\documentclass[12pt]{article}
\usepackage{amssymb,amsmath}
\input{amssym.def}\input{amssym}
\usepackage{graphicx, tikz}
 \usepackage{hyperref}
\usepackage{changebar}

\headsep-0.5in
\oddsidemargin-1cm \evensidemargin-1cm
\textheight=9.2in
\textwidth=7.4in

\def\subjclass#1{\par\medskip
\noindent\textbf{Mathematics Subject Classification (2010):} #1}
\def\keywords#1{\par\medskip
\noindent\textbf{Keywords.} #1}

\newcommand{\E}{{\mathbb E}}

\newcommand{\N}{{\mathbb N}}
\newcommand{\Q}{{\mathbb Q}}
\newcommand{\R}{{\mathbb R}}
\newcommand{\Z}{{\mathbb Z}}
\renewcommand{\S}{{\mathbb S}}
\renewcommand{\P}{{\mathbb P}}

\newcommand\cA{{\mathcal A}}
\newcommand\cB{{\mathcal B}}
\newcommand\cC{{\mathcal C}}

\newcommand\cE{{\mathcal E}}

\newcommand\cG{{\mathcal G}}

\newcommand\cI{{\mathcal I}}
\newcommand\cK{{\mathcal K}}
\newcommand\cL{{\mathcal L}}
\newcommand\cO{{\mathcal O}}
\newcommand\cR{{\mathcal R}}
\newcommand\cM{{\mathcal M}}

\newcommand\cT{{\mathcal T}}
\newcommand\cW{{\mathcal W}}

\newcommand\cS{{\mathcal S}}
\newcommand\cZ{{\mathcal Z}}
\newcommand\cXi{{\Psi}}

\newcommand\bH{{\mathbb H}}

\newcommand\bP{{\mathbb P}}

\newcommand\bR{{\mathbb R}}

\newcommand\Id{{\bf 1}}

\newcommand\hxi{\hat{\xi}}
\newcommand\hm{\hat{m}}
\newcommand\shxi{\sum_{\hat{\xi}}}
\newcommand\sxi{\sum_{\xi}}
\newcommand\wCs{\bar{C}_{\scat}}
\newcommand\wgs{\bar{\gamma}_{\scat}}
\newcommand\wts{\bar{\vartheta}_{\scat}}

\newtheorem{theorem}{Theorem}[section]
\newtheorem{thm}[theorem]{Theorem}
\newtheorem{prop}[theorem]{Proposition}
\newtheorem{corollary}[theorem]{Corollary}
\newtheorem{lemma}[theorem]{Lemma}
\newtheorem{remark}[theorem]{Remark}

\newenvironment{proof}{\noindent {\bf Proof.}}{ \hfill $\Box$\\ }
\newenvironment{proofof}[1]{\noindent {\bf Proof of #1.}}{ \hfill $\Box$\\ }

\newtheorem{mthm}{Theorem}

\newcommand\eps{\varepsilon}

\def\vs{\varsigma}
\def\scat{\rho}
\def\ks{\kappa_\scat}
\def\hks{\hat\kappa_\scat}

\def\ga{\gamma}
\def\gb{\hat\gamma}

\def\supp{\mathop{\mbox{supp}}}

\def\red{\color{red}}

\begin{document}

\title{Periodic Lorentz gas with small scatterers}

\author{P\'eter B\'alint\thanks{
MTA-BME Stochastics Research Group,
Budapest University of Technology and Economics,
M{\H u}egyetem rkp. 3., H-1111 Budapest, Hungary and
 Department of Stochastics, Institute of Mathematics,
Budapest University of Technology and Economics,
M{\H u}egyetem rkp. 3., H-1111 Budapest, Hungary,
\textit{pet@math.bme.hu}} , Henk Bruin\thanks{Faculty of Mathematics, University of Vienna,
Oskar Morgensternplatz 1, 1090 Vienna, Austria, {\it henk.bruin@univie.ac.at}} and Dalia Terhesiu
\thanks{Institute of Mathematics, University of Leiden,
	Niels Bohrweg 1, 2333 CA Leiden, The Netherlands,
		{\it daliaterhesiu@gmail.com}}}

\date{\today}

\maketitle
\abstract{We prove
 limit laws for infinite horizon planar periodic Lorentz gases when, as time $n$ tends to infinity, the scatterer size $\scat$ may also tend to zero simultaneously at a sufficiently slow pace. In particular we obtain a non-standard Central Limit Theorem as well as a Local Limit Theorem for the displacement function. To the best of our knowledge, these are the first results on an intermediate case between the two well-studied regimes with superdiffusive $\sqrt{n\log n}$ scaling (i) for fixed infinite horizon configurations -- letting first $n\to \infty$ and then $\scat\to 0$ -- studied e.g.~by Sz\'asz \& Varj\'u (2007) and (ii) Boltzmann-Grad type situations -- letting first $\scat\to 0$ and then $n\to \infty$ -- studied by Marklof \& T\'oth (2016).
}

\subjclass{Primary: 37D50, Secondary 37A60, 60F05, 60F17, 82C05, 82C40}
\keywords{Lorentz gas, small scatterers, billiards, limit theorems, Nagaev-Guivarc'h method}

\section{Introduction}\label{sec:intro}

In this paper we are interested in limit laws for infinite horizon planar periodic Lorentz gases with small scatterers.
The Lorentz gas, a popular model of mathematical physics introduced by H. Lorentz in 1905 \cite{L}, is a dynamical system on the infinite billiard table obtained by removing strictly convex scatterers from $\R^2$. We study the periodic model when these scatterers are round disks of radius $\scat\in (0,1/2)$ positioned at the points of the Euclidean lattice $\Z^2$. This table can be split up into countably many compact cells, each congruent to the unit square, which can be also regarded as the $2$-dimensional flat torus. As usual, a point particle on the table
moves with a unit velocity vector  along straight lines inside the table, and collides elastically -- angle of incidence equals angle of reflection -- at the scatterers. This billiard flow produces a billiard map for the Poincar\'e section of outgoing collisions. The phase space of the billiard map in a single cell is
$\cM = \partial O \times [-\frac{\pi}{2}, \frac{\pi}{2}]$,
where $O$ is a round disk at the origin with radius $\scat$.
The phase space representing all cells together is $\widehat\cM =  \cM \times \Z^2$ and the displacement function
$\ks:\cM \to \Z^2$ indicates the difference in cell
numbers going from one collision to the next.
As $O$ is strictly convex, the billiard is dispersing, and the dynamics has good hyperbolicity properties.

For any $\scat\in (0,1/2)$, the horizon is infinite which means that the time between two consecutive collisions -- and accordingly, $\ks:\cM\to\Z^2$ -- is unbounded. This corresponds to \textit{corridors}, that is, infinite strips on $\R^2$ parallel to some direction $\xi\in\Z^2\setminus\{0\}$ which do not intersect any of the scatterers of the infinite billiard table. Both the number and the geometry of these corridors depend on $\scat$. Hence, the value of the parameter $\scat$ strongly affects the asymptotic properties of the dynamics, and in particular, plays a central role in our exposition.

\subsection{Recalling limit laws for fixed $\scat\in (0,1/2)$ as time $n \to \infty$}
\label{subsec:recfix}

A consequence of the infinite horizon is the superdiffusive behaviour of $\ks$ with
$\scat\in (0,1/2)$
fixed, captured in the first place in the Central Limit Theorem  (CLT) with non-standard normalization.
To recall this result along with its refinements, we introduce some notation to be used throughout this paper.
Let $T_{\scat}:\cM \to  \cM$ be the billiard map, recall that it preserves the canonical invariant probability  measure $\mu$.
Set
\begin{equation}
\label{eq:nottr}
 \kappa_{n,\scat}=\sum_{j=0}^{n-1}\ks\circ T_\scat^j,
 \qquad  \Sigma=\frac{1}{\pi} \begin{pmatrix} 1&0\\0&1 \end{pmatrix}.
\end{equation}
Choosing the initial point on $\cM$ according to $\mu$, we can regard $\kappa_{n,\scat}$ as a family of random variables.
Throughout we let $\implies$ stand for convergence in distribution.
We recall the CLT with non-standard normalization: for every $\scat\in(0,\frac12)$  there exists a positive definite matrix $\Sigma_{\scat}$ such that:
\begin{align}
\label{eq:Cltfix}
 \text{For fixed }\scat\in (0,1/2),\quad \frac{\kappa_{n,\scat}}{\sqrt{n\log n}}\implies \mathcal{N}(0,\Sigma_{\scat}) \text{ as }
  n\to\infty.
\end{align}
This result was conjectured by Bleher~\cite{Bl92} and proved rigorously via two different methods: Sz\'asz \& Varj\'u in~\cite{SV07}, and Chernov \& Dolgopyat in~\cite{ChDo09}. In the setting above,
the requirement of having two non-parallel corridors (present in~\cite{SV07, ChDo09}) is automatically satisfied because the scatterers are positioned at the lattice points.

It is important to note that there is an explicit  formula for $\Sigma_{\scat}$, which involves the scatterer geometry for fixed $\scat$, see for example \cite[Formula (2.1)]{ChDo09}. A computation (similar to our proof of Lemma~\ref{lem:corridor_sum}) shows that
\begin{equation}
\label{eq:LimSigmaScat}
\lim_{\scat\to 0} (4\pi \scat^2) \Sigma_{\scat}=\Sigma,
\end{equation}
where $\Sigma$ is the diagonal matrix given in~\eqref{eq:nottr}.
To compare these results with our Theorem~\ref{thm:meta} below, we point out the following direct consequence of~\eqref{eq:Cltfix} and~\eqref{eq:LimSigmaScat}:
\begin{align}
\label{eq:CltSV}
  \frac{\kappa_{n,\scat}}{(\sqrt{4\pi}\scat)^{-1}\sqrt{n\log n}}
  \implies \mathcal{N}(0,\Sigma) \text{ as first }
  n\to\infty \text{ and then } \scat\to 0.
\end{align}

The method of proof in~\cite{SV07} relies on the existence of a Young tower for $T_\scat$
as in~\cite{Young98, Chernov99} and an abstract result of B\'alint \& Gou\"ezel~\cite{BalintGouezel06}
along with several additional properties of $(\ks, T_\scat)$ established in~\cite{SV07}.
One notable feature of this method is that it provides a refinement
of the CLT~\eqref{eq:Cltfix}, namely the Local Limit Theorem (LLT):
\begin{align}
\label{eq:lltfix}
\text{For fixed }\scat\in (0,1/2),\quad (n\log n)\mu(\kappa_{n,\scat}=0)\to\Phi_{\Sigma_{\scat}}(0)\text{ as } n\to\infty,
\end{align}
where $\Phi_{\Sigma_{\scat}}$ is the density of the Gaussian random variable on the r.h.s~of~\eqref{eq:Cltfix}.

The method of proof in~\cite{ChDo09} exploits exponential mixing for the sequence $\{\ks\circ T_\scat^n\}_{n\ge 1}$.
The authors of that work develop an argument based on standard pairs to establish a bound on the correlations for $\ks$:
\begin{equation}\label{eq:corrfix}
\begin{array}{l}
\text{For fixed }\scat\in (0,1/2),\text{ there exist }\hat{\vartheta}_{\scat}\in (0,1)
\text{ and }\hat{C}_\scat > 0\\
\text{so that }\left|\int_{\cM} \ks\cdot\ks\circ T_\scat^n\,d\mu\right|\le \hat{C}_\scat\cdot\hat{\vartheta}_{\scat}^n \text{ for all }
n\ge 1.\\
\end{array}
\end{equation}
Using~\eqref{eq:corrfix}, the CLT~\eqref{eq:Cltfix} is proved in~\cite[Proof of Theorem 8 a)]{ChDo09} via blocking type arguments; we refer to Denker~\cite{Den86} for a classical reference. Furthermore, as shown in~\cite[Proof of Theorem 8]{ChDo09},
 the limit law~\eqref{eq:Cltfix} together with a tightness argument for a truncated version of $\ks$
provides another refinement of the CLT, namely, the Weak Invariance Principle (WIP):
\begin{equation}\label{eq:wipfix}
\begin{array}{l}
\text{For fixed }\scat\in (0,1/2)
\text{ and } s\in (0,1), \ \frac{\kappa_{\lfloor ns \rfloor ,\scat}+ \{ns\}(\kappa_{\lfloor ns \rfloor +1,\scat}- \kappa_{\lfloor ns \rfloor,\scat}) }{\sqrt{n\log n}}
\text{ converges as }\\
n\to\infty \text{ to a Brownian motion with mean } 0\text{ and variance }\Sigma_{\scat}.
\end{array}
\end{equation}
Similar versions of the CLT~\eqref{eq:Cltfix} and the WIP~\eqref{eq:wipfix}
hold for the flight time function taking values in $\R^2$, see~\cite{ChDo09}.

In a different direction, a further important consequence of the LLT~\eqref{eq:lltfix} established in~\cite{SV07}
is that it allows one to study mixing of the infinite measure preserving
billiard dynamics on the entire lattice $\widehat\cM =  \cM \times \Z^2$. This can be modelled by a $\Z^2$ extension
$$
\hat T_{\scat}^n(\theta,\phi,\ell) = ( T_{\scat}^n(\theta,\phi),
\ell+\kappa_{n,\scat}(\theta,\phi)),
\qquad  (\theta,\phi) \in  \cM,\ \ell\in \Z^2.
$$
The dynamics $\hat T_\scat$ preserves the measure $\hat\mu=\mu\times \mbox{Leb}_{\Z^2}$, where $\mbox{Leb}_{\Z^2}$ denotes the counting measure. Let us introduce the notation $\cM_{\xi}:=\cM\times\{\xi\} \subset \widehat\cM$ for $\xi\in\Z^2$, and furthermore, for brevity, let
$\cM_0:=\cM\times \{(0,0)\}$, where the label $0$ refers here to the origin in $\Z^2$. An immediate consequence of~\eqref{eq:lltfix} is:
\begin{align}
\label{eq:mixfix}
\text{For fixed }\scat\in (0,1/2),\ (n\log n) \mu(\kappa_{n,\scat}=0)
=(n\log n) \hat\mu(\cM_0\cap\hat T_\scat^{-n}\cM_0)
\to\Phi_{\Sigma_{\scat}}(0) \text{ as }
n\to\infty.
\end{align}

A first refinement of the LLT~\eqref{eq:lltfix}
and of the mixing statement~\eqref{eq:mixfix} was obtained by P\`ene~\cite{Pene18}
who proved the analogue of these statements for the class of dynamically H\"older observables.
Later on, P\`ene \& Terhesiu~\cite{PT20}, building on the results in~\cite{BalintGouezel06}, obtained
sharp error rates in the LLT and the mixing for dynamically H\"older observables, including observables supported
on compact sets. Furthermore,~\cite{PT20} establish optimal error rates for mean zero observables.

\subsection{Recalling results as first $\scat\to 0$ and then $n \to \infty$ (Boltzmann-Grad limit)}
\label{subsec:recnonj}

In \cite{MS,MS2}, Marklof \& Str\"ombergsson studied the Boltzmann-Grad limit of the periodic Lorentz gas. This corresponds to letting the scatterer size $\scat \to 0$  and investigating the displacement in the rescaled continuous time $T=\scat t$ (so that the mean free path remains bounded). In particular, \cite{MS} proves that, in this Boltzmann-Grad limit, the displacement of the particle converges, on any finite time interval, to an explicitly given Markov process. Marklof \& T\'oth~\cite{MT16} then studied the large time asymptotic of this Markov process, and obtained the CLT and the WIP with  non-standard normalization $\sqrt{T\log T}$.

These results on the Boltzmann-Grad limit scenario hold in any dimension,
not just in $d=1, 2$ as the results mentioned in the previous subsection. For more details, we refer to the original references. What is most relevant for us is that \cite[Theorem 1.1]{MT16} and \cite[Theorem 1.3]{MT16} are reduced to discrete time statements that can be formulated in terms of the behavior of $\kappa_{n,\scat}$ in the limits $\scat\to 0$ first and then $n\to\infty$. In particular, \cite[Theorem 1.2]{MT16} states for $d=2$ that:
\begin{equation}\label{eq:recnonjoint}
  \frac{\kappa_{n,\scat}}{(\sqrt{4\pi}\scat)^{-1}\sqrt{n\log n}}
  \implies \mathcal{N}(0,\Sigma) \qquad \text{ as } \scat\to 0\text{ followed by } n\to\infty,
\end{equation}
where $\kappa_{n,\scat}$ and $\Sigma$ are as in~\eqref{eq:nottr}
\footnote{Actually, \cite[Theorem 1.2]{MT16} is stated for the flight time function taking values in $\R^2$,
as opposed to the displacement function taking values in $\Z^2$, but these are equivalent as the difference between the two processes is uniformly bounded, see Remark~\ref{rmk:flightev}.}, while
\cite[Theorem 1.4]{MT16} is the corresponding  WIP which, when $d=2$, reads as~\eqref{eq:wipfix} with the main difference of the limit paths:
$\scat\to 0$ followed by $ n\to\infty$, as opposed to fixed $\scat$.

In~\cite{MT16}, the authors state that
\emph{an  open problem is to consider the joint limit $\scat \to 0$ and $n\to\infty$}.
In the Boltzmann--Grad limit scenario with \emph{diffusive behaviour},  this type of question is answered  by Lutsko \& T\'oth in~\cite{LTo20} for \emph{random Lorentz gases}, in dimension $d=3$,
where, on top of the initial condition, additional randomness comes from the \emph{random placement of the scatterers}. However, their model is very different from the  model considered in~\cite{MT16}
and it is characterized by diffusion (Brownian motion with \emph{standard normalization}).

\subsection{Main results as $\scat\to 0$ and $n \to \infty$ in the  joint limit}
\label{subsec:ours}

Our main result takes a step in answering the open question in~\cite{MT16} for the planar periodic Lorentz gas. It reads as follows.
\begin{mthm}\label{thm:meta}
Let $\kappa_{n,\scat}$ and $\Sigma$ be as in~\eqref{eq:nottr}, and let
\[
b_{n,\scat}=\frac{\sqrt{n\log (n/\scat^2)}}{\sqrt{4\pi}\ \scat}.
\]
There exists a function $M(\scat)$ with $M(\scat)\to \infty$ as $\scat \to 0$ such that,
\begin{align*}
 \frac{\kappa_{n,\scat}}{b_{n,\scat}}\implies \mathcal{N}(0,\Sigma), \text{ as }
  n\to\infty \text { and } \scat\to 0 \text{ such that } M(\scat)=o(\log n).
\end{align*}
\end{mthm}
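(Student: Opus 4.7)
The plan is to apply the Nagaev--Guivarc'h (spectral) method to the perturbed transfer operator of $T_\scat$. Let $L_\scat$ denote the transfer operator of $T_\scat$ with respect to the invariant measure $\mu$, acting on the Young-tower Banach space $\cB_\scat$ built in~\cite{Young98,Chernov99} and used in~\cite{SV07}, and for $t\in\R^2$ define the twisted operator $\hat L_{\scat,t}f := L_\scat(e^{it\cdot\ks}f)$. A standard identity gives
\begin{equation*}
\varphi_{n,\scat}(t) := \E_\mu\bigl[e^{it\cdot\kappa_{n,\scat}}\bigr] = \int_\cM \hat L_{\scat,t}^n \Id \, d\mu,
\end{equation*}
and by L\'evy's continuity theorem it suffices, for each fixed $s\in\R^2$, to prove $\varphi_{n,\scat}(s/b_{n,\scat}) \to e^{-\frac12 s^T\Sigma s}$ along the joint regime $M(\scat) = o(\log n)$.

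The first and main task is a \emph{quantitative spectral decomposition} for $\hat L_{\scat,t}$. For fixed $\scat$, the analysis of~\cite{SV07} --- combining Young towers with the perturbation theorem of~\cite{BalintGouezel06} --- yields a neighborhood $U_\scat$ of the origin in which $\hat L_{\scat,t}$ has a simple leading eigenvalue $\lambda_\scat(t)$ with $\lambda_\scat(0)=1$, the rest of the spectrum being contained in a disk of radius $\theta_\scat<1$; consequently
\begin{equation*}
\hat L_{\scat,t}^n \Id = \lambda_\scat(t)^n h_{\scat,t} + O(\theta_\scat^n)\qquad\text{in }\cB_\scat.
\end{equation*}
The new task is to track how the size of $U_\scat$, the gap $1-\theta_\scat$, the norm of $h_{\scat,t}$ and the implicit constant degrade as $\scat\to 0$. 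Extracting this quantitative dependence is what both defines and forces the admissible growth rate $M(\scat)=o(\log n)$.

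Next, the eigenvalue $\lambda_\scat(t)$ must be expanded near $t=0$. The displacement $\ks$ has heavy tails $\mu(|\ks|>R)\asymp \scat^{-2}R^{-2}$ coming from corridors, so the standard quadratic Taylor expansion is replaced by one with a slowly varying logarithmic correction; truncating $\ks$ at scale $R=1/|t|$ and summing the corridor contributions as in the proof of Lemma~\ref{lem:corridor_sum} (the joint-limit analogue of~\eqref{eq:LimSigmaScat}) yields
\begin{equation*}
1-\lambda_\scat(t) = t^T \Sigma_\scat t \,\log(1/|t|)\,(1+o(1)) = \frac{t^T\Sigma t}{4\pi\scat^2}\,\log(1/|t|)\,(1+o(1)),
\end{equation*}
valid uniformly as $|t|\to 0$ and $\scat\to 0$ in the joint regime. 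Substituting $t=s/b_{n,\scat}$ gives $|t|^2 = 4\pi|s|^2\scat^2/(n\log(n/\scat^2))$ and $\log(1/|t|) = \tfrac12\log(n/\scat^2) + O(\log\log(n/\scat^2))$, so that $\log(1/|t|)/\log(n/\scat^2) \to \tfrac12$ and hence $n(1-\lambda_\scat(s/b_{n,\scat})) \to \tfrac12\, s^T\Sigma s$. Combined with the spectral decomposition, this yields $\varphi_{n,\scat}(s/b_{n,\scat}) \to e^{-\frac12 s^T\Sigma s}$.

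The main obstacle is the first task. The Young-tower construction for dispersing billiards --- with its dependence on the complexity of singularity curves, the tails of first return times to the tower base and the spectral gap of $L_\scat$ --- has constants that a priori degenerate as $\scat\to 0$ (the billiard becomes nearly integrable: long free flights, longer return times, progressively thinner corridors). Identifying the rate at which these quantities deteriorate and repackaging this into the admissible function $M(\scat)$ is where the real work lies; once this is done, the eigenvalue expansion is a careful but essentially classical corridor-sum computation in the spirit of~\cite{SV07,ChDo09}, and the final substitution is bookkeeping.
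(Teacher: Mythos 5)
Your high-level plan (Nagaev method applied to the twisted transfer operator, expand the leading eigenvalue using corridor sums, substitute $t=s/b_{n,\scat}$) is correct, and your eigenvalue asymptotics $1-\lambda_\scat(t)\sim t^T\Sigma t\,\log(1/|t|)/(4\pi\scat^2)$ and the resulting limit $n(1-\lambda_\scat(s/b_{n,\scat}))\to\frac12 s^T\Sigma s$ agree with what the paper obtains (Proposition~\ref{prop-eigv}, Lemma~\ref{lemma-matrix} and the final substitution in the proof of Theorem~\ref{th-mainth}). But the route you propose for the quantitative spectral input has a genuine gap, in two related ways.

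First, your plan for ``task one'' is to take the Young tower of~\cite{Chernov99,Young98}, invoke the perturbation machinery of~\cite{SV07,BalintGouezel06}, and then \emph{track} how the tower constants (gap $1-\theta_\scat$, size of $U_\scat$, norms, implicit constants) degrade as $\scat\to 0$. The paper explicitly discards this route: the Young tower construction involves non-constructive compactness arguments, and there is no way known to build the tower so that it depends continuously (let alone quantitatively) on $\scat$. Instead, the paper works directly on the Demers--Zhang Banach spaces of distributions~\cite{DZ11,DZ13,DZ14}, redoing the growth lemma and Lasota--Yorke inequalities (Sections~\ref{sec:growth}--\ref{subsec-Bsp}) so that almost all constants are $\scat$-independent, with the only $\scat$-dependence entering through $\delta_1\asymp\scat^\nu$ in the compact term. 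This is not ``the same argument with constants tracked''; it is a different Banach-space framework chosen precisely because the Young tower cannot be tracked.

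Second, even granting a quantitative spectral decomposition, your proposal treats the eigenvalue expansion as a ``careful but essentially classical corridor-sum computation.'' It is not. The achievable continuity estimate is only $\|(\hat R_\scat(t)-\hat R_\scat(0))\|_\cB\le C\scat^{-\nu}|t|^\nu$ with $\nu<\frac12$ (equation~\eqref{eq-cont}), which is too weak to yield $1-\lambda_\scat(t)\sim |t|^2\log(1/|t|)$ directly by Taylor-type perturbation theory --- a $C^\nu$ control with $\nu<1/2$ cannot see the quadratic term. The paper circumvents this by decomposing $1-\lambda_\scat(t)=\mu(1-e^{it\ks})+V(t,\scat)$ and further splitting $V(t,\scat)$ into several resolvent integrals ($I_1$, $I_2=J_1+J_2$, $J_2=K_1+K_2$ in the proof of Proposition~\ref{prop-eigv}), some of which are estimated on the Banach space via Lemma~\ref{lemma-bw}, while the pieces $J_1$ and $K_1$ are reduced to exponential correlation bounds for the unbounded observable $e^{it\ks}-1$ (assumptions~\eqref{eq-corel}, \eqref{eq-corel2}) proved by an $\scat$-quantified extension of Chernov--Dolgopyat's standard-pair argument (Appendix~\ref{sec:decay}). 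The function $M(\scat)$ in the theorem is built from the decay rates $\ga_\scat$, $C_\scat$ of the complementary projection on $\cB$ \emph{and} the constants $\wCs,\wgs$ of the standard-pair correlation bound, not from Young-tower constants. If you tried to run your proposal literally you would get stuck: no Young-tower construction with controlled $\scat$-dependence exists, and even with one, the available modulus of continuity of $t\mapsto\hat L_{\scat,t}$ would not deliver the $|t|^2\log(1/|t|)$ asymptotics on its own.
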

A precise expression of $M(\scat)$ is given in Theorem~\ref{th-mainth} in Section~\ref{sec-lmthm}.
At this stage we mention that $M(\scat)$  depends on the rate of correlation decay for H\"older observables as $\scat\to 0$.
How this decay rate depends on $\scat$ is not known and we do not attempt to study this in the present paper. However, we comment on some relevant aspects of correlation decay below.
%However, we emphasize that apart this quantity, everything else in the definition of $M(\scat)$ is made explicit.

In the remainder of this section, we make some further comments on how our results compare to various other works, and on some key ingredients of our argument.

\textbf{Comments on the rate of correlation decay}. Statistical limit laws in dynamical settings in general, and our results in particular, are strongly related to effective bounds on time correlations.
For several decades, it has been a major problem to prove exponential decay of correlations for H\"older observables in dispersing billiards, that is, bounds of the form:
\begin{equation}
\label{eq:edcHolder}
\left|\int_{\cM} \psi_1\cdot\psi_2\circ T_\scat^n \,d\mu\right|\le C_\scat(\psi_1,\psi_2)\cdot\hat{\theta}_{\scat}^n \text{ for all }
n\ge 0,
\end{equation}
where $\psi_1:\cM\to\R$ and $\psi_2:\cM\to\R$ are centered, H\"older continuous observables, and $\hat{\theta}_{\scat}<1$ may depend on the H\"older exponent, while $C_\scat(\psi_1,\psi_2)>0$ on the H\"older norm of these functions, and both constants depend also on $\scat$ (i.e.~on the billiard table). Several powerful methods have been designed to prove bounds of the form
\eqref{eq:edcHolder}, in particular using quasi-compactness of the transfer operator on Young towers \cite{Young98} or anisotropic Banach spaces \cite{DZ11}, coupling of standard pairs
\cite[Chapter 7]{CM} or most recently, Birkhoff cones \cite{DemersLiverani21}. However, each of these methods involve some non-constructive compactness argument which is the reason why there
is no explicit information available on how the rate of decay (i.e.~$C_\scat$ and $\hat{\theta}_{\scat}$) depends on $\scat$. For instance, in the framework of quasi-compact transfer operators,
this corresponds to having effective bounds on the essential spectral radius, but not on the spectral gap.

In fact, depending on the method, $\psi_1$ and $\psi_2$ may belong to a larger space (that contains H\"older functions), however, these spaces do not contain the unbounded observable
$\kappa_{\scat}$. Hence, even for fixed $\scat$, it requires additional effort to obtain correlation bounds for unbounded observables, in particular, to derive \eqref{eq:corrfix}.
As mentioned above, in our context of the infinite horizon Lorentz gas, \eqref{eq:corrfix} was proved by Chernov and Dolgopyat in \cite[Proposition 9.1]{ChDo09},
which is an important reference for our work. Let us also mention
\cite[Lemma 3.2]{BCD11} on a similar bound for the induced return time arising in dispersing billiards with cusps, and the more recent paper
\cite{WZZ} where correlation bounds for unbounded observables are studied in an axiomatic framework that includes further billiard models. Nonetheless, all these works consider the large
time asymptotics of a fixed billiard system. To treat the simultaneous scaling of $\scat\to 0$ and $n\to\infty$, in Appendix~\ref{sec:decay} of the present paper we extend \cite[Proposition 9.1]{ChDo09} in two directions. On the one hand,
on top of the mere existence of some $\hat{C}_\scat > 0$ and $\hat{\vartheta}_{\scat}<1$ in \eqref{eq:corrfix}, we explicitly relate these constants to $C_\scat$ and $\hat{\theta}_{\scat}$
 of \eqref{eq:edcHolder}, as expressed in \eqref{eq:edc_const_relate}.\footnote{We will also use the notations $\gamma_{\scat}=1-\hat{\theta}_{\scat}$ and
 $\hat{\gamma}_{\scat}=1-\hat{\vartheta}_{\scat}$.} On the other hand, to exploit correlation bounds of the type \eqref{eq:corrfix} when taking the joint limit,
 these have to be combined with the action of the perturbed transfer operator $R_{\scat}(t)$ (introduced below) as stated in our Proposition~\ref{prop:decaykappa}.

\textbf{Comparison with results on the random Lorentz gas}. To compare our Theorem~\ref{thm:meta} with the results of Lutsko \& T\'oth on the random Lorentz gas, it is important to emphasize that although both \cite{LTo20} and our paper  consider a joint limit of scatterer radius tending to $0$ and time tending to infinity simultaneously, the settings of these two papers are quite different. In particular, the starting point of Lutsko \& T\'oth is the Boltzmann Grad limit of the random Lorentz gas, and accordingly, \cite{LTo20} can handle situations when time tends to infinity at a \textit{sufficiently slow} pace in relation to the scatterer size tending to $0$. In contrast, the starting point of our work is the superdiffusive limit in the infinite horizon periodic Lorentz gas with fixed scatterer size (see subsection~\ref{subsec:recfix} for a summary of previous results), and accordingly we can handle situations when  time tends to infinity at a \textit{sufficiently fast} pace in relation to the scatterer size tending to $0$.

It is also important to note that under the condition $M(\scat)=o(\log n)$  we have
\[
\frac{b_{n,\scat}}{(\sqrt{4\pi}\scat)^{-1}\sqrt{n\log n}}\to 1,
\]
which shows that our Theorem~\ref{thm:meta} is indeed a direct analogue of both \eqref{eq:CltSV} and \eqref{eq:recnonjoint}.  To simplify the exposition, we omit the case $d=1$ (i.e., the Lorentz tube), but believe that similar results can be obtained by the same arguments.

\textbf{Further comments on some corollaries of our result and some elements of our proofs.}
A main advantage of the current method of proof via spectral methods
is that it allows us to obtain (with no additional effort) the LLT~\eqref{eq:lltfix} and the mixing statement~\eqref{eq:mixfix} with appropriate limit paths $\scat\to 0$ simultaneously with $n\to\infty$, as opposed to fixed $\scat$.
For the LLT we refer to Theorem~\ref{th-llt} and for the mixing result we refer to Corollary~\ref{cor:mix}.

We mention up front that unlike in the \emph{fixed $\scat$} scenario with main results recalled in Subsection~\ref{subsec:recfix}, we cannot exploit the existence of a Young tower because
it seems undoable to build such a tower in a fashion
that it depends continuously on $\scat$.
Instead, we prove Theorem~\ref{thm:meta} via the Nagaev method on Banach spaces of distributions introduced by Demers \& Zhang~\cite{DZ11, DZ13, DZ14} in the spirit
of the spaces constructed in Demers \& Liverani~\cite{DemersLiverani08}.
See Aaronson \& Denker~\cite{AD01,ADb}
for a classical reference
on the Nagaev method in (Gibbs Markov) dynamics beyond the CLT with standard normalization (that is $\sqrt n$). However, as we shall explain below,
the standard pairs argument in~\cite{ChDo09} plays a crucial role in our proof.

We end this introduction summarizing the main steps and challenges of our proofs.
A main difficulty comes from the fact that as $\scat\to 0$, more and more corridors open up and controlling their number and geometry is a non-trivial task.
Another challenge for the proofs of Theorem~\ref{thm:meta} and the LLT in Theorem~\ref{th-llt}
comes from the fact that the spaces in~\cite{DZ11, DZ13, DZ14} cannot be used in a straightforward way \emph{even} in the infinite horizon case with \emph{fixed} $\scat$.

The Nagaev method requires: 1) the existence of a Banach space $(\cB, \|\,\|_{\cB})$ on which the transfer
operator $R_\scat$ of $T_\scat$ has a spectral gap; 2) that the
 perturbed transfer operator ($R_\scat(t) \psi=R(e^{it\ks}\cdot \psi )$ for $\psi\in\cB$) has sufficiently good
continuity estimates $\|R_\scat(t)-R_\scat(0)\|_{\cB}\leq C |t|^\nu$; the larger $\nu > 0$, the better.

Regarding 1), using a Lasota-Yorke inequality on a strong space $\cB$ and a weak space $\cB_w$, Demers \& Zhang~\cite{DZ11, DZ13, DZ14} established the spectral gap for every fixed $\scat$, see Section~\ref{subsec-Bsp}.
This is the main reason why we resorted to the use of such Banach spaces.

Regarding 2), as in Keller \& Liverani~\cite{KellerLiverani99}, one could work with the weak space.
 For infinite horizon billiards, continuity estimates in the strong or weak Banach spaces in~\cite{DZ11, DZ13, DZ14} have not been obtained previously.
In Section~\ref{sec:periphspgap}, we give continuity estimates in such Banach spaces (strong or weak); the estimates there rely heavily on
a version of the growth lemma, namely Proposition~\ref{prop:growthscaled}.
These continuity estimates are $O(|t|^\nu)$ for $\nu<1/2$ with explicit dependence on $\scat$, in both the strong and the weak spaces. This exponent $\nu$ is too small to obtain the asymptotics of the leading eigenvalue of $R_\scat(t)$ directly.
Therefore we resort to a decomposition of the eigenvalue in several pieces (see the proof of Proposition~\ref{prop-eigv})  and exploit the standard pairs arguments in~\cite{ChDo09}  to deal with some  parts of the estimate, see Appendix~\ref{sec:decay}.
Along the way, we give a new proof of the LLT~\eqref{eq:lltfix} for fixed $\scat$
which is new at an abstract level as well,
namely by working on the Banach spaces~\cite{DZ11, DZ13, DZ14} in the absence of good continuity estimates but in the presence of exponential decay of correlations.
\\[4mm]
The paper is organised as follows: In Section~\ref{sec:Lorentz_model}
we recall some basic properties of hyperbolic billiards and also
estimate widths of corridors that open up as $\scat \to 0$.
Section~\ref{sec:growth} gives the Growth Lemmas, following~\cite{DZ11, DZ13, DZ14} but with estimates made explicit in terms of $\scat$,
and including sums over unbounded number of corridors (this is the reason why the estimates are worse than for the usual Growth Lemmas).
Section~\ref{subsec-Bsp} introduces the Banach spaces and recalls the proof of
the spectral gap property for the unperturbed transfer operator $R_\scat$, showing that the $\scat$-dependence can be controlled. Section~\ref{sec-opRk} is devoted to the continuity estimates of the perturbed transfer operator $R_\scat(t)$ and Section~\ref{sec:ev} gives the asymptotics of the corresponding leading eigenvalue. The precise statements and proofs of the limit theorems are gathered in Section~\ref{sec-lmthm}.

The appendices give further technical details on corridor sums (Appendix~\ref{sec:corridors}), distortion (Appendix~\ref{sec:distortion}) and decay of correlations by a combination of tower and standard pair arguments (Appendix~\ref{sec:decay}).
\\[3mm]
{\bf Acknowledgements:}
We thank the anonymous referees for useful comments.
HB was supported by the FWF Project P31950-N35.
PB was supported by National Research, Development and Innovation Office -– NKFIH, Projects K123782, K142169 and KKP144059.
PB and HB acknowledge the support of Stiftung A\"OU Project 103\"ou6.
DT was partially supported by EPSRC grant EP/S019286/1.

We would like to thank M. Weber for his helpful correspondence on the results established in~\cite{Weber}.
\\[3mm]
%{\bf Data availability statement:} Data sharing not applicable to this article as no datasets were generated or analysed during the current study.

\section{Preliminaries on Lorentz gas on $\Z^2$}
\label{sec:Lorentz_model}

 Our general reference on hyperbolic billiards is Chernov \& Markarian~\cite{CM}, the conventions of which are followed in our exposition, except for some minor differences.
In particular, we use coordinates $(\theta,\phi) \in \S^1 \times [-\frac{\pi}{2}, \frac{\pi}{2}]$ on $\cM$, where
\begin{itemize}
\item  $\theta \in \S^1$ in clockwise orientation describes the collision point on the scatterer (so the corresponding point on $\partial O$ is $(\scat \sin \theta, \scat \cos \theta)$);
\item $\phi \in [-\frac{\pi}{2}, \frac{\pi}{2}]$ denotes the outgoing angle that the billiard trajectory makes
after a collision at a point with coordinate $\theta$ with the outward normal vector $\vec N_\theta$ at this point
(so $\phi = \frac{\pi}{2}$ corresponds to an outgoing trajectory tangent to $O$ in the positive $\theta$-direction).
\end{itemize}
In these coordinates $(\theta,\phi)$, the measure $\mu$ has the same form
$d\mu = \frac{1}{4\pi} \cos \phi \, d\phi\, d\theta$
for all values of the radius $\scat > 0$. Integrals
involving the displacement function $\ks$, however, do depend on $\scat$.
If the flight between
$(x,\ell)$ and $( T_\scat(x), \ell + \ks(x))$ goes through a corridor for a long time before hitting a scatterer at the boundary of this corridor, then  the angle at which the second
scatterer is hit is close to $\pm \frac{\pi}{2}$. This sparks another long
flight in the same corridor, i.e.,
$\| \ks(T_\scat x)\|$ is large, too.

In the remainder of this section, we record some properties of $T_\scat$ and  $\ks$. In Subsection~\ref{sec:corr} the geometry of corridors is described, with special emphasis on the asymptotics of small $\scat$. In Subsection~\ref{sec:singu} we focus on the singularities,  which, in addition to strong hyperbolicity, are the other main feature of the map $T_{\scat}:\cM\to\cM$.
In Subsection~\ref{sec-Hyperb}, the hyperbolic properties of $T_{\scat}:\cM\to\cM$ are discussed. Some lemmas of technical character are moved to Appendices~\ref{sec:corridors} and~\ref{sec:distortion}.
\\[3mm]
{\bf Notation:}
For functions (or sequences) $f$ and $g$,
we use the Vinogradov notation $f \ll g$ and the Landau big $O$ notation interchangeably: there is a constant $C > 0$ such that $f \leq C g$.
Similarly $f \asymp g$ means that there exists $C>1$ such that $C^{-1} g \le f \le Cg$.

\subsection{Corridors and their widths \label{sec:corr}}
Let $O_\ell$ denote the circular scatterer of radius $\scat$ placed at lattice point $\ell \in \Z^2$.
The computation of
$\mu(x \in \partial O_0 \times [-\frac{\pi}{2}, \frac{\pi}{2}] : \ks(x) =
(p,q))$
is based on the division of the phase space in corridors.
These are infinite strips in rational directions given by $\xi \in \Z^2 \setminus \{ 0 \}$
for $\scat$ sufficiently small,
that are disjoint from all scatterers (but maximal with respect to this property),
and they are periodically repeated under integer translations. As soon as $\scat < \frac12$, there are infinite corridors parallel to the coordinate axes.
If $\scat < \frac14 \sqrt{2}$, then corridors at angles of $\pm 45^\circ$ open up,
and the smaller $\scat$ becomes, the more corridors open up at rational angles.

Given $0 \neq \xi \in \Z^2$ and $\scat > 0$ sufficiently small,
there are two corridors simultaneous tangent to $O_0$ and $O_\xi$, one
corridor
on either side of the arc connecting $0$ and $\xi$.
The widths of the corridors
are denoted by $d_{\scat}(\xi)$ and $\tilde d_{\scat}(\xi)$, see Figure~\ref{fig:corridors}.

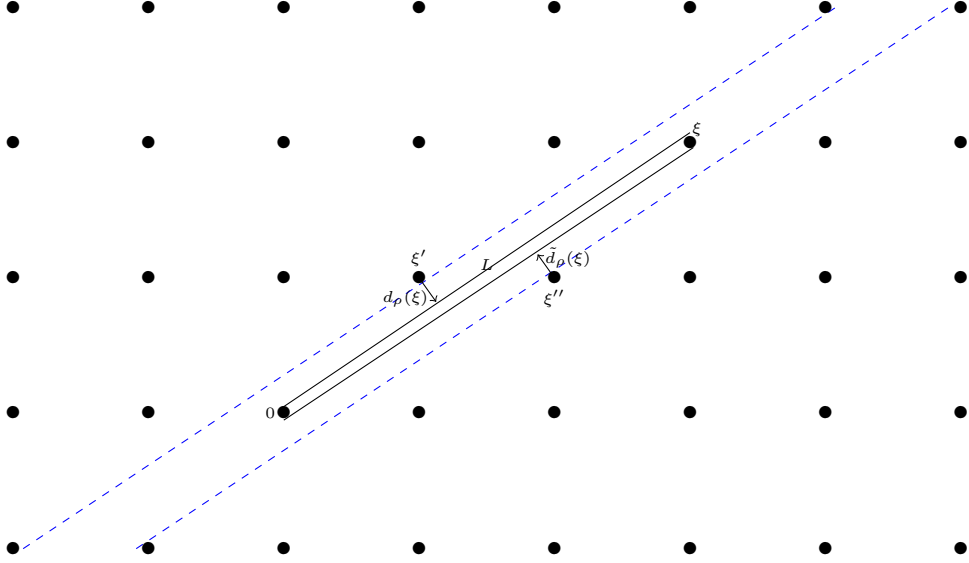
\begin{figure}[ht]
\begin{center}
\begin{tikzpicture}[scale=0.9]
\draw[-, draw=black] (4,-1.9) -- (10,2.15);
\draw[-, draw=black] (4,-2.1) -- (10.05,1.93);  \node at (7,0.2) {\tiny $L$};
\draw[-, draw=blue, dashed] (1.82,-4) -- (13.82,4);
\draw[-, draw=blue, dashed] (0.15,-4) -- (12.15,4);
%\draw[-, draw=blue, dashed] (1,-4) -- (13,4);
\draw[->, draw=black] (8,0) -- (7.75,0.35);  \node at (8.2,0.3) {\tiny $\tilde
d_{\scat}(\xi)$};
\draw[->, draw=black] (6,0) -- (6.25,-0.35);  \node at (5.8,-0.3) {\tiny
$d_{\scat}(\xi)$};
\node at (6,0.3) {\tiny $\xi'$}; \node at (8,-0.3) {\tiny $\xi''$};
  \node at (3.8,-2) {\tiny $0$};  \node at (10.1,2.2)  {\tiny $\xi$};
 \node at (0,0) {$\bullet$}; \node at (2,0) {$\bullet$};
 \node at (4,0) {$\bullet$}; \node at (6,0) {$\bullet$};
  \node at (8,0) {$\bullet$}; \node at (10,0) {$\bullet$};
  \node at (12,0) {$\bullet$}; \node at (14,0) {$\bullet$};
 \node at (0,2) {$\bullet$}; \node at (2,2) {$\bullet$};
 \node at (4,2) {$\bullet$}; \node at (6,2) {$\bullet$};
 \node at (8,2) {$\bullet$}; \node at (10,2) {$\bullet$};
  \node at (12,2) {$\bullet$}; \node at (14,2) {$\bullet$};
 \node at (0,4) {$\bullet$}; \node at (2,4) {$\bullet$};
 \node at (4,4) {$\bullet$}; \node at (6,4) {$\bullet$};
 \node at (12,4) {$\bullet$}; \node at (14,4) {$\bullet$};
  \node at (8,4) {$\bullet$}; \node at (10,4) {$\bullet$};
  \node at (0,-4) {$\bullet$}; \node at (2,-4) {$\bullet$};
  \node at (4,-4) {$\bullet$}; \node at (6,-4) {$\bullet$};
   \node at (8,-4) {$\bullet$}; \node at (10,-4) {$\bullet$};
  \node at (12,-4) {$\bullet$}; \node at (14,-4) {$\bullet$};
  \node at (0,-2) {$\bullet$}; \node at (2,-2) {$\bullet$};
  \node at (4,-2) {$\bullet$}; \node at (6,-2) {$\bullet$};
   \node at (8,-2) {$\bullet$}; \node at (10,-2) {$\bullet$};
   \node at (12,-2) {$\bullet$}; \node at (14,-2) {$\bullet$};
\end{tikzpicture}
\caption{Corridors tangent to the scatterers at $0$ and $\xi = (3,2)$}
\label{fig:corridors}
\end{center}
\end{figure}

\begin{lemma}\label{lem:width}
 If $\scat = 0$ and $\xi = (p,q) \in \Z^2$ is expressed in lowest terms, then
$$
 d_0(\xi) = \tilde d_0(\xi) = \frac{1}{|\xi|}.
$$
For $\scat > 0$, the actual width of the corridor is then $d_{\scat}(\xi) =
\tilde d_{\scat}(\xi) = \max\{0,  |\xi|^{-1} - 2\scat\}$.
\end{lemma}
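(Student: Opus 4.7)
The plan is to reduce everything to the spacing of lines parallel to $\xi$ that pass through lattice points. First I would parametrise such lines as $\ell_c = \{(x,y)\in\R^2 : qx - py = c\}$, $c\in\R$, noting that the perpendicular distance between $\ell_{c_1}$ and $\ell_{c_2}$ is $|c_1-c_2|/|\xi|$. The crucial observation is that $\ell_c$ contains a lattice point $(a,b)\in\Z^2$ iff $c = qa-pb$; since $\gcd(p,q)=1$, B\'ezout yields $\{qa-pb : (a,b)\in\Z^2\} = \Z$, so the lines through lattice points parallel to $\xi$ are exactly $\{\ell_k\}_{k\in\Z}$, and consecutive ones are at perpendicular distance $1/|\xi|$.

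For the case $\scat = 0$, both $0$ and $\xi$ sit on $\ell_0$, while $\ell_{\pm 1}$ are the nearest parallel lattice lines, at perpendicular distance $1/|\xi|$. The open strips between $\ell_0$ and $\ell_{\pm 1}$ are free of lattice points and constitute the two corridors emanating from the (degenerate) scatterers at $0$ and $\xi$, so $d_0(\xi) = \tilde d_0(\xi) = 1/|\xi|$.

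For $\scat > 0$, both $O_0$ and $O_\xi$ are centred on $\ell_0$ with radius $\scat$, so their common tangent on either side is parallel to $\xi$ at perpendicular distance $\scat$ from $\ell_0$; likewise the opposite bounding line of each corridor is a common tangent to the scatterers sitting on $\ell_{\pm 1}$, at perpendicular distance $\scat$ from that line. This shaves $\scat$ off each side of the width-$1/|\xi|$ strip between $\ell_0$ and $\ell_{\pm 1}$, giving corridor width $1/|\xi| - 2\scat$ when positive and $0$ otherwise, which is the stated $\max$.

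The only point genuinely requiring care, which I view as the main obstacle, is to verify that no scatterer on a more distant lattice line $\ell_k$ with $|k|\geq 2$ can protrude into the nominal corridor between $\ell_0$ and $\ell_{\pm 1}$. I would handle this by noting that any such scatterer lies within perpendicular distance $\scat$ of $\ell_k$, hence at distance at least $|k|/|\xi|-\scat \geq 2/|\xi|-\scat$ from $\ell_0$; for it to reach the corridor would force $\scat > 1/|\xi|$, a regime where already $1/|\xi|-2\scat<0$ and the corridor has collapsed anyway. Symmetry across $\ell_0$ then gives the equality of the two corridor widths and finishes the proof.
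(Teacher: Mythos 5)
Your proposal is correct, and it takes a genuinely different route from the paper. The paper fixes the sector $p\geq q>0$, writes $q/p$ as a continued fraction, identifies the two lattice points $\xi'$ and $\xi''$ on the opposite sides of $L$ as best rational approximations, and computes vertical distances to $L$ using the determinant identities $q'p-p'q=\pm1$, finally projecting onto the normal of $\xi$. You instead observe directly that the family of lines parallel to $\xi$ through lattice points is $\{\ell_k : qx-py=k\}_{k\in\Z}$ (by B\'ezout, using $\gcd(p,q)=1$), equally spaced at perpendicular distance $1/|\xi|$, and then trim $\scat$ off each side. Your argument is cleaner and more symmetric -- it avoids the case distinction between $(0,\pm1), (\pm1,0)$ and general $\xi$, and treats $d_0$ and $\tilde d_0$ simultaneously. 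What the paper's continued-fraction computation buys in return is the explicit identification of the neighbouring lattice points $\xi'$ and $\xi''$ as preceding convergents of $p/q$, which Remark~2.2 records and which the rest of the paper relies on repeatedly (e.g., the bound $|\xi'|\leq|\xi|$ and the parametrisation of $\ks$ as $\xi'+N\xi$). So the paper's extra machinery is not wasted -- it is setting up the combinatorial labelling of corridor exits -- but for the width statement alone your argument is the more economical one. Two small points: your claim that a scatterer on $\ell_k$, $|k|\geq 2$, ``reaching the corridor would force $\scat>1/|\xi|$'' is actually weaker than what is true (comparing to the far corridor boundary at distance $1/|\xi|-\scat$ shows it is outright impossible for any $\scat$), but the weaker statement already suffices; and it would be worth stating explicitly that the lattice lines $\ell_k$ exhaust all lattice points, so no other scatterer can intrude.
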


\begin{remark}\label{rem:width}
 Let us call these two corridors in the direction $\xi$ the $\xi$-corridors.
They open up only when $\scat < d_0(\xi)/2 = \tilde d_0(\xi)/2$.
For $\scat = 0$, the common boundary (called $\xi$-boundary) of the two
$\xi$-corridors is the line through $0$ and $\xi$.
The other boundaries are lines parallel to the $\xi$-boundary, going through
lattice points
that are called $\xi'$ and $\xi''$ in the below proof.
For $\xi = (p,q)$ (with $\gcd(p,q) = 1$), these points $\xi' = (p',q')$,
$\xi'' = (p'',q'')$
are uniquely determined by $\xi$ in the sense that $p'/q'$ and $p''/q''$ are
convergents preceding $p/q$
in the continued fraction expansion of $p/q$. In particular $|\xi'| , |\xi''|
\leq |\xi|$.
In the sequel, we usually only need one of these two $\xi$-corridors, and we
take the one with $\xi'$
in its other boundary.
\end{remark}

\begin{proof}
 If $(p,q) = (0, \pm 1)$ or $(\pm 1, 0)$, then clearly $d_0(\xi) = \tilde d_0(\xi) = 1$,
 so we can assume without loss of generality that $p \geq q > 0$.
 Let $L$ be the arc connecting $(0,0)$ to $(p,q)$.
 The corridors associated to $\xi$ intersect $[0,p] \times [0,q]$ in
 diagonal strips on either side of $L$.

 Let $\frac{q}{p} = [0;a_1, \dots, a_n=a]$ be the standard continued fraction
 expansion with $a \geq 1$,
 and the previous two convergents are denoted by
 $q'/p'$ and $q''/p''$, say $q''/p'' < q/p <q'/p'$ (the other inequality goes analogously).
 Therefore $q'p-qp' = 1$ and $q''p'-q'p'' = -1$.
 Also
 $$
 \frac{(a-1)q' + q''}{(a-1)p'+p''} < \frac{q}{p} < \frac{q'}{p'}
 $$
 are the best rational approximations of $q/p$, belonging to lattice points
 $\xi'$
 above $L$ and $\xi''$ below $L$.
 The vertical distance between $\xi'$ and the arc $L$
is $|q'-p'\frac{q}{p}| = \frac1p|q'p-p'q| = \frac1p$.
The vertical distance between $L$ and $\xi''$ is
\begin{eqnarray*}
 ((a-1)p'+p'')\frac{q}{p} - ((a-1)q'+q'') &=& \frac1p ( (a-1)(qp'-q'p) +
 qp''-q''p )\\
 &=& \frac1p ( 1-a + ( aq'+q'')p'' - (ap'+p'')q'' ) \\
 &=& \frac1p ( 1-a + a(q'p'' - q''p')) = \frac1p.
\end{eqnarray*}
The corridor's diameter is perpendicular to $\xi$, so $d_0(\xi)$ is computed
from this vertical distance
as the inner product of the vector $(0,1/p)^T$ and the vector $\xi = (p,q)^T$
rotated over $90^\circ$:
$$
\frac{1}{\sqrt{p^2+q^2}} \left\langle \begin{pmatrix} 0 \\ 1/p \end{pmatrix}
\ , \
 \begin{pmatrix} -q \\ p \end{pmatrix} \right\rangle =
 \frac{1}{\sqrt{p^2+q^2}} = \frac{1}{|\xi|}.
$$
The computation for $\tilde d_0(\xi) = |\xi|^{-1}$ is the same.
\end{proof}

\subsection{Singularities of the billiard map}\label{sec:singu}
In the coordinates $(\theta,\phi,\ell) \in \S^1 \times [-\frac\pi2,
\frac\pi2] \times \Z^2$ (or $\times \Z$ if it is a Lorentz tube),
the size of the scatterers $\scat$ doesn't appear, but it comes back in the
formula of the billiard map $T_{\scat}$
and in its hyperbolicity. Also the curvature of the scatterers is $\cK \equiv
1/\scat$.
We recall some notation from the Chernov \& Markarian book \cite{CM} (going back to the work of Sina\u{\i}),
bearing in mind that we have to redo several of their estimates to track the precise dependence on $\scat$.
The phase space is
$\widehat\cM =  \cM \times \Z^2 = \bigcup_{\ell \in \Z^2} \cM_\ell$, where each
$\cM_\ell$ is a copy of the cylinder
$\S^1 \times [-\frac\pi2, \frac\pi2]$, see Figure~\ref{fig:phasespace}.

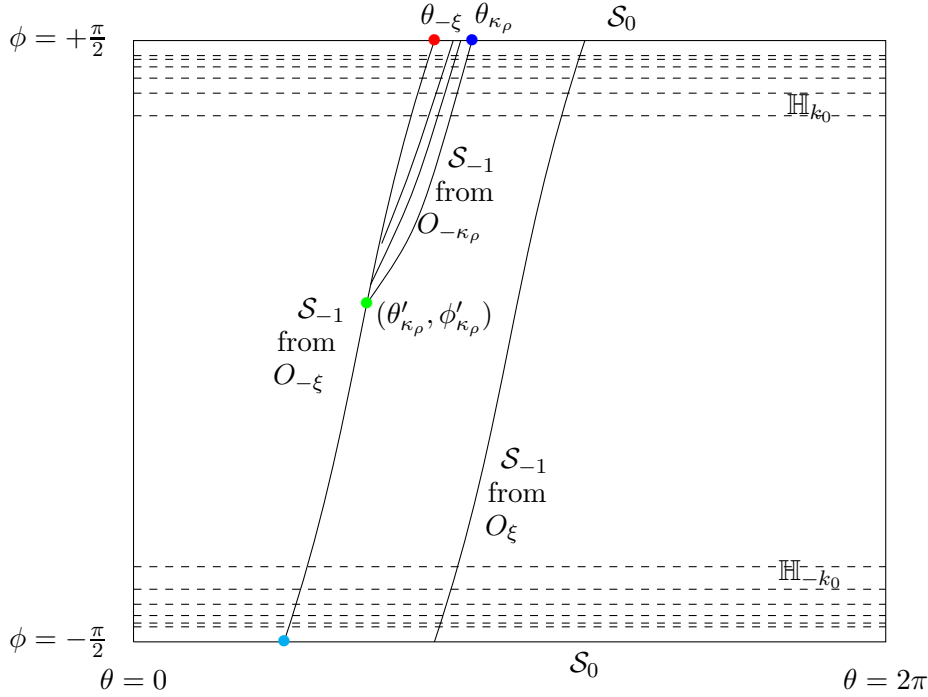
\begin{figure}[ht]
\begin{center}
\begin{tikzpicture}[scale=1]
\draw[-] (0,0) -- (10,0) -- (10,8) -- (0,8) -- (0,0);
\node at (-1,0) {\small $\phi = -\frac\pi2$}; \node at (-1,8) {\small $\phi = +\frac\pi2$};
\node at (0,-0.5) {\small $\theta = 0$}; \node at (10,-0.5) {\small $\theta =
2\pi$};
\node at (6.5,8.3) {\small $\cS_0$}; \node at (6,-0.3) {\small $\cS_0$};
\draw[-] (4,8) .. controls (3, 5) and (3,3) .. (2,0);
\node at (2.5,4.4) {\small $\cS_{-1}$};\node at (2.3,4) {\small from};\node
at (2.2,3.5) {\small $O_{-\xi}$};
\draw[-] (4,0) .. controls (5, 3) and (5,5) .. (6,8);
\node at (5.2,2.4) {\small $\cS_{-1}$};\node at (5.1,2) {\small from};\node
at (4.9,1.5) {\small $O_{\xi}$};
\draw[-] (4.5,8) .. controls (3.8,5.5) .. (3.1,4.5);
\node at (4.5,6.4) {\small $\cS_{-1}$};\node at (4.4,6) {\small from};\node
at (4.2,5.5) {\small $O_{-\ks}$};
\draw[-] (4.35,8) .. controls (3.75,6) .. (3.15,4.75);
\draw[-] (4.25,8) .. controls (3.7,6.3) .. (3.3,5.3);
\draw[-, dashed] (0,1) -- (10,1); \draw[-, dashed] (0,0.7) -- (10,0.7);
\draw[-, dashed] (0,0.5) -- (10,0.5); \draw[-, dashed] (0,0.35) --
(10,0.35);
\draw[-, dashed] (0,0.25) -- (10,0.25); \draw[-, dashed] (0,0.2) --
(10,0.2);
\draw[-, dashed] (0,7) -- (10,7); \draw[-, dashed] (0,7.3) -- (10,7.3);
\draw[-, dashed] (0,7.5) -- (10,7.5); \draw[-, dashed] (0,7.65) --
(10,7.65);
\draw[-, dashed] (0,7.75) -- (10,7.75); \draw[-, dashed] (0,7.8) --
(10,7.8);
\node at (9,7.1) {\small $\bH_{k_0}$}; \node at (9,0.9) {\small
$\bH_{-k_0}$};
\node at (4.1,8.3) {\small $\theta_{-\xi}$}; \node[color=red] at (4,8) {\small
$\bullet$};
\node at (4.8,8.3) {\small $\theta_{\ks}$}; \node[color=blue] at (4.5,8) {\small
$\bullet$};
\node at (4,4.3) {\small $(\theta'_{\ks}, \phi'_{\ks})$}; \node[color=green] at
(3.1,4.5) {\small $\bullet$};
 \node[color=cyan] at (2,0) {\small $\bullet$};
\end{tikzpicture}
\caption{The parameter subset $\cM_0$ with singularity lines and $\ks =
\xi'-M\xi$.}
\label{fig:phasespace}
\end{center}
\end{figure}

Let $\cS_0 = \{ \phi = \pm\frac\pi2 \}$ be the discontinuity of the billiard
map corresponding to grazing collisions.
The forward and backward discontinuities are
$$
\cS_n = \cup_{i=0}^n T_{\scat}^{-i}(\cS_0)
\quad \text{ and } \quad
\cS_{-n} = \cup_{i=0}^n T_{\scat}^{i}(\cS_0),
$$
so that $T_{\scat}^n:\cM \setminus \cS_n \to \cM \setminus \cS_{-n}$ is a
diffeomorphism.
We line the curve $\cS_0$ with homogeneity strips $\bH_k$ bounded by
curves $|\pm\frac{\pi}{2}-\phi| = k^{-r_0}$ and $|\pm\frac{\pi}{2}-\phi| =
(k+1)^{-r_0}$, $k \geq k_0$,
for a fixed number $r_0 > 1$. The standard value is $r_0 = 2$, but as
distortion results and some other estimates
improve when $r_0$ is larger, we choose the optimal value of $r_0$ later.

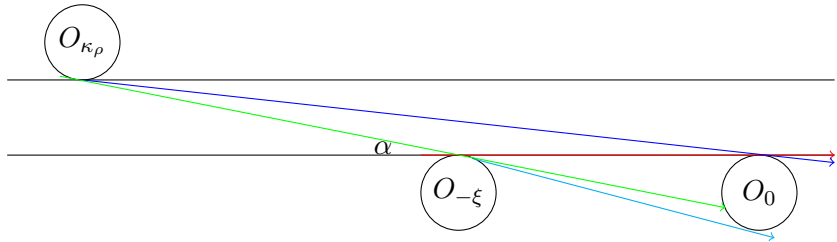
\begin{figure}[ht]
\begin{center}
\begin{tikzpicture}[scale=1]
\draw (10,0.5) circle (0.5); \node at (10,0.5) {\small $O_0$};
\draw (6,0.5) circle (0.5); \node at (6,0.5) {\small $O_{-\xi}$};
\draw (1,2.5) circle (0.5); \node at (1,2.5) {\small $O_{\ks}$};
\draw[-] (0,1) -- (11,1);
\draw[-] (0,2) -- (11,2);
\draw[->, draw=red] (5.5,1) -- (11,1);
\draw[->, draw=cyan] (6.2,0.95) -- (10.2,-0.1);
\draw[->, draw=blue] (1,2) -- (11,0.9);
\draw[->, draw=green] (0.7,2.05) -- (9.55,0.3);
\node at (5,1.1) {\small $\alpha$};
\end{tikzpicture}
\caption{A corridor collision map from $O_{-\xi}$ and $O_{-\ks}$ to
$O_0$.}
\label{fig:corridor4}
\end{center}
\end{figure}

The set $\cS_{-1}$ consists of multiple curves inside $\cM_0$, one for each
scatterer
from which a particle can reach $O_0$ in the next collision.
In Figure~\ref{fig:phasespace} we consider the corridor in the direction of
$\xi \in \Z^2$,
and drew the parts of $\cS_{-1}$ coming from scatterers $O_\xi$, $O_{-\xi}$ and $O_{-\ks}$
for some scatterer on the other side of this corridor.

\begin{lemma}\label{lem:S-kappa}
 For the $\xi$-corridor, let $(\theta_{-\xi}, \frac{\pi}{2}) \in \cM_0$ be the point
 of intersection of $\cS_0$ and the part of
 $\cS_{-1}$ associated to the scatterer $O_{-\xi}$, and
 $(\theta_{\ks}, \frac{\pi}{2}) \in \cM_0$, $\ks = \xi'-M\xi$,
 be the point of intersection of $\cS_0$ and the part of
 $\cS_{-1}$ associated to the scatterer $O_{\ks} = O_{\xi'-M\xi}$ at the other side
 (i.e., the $\xi'$-boundary) of the $\xi$-corridor, see Figure~\ref{fig:corridor4}.
 Let $(\theta'_{\ks}, \phi'_{\ks})$ be the intersection of the parts of
 $\cS_{-1}$ associated to
 the scatterers $O_{-\xi}$ and the scatterer $O_{\ks}$, see Figure~\ref{fig:phasespace}.
 Then
 $$
 |\theta_{-\xi} - \theta_{\ks}| = \frac{d_\scat(\xi)}{|\xi| M}
 \left(1+\cO\left(\frac{\scat}{|\xi| M} \right) \right)
 $$
 and
 $$
 \frac{\pi}{2}-\phi'_{\ks} = \sqrt{ \frac{2 d_{\scat}(\xi)}{\scat M} }
 \left( 1-\cO\left(\frac{\scat}{|\xi|} - \frac{1}{M} +
 \frac{\sqrt{d_{\scat}(\xi) \scat}}{|\xi| \sqrt{M} }\right)\right).
 $$
\end{lemma}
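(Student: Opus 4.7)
The plan is to work in rotated coordinates in which $\xi=(|\xi|,0)$, so that the chosen $\xi$-corridor is the horizontal strip $\scat\le y\le 1/|\xi|-\scat$, tangent from below to $O_0,O_{\pm\xi},\dots$ and from above to $O_{\xi'+k\xi}$, $k\in\Z$. In these coordinates $O_{-\xi}$ has centre $(-|\xi|,0)$ and $O_{\ks}=O_{\xi'-M\xi}$ has centre $(a,b)=(x_{\xi'}-M|\xi|,1/|\xi|)$, where $x_{\xi'}=\xi\cdot\xi'/|\xi|$ and $|x_{\xi'}|\le|\xi|$. The geometric meaning of the three quantities is as follows: $\theta_{-\xi}$ (respectively $\theta_{\ks}$) is the value of $\theta$ at which the tangent line to $\partial O_0$ at parameter $\theta$ is also tangent to $O_{-\xi}$ (resp.\ $O_{\ks}$) on the corridor side, and $(\theta'_{\ks},\phi'_{\ks})\in\cM_0$ is the (unique admissible) phase-space point whose backward trajectory lies on the common internal tangent line of $O_{-\xi}$ and $O_{\ks}$ taken on the corridor side.

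For the first formula I write the tangent line to $\partial O_0$ at parameter $\theta$ in the form $x\sin\theta+y\cos\theta=\scat$. For $O_{-\xi}$ the corridor-side tangent is the external tangent $y=\scat$, so $\theta_{-\xi}=0$ exactly. For $O_{\ks}$, which sits on the opposite side of the corridor, the corridor-side tangent is the internal tangent characterised by the condition that the centre $(a,b)$ lies above the line at distance $\scat$, namely
\[
(x_{\xi'}-M|\xi|)\sin\theta_{\ks}+(1/|\xi|)\cos\theta_{\ks}=2\scat.
\]
For small $\theta_{\ks}$ this yields the leading-order solution $\theta_{\ks}=d_{\scat}(\xi)/(M|\xi|-x_{\xi'})$; expanding the denominator around $M|\xi|$ and including the cubic correction in the sine expansion produces the claimed asymptotic.

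For the second formula I first determine the common internal tangent of $O_{-\xi}$ and $O_\ks$ on the corridor side. Parametrising its unit normal as $l=(\sin\gamma',\cos\gamma')$ with $l_2>0$, and imposing the signed tangency conditions $l\cdot(-|\xi|,0)-c=-\scat$ and $l\cdot(a,b)-c=\scat$, subtraction yields $(a+|\xi|)\sin\gamma'+b\cos\gamma'=2\scat$, whose leading-order solution is $\gamma'=d_\scat(\xi)/((M-1)|\xi|-x_{\xi'})$. The tangent line itself has equation $x\sin\gamma'+y\cos\gamma'=\scat-|\xi|\sin\gamma'$, and intersecting with $\partial O_0$ gives $\cos(\theta'_{\ks}-\gamma')=1-|\xi|\sin\gamma'/\scat$. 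Setting $\delta=\theta'_{\ks}-\gamma'$ and using the quadratic approximation of $\cos$, this becomes $\delta^2/2=|\xi|\sin\gamma'/\scat+\cO(\delta^4)$, so that $|\delta|=\sqrt{2 d_\scat(\xi)/(M\scat)}(1+\cO(\cdot))$. Matching the direction $(\sin(\theta'_{\ks}-\phi'_{\ks}),\cos(\theta'_{\ks}-\phi'_{\ks}))$ of the backward trajectory from $(\theta'_{\ks},\phi'_{\ks})$ to the tangent-line direction $(-\cos\gamma',\sin\gamma')$ forces $\theta'_{\ks}-\phi'_{\ks}=-\pi/2+\gamma'$, i.e.\ $\phi'_{\ks}=\delta+\pi/2$; the constraint $\phi'_{\ks}\in[-\pi/2,\pi/2]$ selects the negative root $\delta=-|\delta|$, and therefore $\pi/2-\phi'_{\ks}=\sqrt{2 d_\scat(\xi)/(M\scat)}(1+\cO(\cdot))$.

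The main obstacle is controlling the error terms to the precision stated in the lemma. In the first formula this amounts to combining the shift $x_{\xi'}/(M|\xi|)$ in the denominator of $\theta_{\ks}$ with the cubic correction in the sine expansion, of relative size $\theta_{\ks}^2\asymp d_\scat(\xi)^2/(M|\xi|)^2$. In the second formula the error receives three distinct contributions: the $\scat/|\xi|$ piece from the higher-order terms in $\sin\gamma'$ combined with the identity $d_\scat(\xi)=1/|\xi|-2\scat$; the $1/M$ piece from replacing $(M-1)|\xi|-x_{\xi'}$ by $M|\xi|$ in the denominator of $\gamma'$; and the $\sqrt{d_\scat(\xi)\scat}/(|\xi|\sqrt M)$ piece from the quartic error $\cO(\delta^4)$ in the cosine equation, once the square root is taken to recover $\delta$. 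Careful bookkeeping of these iterated expansions, together with correctly identifying the sign of $\delta$ and distinguishing the relevant common tangent (internal rather than external), is where most of the work will lie.
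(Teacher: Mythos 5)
Your approach is correct and essentially identical to the paper's: for the first asymptotic you find the common tangent of $O_0$ and $O_{\ks}$, and for the second you find the common tangent of $O_{-\xi}$ and $O_\ks$, then compute where it hits $\partial O_0$ and what outgoing angle it forces. Working in the rotated frame where the corridor is horizontal is a cleaner way of executing the same computation the paper does directly in Figure~\ref{fig:lemkappa}. There is one cosmetic difference worth noting: the paper's intermediate variable $\theta'_\ks$ is the \emph{unsigned} angle $\angle RO_0P$, which equals $|\delta|-\gamma'$ in your notation, so the paper's final step $\frac\pi2-\phi'_\ks=\alpha+\theta'_\ks$ (triangle-angle sum in $\triangle PO_0Q$) is the same as your $\frac\pi2-\phi'_\ks=-\delta$ after the $\gamma'$'s cancel. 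Your exact relation $\cos(\theta'_\ks-\gamma')=1-|\xi|\sin\gamma'/\scat$ is tidier than the paper's approximate version of the same identity.

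One caution on the error term in the first display, affecting your write-up and in fact the paper as well: the leading-order solution is $\theta_\ks=d_\scat(\xi)/(M|\xi|-x_{\xi'})$ with $x_{\xi'}=\xi\cdot\xi'/|\xi|$, and since $|\xi'|$ can be comparable to $|\xi|$ the shift $x_{\xi'}/(M|\xi|)$ is generically of size $1/M$, which is larger than the stated $\cO(\scat/(|\xi|M))$. You flag this shift as one of the two error contributions to be bookkept but then assert it produces the claimed asymptotic; if you carried it out you would find the relative error is $\cO(1/M)$, not $\cO(\scat/(|\xi|M))$. The paper's proof of this step is likewise an unproved assertion about the slope. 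Since the lemma is only ever used through its leading order (in the growth estimates and in Lemma~\ref{lem:p6}), this does not propagate, but you should not claim that the stated precision follows from your expansion without resolving this point.
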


\begin{proof}
The angle $\theta_{-\xi}$ refers to the point where the common tangent line of $O_0$ and $O_{-\xi}$ touches $O_0$.
For the value $\theta_{\ks}$, $\ks = \xi'-M\xi$, we take the common tangent line to $O_0$
and $O_{\ks}$ which has slope $\frac{d_{\scat}(\xi)}{M |\xi|}
\left(1+\cO(\frac{\scat}{|\xi| M} ) \right)$.
This is then also $|\theta_{-\xi}-\theta_{\ks}|$.

\begin{figure}[ht]
\begin{center}
\begin{tikzpicture}[scale=1.]
\draw[-] (0,4) -- (12,4);
\draw(10,2) circle (2); \node at (10.35,2) {\small $O_0$};
\draw(2,2) circle (2); \node at (2.5,2) {\small $O_{-\xi}$};
\node at  (2,2) {\small $\bullet$};\node at  (10,2) {\small $\bullet$};
\draw[-] (10,2) -- (10,4); \node at (10,4) {\small $\bullet$}; \node at
(10,4.25) {\small $R$};
\node at (10,3) {\small $\bullet$}; \node at (10.3,2.9) {\small $Q$};
\node at (8.4,3.2) {\small $\bullet$}; \node at (8,2.9) {\small $P$};
\draw[-, draw = blue] (0,4.3) -- (10,3); \draw[-] (10,3.2) -- (8.4,3.2);
\node at (10,3.2) {\small $\bullet$}; \node at (10.3,3.4) {\small $Q'$};
\draw[-] (10,2) -- (7.1,4.2); \draw[->, draw = blue] (8.4,3.2) -- (7.4,6);
\node at (5,3.8) {\small $\alpha$}; \node at (9.7,2.5) {\small $-\theta'_{\ks}$};
\node at (7.4,3.5) {\small $-\phi'_{\ks}$}; \node at (7.95,3.8) {\small $\phi'_{\ks}$};
\draw[->] (5,5.2) -- (10,5.2);  \draw[->] (5,5.2) -- (2,5.2); \node at
(6,5.5) {\small $|\xi|$};
\draw[->] (5,4.5) -- (8.4,4.5);  \draw[->] (5,4.5) -- (2,4.5); \node at
(5.6,4.8) {\small $|\xi|- \scat \sin \theta'_{\ks}$};
\end{tikzpicture}
\caption{Illustration of the proof of Lemma~\ref{lem:S-kappa}}
\label{fig:lemkappa}
\end{center}
\end{figure}
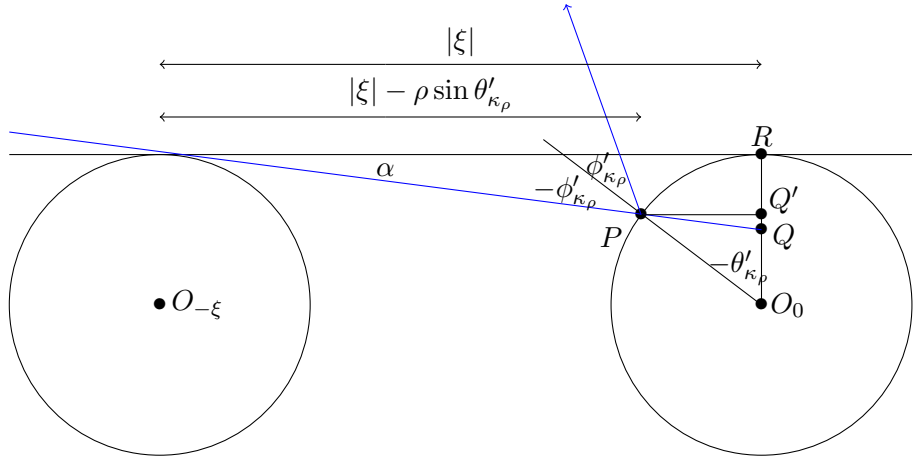

Now for the other endpoint of this piece of $\cS_{-1}$, consider
the common tangent line to $O_{-\xi}$ and $O_{\ks}$
which has slope $\tan \alpha := \frac{d_{\scat}(\xi)}{(M-1) |\xi|}
(1+\cO(\frac{\scat}{|\xi|(M-1)}))$, hitting the scatterer $O_0$ in point $P$
and when extended inside $O_0$ hits the vertical line through the center $O_0$
in point $Q$.
Let also $R$ be the tangent point of $O_0$ to the corridor, and $Q'$ is the
point on $O_0R$ at the same horizontal height as $P$, see Figure~\ref{fig:lemkappa}.
Then $|RQ| = |\xi|\sin \alpha$
whereas $|O_0Q'| = \scat - (|\xi| - \scat \sin \theta'_{\ks}) \sin \alpha = \scat \cos \theta'_{\ks}$.
The latter gives
$$
\theta'_{\ks} = \sqrt{ \frac{2|\xi|}{\scat} \sin \alpha
\left(1-\cO(\frac{\scat}{|\xi|} \sin \theta)\right)  }
=  \sqrt{ \frac{2d_\scat(\xi)}{\scat M} \left(1-\cO(\frac{\scat}{|\xi|} -
\frac{1}{M} )\right)  }.
$$
The triangle $\triangle PO_0Q$ has angles $\phi'_{\ks}$, $\alpha+\frac{\pi}{2}$ and
$\theta'_{\ks}$, which add up to $\pi$.
Hence
\begin{equation}\label{eq:tildephi}
\frac{\pi}{2} - \phi'_{\ks} = \alpha+\theta'_{\ks}
=   \sqrt{ \frac{2d_\scat(\xi)}{\scat M} }
\left( 1-\cO\left(\frac{\scat}{|\xi|} - \frac{1}{M} +
\frac{\sqrt{d_{\scat}(\xi) \scat}}{|\xi| \sqrt{M} }\right)\right)
\end{equation}
as claimed.
\end{proof}

\subsection{Hyperbolicity of the Lorentz gas with small scatterers}\label{sec-Hyperb}

The derivative $DT_{\scat} : \cT\cM \to  \cT\cM$ preserves the unstable cone field
\begin{equation}\label{eq:unstable_cone}
\cC^u_x = \left\{ (d\theta, d\phi) \in \cT_x\cM : 1 \leq \frac{1}{2\pi}
\frac{d\phi}{d\theta}
\leq 1 + \frac{\scat}{\tau_{\min}} \right\}.
\end{equation}
This is \cite[page 74]{CM} in the coordinates $\theta = r/2\pi \scat$, and we
can sharpen this cone
by replacing $\tau_{\min}$ by $\tau(x)$, the flight time at $x$ before the
next collision.
The derivative of the inverse of the billiard map preserves the stable cone
field
\begin{equation}\label{eq:stable_cone}
\cC^s_x = \{ (d\theta, d\phi) \in \cT_x\cM : -1-\frac{\scat}{\tau_{\min}}
\leq \frac{1}{2\pi} \frac{d\phi}{d\theta} \leq -1 \}.
\end{equation}
Clearly, these cone-fields are transversal uniformly over $\cM$,
and $\cS_n$ is a unstable (or stable) curve if $n > 0$ (or $n < 0$).

In the billiard literature it is common to use a pseudo-norm, the $p$-norm for unstable vectors, defined
as $\| dx \|_p = \cos \phi \, dr$. When restricted to the unstable cone, the p-norm is non-degenerate.
With the notation $\cR(x) = \frac{2}{\scat \cos \phi}$, the expansion/contraction factor $\Lambda$ on unstable vectors in the p-norm satisfies
$$
\Lambda \geq 1 + \tau(x) \cR(x) \geq 1 + \tau_{\min} \cR_{\min}  = 1 +
\frac{2\tau_{\min}}{\scat}.
$$
This proves uniform hyperbolicity of the billiard map.

In our coordinates the $p$-norm can be also expressed as  $\| dx \|_p = 2\pi\, \scat \, \cos \phi \, d\theta$, and it is related to the standard  Euclidean
norm as
$$
\| dx \| = \frac{ \sqrt{1+(\frac{d\phi}{dr})^2} }{\cos \phi}  \| dx\|_p
= \frac{ \sqrt{4\pi^2 \scat^2 +(\frac{d\phi}{d\theta})^2} }{2\pi \scat \cos
\phi}  \| dx\|_p.
$$
The expansion of $DT_{\scat}$ of unstable vectors is uniform in the $p$-norm, see
\cite[Formula (3.40)]{CM}:
$$
\frac{ \| DT_{\scat}(dx)\|_p} {\|dx\|_p} = 1+\frac{\tau(x)}{\cos \phi} (\cK +
\frac{d\phi}{dr})
= \frac{\tau(x)}{\scat \cos \phi} \left(1+\frac{1}{2\pi}
\frac{d\phi}{d\theta} + \frac{\scat \cos \phi}{\tau(x)}\right).
$$
Expressed in Euclidean norm, this gives, for $DT_{\scat}(dx) = (d\theta_1,
d\phi_1)$,
\begin{equation}\label{eq:expansion}
\frac{\|DT_{\scat}(dx)\|}{\| dx \|} =
\sqrt{\frac{4\pi^2\scat^2 + (\frac{d\phi_1}{d\theta_1})^2}{4\pi^2\scat^2 +
(\frac{d\phi}{d\theta})^2} }
\ \frac{\tau(x)}{\scat \cos \phi_1}\ \left(1+\frac{1}{2\pi}
\frac{d\phi}{d\theta} + \frac{\scat \cos \phi}{\tau(x)} \right).
\end{equation}
For later use, if $T_{\scat}(x)$ is in the homogeneity strip $\bH_k$, then $\cos
\phi_1 \approx k^{-r_0}$.

\section{Growth lemmas}\label{sec:growth}

As already mentioned in the introduction, the main line of our argument uses perturbed transfer operators acting on the Banach spaces constructed in  \cite{DZ11} and \cite{DZ14}. These works, as essentially all other methods studying statistical properties of hyperbolic billiards, rely on appropriately formulated growth lemmas, which quantify the competition of the two main dynamical effects, singularities and expansion, in these systems. The constructions of \cite{DZ11} and \cite{DZ14} involve several exponents, which thus are present in our setting, too. Additionally, we have to introduce some further exponents as we study perturbed transfer operators. Before stating the growth lemmas, here we include a table summarizing the role and the interrelation of these exponents.
Essentially, we use the same notation as in~\cite{DZ11}
except for some subscripts ${}_0$, and in fact some of the constants reduce to their value in~\cite{DZ11} if $r_0=2$.
\begin{equation}\label{eq:constants}
 \begin{cases}
  r_0 \geq 2& \text{is the exponent of the homogeneity strips:}\\
  & \qquad \bH_{\pm k} = \{ |\pm \frac{\pi}{2} - \varphi|
     \in [(k+1)^{-r_0}, k^{-r_0})\},\\[1mm]
  0<\nu<\frac12-\frac{1}{2r_0} & \text{the exponent of $\ks$ in the continuity estimate for the}\\
  & \text{transfer operator,}\\[1mm]
  \vs_0=1-\frac{2r_0\nu}{r_0-1} & \text{upper bound on }\vs \text{ in the Jensenized growth lemma, see \eqref{eq:WnuJensen},} \\[1mm]
  \alpha_0 < \min\left(\frac1{2(r_0+1)},\vs_0\right)    & \text{needed for \cite[Lemma 3.7]{DZ11}
  for general } r_0,\\[1mm]
  s_0 = \frac{1-\alpha_0(r_0+1)}{2r_0} > 0
  & \text{used in Lemma~\ref{lemma-bw},}\\[1mm]
  0 < q_0 < p_0 < \frac{1}{r_0+1} & \text{cf.\
  Lemma~\ref{lem:Distortion}},\\[1mm]
  0 < \beta_0 < \min\{\frac{\alpha_0}{2}, p_0-q_0\}. &
 \end{cases}
\end{equation}

We use a class $\cW^s$ of {\em admissible stable leaves} defined as $C^2$
leaves $W$
in the phase space such that all its tangent lines are in the stable cone
bundle,
their second derivative is uniformly bounded, $W$ is contained in a single
homogeneity strip, $\ks(x)$ is constant on $W$
and there is a $\scat$-dependent upper bound on $|W|$, namely
\begin{equation} \label{eq-size W}
\sup_{W \in \cW^s} |W| =\delta_0 := c\scat^{\nu},
\end{equation}
where the small $c > 0$, to be fixed below, is independent of $\scat$.

Let $W \in \cW^s$ be an admissible stable leaf.
The preimage $T_{\scat}^{-1}(W)$ is cut by the discontinuity lines $\cS_1$
and boundaries of homogeneity strips into at most countably many pieces
$V_i$. Note that we may have to cut the pieces $V_i$ further
into curves $W_i$ of length $\leq \delta_0$.

\subsection{The growth lemma in terms of $V_i$}

The particle  can reach the scatterer $O_0$ at the origin from corridors in all directions, indexed by $(\xi, \xi') \in \cXi $, see
Figure~\ref{fig:corridor4}.
If the previous scatterer is $\pm \xi$ itself, we call this a trajectory from
the $\xi$-boundary;
if the previous scatterer is at lattice point $\xi'-M\xi$, the trajectory
comes in from the $\xi'$-boundary,
see Remark~\ref{rem:width}.
To each such scatterer and homogeneity strip $\bH_k$ belongs at most one $V_i$, and the contraction $|T_{\scat}V_i|/|V_i|$ is governed by \eqref{eq:expansion}, where the distortion
$T_{\scat}:V_i \to T_{\scat}V_i$ is uniformly bounded, see Appendix~\ref{sec:distortion}.

\begin{prop}\label{prop:growthscaled}
Assume $0 \leq \nu < \frac12-\frac{1}{2r_0}$.
Then there is a constant $C > 0$, uniform in $\scat,\nu$ and $r_0$ such
that
$$
\sum_i |\ks(V_i)|^{\nu} \ \frac{|T_{\scat}V_i|}{|V_i|}
\leq  C \left( \scat  +
 \scat^{-\nu} \, \delta_0 \right)
$$
for every  stable leaf $W \in \cW^s$.
\end{prop}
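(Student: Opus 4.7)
The plan is to partition $T_\scat^{-1}(W)$ by the corridor data of the previous flight and bound the resulting sum slot-by-slot. I label each piece $V_i$ by a triple $(\xi, M, k)$: $\xi \in \Z^2 \setminus \{0\}$ is the primitive direction of the corridor (open only when $|\xi| \le 1/(2\scat)$ by Lemma~\ref{lem:width}), $M \ge 1$ counts the copies of $\xi$ traveled before hitting $O_0$, so that $|\ks(V_i)| \asymp M|\xi|$ and $\tau \asymp M|\xi|$, and $k \ge k_0$ indexes the homogeneity strip $\bH_k$ of the preimage; the boundary choice of Remark~\ref{rem:width} ($\xi$- vs.\ $\xi'$-boundary) contributes only a bounded multiplicative factor. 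By Lemma~\ref{lem:S-kappa}, each nonempty slot contains a single maximal stable arc of $T_\scat^{-1}(W)$, cut by the homogeneity boundaries and further subdivided into sub-arcs of length at most $\delta_0$. Feeding the stable-cone slope~\eqref{eq:stable_cone} into~\eqref{eq:expansion} yields the contraction estimate
$$\frac{|T_\scat V_i|}{|V_i|}\;\lesssim\;\frac{\scat}{M|\xi|}.$$

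Since the number of sub-arcs in slot $(\xi, M, k)$ is at most $1 + |V|_{(\xi,M,k)}/\delta_0$, where $|V|_{(\xi,M,k)}$ denotes the total preimage length in the slot, and $|V|_{(\xi,M,k)}\cdot(|T_\scat V_i|/|V_i|) = |T_\scat V|_{(\xi,M,k)}$ is the total image length of the slot, the target sum splits as
$$\sum_i |\ks(V_i)|^\nu \frac{|T_\scat V_i|}{|V_i|} \;\lesssim\; \underbrace{\sum_{\xi,M,k}(M|\xi|)^\nu\cdot\frac{\scat}{M|\xi|}}_{=:\,S_1} \;+\; \underbrace{\sum_{\xi,M,k}(M|\xi|)^\nu\cdot\frac{|T_\scat V|_{(\xi,M,k)}}{\delta_0}}_{=:\,S_2}.$$
For $S_1$, Lemma~\ref{lem:S-kappa} pins nonempty slots to $\pi/2 - \phi \asymp \sqrt{d_\scat(\xi)/(\scat M)}$, so via $\pi/2 - \phi \in [(k+1)^{-r_0}, k^{-r_0})$ one obtains the dyadic relation $M \asymp d_\scat(\xi)\, k^{2r_0}/\scat$; substituting this together with $d_\scat(\xi) \asymp |\xi|^{-1}$ from Lemma~\ref{lem:width}, the $k$-sum becomes a convergent geometric series (since $r_0 > 1$) and the remaining primitive-lattice sum over $|\xi| \le 1/(2\scat)$ is handled by the methods of Appendix~\ref{sec:corridors} and Lemma~\ref{lem:corridor_sum}, yielding $S_1 = O(\scat)$ precisely under the hypothesis $\nu < \tfrac12 - \tfrac1{2r_0}$. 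For $S_2$, the key observation is that $\sum_{\xi,M,k} |T_\scat V|_{(\xi,M,k)} \le |W| \le \delta_0$, so $S_2$ is a weighted average of $(M|\xi|)^\nu$ over occupied slots; combining this total-mass bound with the geometric constraint $|\xi| \le 1/(2\scat)$ and the $M|\xi|$-bound induced by the $\phi$-width estimate of Lemma~\ref{lem:S-kappa}, one gets $S_2 = O(\scat^{-\nu}\delta_0)$.

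The main obstacle is the corridor sum in $S_1$: the exponent condition $\nu < \tfrac12 - \tfrac1{2r_0}$ is precisely what allows the $k$-sum (controlled by $r_0$) and the combined $(\xi,M)$-sum (controlled by $\nu$) to converge simultaneously. A secondary subtlety in $S_2$ is that the $(M|\xi|)^\nu$ weight is largest in long corridors, where each slot carries only small image mass; the required balance between weight and mass again rests on the corridor-sum machinery and the explicit $\theta$- and $\phi$-width estimates of Lemmas~\ref{lem:width} and~\ref{lem:S-kappa}.
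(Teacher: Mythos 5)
Your slot decomposition by $(\xi, M, k)$ is the right starting point, but the core estimates fail. The contraction bound $|T_\scat V_i|/|V_i|\ll\scat/(M|\xi|)$ drops the essential factor $k^{-r_0}$: since $V_i\subset\bH_k$ is a stable curve, the relevant application of~\eqref{eq:expansion} is time-reversed — the expansion of $T_\scat^{-1}$ on $T_\scat V_i$ at $y=T_\scat(x)$ is $\approx\tau(x)/(\scat\cos\phi(x))$, with $\cos\phi(x)\approx k^{-r_0}$ because $x\in V_i\subset\bH_k$ and $\tau(x)\asymp M|\xi|$ — giving $|T_\scat V_i|/|V_i|\approx\scat k^{-r_0}/(M|\xi|)$. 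Moreover, there is no dyadic relation $M\asymp d_\scat(\xi)k^{2r_0}/\scat$: Lemma~\ref{lem:S-kappa} controls only the extreme angle $\phi'_{\ks}$, hence only the \emph{lower} endpoint $k_{\min}\asymp(\scat M/d_\scat(\xi))^{1/2r_0}$, and the preimage piece on $O_{\xi'-M\xi}$ spans \emph{all} homogeneity strips $k\geq k_{\min}$. Without the $k^{-r_0}$ factor that infinite $k$-sum is uncontrolled; with it one gets $\sum_{k\ge k_{\min}}k^{-r_0}\ll k_{\min}^{1-r_0}$, after which the $M$-sum converges precisely under $\nu<\tfrac12-\tfrac1{2r_0}$ — this is the paper's route.

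Your $S_2$ term is both out of scope and not actually bounded. Proposition~\ref{prop:growthscaled} concerns the pieces $V_i$, cut only by $\cS_1$ and homogeneity boundaries; the artificial subdivision into arcs of length $\le\delta_0$ produces the $W_i$ of Subsection~\ref{sec:growthW}, where the resulting bound~\eqref{eq:W} is genuinely weaker, so this term has no place here. Further, the total-mass bound $\sum|T_\scat V|\le|W|\le\delta_0$ cannot give $S_2=O(\scat^{-\nu}\delta_0)$: the weights $(M|\xi|)^\nu$ are unbounded over grazing flights, so a weighted average is not a priori finite. The true source of the $\scat^{-\nu}\delta_0$ term — which your proof never touches — is that $W$ subtends an arc of angular width $\asymp|W|$ on $\partial O_0$, so only corridors $\xi$ in a sector of angle $\ll|W|\le\delta_0$ can contribute; restricting the $\xi$-sum to this visible set $\cXi_W$ and invoking the sector-sum estimate Lemma~\ref{lem:sector_sum} is what produces the $\delta_0$-dependence. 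Summing over all opened corridors instead yields only $O(\scat^{-\nu})$, which is too weak to give the uniform $\theta_*<1$ of Remark~\ref{rem-usegr3}.
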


\begin{remark}\label{rem-usegr3}
(i) Since $|W| \leq \delta_0 \leq c \scat^{\nu}$, there is $\theta_* <
1$ such that
$$
\sum_{V_i} |\ks(V_i)|^{\nu} \  \frac{|T_{\scat}V_i|}{|V_i|}
\leq  3C (\scat +c)
\leq \theta_*,
$$
for $\scat$ sufficiently small, and $c$ chosen appropriately small.
In addition, we assume that
\begin{equation}\label{eq-delta1}
\delta_1 \in (0, \delta_0/2)
\ \text{ is such that } \ \theta_* e^{C_d \delta_1^{1/(r_0+1)} } =: \theta_1 < 1
\end{equation}
for distortion constant $C_d$ from Lemma~\ref{lem:Distortion};
\\[1mm]
(ii) As later we will need $\nu > \frac13$, we can take $r_0=5$
and $\nu = \frac38$.
\end{remark}

\begin{proof}
The homogeneous admissible preimage curves $T_{\scat}^{-1}W=\cup_i V_i$
    are obtained by partitioning according to
    \begin{itemize}
    \item incoming corridors $\xi$;
    \item for a fixed corridor $\xi$, the scatterer on which $V_i$ is located. Accordingly,
        $\ks(V_i)=M\xi-\xi'$ for some $M \in \N$, and the summation is over $M$;
    \item for a fixed scatterer, the homogeneity strip containing $V_i$, that is, $V_i\subset \bH_k$ for some $k$.
    \end{itemize}

If $W$ is on the scatterer $O_{0}$ and $V_i$ is on the
    scatterer
$O_{\xi'-M\xi}$, then both of these scatterers are tangent to the same
corridor. The trajectory makes and angle $\sim \frac{d_{\scat}(\xi)}{M
|\xi|}$ with the corridor and there is a lower bound on the collision angle given by
\eqref{eq:tildephi}. This puts restrictions on how $M$ is related to
 $k$; as reflected by allowed
intersections of homogeneity strips and $M$-cells on
Figure~\ref{fig:phasespace}.
In particular
\begin{equation}\label{eq:k}
k \geq C(\scat d_{\scat}(\xi)^{-1} M)^{\frac{1}{2r_0}}
\end{equation}
which determines the range of $k$ for $M$ fixed.

We sum over the
homogeneity strips for $\xi$ and $M$ fixed on the $\xi'$ boundary.
\begin{align*}
\sum_{V_i \in \cM_{\xi'-M\xi}} |\ks(V_i)|^{\nu} \
\frac{|T_{\scat}V_i|}{|V_i|}
&\ll \frac{\scat |\xi|^{\nu} M^\nu}{|\xi| M}
\sum_{k \geq (\max\{ C(\frac{\scat M}{d_{\scat}(\xi)} , 1\} )^{\frac{1}{2r_0}
}} \frac{1}{k^{r_0}} \\
&\ll \scat^{\frac{1}{2r_0}+\frac{1}{2}} |\xi|^{\nu-1}
d_{\scat}(\xi)^{\frac{1}{2}-\frac{1}{2r_0}}
M^{\nu-\frac{3}{2}+\frac{1}{2r_0} } \\
&\ll \scat^{\frac{1}{2r_0}+\frac{1}{2}}
|\xi|^{\nu-\frac{3}{2}+\frac{1}{2r_0}}
M^{\nu-\frac{3}{2}+\frac{1}{2r_0} },
\end{align*}
where we used that the exponent $\frac{1}{2}-\frac{1}{2r_0}$
of $d_{\scat}(\xi)$ is non-negative.
By our assumption that
$\nu<\frac{1}{2}-\frac{1}{2r_0}$,
this expression is summable over $M$, and therefore
the sum over the $\xi'$-boundary of the entire $\xi$-corridor is
\begin{eqnarray*}
\sum_{\text{corridor }\xi} |\ks(V_i)|^{\nu} \
\frac{|T_{\scat}V_i|}{|V_i|} &\ll& \scat^{\frac{1}{2}+\frac{1}{2r_0} }
|\xi|^{\nu-\frac{3}{2}+\frac{1}{2r_0} }.
\end{eqnarray*}
The sum over homogeneity strips for $\xi$ fixed on the $\xi$-boundary is no
different:
\begin{align*}
\sum_{V_i \in \cM_{-\xi}} |\ks(V_i)|^{\nu} \ \frac{|T_{\scat}V_i|}{|V_i|}
&\ll \frac{\scat |\xi|^{\nu} }{ |\xi| }
\sum_{k \geq 1} \frac{1}{k^{r_0}}
\ll \scat |\xi|^{\nu-1}.
\end{align*}

 Next we sum over all opened-up corridors, indexed by
all the ``visible'' lattice points inside a sector of angle
$|W|/\sqrt{1+4\pi^2}$,
because only trajectories from scatterers within such a narrow sector can hit $O_0$ at coordinates in $W$.
The ``visible'' corridors will be denoted by $\cXi_W$.
It can happen that a single corridor, or even a single scatterer in a corridor blocks the entire sector, and we reserve one term for $|\xi| \geq 1$ (which is the worst case because the contraction of $T_{\scat}$ is the weakest).
Apart from this corridor, and since we need an upper bound, we can replace we replace $|W|$
by a stable curve of length $\delta_0$,
and apply Lemma~\ref{lem:sector_sum} for $a = 1-\nu$ and $a = \frac{3}{2} - \nu - \frac{1}{2r_0}$.
This gives
\begin{align*}
\sum_{V_i}  |\ks(V_i)|^{\nu}  \  \frac{|T_{\scat}V_i|}{|V_i|}
&\ll \scat +
 \sum_{(\xi,\xi')\in\cXi_W} \scat |\xi|^{\nu-1}
+ \scat^{\frac{1}{2}+\frac{1}{2r_0}}
|\xi|^{\nu-\frac{3}{2}+\frac{1}{2r_0}} \\
&\ll \scat + \scat^{-\nu} \delta_0 + \scat^{1-\nu} \log(1/\scat)
+ \scat^{1-\nu} \delta_0^{-1} \\
&\qquad
+  \scat^{-\nu} \delta_0 + \scat^{1-\nu} \log(1/\scat)
+ \scat^{2-\nu} \delta_0^{-1} \\
&\ll  \scat + \scat^{-\nu} \delta_0 + \scat^{1-\nu} \log(1/\scat) + \scat^{1-\nu} \delta_0^{-1}.
\end{align*}
Since $\delta_0 = c\scat^{\nu}$ and $\nu < \frac12$, this completes the proof.
\end{proof}

\subsection{The growth lemma in terms of $W_i$}\label{sec:growthW}

The pieces of preimage leaf $V_i \subset T_{\scat}^{-1}(W)$ emerge by natural
cutting
at the discontinuity set $\cS_1$ and the homogeneity strips, but even so,
their lengths can be larger than $\delta_0$, the bound of admissible stable
leaves.
We therefore need to cut them into shorter pieces, denoted as $W_i$.
In the worst case, each $V_i$ needs to be cut into $\delta_0^{-1}$ pieces,
which gives the estimate
\begin{equation}\label{eq:W}
 \sum_i |\ks(W_i)|^{\nu} \ \frac{|T_{\scat}W_i|}{|W_i|}
\leq  C \left( \scat \delta_0^{-1} + \scat^{-\nu}  \right) \ll \scat^{-\nu}.
\end{equation}
Although this estimate suffices for some purposes, it is not always good
enough for larger
iterates $T_{\scat}^n$. The next lemma (which follows \cite[Lemma 3.2]{DZ11} or \cite[Lemma 3.3]{DZ14})
achieves an estimate, uniform in $n$,
for $\nu=0$.

For the next lemma we recall some notation used in~\cite{DZ14}.
For $W \in \cW^s$,
we construct the components $\cG_k(W)$ of $T_{\scat}^{-k}W$ inductively on $k=0,\dots,n$. That is $\cG_0(W) = \{ W \}$, and to obtain $\cG_{k+1}(W)$ first we apply Proposition~\ref{prop:growthscaled} to each curve
in $\cG_k(W)$, and then we partition curves that are longer then $\delta_0$ into pieces of length between
$\delta_0$ and $\delta_0/2$.
We enumerate the leaves of the $k$-th generation $\cG_k(W)$ as $W_i^k$.

\begin{lemma}\label{lem:W}
There is a constant $C_s>0$, independent of $\scat$, such that
\begin{equation}\label{eq-consgr01}
\sum_{W_i^n\in \cG_n(W)} \frac{|T_{\scat}^nW_i^n|}{|W_i^n|}\leq C_s,
\end{equation}
and
\begin{equation}\label{eq-consgr3}
\sum_{W_i^n\in \cG_n(W)}  \frac{|W_i^n|^\vs}{|W|^\vs} \,
\frac{|T_{\scat}^nW_i^n|}{|W_i^n|}\leq C_s^{1-\vs},
\end{equation}
for all $\vs\in [0,1)$.
\end{lemma}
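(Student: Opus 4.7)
The plan is to prove the first bound \eqref{eq-consgr01} directly by decomposing $\cG_n(W)$ according to the most recent cutting time in each piece's lineage, and then to deduce \eqref{eq-consgr3} from \eqref{eq-consgr01} by a single application of H\"older's inequality.

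For \eqref{eq-consgr01}, I would partition $\cG_n(W)=\bigsqcup_{k=0}^n\cG_n^{(k)}(W)$, where $\cG_n^{(k)}(W)$ consists of those $W_i^n$ whose forward chain $W_i^n,\ W_{i_1}^{n-1}=\overline{T_{\scat}W_i^n},\ \dots,\ W_{i_{n-1}}^1,\ W$ was most recently cut at generation $k$ (with $k=0$ meaning no cut ever occurred in the lineage, so the ancestor at generation $0$ is $W$ itself). For $k\ge 1$ the ancestor $A$ of $W_i^n$ at generation $k$ then has $|A|\in[\delta_0/2,\delta_0]$ by construction. The chain rule together with the bounded distortion estimate of Appendix~\ref{sec:distortion} gives
$$\frac{|T_{\scat}^n W_i^n|}{|W_i^n|}\ \le\ C_d\,\frac{|T_{\scat}^{n-k} W_i^n|}{|W_i^n|}\cdot\frac{|T_{\scat}^k A|}{|A|}.$$
Iterating Proposition~\ref{prop:growthscaled} with $\nu=0$ across the uncut stretch from generation $k$ to generation $n$, and absorbing the per-step distortion factor into $\theta_1<1$ as prepared in Remark~\ref{rem-usegr3}, gives $\sum\tfrac{|T_{\scat}^{n-k}W_i^n|}{|W_i^n|}\le\theta_1^{n-k}$, restricted to descendants of $A$ with no further cuts. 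Meanwhile, the ancestors $A$ appearing at generation $k$ are pairwise disjoint subsets of $T_{\scat}^{-k}(W)$, so their images $T_{\scat}^k A$ tile a subset of $W$; combined with $|A|\ge\delta_0/2$ this yields $\sum_A\tfrac{|T_{\scat}^k A|}{|A|}\le 2|W|/\delta_0\le 2$. Summing inside each stratum $\cG_n^{(k)}(W)$ and then geometrically over $k=0,\dots,n$ produces \eqref{eq-consgr01} with $C_s\le C_d+2C_d/(1-\theta_1)$, uniformly in $n$ and $\scat$.

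For \eqref{eq-consgr3}, I set $a_i=|W_i^n|/|W|$ and $b_i=|T_{\scat}^n W_i^n|/|W_i^n|$. Writing $a_i^\vs b_i=(a_i b_i)^\vs\,b_i^{1-\vs}$ and applying H\"older's inequality with exponents $1/\vs$ and $1/(1-\vs)$,
$$\sum_i a_i^\vs b_i\ \le\ \Bigl(\sum_i a_i b_i\Bigr)^{\vs}\Bigl(\sum_i b_i\Bigr)^{1-\vs}.$$
Since the $T_{\scat}^n W_i^n$ are pairwise disjoint subsets of $W$, $\sum_i a_i b_i = |W|^{-1}\sum_i|T_{\scat}^n W_i^n|\le 1$; and $\sum_i b_i\le C_s$ by \eqref{eq-consgr01}. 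This gives \eqref{eq-consgr3}.

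The main obstacle is the lineage bookkeeping for \eqref{eq-consgr01}: one must justify that iterating Proposition~\ref{prop:growthscaled} across uncut segments compounds correctly, so that the telescoping distortion terms accrued in the chain rule sum to a uniform constant, and that the resulting $C_s$ remains $\scat$-uniform. Both points rest on the calibration of $\delta_0$ and $\delta_1$ in Remark~\ref{rem-usegr3} and on the uniform hyperbolicity of $T_{\scat}$ from Section~\ref{sec-Hyperb}, which ensures the geometric decay needed to close the induction.
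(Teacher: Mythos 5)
Your overall strategy for \eqref{eq-consgr01} — stratify $\cG_n(W)$ by the most recent ``marker'' generation $k$ in each lineage, contract exponentially along the marker-free stretch from $k$ to $n$, and control the markers at generation $k$ via disjointness of their forward images inside $W$ — is the same as the paper's, and your H\"older step for \eqref{eq-consgr3} is a faithful rewriting of the paper's Jensen argument. But there is a genuine gap in the choice of marker. You stratify by the most recent \emph{artificial cut}, so in a stratum $\cG_n^{(k)}(W)$ with $k<n$ the intermediate pieces at generations $k+1,\dots,n$ are merely uncut and therefore have lengths bounded only by $\delta_0$. The per-step distortion factor accumulated when you iterate Proposition~\ref{prop:growthscaled} along this stretch is consequently $e^{C_d\delta_0^{1/(r_0+1)}}$, and the compound one-step rate you actually obtain is $\theta_1':=\theta_*e^{C_d\delta_0^{1/(r_0+1)}}$ — \emph{not} the $\theta_1=\theta_*e^{C_d\delta_1^{1/(r_0+1)}}$ that you invoke from Remark~\ref{rem-usegr3}. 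Since $\delta_1<\delta_0/2$, one has $\theta_1'>\theta_1$, and $\theta_1'<1$ is not a consequence of the hypotheses in Remark~\ref{rem-usegr3}.

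The paper's marker avoids this: defining ``long'' as length $\ge\delta_1$ for a separate auxiliary threshold $\delta_1\in(0,\delta_0/2)$, and stratifying by the \emph{most recent long ancestor}, guarantees that the intermediate pieces at generations $k+1,\dots,n$ have lengths $<\delta_1$, so the per-step rate really is $\theta_1=\theta_*e^{C_d\delta_1^{1/(r_0+1)}}$; the whole point of introducing $\delta_1$ separately from $\delta_0$ is to calibrate it so that $\theta_1<1$, which is exactly what Remark~\ref{rem-usegr3} arranges. (The margin $\delta_1<\delta_0/2$ also ensures that every cut piece, of length $\ge\delta_0/2$, is automatically long, so the long pieces include but strictly contain the cut pieces — your strata are genuine coarsenings of the paper's.) You could repair the argument by imposing the stronger condition $\theta_*e^{C_d\delta_0^{1/(r_0+1)}}<1$ directly — this is achievable by taking $c$ and $\scat$ small, since $\theta_*$ and $\delta_0$ both tend to zero — but that is a strictly stronger hypothesis than the paper's, and it should be stated rather than read out of Remark~\ref{rem-usegr3}, which only controls the $\delta_1$-calibrated rate.
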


\begin{proof}
Define $\cL_k$ as the collection of indices such that $W_i^k \in \cG_k(W)$ is long, i.e., $|W_i^k| \geq \delta_1$ for $i \in \cL_k$, and
$\cI_n(W_j^k)$ as the collection indices of $W_i^n$ such that their
most recent long ancestor is $W_j^k \in \cG_k(W)$. If
for some $W^n_{i_1}$ no such long ancestor exists, then
set $k(i_1)=0$ and $W_{i_1}^n$ belongs to $\cI_n(W)$; if $W^n_{i_2}$ is itself long, then set $k(i_2)=n$.
Fix some $j \in \cL_k$. As for $W^i_n\in \cI_n(W_j^k)$ the preimages under
$T_{\scat}^{n-k}$ of $T_{\scat}^{n-k}W^i_n$
need not be cut
artificially (they are already short),
and due to the distortion bound from Lemma~\ref{lem:Distortion},
\begin{equation} \label{eq-consgr001}
\sum_{i \in \cI_n(W^k_j)}  \frac{|T_{\scat}^{n-k}W_i^n|}{|W_i^n|}\leq
\theta_1^{n-k},
\quad \text{ for } \theta_1 = \theta_*e^{C_d |\delta_1|^{\frac{1}{r_0+1}}}.
\end{equation}
Recall that by our assumption $\delta_1$ is so small that $\theta_1 < 1$.
In the estimate below, we group $W_i^n\in \cG_n(W)$ according to their most recent long ancestors.
\begin{eqnarray}
\label{eq-k0}
\nonumber\sum_i  \frac{|T_{\scat}^nW_i^n|}{|W^n_i|}
 &=& \sum_{k=1}^n \sum_{W_j^k \in \cL_k(W)} \sum_{i \in \cI_n(W_j^k)}
 \frac{|T_{\scat}^nW_i^n|}{|W^n_i|} + \sum_{i \in \cI_n(W)}
 \frac{|T_{\scat}^nW_i^n|}{|W^n_i|} \\
\nonumber &\leq& \sum_{k=1}^n \sum_{W_j^k \in \cL_k(W)} \left( \sum_{i \in
\cI_n(W_j^k)}
   \frac{|T_{\scat}^{n-k}W_i^n|}{|W^n_i|} \right)
   e^{\delta_1^{1/r_0+1} C_d}  \frac{|T_{\scat}^kW_j^k|}{|W^k_j|} + \theta_1^n\\
 &\leq& \sum_{k=1}^n \sum_{W_j^k \in \cL_k(W)} \theta_1^{n-k}
 \delta_1^{-1}  |T_{\scat}^kW_j^k| +\theta_1^n \nonumber \\
 &\leq&
C\delta_1^{-1}|W| \sum_{k=1}^n \theta_1^{n-k} +\theta_1^n\le C_s,
\end{eqnarray}
where we have used that for fixed $k$ and ${W_j^k \in \cL_k(W)}$, (i) $|W_j^k|\ge \delta_1$,
(ii) the $T_{\scat}^kW_j^k$ are pairwise disjoint subcurves of $W$, and (iii) $|W|\le \delta_1$.
By Jensen's inequality and \eqref{eq-k0},
\begin{eqnarray*}
 \sum_i \frac{|W_i^n|^\vs}{|W|^{\vs}} \frac{|T_{\scat}^nW_i^n|}{|W^n_i|}
 &=& \sum_i \left(  \frac{|W|}{|W_i^n|} \right)^{1-vs}
 \frac{|T_{\scat}^nW_i^n|}{|W|}
\leq  \left( \sum_i \frac{|T_{\scat}^nW_i^n|}{|W^n_i|} \right)^{1-\vs}
 \ll C_s^{1-\vs},
\end{eqnarray*}
which proves the second statement.
\end{proof}

It is worth including the following bound, which follows from \eqref{eq-consgr001} by Jensen inequality:
\begin{equation}\label{eq-consgr001Jensen}
  \sum_{i \in \cI_n(W)}  \frac{|W_i^n|^\vs}{|W|^{\vs}} \frac{|T_{\scat}^n W_i^n|}{|W_i^n|}\leq
\theta_1^{(1-\vs)n}, \qquad \text{ for all } \vs\in[0,1).
\end{equation}

\begin{remark}\label{rmk:vs0}
For further reference, we state a  version of~\eqref{eq-consgr3}
for $\nu>0$, $n=1$. Let $\vs_0=1-\frac{2r_0\nu}{r_0-1}$.
\begin{equation}\label{eq:WnuJensen}
 \sum_i |\ks(W_i)|^{\nu} \ \frac{|T_{\scat}W_i|}{|W_i|} \frac{|W_i|^\vs}{|W|^{\vs}}
\ll  \scat^{-\nu}, \qquad \text{ for all } \vs\in[0,\vs_0).
\end{equation}
This follows by Jensen's inequality from
\eqref{eq:W}, applied with $\frac{\nu}{1-\vs}$ in place of $\nu$.
The condition $\vs<\vs_0$ ensures that $\frac{\nu}{1-\vs}<\frac12-\frac{1}{2r_0}$. For the choices $r_0=5$, $\nu=\frac38$ we have $\vs_0=\frac1{16}$.
\end{remark}

\section{Banach spaces and spectral gap}
\label{subsec-Bsp}

For the exponents $p_0$ and $q_0$ defined in \eqref{eq:constants} we define the Banach spaces (of distributions) $C^{p_0}, \cB, \cB_w,\\
(C^{q_0})'$ in
analogy to~\cite{DZ14}. \footnote{Note that our set-up fits the conditions (H1)-(H5) in \cite[Section 2.1]{DZ14},
with $f(x)=f(\theta,\phi)=\cos\phi$ and $\ks=1$ in (H1), $r_h=r_0+1$ in (H2), $\xi=\frac12$ and $t_0=1$ in (H3),
$p_0=\frac{1}{r_0+1}$ in (H4) and $\gamma_0=0$ in (H5).}
We recall that  $(C^{q_0})'$ is the topological dual
of $C^{q_0}$.

Given $W\in\cW^s$, let $m_W$ be the Lebesgue measure on $W$, and define
$$
|\psi|_{W,\alpha,p_0} := |W|^{\alpha} \cos W\, |\psi|_{C^{p_0}}, \qquad
|\psi|_{C^{p_0}} := |\psi|_{C^0} + H^{p_0}_W(\psi),
$$
for $\alpha \geq 0$, $\cos W = |W|^{-1} \int_W \cos \phi\, dm_W$ (note that
$\cos W \ll k^{-r_0}$ if $W \subset \bH_{\pm k}$),
and $H^{p_0}_W(\psi)$
the H\"older constant of $\psi$ along $W$.
Also let $d_W(W_1, W_2)$ stand for the distance between leaves as in \cite[Section 3.1]{DZ11} or
\cite[Section 3.1]{DZ14};
in particular, if $W_1$ and $W_2$ belong to the same homogeneity strip, $d_W(W_1, W_2)$ is the $C^1$ distance
of their graphs in the $(\theta,\phi)$ coordinates, and otherwise infinite.

Given $W\in\cW^s$ and $h\in C^1(W)$, define the \emph{weak norm}\footnote{In the definition of
the weak norm \cite{DZ14} uses test functions with $|\psi|_{W,\gamma,p}\le 1$ for some $\gamma>0$, and requires $p<\gamma$.
However, this is needed only to ensure that the inclusion $\cB_w\hookrightarrow (C^p)'$ is injective, cf.~\cite[Lemma 3.8]{DZ14}.
Since we do not use this property, we can take $\gamma=0$ in the definition of the weak norm, and avoid additional restrictions on $p_0$.}
\begin{equation}\label{eq-weaknorm}
\|h\|_{\cB_w}:=\sup_{W\in\cW^s}\;\sup_{\stackrel {|\psi| \in C^{p_0}(W) }{
|\psi|_{W,0,p_0}\le 1} }  \int_W h\psi\, dm_W.
\end{equation}
With $q_0 < p_0$ fixed we define the distance between functions
$d(\psi_1,\psi_2)$ in the same way
as in~\cite[Section 3.1]{DZ11}.
We define the \emph{strong stable norm} by

\begin{equation}\label{eq-strongnormst}
\|h\|_s:=\sup_{W\in\cW^s}\;\sup_{\stackrel{\psi \in
C^{q_0}(W)}{|\psi|_{W,\alpha_0,q_0}\le 1}}
\int_W h\psi\, dm_W.
\end{equation}
Choosing $\eps_0 \in (0,\delta_0)$ and $\beta_0 \in (0, \min\{ \alpha_0,
p_0-q_0\})$, we define the \emph{strong unstable norm} by
\begin{equation}\label{eq-strongnormunst}
\|h\|_u:=\sup_{\eps\le\eps_0} \sup_{\stackrel {W_1, W_2\in\cW^s}{ d(W_1,
W_2)\le \eps}}
\sup_{\begin{subarray}{1}{\psi_i \in C^{p_0}(W),\ }\\ { |\psi_i|_{C^1(W)}
\leq 1 }
\\ { d_{q_0}(\psi_1,\psi_2) \leq \eps}\end{subarray}}
\frac{1}{\eps^{\beta_0}}\left|  \int_{W_1} h\psi_1\, dm_{W} - \int_{W_2}
h\psi_2\, dm_{W}\right|.
\end{equation}
The \emph{strong norm} is defined by $\|h\|_{\cB}=\|h\|_s+c_u\|h\|_u$,
where we will fix $c_u\ll 1$ (but independent of $\scat$) at the beginning of Subsection~\ref{sec:periphspgap}.

Since $C^{p_0}\subset\cB\subset\cB_w\subset (C^{q_0})'$ (see
Subsection~\ref{subsec-prop}), we have $\|h\|_{\cB_w}+\|h\|_{\cB}\leq C\|h\|_{C^1}$. As in~\cite{DZ14}, we define $\cB$ to be the completion of $C^1$ in the
strong norm and $\cB_w$ to be the completion in the weak norm.

\subsection{Transfer operator on $\cB$}
\label{subsec-prop}

Throughout we let $R_\scat: L^1(m)\to
L^1(m)$ be  the transfer operator of the billiard map $T_\scat$.
We recall that~\cite[Lemmas 3.7-3.10]{DZ11} ensure that:
i) $R_\scat(C^1)\subset\cB$ and as a consequence $R$ is well defined on $\cB$; $\cB_w$;
ii) the unit ball of $\cB$ is compactly embedded in $\cB_w$,
 and iii) $C^{p_0}\subset\cB\subset\cB_w\subset(C^{q_0})'$.

It follows that $R_\scat$ is well defined on $\cB$ and $\cB_w$,
and we also let $R_\scat$ denote the extension of this transfer operator to $\cB_w$.

\subsection{Lasota-Yorke inequalities}
\label{subsec-LY}

Using Proposition~\ref{prop:growthscaled} with $\nu=0$ and Lemma~\ref{lem:W}
we obtain the analogue of the Lasota-Yorke inequality~\cite[Proposition
2.3]{DZ14}. As our set-up fits \cite{DZ14}, our only concern is the dependence on $\scat$.
It is important to point out that our all estimates in Section~\ref{sec:growth} and
Appendix~\ref{sec:distortion} are independent of $\scat$, except that $\delta_1<\delta_0\ll \scat^{\nu}$.
\begin{lemma}[Weak norm]
\label{lemma-wn} There exists a uniform constant $C>0$ so that for all
$h\in\cB$
and for all $n\ge 0$,
\[
\|R_\scat^n h\|_{\cB_w}\le C\cdot C_s\,\|h\|_{\cB_w},
\]
where $C_s$ is given by~\eqref{eq-consgr01}.
\end{lemma}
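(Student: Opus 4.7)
The strategy is the standard leafwise duality argument for transfer operators on anisotropic Banach spaces, combined with the $\scat$-uniform growth bound \eqref{eq-consgr01}. Fix $h\in C^1(\cM)$ (which is dense in $\cB_w$), a stable leaf $W\in\cW^s$, and a test function $\psi\in C^{p_0}(W)$ with $|\psi|_{W,0,p_0}\le 1$. By the change-of-variables formula along stable curves,
\[
\int_W R_\scat^n h\cdot\psi\, dm_W \;=\; \sum_{W_i^n\in\cG_n(W)} \int_{W_i^n} h\cdot\psi_i^n\, dm_{W_i^n},
\qquad \psi_i^n := (\psi\circ T_\scat^n)\, J_i,
\]
where $J_i$ denotes the arclength Jacobian of $T_\scat^n\colon W_i^n\to T_\scat^n W_i^n$. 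Each $W_i^n$ lies in a single homogeneity strip and avoids $\cS_n$, so $T_\scat^n|_{W_i^n}$ is smooth.

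Next I would apply the weak norm leafwise: $|\int_{W_i^n} h\psi_i^n\, dm_{W_i^n}|\le \|h\|_{\cB_w}\cdot |\psi_i^n|_{W_i^n,0,p_0}$. The key estimate is
\[
|\psi_i^n|_{W_i^n,0,p_0} \;\le\; C\, |\psi|_{W,0,p_0}\, \frac{|T_\scat^n W_i^n|}{|W_i^n|},
\]
where $C$ is independent of $n$, $W$, the index $i$, and $\scat$. This follows from three standard ingredients: (a) $T_\scat^n$ contracts stable leaves in the Euclidean metric, so $\psi\circ T_\scat^n$ inherits the $C^{p_0}$-seminorm of $\psi$; (b) the distortion estimate of Appendix~\ref{sec:distortion} yields $J_i$ uniformly comparable to $|T_\scat^n W_i^n|/|W_i^n|$ with $p_0$-Hölder variation bounded independently of all parameters; (c) the invariance of $\cos\phi\, d\phi\, d\theta$ under $T_\scat$ is exactly what makes the product $\cos(W_i^n)\cdot J_i$ absorb into $\cos(W)\cdot |T_\scat^n W_i^n|/|W_i^n|$ regardless of which homogeneity strip $W_i^n$ sits in. Summing over $\cG_n(W)$ and invoking \eqref{eq-consgr01} then gives
\[
\left| \int_W R_\scat^n h\cdot\psi\, dm_W \right|
\;\le\; C\,\|h\|_{\cB_w}\sum_{W_i^n\in\cG_n(W)} \frac{|T_\scat^n W_i^n|}{|W_i^n|}
\;\le\; C\cdot C_s\,\|h\|_{\cB_w}.
\]
Taking the supremum over admissible $W$ and test functions $\psi$ yields the claim, and the bound extends to all $h\in\cB_w$ by density.

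The only real obstacle is point (c), i.e.\ verifying that the cosine factors in the norm $|\cdot|_{W,0,p_0}$ interact with the Jacobian $J_i$ so as to produce a \emph{uniform} (in particular $\scat$-independent) constant. Items (a) and (b) are essentially inherited from the Demers--Zhang framework; what is new in our setting is that Lemma~\ref{lem:W} was established with $C_s$ independent of $\scat$, so the $\scat$-uniformity propagates to the weak Lasota--Yorke bound. Consequently, no per-$\scat$ constant is hidden in the statement, which is crucial for the perturbation arguments of Section~\ref{sec-opRk} where $\scat\to 0$ is taken simultaneously with $n\to\infty$.
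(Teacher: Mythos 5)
Your argument follows the paper's route exactly: decompose $\int_W R_\scat^n h\,\psi\,dm_W$ over the pieces $W_i^n\in\cG_n(W)$, bound each leaf integral by the weak norm, and sum using the $\scat$-uniform growth bound~\eqref{eq-consgr01}, which is precisely the paper's strategy (the paper then defers the leafwise estimate to \cite[Section 4.1]{DZ14}).

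However, there is a slip in the transfer-operator decomposition that, read literally, breaks the key intermediate estimate. You set $\psi_i^n := (\psi\circ T_\scat^n)\,J_i$ with $J_i$ the arclength Jacobian of $T_\scat^n|_{W_i^n}$. The correct weight arising from $R_\scat^n$ is $\frac{J_{W_i^n}T_\scat^n}{|DT_\scat^n|}$, and the omitted factor $\frac{1}{|DT_\scat^n|}\approx \frac{\cos\phi\circ T_\scat^n}{\cos\phi}\approx \frac{\cos W}{\cos W_i^n}$ is precisely what cancels the $\cos W_i^n$ in $|\psi_i^n|_{W_i^n,0,p_0}$ against the $\cos W$ in $|\psi|_{W,0,p_0}$. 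Without it, your claimed bound $|\psi_i^n|_{W_i^n,0,p_0}\le C\,|\psi|_{W,0,p_0}\,\frac{|T_\scat^n W_i^n|}{|W_i^n|}$ reduces to $\cos W_i^n \le C\cos W$, which fails in general: a leaf $W$ lying deep in a homogeneity strip can have preimage components $W_i^n$ in the bulk of $\cM$, making $\cos W_i^n/\cos W$ arbitrarily large. Your ingredient (c) gestures at the correct mechanism, but as written, the product $\cos(W_i^n)\cdot J_i$ does not absorb into $\cos(W)\cdot\frac{|T_\scat^nW_i^n|}{|W_i^n|}$ on its own --- the extra $\frac{1}{|DT_\scat^n|}$ is the agent of that absorption. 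Once this factor is restored, the remainder of your argument (distortion control via Lemma~\ref{lem:Distortion}, $\scat$-uniformity of $C_s$ via Lemma~\ref{lem:W}) is sound and coincides with the paper's proof.
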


\begin{proof}
For $W\in\cW^s$, $h\in C^1(\cM_0)$, $\psi\in C^{p_0}(W)$ with
$|\psi|_{W,\alpha_0,p_0}\le 1$,
\[
\int_W R_\scat^n h \psi\, dm_W=\sum_{W_i^n\in \cG_n(W)}\int_{W_i^n} h
\frac{J_{W_i^n} T_\scat^n}{|DT_{\scat}^n|}\psi\circ T_\scat^n\, dm_W.
\]
Using the present definition of the weak norm,
\[
\int_W R_\scat^n h \psi\, dm_W\le \sum_{W_i^n\in \cG_n(W)}\int_{W_i^n}
\|h\|_{\cB_w}
\frac{|J_{W_i} T_\scat|_{C^{p_0}(W_i)}}{|DT_\scat|}|\psi\circ T_\scat|_{C^{p_0}(W_i)}\cos
(W_i^n)\, dm_W.
\]
From here on the argument goes almost word for word as the argument in~\cite[Section 4.1]{DZ14},
except for the use of equation~\eqref{eq-consgr01} (the analogue of~\cite[Lemma 3.3(a)]{DZ14} with $\vs=0$).~\end{proof}

\begin{lemma}[Strong stable norm]
\label{lemma-ssn} Take $\delta_1$ as in~\eqref{eq-delta1} and
$\theta_1$ as in~\eqref{eq-consgr001}.
There exists a uniform constant $C>0$ so that for all $h\in\cB$
and all $n\ge 0$,
\[
\|R_\scat^n h\|_{s}\le C \left(
\theta_1^{(1-\alpha_0)n}+C_s^{1-\alpha_0}\Lambda^{-q_0n}\right)\|h\|_{s}+
C\delta_1^{-\alpha_0}\|h\|_{\cB_w}.
\]
\end{lemma}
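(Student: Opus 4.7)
The plan is to adapt the standard Lasota--Yorke argument for the strong stable norm, in the spirit of~\cite[Proposition~2.3]{DZ14}, and to keep track of the explicit $\scat$-dependence through the constant $C_s$ of Lemma~\ref{lem:W}. Fix $W\in\cW^s$ and a test function $\psi\in C^{q_0}(W)$ with $|\psi|_{W,\alpha_0,q_0}\le 1$, so that $|\psi|_{C^{q_0}}\le(|W|^{\alpha_0}\cos W)^{-1}$, and perform the standard change of variables
\[
\int_W R_\scat^n h\,\psi\,dm_W=\sum_{W_i^n\in\cG_n(W)}\int_{W_i^n}h\,\tilde J_i\,(\psi\circ T_\scat^n)\,dm_{W_i^n},
\]
where $\tilde J_i=\tfrac{\cos\phi\circ T_\scat^n}{\cos\phi}|DT_\scat^n|_{W_i^n}$. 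On each $W_i^n$ decompose $\psi\circ T_\scat^n=\bar\psi_i+\bar c_i$ with $\bar c_i$ the $m_{W_i^n}$-average. Following Lemma~\ref{lem:W}, partition $\cG_n(W)$ into the short pieces $\cI_n(W)$ (no long ancestor) and the long-related pieces $\bigsqcup_{k=1}^n\bigsqcup_{W_j^k\in\cL_k(W)}\cI_n(W_j^k)$.

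For the short pieces the plan is to treat $\tilde J_i(\psi\circ T_\scat^n)$ as a test function for $\|h\|_s$ on $W_i^n$. Distortion (Lemma~\ref{lem:Distortion}) controls the $C^{q_0}$-regularity of $\tilde J_i$ by $(\cos(T_\scat^n W_i^n)/\cos W_i^n)(|T_\scat^n W_i^n|/|W_i^n|)$; stable contraction gives $|\psi\circ T_\scat^n|_{C^{q_0}(W_i^n)}\lesssim|\psi|_{C^{q_0}(W)}$; and, after collecting the factors of $|W_i^n|^{\alpha_0}\cos W_i^n$ from the seminorm, the resulting sum is controlled by~\eqref{eq-consgr001Jensen} with $\vs=\alpha_0$, producing the $C\theta_1^{(1-\alpha_0)n}\|h\|_s$ contribution. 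For the long-related pieces, the zero-mean part $\bar\psi_i$ is again to be handled by $\|h\|_s$ but with an additional gain: since $\bar\psi_i$ has zero mean on $W_i^n$, the inequality $|\bar\psi_i|_{C^0(W_i^n)}\lesssim H_{q_0}(\bar\psi_i)|W_i^n|^{q_0}$ combined with the chain rule $H_{q_0}(\bar\psi_i)\le |\psi|_{C^{q_0}(W)}(|T_\scat^n W_i^n|/|W_i^n|)^{q_0}$ yields an extra contraction factor of order $(|T_\scat^n W_i^n|/|W_i^n|)^{q_0}\lesssim\Lambda^{-q_0 n}$, and summing via~\eqref{eq-consgr3} with $\vs=\alpha_0$ delivers the $CC_s^{1-\alpha_0}\Lambda^{-q_0 n}\|h\|_s$ contribution.

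The remaining constant part $\bar c_i\int_{W_i^n}h\,\tilde J_i$ on long-related pieces will be bounded via the weak norm: $|\int_{W_i^n}h\,\tilde J_i|\le\|h\|_{\cB_w}|\tilde J_i|_{W_i^n,0,p_0}$, with $|\tilde J_i|_{W_i^n,0,p_0}\lesssim\cos(T_\scat^n W_i^n)(|T_\scat^n W_i^n|/|W_i^n|)$ by distortion, together with $|\bar c_i|\le|\psi|_{C^0(W)}\le(|W|^{\alpha_0}\cos W)^{-1}$. The delicate step is to regroup the sum over descendants $\cI_n(W_j^k)$ under a common most recent long ancestor $W_j^k$ and to telescope via~\eqref{eq-consgr01}--\eqref{eq-consgr001}: the crucial point is that $|W_j^k|\ge\delta_1$, which allows the length factor appearing in $|\bar c_i|$ to be absorbed into $\delta_1^{-\alpha_0}$ once the telescope over levels $k=1,\dots,n$ is carried out. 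This produces the $C\delta_1^{-\alpha_0}\|h\|_{\cB_w}$ term, and combining with the previous two bounds yields the stated inequality. The main obstacle is precisely this regrouping, which requires matching the cosine factors arising from the Jacobian $\tilde J_i$ with those in the test-function seminorm so that the effective length scale $|W_j^k|\ge\delta_1$ --- rather than the (possibly very small) length scale coming from the terminal piece $W_i^n$ --- governs the final bound; I expect this to be the only place where the regularity exponent $p_0>q_0$ and the distortion estimates from Appendix~\ref{sec:distortion} interact nontrivially.
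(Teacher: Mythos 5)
Your proposal is correct and follows essentially the same line as the paper's proof, which is a pointer to~\cite[Section 4.2]{DZ14} together with the substitution of the scaled growth estimates~\eqref{eq-consgr3} and~\eqref{eq-consgr001Jensen} for the corresponding DZ lemmas, exactly as you describe. The one cosmetic difference is the bookkeeping: the paper splits $\psi\circ T_\scat^n$ into its fluctuating part and its mean on \emph{every} $W_i^n$, bounding the fluctuating sum over all of $\cG_n(W)$ via~\eqref{eq-consgr3} (giving $C_s^{1-\alpha_0}\Lambda^{-q_0 n}\|h\|_s$) and then splitting the mean part into $k=0$ and $k\geq1$; you instead keep the full $\psi\circ T_\scat^n$ on the $k=0$ pieces (strong norm, $\theta_1^{(1-\alpha_0)n}\|h\|_s$ via~\eqref{eq-consgr001Jensen}) and only split on the long-related pieces. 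These rearrangements produce the same three terms, and your identification of the $k\geq1$ regrouping, with $|W_j^k|\geq\delta_1$ supplying the $\delta_1^{-\alpha_0}$ coefficient of $\|h\|_{\cB_w}$, as the step where the $\scat$-dependence is actually generated matches Remark~\ref{rmk-WhereScatInLY} and the structure of the paper's argument.
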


\begin{remark}
\label{rmk-WhereScatInLY}
The compact term $C\delta_1^{-\alpha_0}\|h\|_{\cB_w}$ in Lemma~\ref{lemma-ssn}
is the only point in the Lasota-Yorke inequalities where a $\scat$-dependence arises, via $\delta_1 = c \scat^{\nu}$.
\end{remark}

\begin{proof} The argument goes almost word for word as the~\cite[Argument in Section 4.2]{DZ14},
except for the  differences:

i) We use of  equation~\eqref{eq-consgr3} with $\vs=\alpha_0$ instead
of~\cite[Lemma 3.3 (b)]{DZ14} (also with $\vs=\alpha_0$)
in~\cite[Equation (4.5)]{DZ14}. In particular, using the present definition of
the stable norm, with the same notation as in
~\cite[Section 4.2]{DZ14}, we have the following analogue of~\cite[Equation
(4.5)]{DZ14}:
\begin{align*}
\sum_{W_i^n\in \cG_n(W)} & \int_{W_i^n} h \frac{J_{W_i^n}
T_\scat^n}{|DT_\scat^n|}\left(\psi\circ T_\scat^n-\bar\psi_i\right)\, dm_W\\
&\ll \Lambda^{-q_0n} \|h\|_{s}\sum_{W_i^n\in \cG_n(W)}
\frac{|W_i^n|^{\alpha_0}}{|W|^{\alpha_0}} \,
\frac{|T_\scat^nW_i^n|}{|W_i^n|} \, \ll \Lambda^{-q_0n} \|h\|_{s},
\end{align*}
where we have used the distortion bounds of Appendix~\ref{sec:distortion} and Formula
\eqref{eq-consgr3} (with $\vs=\alpha_0$).

 ii) To obtain the analogue of \cite[Equation (4.6)]{DZ14}, as in
 \cite[Section 4.2]{DZ14}, we split the sum
\[
\sum_{k=0}^n\sum_ {j\in L_k}\sum_{i\in \cI_n(W_j^k)} |W|^{-\alpha_0} (\cos W)^{-1}
\int_{W_i^n} h \frac{J_{W_i^n}T_{\scat}^n}{|DT_{\scat}^n|}\, dm_W
\]
into a term for $k=0$  and further terms for $k=1,\dots, n$. For $k=0$, we use the strong stable norm and
\eqref{eq-consgr001Jensen} (the analogue of \cite[Lemma 3.3(a)]{DZ14}) with $\vs=\alpha_0$, giving a contribution
$\ll \|h\|_{s} \theta_1^{n(1-\alpha_0)}$. For the terms $k=1,\dots n$, we use the weak norm,
\eqref{eq-consgr3} (the analogue of \cite[Lemma 3.3(b)]{DZ14}) with $\vs=\alpha_0$,
and the fact that $|W^k_j|\ge \delta_1$ for $j\in\cL_k(W)$, resulting in a contribution of
$O(\|h\|_{\cB_w} \delta_1^{-\alpha_0})$.
\end{proof}

As in \cite{DZ14}, dealing with the strong unstable norm is the most delicate
part of the Lasota-Yorke inequality. The only difference from \cite[Argument in Section 4.3]{DZ14} is that we apply \eqref{eq-consgr01} (instead of \cite[Lemma 3.3 (b)]{DZ14}) multiple times. Note that our bound in \eqref{eq-consgr01} is independent of $\scat$, so no
$\scat$-dependence arises here.

\begin{lemma}[Strong unstable norm]
\label{lemma-usn} There exists a uniform constant $C>0$ so that for all
$h\in\cB$
and for all $n\ge 0$,
\[
\|R_\scat^n h\|_{u}\le C\cdot C_s \cdot
\Lambda^{-\beta_0 n}\|h\|_{u}+ C\cdot C_s \cdot n\|h\|_{s}.
\]
\end{lemma}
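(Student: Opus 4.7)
The plan is to follow the argument of \cite[Section 4.3]{DZ14}, with every invocation of \cite[Lemma 3.3(b)]{DZ14} replaced by our growth estimate~\eqref{eq-consgr01}; since the constant $C_s$ appearing there is independent of $\scat$, no $\scat$-dependence will propagate into the final bound. I would fix $\eps\le\eps_0$, leaves $W_1,W_2\in\cW^s$ with $d_W(W_1,W_2)\le\eps$, and test functions $\psi_1,\psi_2$ satisfying $|\psi_i|_{C^1(W_i)}\le 1$ and $d_{q_0}(\psi_1,\psi_2)\le\eps$, and start from the change-of-variables identity
\[
\int_{W_j} R_\scat^n h\,\psi_j\,dm_W=\sum_{W_i^{j,n}\in\cG_n(W_j)}\int_{W_i^{j,n}}h\cdot \frac{J_{W_i^{j,n}}T_\scat^n}{|DT_\scat^n|}\,\psi_j\circ T_\scat^n\,dm_W,\qquad j=1,2.
\]

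The first step is the standard matching procedure. Since $T_\scat^{-1}$ contracts distances between nearby stable leaves by a factor $\Lambda^{-1}$, the components of $\cG_n(W_1)$ pair up bijectively with components of $\cG_n(W_2)$ at mutual distance $\le\eps\Lambda^{-n}$, up to a residual family of unmatched pieces created when singularity curves or homogeneity-strip boundaries cut $T_\scat^{-k}W_1$ and $T_\scat^{-k}W_2$ differently at some iterate $k\in\{0,\dots,n-1\}$. For a matched pair $(W_i^{1,n},W_i^{2,n})$, I would insert and subtract an intermediate integral on $W_i^{2,n}$ with a transported test function, so that the discrepancy of the Jacobian weights and of $\psi_1\circ T_\scat^n$ versus $\psi_2\circ T_\scat^n$ is controlled by $\eps\Lambda^{-n}$. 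The residual difference then fits the definition of $\|\cdot\|_u$ at scale $\eps\Lambda^{-n}$, contributing $(\eps\Lambda^{-n})^{\beta_0}\|h\|_u$ weighted by the Jacobian ratio. Summing over matched pairs via~\eqref{eq-consgr01} produces the first term $C\cdot C_s\cdot\Lambda^{-\beta_0 n}\,\eps^{\beta_0}\,\|h\|_u$.

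For the unmatched pieces, I would group them by the iterate $k\in\{0,\dots,n-1\}$ at which they first fail to match and apply the strong stable norm to the corresponding integrals. Using that each such piece has transverse scale $\ll\eps$, and invoking~\eqref{eq-consgr01} once per level $k$ to bound the cumulative Jacobian weight before summing over $k$, yields the second term $C\cdot C_s\cdot n\,\eps^{\beta_0}\,\|h\|_s$. Dividing both contributions by $\eps^{\beta_0}$ and taking the supremum over $W_1,W_2,\psi_1,\psi_2$ gives the claimed inequality.

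The main technical obstacle is the combinatorial bookkeeping of matched versus unmatched components across $n$ iterations in the presence of cuts from $\cS_1$ and the homogeneity strips; this is precisely the content of~\cite[Section 4.3]{DZ14}. In our setting no modification is needed beyond the substitution mentioned at the outset, because all growth and distortion estimates of Section~\ref{sec:growth} and Appendix~\ref{sec:distortion} used in that argument are uniform in $\scat$, as required for the joint-limit analysis to follow.
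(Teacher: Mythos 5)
Your overall structure is right — reproduce the matched/unmatched decomposition from \cite[Section 4.3]{DZ14}, substitute our growth estimate~\eqref{eq-consgr01} for \cite[Lemma 3.3(b)]{DZ14}, and note that all ingredients are $\scat$-uniform — but the bookkeeping of where each term in the Lasota–Yorke inequality comes from is wrong in a way that matters. You attribute the $C\cdot C_s\cdot n\,\|h\|_s$ term to the unmatched pieces summed over the iterate $k$ at which they are created. In the Demers–Zhang argument that the paper follows, the unmatched pieces give a contribution that is \emph{uniform in $n$}: their lengths are $\ll\eps^{1/2}$, the weight $|W|^{\alpha_0}$ in the strong stable norm then produces a factor $\eps^{\alpha_0/2}$, and after applying~\eqref{eq-consgr01} (a bound uniform in $n$) and dividing by $\eps^{\beta_0}$ one is left with $\eps^{\alpha_0/2-\beta_0}\,C_s\,\|h\|_s$ — bounded by a constant precisely because $\beta_0<\alpha_0/2$ (see the paper's list of constants~\eqref{eq:constants} and \cite[Formulas (4.10)–(4.11)]{DZ14}). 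No $n$ appears there. The linear-in-$n$ term actually comes from the \emph{matched} pieces, from the part of the comparison that you dismiss too quickly.

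Specifically, for a matched pair $(W_i^{1,n},W_i^{2,n})$, after inserting an intermediate integral the difference splits into (a) a difference of integrals of $h$ against the \emph{same} test function on the two nearby leaves, which does fit the definition of $\|\cdot\|_u$ at scale $\eps\Lambda^{-n}$ and produces the first term, and (b) an integral of $h$ against the \emph{difference} of the transported test-function-times-Jacobian weights on a single leaf, which is bounded by $\|h\|_s$. Your claim that this second discrepancy is ``controlled by $\eps\Lambda^{-n}$'' is too optimistic: the Jacobian and test-function comparison does not decay exponentially in $n$; it is controlled in the $C^{q_0}$ norm by $\eps^{p_0-q_0}$, and bounding it requires a telescoping over the $n$ intermediate iterates (this is \cite[Formula (4.17)]{DZ14}), which is exactly where the factor $n$ comes from. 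The condition $\beta_0<p_0-q_0$ is then essential so that after dividing by $\eps^{\beta_0}$ this contribution is finite. You omit both exponent conditions $\beta_0<\alpha_0/2$ and $\beta_0<p_0-q_0$, which the paper flags as the two points on which the whole estimate hinges.
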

\begin{proof}
Given $W_1,W_2 \in \cW^s$ with $d(W_1,W_2)\le\varepsilon$, we may
identify matched and unmatched pieces in $T_\scat^{-n}W_{\ell}$, $\ell=1,2$.
The estimates of \cite{DZ14} on the length of the \textit{unmatched pieces} apply, thus we may estimate their contribution by the strong stable norm using
\eqref{eq-consgr01} (instead of \cite[Lemma 3.3 (b)]{DZ14}). As the length estimates give $\varepsilon^{\alpha_0/2}$, $\beta_0<\alpha_0/2$ is essential here (cf. \cite[Formulas (4.10) and (4.11)]{DZ14}, noting that $\gamma=0$ in our case).

To bound the contribution of the \textit{matched pieces} we use, on the one hand, the strong unstable norm (as in \cite[Formula (4.14)]{DZ14}) and, on the other hand, the strong stable norm (as in \cite[Formula (4.17)]{DZ14}). Here again we rely on  equation~\eqref{eq-consgr01} which plays the role of \cite[Lemma 3.3 (b)]{DZ14}. $\beta_0<p_0-q_0$ ensures that after division by $\varepsilon^{\beta_0}$ the proof of Lemma~\ref{lemma-usn} can be completed.~\end{proof}

\section{Perturbed transfer operators}
\label{sec-opRk}

A standard way of obtaining  limit theorems for dynamical systems is via the
perturbed transfer operator method.
In Section~\ref{sec-lmthm} we will use the spectral properties of the family
of perturbed transfer operators $\hat R_\scat(t), t\in \R$
with $\hat R_\scat(t) h =R(e^{it\ks} h)$, $h\in L^1(m)$.

\subsection{Continuity properties}

By definition,  $\hat R_\scat(0)=R_\scat$. Take $0 \le \nu < \frac{1}{2} - \frac{1}{2r_0}$ as in Proposition~\ref{prop:growthscaled}.
In this subsection we show the following continuity estimate:
\begin{equation}
\label{eq-cont}
\|(\hat R_\scat(t)-\hat R_\scat(0))h\|_{\cB} \le C \scat^{-\nu} |t|^\nu
\|h\|_{\cB}
\end{equation}
for some uniform constant $C$.

The argument goes parallel to Subsection~\ref{subsec-LY}, except that this time we need the estimates (i) for $\nu>0$  and (ii) only for $n=1$, we rely on~\eqref{eq:W} and~\eqref{eq:WnuJensen} instead of Lemma~\ref{lem:W}.

\begin{lemma}\label{lemma-rkw}
Assume~\eqref{eq-size W}. Then there exists a uniform constant $C>0$ so that
for all $h\in\cB$,
\[
\|R_\scat(e^{it\ks}-1) h)\|_{\cB_w} \le C \scat^{-\nu} |t|^{\nu}
\|h\|_{\cB_w}.
\]
\end{lemma}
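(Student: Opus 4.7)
The plan is to mimic the proof of the weak-norm Lasota--Yorke inequality (Lemma~\ref{lemma-wn}), applied to a single iterate and with the multiplicative perturbation $e^{it\ks}-1$ inserted in place of the constant $1$. The elementary input is the interpolation inequality
\[
|e^{it\ks(x)}-1|\;\le\;2^{1-\nu}|t|^{\nu}|\ks(x)|^{\nu},
\]
obtained from $|e^{is}-1|\le\min\{2,|s|\}$ by taking the $\nu$-th power of the second and the $(1-\nu)$-th power of the first. This converts the perturbation into the weight $|\ks|^{\nu}$ that can be absorbed into a growth estimate.

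Fix $W\in\cW^s$ and a test function $\psi\in C^{p_0}(W)$ with $|\psi|_{W,0,p_0}\le 1$. Unfolding the transfer operator over one step gives
\[
\int_W R_\scat((e^{it\ks}-1)h)\,\psi\,dm_W \;=\; \sum_{W_i\in\cG_1(W)} \int_{W_i} (e^{it\ks}-1)\,h\cdot\frac{J_{W_i}T_\scat}{|DT_\scat|}\,\psi\circ T_\scat\,dm_W.
\]
The key structural point is that, by the definition of admissible stable leaves, $\ks$ is \emph{constant} on each $W_i$. Thus $e^{it\ks(W_i)}-1$ is a scalar factor, and the remaining integrand $\frac{J_{W_i}T_\scat}{|DT_\scat|}\psi\circ T_\scat$ can be treated exactly as in the proof of Lemma~\ref{lemma-wn}. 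Using the distortion bounds of Appendix~\ref{sec:distortion} and the definition of the weak norm, the integral over $W_i$ is dominated by
\[
\|h\|_{\cB_w}\;|e^{it\ks(W_i)}-1|\;\frac{|T_\scat W_i|}{|W_i|},
\]
up to a uniform constant.

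Summing over $i$ and applying the elementary bound on $|e^{it\ks}-1|$ above yields
\[
\left|\int_W R_\scat((e^{it\ks}-1)h)\,\psi\,dm_W\right|\;\ll\;|t|^{\nu}\|h\|_{\cB_w}\sum_{W_i\in\cG_1(W)}|\ks(W_i)|^{\nu}\frac{|T_\scat W_i|}{|W_i|}.
\]
At this point the one-step growth estimate~\eqref{eq:W} provides exactly what is needed, bounding the sum by $C\scat^{-\nu}$. Taking the supremum over $W$ and over admissible $\psi$ gives the stated inequality.

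The only substantive obstacle is that the natural weighted growth sum carries the factor $|\ks(W_i)|^{\nu}$, so one cannot appeal to the sharper uniform-in-$n$ bound~\eqref{eq-consgr01}; the weaker one-step estimate~\eqref{eq:W} is the correct tool, and it is precisely this estimate that is responsible for the $\scat^{-\nu}$ factor on the right-hand side.
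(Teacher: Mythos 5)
The proposal is correct and takes essentially the same approach as the paper: unfold $R_\scat$ over one iterate, use $\ks$-constancy on each piece together with the interpolation bound $|e^{is}-1|\ll |t|^{\nu}|\ks|^{\nu}$, bound each piece via the weak norm and distortion estimates, and sum using the one-step weighted growth estimate~\eqref{eq:W}. You are slightly more explicit about the interpolation constant and about why $\ks$ is a scalar on each $W_i$, but otherwise this matches the paper's argument (which defers to the analogous estimate in~\cite[Section 4.1]{DZ14} for the intermediate step).
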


\begin{proof} The argument goes similarly to the argument in~\cite[Section
4.1]{DZ14} restricted to the case $n=1$.
More precisely, for $W\in\cW^s$, $h\in C^1(\cM_0)$, $\psi\in C^{p_0}(W)$ with
$|\psi|_{W,\alpha_0,p_0}\le 1$,
\[
\int_W R_\scat(e^{it\ks}-1) h \psi\, dm_W=\sum_{i\in \cG_1(W)}\int_{W_i}
(e^{it\ks}-1) h \frac{J_{W_i} T_\scat}{|DT_{\scat}|}\psi\circ T_\scat\, dm_W.
\]
Using the definition of the weak norm and the inequality $|e^{ix}-1|\le
x^\nu$,
\begin{align*}
\int_W R_\scat(e^{it\ks}-1) h \psi\, dm_W \le&\ |t|^{\nu}\sum_{i\in
\cG_1(W)}\int_{W_i}\|h\|_{\cB_w} |\ks(W_i)|^\nu \\
& \qquad  \qquad \times \frac{|J_{W_i} T_\scat|_{C^{p_0}(W_i)}}{|DT_\scat|}|\psi\circ
T_\scat|_{C^{p_0}(W_i)}\cos (W_i)\, dm_W.
\end{align*}
From here on the proof goes the same as the argument in~\cite[Section
4.1]{DZ14}
except for the use of equation~\eqref{eq:W} instead of~\cite[Lemma
3.3 (b)]{DZ14}.~\end{proof}

\begin{lemma}\label{lemma-kappast}
There exists a uniform constant $C>0$ so that for all $h\in\cB$
and for all $n\ge 0$,
\[
\|R_\scat(e^{it\ks}-1) h)\|_{s}\le C|t|^{\nu} \scat^{-\nu}
\|h\|_{s}.
\]
\end{lemma}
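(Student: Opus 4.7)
The plan is to mimic the proof of Lemma~\ref{lemma-ssn} restricted to a single time step ($n=1$), while tracking the extra factor $|\ks(W_i)|^{\nu}$ coming from the bound $|e^{it\ks}-1| \le |t|^{\nu} |\ks|^{\nu}$. Fix $W \in \cW^s$ and a test function $\psi \in C^{q_0}(W)$ with $|\psi|_{W,\alpha_0,q_0}\le 1$. Since $\ks$ is constant on each homogeneous preimage component $W_i \in \cG_1(W)$, the starting point is
\[
\int_W R_\scat(e^{it\ks}-1)h \cdot \psi\, dm_W
= \sum_i (e^{it\ks(W_i)}-1) \int_{W_i} h \cdot \frac{J_{W_i}T_\scat}{|DT_\scat|}\,\psi\circ T_\scat\, dm_W,
\]
and I would pull out $|t|^{\nu}|\ks(W_i)|^{\nu}$ from each summand.

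Next, following the decomposition from the proof of Lemma~\ref{lemma-ssn}, I would write $\psi\circ T_\scat = (\psi\circ T_\scat - \bar\psi_i) + \bar\psi_i$, where $\bar\psi_i$ is the average of $\psi\circ T_\scat$ on $W_i$. For the mean-zero part, the $C^{q_0}$-regularity of $\psi$ combined with the stable contraction of $T_\scat$ gives $|\psi\circ T_\scat - \bar\psi_i|_{C^0(W_i)} \ll \Lambda^{-q_0}|\psi|_{C^{q_0}(W)} |W_i|^{q_0}$, so that after normalizing by $|W_i|^{\alpha_0}\cos W_i$ the integrand is a bounded multiple of a valid strong stable test function on $W_i$. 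For the constant part $\bar\psi_i$, which satisfies $|\bar\psi_i| \ll (|W|^{\alpha_0}\cos W)^{-1}$, the weight $\bar\psi_i \cdot J_{W_i}T_\scat/|DT_\scat|$ has its $C^{q_0}(W_i)$-norm under control via the distortion bounds of Appendix~\ref{sec:distortion}. Unlike in Lemma~\ref{lemma-ssn}, where this term had to be estimated against the weak norm to avoid a factor growing with $n$, here the single-step nature lets us keep everything in the strong stable norm.

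Combining these, factoring out the normalization $|W|^{-\alpha_0}(\cos W)^{-1}$, and applying the strong stable norm definition on each $W_i$ gives
\[
\left| \int_W R_\scat(e^{it\ks}-1)h \cdot \psi\, dm_W \right|
\ll |t|^{\nu}\, \|h\|_s \sum_i |\ks(W_i)|^{\nu}\, \frac{|T_\scat W_i|}{|W_i|}\, \frac{|W_i|^{\alpha_0}}{|W|^{\alpha_0}}.
\]
The sum is now exactly of the form treated by the Jensenized growth estimate~\eqref{eq:WnuJensen} with $\vs=\alpha_0$, which is permissible because $\alpha_0 < \vs_0$ by~\eqref{eq:constants}. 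This yields a bound of order $\scat^{-\nu}$, and taking the supremum over $W$ and $\psi$ gives the desired inequality.

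The main obstacle I expect is the bookkeeping needed to verify that the composed weights $\bar\psi_i\,J_{W_i}T_\scat/|DT_\scat|$ and $(\psi\circ T_\scat - \bar\psi_i)\,J_{W_i}T_\scat/|DT_\scat|$, after rescaling by $|W_i|^{\alpha_0}\cos W_i$, indeed produce valid strong stable test functions on each $W_i$ with constants that are uniform in $\scat$; in particular, one must confirm that the distortion and Jacobian factors do not contribute further $\scat$-dependence beyond what is already tracked in the growth lemma, and that the admissibility requirement $\alpha_0 < \vs_0$ is truly the binding constraint rather than any subtler constraint coming from matching $C^{q_0}$ regularity against $\alpha_0$ weights.
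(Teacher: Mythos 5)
Your proof arrives at the correct bound and uses the correct key input, namely \eqref{eq:WnuJensen} with $\vs=\alpha_0$, which is permissible precisely because $\alpha_0<\vs_0$ in \eqref{eq:constants}. However, the paper's proof is structured differently and is shorter: rather than mimicking Lemma~\ref{lemma-ssn} with its decomposition $\psi\circ T_\scat=(\psi\circ T_\scat-\bar\psi_i)+\bar\psi_i$, it simply carries over the one-step argument from Lemma~\ref{lemma-rkw} verbatim, replacing the weak-norm test-function class $|\psi|_{W,0,p_0}\le 1$ by the strong-stable-norm class $|\psi|_{W,\alpha_0,q_0}\le 1$. Since there is no need to obtain a contraction in $n$ (as there is in Lemma~\ref{lemma-ssn}), the mean-zero/average split — which in the Lasota--Yorke proof exists exactly to separate a contracting term from a compact term — is superfluous here; one just bounds $|\psi\circ T_\scat|_{C^{q_0}(W_i)}\le|\psi|_{C^{q_0}(W)}\le(|W|^{\alpha_0}\cos W)^{-1}$ directly, picks up the ratio $|W_i|^{\alpha_0}/|W|^{\alpha_0}$ from the normalizations on $W_i$ versus $W$, and lands on the same sum $\sum_i|\ks(W_i)|^{\nu}\frac{|T_\scat W_i|}{|W_i|}\frac{|W_i|^{\alpha_0}}{|W|^{\alpha_0}}$ that you obtain. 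Your route is valid (the extra term $\bar\psi_i$ also admits a strong-stable-norm bound because no $n$-dependent accumulation occurs at a single step), but it carries more bookkeeping than needed, which is the obstacle you yourself flag at the end; the intended shortcut is that no splitting is required at all.
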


\begin{proof}  This time we are only concerned with $n=1$, and do not need a contraction of the strong stable norm. Hence, an argument analogous to the proof of Lemma~\ref{lemma-rkw} suffices,  with
the weak norm replaced by the strong stable norm. Accordingly, we use~\eqref{eq:WnuJensen}  with $\vs=\alpha_0$
instead of~\cite[Lemma 3.3 (b)]{DZ14}.~\end{proof}

\begin{lemma}\label{lemma-kappausn}
There exists a uniform constant $C>0$ so that for all $h\in\cB$,
\[
\|R_\scat(e^{it\ks}-1) h)\|_{u}\le C|t|^{\nu}\left( \scat^{-\nu}
\cdot\|h\|_{u}+\scat^{-\nu} \cdot \|h\|_{s})\right).
\]
\end{lemma}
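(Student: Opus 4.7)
The strategy is to mirror the proof of Lemma~\ref{lemma-usn} in the special case $n=1$, carrying the extra factor $|e^{it\ks(x)}-1|\le |t|^{\nu}|\ks(x)|^{\nu}$ through the estimates and summing over preimage pieces by means of the weighted growth bounds~\eqref{eq:W} and~\eqref{eq:WnuJensen} (with $\vs=\alpha_0$) in place of~\eqref{eq-consgr01} and~\eqref{eq-consgr3}. Concretely, fix $\varepsilon\le\varepsilon_0$, two admissible leaves $W_1,W_2\in\cW^s$ with $d(W_1,W_2)\le\varepsilon$, and test functions $\psi_1,\psi_2$ with $|\psi_i|_{C^1(W_i)}\le 1$ and $d_{q_0}(\psi_1,\psi_2)\le\varepsilon$. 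Decompose $T_\scat^{-1}W_\ell=\bigcup_i W_{\ell,i}$ and partition these preimages into matched pairs $(W_{1,i},W_{2,i})$ and unmatched pieces exactly as in \cite[Section~4.3]{DZ14}. Since $\ks$ is constant on each admissible leaf and matched pairs are traced back through the same scatterer (and the same homogeneity strip), one has $\ks(W_{1,i})=\ks(W_{2,i})=:\ks_i$ on every matched pair, so $e^{it\ks_i}-1$ is a single scalar common to both halves of a matched pair.

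For the \emph{unmatched pieces}, estimate each integral by the strong stable norm. The extra factor $|e^{it\ks}-1|\le |t|^{\nu}|\ks(W_{\ell,i})|^{\nu}$ combines with the $O(\varepsilon^{\alpha_0/2})$ length bound on the total measure of unmatched preimages (as in \cite[(4.10)-(4.11)]{DZ14}), and summing via~\eqref{eq:WnuJensen} with $\vs=\alpha_0$ produces a contribution of order $|t|^{\nu}\scat^{-\nu}\varepsilon^{\alpha_0/2}\|h\|_s$. Since $\beta_0<\alpha_0/2$, after division by $\varepsilon^{\beta_0}$ this yields an admissible term of the form $|t|^{\nu}\scat^{-\nu}\|h\|_s$.

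For the \emph{matched pieces}, factor the scalar $e^{it\ks_i}-1$ out of each paired difference, so that what remains is the same object estimated in the proof of Lemma~\ref{lemma-usn}. Splitting the paired difference into a piece absorbing the mismatch between $\psi_1$ and $\psi_2$ (controlled by $\|h\|_u$, cf.\ \cite[(4.14)]{DZ14}) and a piece absorbing the distortion mismatch of $J T_\scat /|DT_\scat|$ along the two curves (controlled by $\|h\|_s$, cf.\ \cite[(4.17)]{DZ14}), then multiplying by $|t|^{\nu}|\ks_i|^{\nu}$ and summing over matched pairs via~\eqref{eq:W}, yields contributions of order $|t|^{\nu}\scat^{-\nu}\varepsilon^{\beta_0}\|h\|_u$ and $|t|^{\nu}\scat^{-\nu}\varepsilon^{\beta_0}\|h\|_s$ respectively. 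Dividing by $\varepsilon^{\beta_0}$ and taking the supremum over all admissible data gives the claimed bound. The one delicate point is the verification that matched pairs share a common value of $\ks$, so that the phase $e^{it\ks_i}-1$ genuinely factors out of the difference; without this, one would be left with an irreducible difference of two phases of size $|t|^{\nu}|\ks|^{\nu}$ which could not be absorbed by the strong unstable norm. Once this is secured via the construction of the matching in~\cite{DZ14}, the rest is a direct transcription of the Lemma~\ref{lemma-usn} argument with the harmless extra weight $|t\ks|^{\nu}$, with all $\scat$-dependence funneled into the single factor $\scat^{-\nu}$ arising from~\eqref{eq:W} and~\eqref{eq:WnuJensen}.
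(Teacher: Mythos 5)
Your argument is correct and follows the paper's own proof closely: both restrict the strong-unstable-norm estimate of Lemma~\ref{lemma-usn} to $n=1$, decompose $T_\scat^{-1}W_\ell$ into matched and unmatched pieces as in~\cite[Section~4.3]{DZ14}, observe that $\ks$ is constant on each piece and shared between members of a matched pair so that $e^{it\ks_i}-1$ factors out as a scalar, and then absorb the resulting weight $|t|^\nu|\ks_i|^\nu$ by replacing~\eqref{eq-consgr01} with~\eqref{eq:W}. The only small difference is that you invoke~\eqref{eq:WnuJensen} with $\vs=\alpha_0$ for the unmatched pieces rather than citing~\eqref{eq:W} uniformly as the paper does; this is a harmless (indeed slightly more precise) refinement, since the strong-stable-norm estimate for unmatched pieces involves the $\alpha_0$-weighted quantity.
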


\begin{proof}  As with the proof of Lemma~\ref{lemma-usn}, the argument goes similar to~\cite[Argument in Section 4.3]{DZ14}, restricted to the case $n=1$. The matched and unmatched pieces can be again identified, this time for $T_{\scat}^{-1}W_{\ell}$, $\ell=1,2$.
Then, as in the proof of Lemma~\ref{lemma-rkw}, the factors $|t|^{\nu}$ and $|\ks|^{\nu}$ arise.  Clearly $\ks$ is constant on each of the (matched or unmatched) pieces, and takes the same value on any two pieces that are matched. Accordingly, the various contributions can be estimated in the same way as in proof of Lemma~\ref{lemma-usn}, with the only difference that,
by the presence of the factor $|\ks|^{\nu}$, throughout the argument \eqref{eq:W} is used instead of \eqref{eq-consgr01}.~\end{proof}

Equation~\eqref{eq-cont} follows from the definition of the norm in $\cB$
together with Lemmas~\ref{lemma-rkw},~\ref{lemma-kappast}
and~\ref{lemma-kappausn}.

\subsection{Peripheral spectrum and spectral gap}
\label{sec:periphspgap}

Choose $1>\sigma>\max\{\Lambda^{-\beta_0}, \theta_1^{(1-\alpha_0)}, \Lambda^{-q_0}\}$.
By Lemmas~\ref{lemma-wn},~\ref{lemma-ssn} and~\ref{lemma-usn} and arguing as in~\cite[Equation (2.14)]{DZ14},
we obtain the traditional Lasota-Yorke inequality for some $N\ge 1$, provided $c_u$ in the definition of $\| \ \|_{\cB}$ (below \eqref{eq-strongnormunst}) is chosen small enough in terms of $N$. That is,
\begin{equation}
\label{eq-spgap}
\|R_\scat^N h\|_{\cB}\le \sigma^N\|
h\|_{\cB}+C \delta_1^{-\alpha_0} \|h\|_{\cB_w}.
\end{equation}
Combined with the properties collected in Subsection~\ref{subsec-prop} (that
is, the relative compactness of  the unit ball of $\cB$  in $\cB_w$), equation~\eqref{eq-spgap}
shows that  the essential spectral radius of $R_\scat$ is bounded by $\sigma$ and
that the spectral radius is $1$.

Let $\Pi_\scat$ be the eigenprojection (that is, the projection on the
eigenspace of $R_\scat$) corresponding to the eigenvalue $1$. In particular,
$\Pi_{\scat} \mu=\mu$ is the invariant measure for $T_{\scat}$.
Since for every $\scat$, $T_\scat$ is mixing, the peripheral spectrum of
$R_\scat$ consists of just the simple eigenvalue at $1$.
Thus, for every $\scat>0$, the eigenprojection $\Pi_{\scat}$ corresponding to
the eigenvalue $1$ of $R_{\scat}$  can be also
characterized by
\begin{equation}
\label{eq-projmixrho}
\Pi_{\scat}h=\lim_{m\to\infty}R_{\scat}^m h,
\end{equation}
for all $h\in\cB$.

Let $Q_{\scat}$ be complementary spectral projection. From here onwards, we
exploit that for every $\scat>0$, there exist $\ga_{\scat}\in (0,1)$ and $C_\scat>0$ so that
\begin{equation}
\label{eq-complprojmixrho}
\|Q_{\scat}^m\|_{\cB}\le  C_\scat(1-\ga_{\scat})^m
\end{equation}
for every $m\ge 1$.
Altogether, $R_{\scat}^m=\Pi_{\scat} +Q_{\scat}^m$,
where $Q_{\scat}$ satisfies~\eqref{eq-complprojmixrho}.

\section{Asymptotics of the dominant eigenvalue}
\label{sec:ev}

To establish limit theorems (such as Theorem~\ref{th-mainth} below) we study the asymptotics of
\(
\E_\mu(e^{it\kappa_{m,\scat} } 1)=\E_\mu(\hat R_{\scat}(t)^m 1),
\)
as $t\to 0$ and $m\to\infty$ via the properties of $\hat R_{\scat}(t) h= R_{\scat}(e^{it\ks} h)$, $h\in\cB$.

We already know that for every $\scat \in (\frac13,\frac12)$, $1$ is a simple  eigenvalue of $\hat
R_{\scat}(0)=R_{\scat}$ when viewed as an operator from $\cB$ to $\cB$.
Due to~\eqref{eq-cont}, $\hat R_{\scat}(t)$  is $C^\nu$ (in $t$)  from $\cB$
to $\cB$.
It follows that  for $t$ in a neighbourhood of $0$, $\hat R_{\scat}(t)$ has a
dominant eigenvalue $\lambda_{\scat}(t)$ (with $\lambda_{\scat}(0)=1$).

Let $\ga_{\scat}$ be as in equation~\eqref{eq-complprojmixrho}. The continuity properties together with~\eqref{eq-complprojmixrho}
ensure that for any $\delta\in (0,\ga_{\scat})$ and $t\in B_\delta(0)$,
\begin{equation}
\label{eq-spdec}
\hat R_{\scat}(t)^m=\lambda_{\scat}(t)^m\Pi_{\scat}(t)+Q_{\scat}(t)^m,\quad \|Q_{\scat}(t)^m\|_{\cB}\le C_\scat(1-\ga_{\scat})^m,
\end{equation}
for some $C_\scat>0$ and $\Pi_{\scat}(t)^2=\Pi_{\scat}(t)$, $\Pi_{\scat}(t)Q_{\scat}(t)=Q_{\scat}(t)\Pi_{\scat}(t)=0$.
Further, for all $t\in B_\delta(0)$,
\begin{equation}
\label{eq-formpi}
\Pi_{\scat}(t)=\int_{|u-1|=\delta}(u-\hat R_{\scat}(t))^{-1}\, du,
\end{equation} for all $t$ small enough.
A standard consequence of~\eqref{eq-cont} and~\eqref{eq-complprojmixrho} is that
for every $\delta\in (0,\ga_{\scat})$ and for all $u$ so that $|u-1|=\delta$,
\begin{align}\label{eq-contdefpi}
\|(u-\hat R_{\scat}(t))^{-1}-(u-\hat R_{\scat}(0))^{-1}\|_{\cB}& \le C
\scat^{-\nu} |t|^{\nu}\|(u-\hat R_{\scat}(t))^{-1}\|_{\cB}\|(u-\hat R_{\scat}(0))^{-1}\|_{\cB} \nonumber \\
& \le  C \scat^{-\nu}\ga_{\scat}^{-2}|t|^{\nu}.
\end{align}
Hence, $\|\Pi_{\scat}(t)-\Pi_{\scat}(0)\|_{\cB}\le C\scat^{-\nu}
|t|^{\nu}\scat^{-\nu}\ga_{\scat}^{-2}|t|^{\nu}$.

The rest of this section is allocated to the study the
asymptotics of $\lambda_{\scat}(t)$ as $t\to 0$.

The following property was used in~\cite{LT16,BT17, BTT20} (see
\cite[assumption (H2)]{BTT20}) for the study of
eigenvalues of perturbed transfer operators in the Banach spaces introduced
in~\cite{DemersLiverani08}.
Here we use it to obtain an adequate analogue for the present set-up.

\begin{lemma}\label{lemma-bw}
Take $s_0 = \frac{1-\alpha_0(r_0+1)}{2r_0}$  as in \eqref{eq:constants}.
Let $h\in\cB$ and $v\in C^{p_0}$.
For every corridor with boundaries determined by $O_\xi$ and $O_{\xi'}$,
there exists a constant $C > 0$ independent of $\scat$ and $\xi$ so that
\[
\left| \int h v 1_{\{ \ks = \xi'+ N\xi \}} \, dm \right|
\le C \| h \|_s |v|_{C^{q_0}} d_{\scat}(\xi)^{\frac32-s_0} |\xi|^{-1}
\scat^{-\frac12+s_0} N^{-\frac52+s_0}.
\]
\end{lemma}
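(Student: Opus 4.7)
The set $\{\ks = \xi'+N\xi\}$ is a ``cell'' in phase space bounded by pieces of $\cS_0$ (the grazing lines) and $\cS_{-1}$ (preimages of grazing lines coming from $O_{-\xi}$ and $O_{\xi'+N\xi}$). By Lemma~\ref{lem:S-kappa}, this cell has $\theta$-extent $\sim d_\scat(\xi)/(|\xi|N)$ along $\phi=\pi/2$ and tapers down to a vertex $(\theta'_{\ks},\phi'_{\ks})$ with $\pi/2-\phi'_{\ks}\sim\sqrt{d_\scat(\xi)/(\scat N)}$. The strategy is to disintegrate the integral along admissible stable leaves foliating the cell, apply the strong stable norm leaf by leaf, and then sum carefully in the transverse direction.

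First I would slice the cell by homogeneity strips $\bH_k$: by the computation surrounding \eqref{eq:k}, $\bH_k\cap\mathrm{cell}\neq\emptyset$ only for $k\geq k_*\asymp (\scat N/d_\scat(\xi))^{1/(2r_0)}$. Inside each $\bH_k$ I foliate by admissible stable leaves $W$ of length
\[
|W|_k \;\asymp\; \min\bigl(\Delta\theta_k,\;k^{-r_0-1}\bigr),
\]
distinguishing the case in which $W$ is cut by the cell's $\theta$-boundary from the case in which it is cut by the strip's $\phi$-boundary ($\Delta\theta_k$ denotes the cell's $\theta$-extent at level $\bH_k$). For each such leaf, I apply the defining property of $\|\,\cdot\,\|_s$ with test function $\psi=v$:
\[
\Bigl|\int_W hv\,dm_W\Bigr|\;\le\;\|h\|_s\,|v|_{W,\alpha_0,q_0}\;\le\;\|h\|_s\,|v|_{C^{q_0}}\,|W|^{\alpha_0}\,\cos W,
\]
with $\cos W\asymp k^{-r_0}$. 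Then Fubini along the foliation gives
\[
\Bigl|\int hv\,1_{\{\ks=\xi'+N\xi\}}\,dm\Bigr|\;\le\;C\,\|h\|_s\,|v|_{C^{q_0}}\sum_{k\ge k_*} J_k\,k^{-r_0}\cdot|T_k|\cdot|W|_k^{\alpha_0}\cos W,
\]
where $|T_k|$ is the transverse width of $\bH_k\cap\mathrm{cell}$ and $J_k$ is the Jacobian of the change from $(r,\phi)$ to stable/unstable coordinates. Because stable and unstable cones are nearly parallel in the $(r,\phi)$-plane when $\scat$ is small, $J_k\asymp\scat$, which is the source of the $\scat^{-1/2+s_0}$ factor after the sum is completed.

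The main obstacle is the precise bookkeeping to extract the target exponents. The sum is dominated by $k\sim k_*$: substituting $k_*^{-2r_0}\asymp d_\scat(\xi)/(\scat N)$ and using $|T_k||W|_k\scat\asymp |\bH_k\cap\mathrm{cell}|_{(r,\phi)}$, the $k$-sum collapses to $\sum_{k\ge k_*}k^{-(2r_0+1+\alpha_0(r_0+1))}\asymp k_*^{-(2r_0+\alpha_0(r_0+1))}$. The quantity $s_0=(1-\alpha_0(r_0+1))/(2r_0)$ is exactly designed so that $k_*^{-\alpha_0(r_0+1)}\asymp (\scat N/d_\scat(\xi))^{1/(2r_0)-s_0}$, and combining with the cell-area factor $d_\scat(\xi)/(|\xi|N)\cdot\epsilon_{\max}$ (which contributes the $d_\scat(\xi)^{3/2}/|\xi|$ piece together with the $\sqrt{d_\scat/(\scat N)}$ from the cusp) produces the claimed exponents. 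The bulk of the delicate work is ensuring that the triangular/cusp geometry of the cell is accounted for correctly in both $\Delta\theta_k$ and $|T_k|$ across the two regimes $k\lessgtr k_1:=(|\xi|N/d_\scat(\xi))^{1/(r_0+1)}$, and that the Jacobian of order $\scat$ and the $\cos W\asymp k^{-r_0}$ factor are not double-counted against $\cos\phi$ in $dm$.
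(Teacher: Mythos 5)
Your general strategy — disintegrate $\{\ks=\xi'+N\xi\}$ along stable leaves, apply the strong stable norm piece by piece within homogeneity strips, then sum — is the right one and essentially what the paper does. However, several of your explicit claims are wrong in ways that would not produce the stated bound. First, the Jacobian $J_k\asymp\scat$ is spurious: in the $(\theta,\phi)$ coordinates in which the Banach norms and admissible stable leaves are defined, the stable cone has slope bounded between $-2\pi(1+\scat/\tau_{\min})$ and $-2\pi$, so the disintegration of Lebesgue measure $m$ along the foliation $\{W_\ell\}_{\ell\in L}$ has transverse density bounded \emph{uniformly in $\scat$} (this is exactly the statement $\nu\ll m_L$ with bounded density in the paper). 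Moreover, were a multiplicative $\scat$ present it would push the $\scat$-power the \emph{wrong} way: since $s_0<\frac12$ the target power $\scat^{-1/2+s_0}$ is negative, and that negative power comes \emph{entirely} from the cusp depth $c'\ll\sqrt{d_\scat(\xi)/(\scat N)}$ of Lemma~\ref{lem:S-kappa}, via the minimal homogeneity index $k(c')\asymp(c')^{-1/r_0}\asymp(\scat N/d_\scat(\xi))^{1/(2r_0)}$ raised to the power $1-r_0-(r_0+1)\alpha_0=r_0(2s_0-1)<0$ after summing the tail $\sum_{k\ge k(c')}k^{-r_0-(r_0+1)\alpha_0}$. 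Second, your sum $\sum_k J_k\,k^{-r_0}|T_k||W|_k^{\alpha_0}\cos W$ double-counts the homogeneity weight: the factor $\cos W\asymp k^{-r_0}$ is precisely the $k^{-r_0}$ you already wrote, and there is no extra $\cos\phi$ from the measure because $dm$ is Lebesgue (not $\mu$). Third, the exponent you quote, $k_*^{-\alpha_0(r_0+1)}\asymp(\scat N/d_\scat)^{1/(2r_0)-s_0}$, has the wrong sign; it should be $(\scat N/d_\scat)^{s_0-1/(2r_0)}$.

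One more discrepancy worth flagging: you use the $\theta$-extent of the cell's base, $\asymp d_\scat(\xi)/(|\xi|N)$, as the transverse factor. The paper instead integrates the transverse parameter $\ell$ over an interval $L$ of length $c\ll d_\scat(\xi)/(N^2|\xi|)$: the cell is a thin sliver between two nearly parallel decreasing singularity curves, and the set of $\ell$'s for which $W_\ell$ actually meets the cell is the transverse width $\asymp(|\xi|N)^{-2}$, not the base width. This extra $N^{-1}$ is essential; with your $d_\scat/(|\xi|N)$ the $N$-exponent comes out as $N^{-3/2+s_0}$ instead of the required $N^{-5/2+s_0}$. So: the skeleton of your argument is fine, but the spurious $\scat$-Jacobian, the repeated $k^{-r_0}$, the sign error in the $k_*$-exponent, and the wrong transverse length would each, separately, prevent the stated exponents from coming out.
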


\begin{proof}
 Let $\{ W_\ell\}_{\ell \in L}$ be the foliation of the set $\{\ks
 = \xi'+\xi N\}$ into stable leaves. We can parametrise
 these leaves by their endpoints $(\ell, \frac{\pi}{2})$ in $\cS_0$, then
 $L$ is an interval of length $c \ll \frac{d_{\scat}(\xi)}{N^2 |\xi|}$ according to Lemma~\ref{lem:S-kappa}.
 The lengths of these stable leaves $|W_\ell| \leq c'$  for another constant
 $c' \ll \sqrt{\frac{2 d_{\scat}(\xi)}{\scat N}}$, again by Lemma~\ref{lem:S-kappa}.
 The measure $dm_{W_\ell}$ is Lebesgue on the $C^1$ stable leaf $W_\ell$, and
 it can be parametrised
 as $(w_\ell(\phi), \phi)$ where $w$ is $C^1$ with
 $-\frac{1}{2\pi} \frac{\scat + \tau_{\min}}{\tau_{\min}} < w'(\phi) <-
 \frac{1}{2\pi}$
 because of the direction of the stable cones, see \eqref{eq:stable_cone}.

 Let $\nu$ be a measure on $L$ that produces the decomposition of Lebesgue
 measure $m$ on
 $\{ \ks = \xi'+\xi N\}$
 along stable leaves. We have $\nu \ll m_L$ (and $d\nu/dm_L$ is bounded
 above).
 Since we need to partition stable leaves $W_\ell$ by the homogeneity strips
 $\bH_k$ near $\cS_0$ into
 pieces $W_{\ell,k} := W_\ell \cap \bH_k$, we get an extra sum over $k \geq k(c') := \lfloor (c')^{-1/r_0} \rfloor$.
 Then
 \begin{eqnarray*}
  \left| \int h v 1_{\{ \ks = N\xi+\xi' \}} \, dm\right| &=&
  \left| \int_L \sum_{k \geq k(c')} \int_{W_{\ell,k}} h\, v \, dm_{W_\ell} \
  d\nu(\ell) \right| \\
   &\ll& \left| \int_L  |v|_{C^{q_0}} \sum_{k \geq k(c')}
   \int_{W_{\ell,k}} h\, \frac{v}{|v|_{C^{q_0}}} \, dm_{W_\ell} \
   d\ell\right| \\
   &\leq& |v|_{C^{q_0}} \|h\|_s \int_L \left( \sum_{k \geq k(c')}
   |W_{\ell,k}|^{\alpha_0-1}  \int_{W_{\ell,k}} \!\!\!\!\!\!\! \cos \phi \,
   \sqrt{1+|w'(\phi)|^2} \, d\phi  \right) \ d\ell \\
   &\ll& |v|_{C^{q_0}} \|h\|_s \int_L \sum_{k \geq k(c')}
   |W_{\ell,k}|^{\alpha_0} k^{-r_0-(r_0+1)\alpha_0} \, d\ell \\
   &\leq& |v|_{C^{q_0}}\, \|h\|_s\, c\, k(c')^{1-\alpha_0(r_0+1)-r_0} \\
   &\ll& |v|_{C^{q_0}} \|h\|_s |\xi|^{-1} d_{\scat}(\xi)^{\frac32-s_0}
   \scat^{-\frac12+s_0} N^{-\frac52+s_0},
 \end{eqnarray*}
 for $s_0 = \frac{1-\alpha_0(r_0+1)}{2r_0}$, as claimed.
\end{proof}

Using~\eqref{eq-contdefpi}, Lemma~\ref{lem:p6} and Lemma~\ref{lemma-bw} we obtain
the asymptotics of the eigenvalue in Proposition~\ref{prop-eigv} below.

\begin{lemma}\label{lemma-matrix}
For $t\in\R^2$, let
$\bar A(t,\scat)=\sum_{|\xi|  \leq 1/(2\scat)} \frac{d_{\scat}(\xi)^2 \langle
t, \xi \rangle^2}{|\xi|}$.
Then
\[
 \lim_{\scat \to 0} \frac{\scat}{2} \bar A(t,\scat)
  = \frac{|t|^2}{\pi}
= \langle \Sigma t,t \rangle
\quad \text{ for } \quad
\Sigma = \begin{pmatrix} \frac{1}{\pi} & 0 \\ 0 &  \frac{1}{\pi}
\end{pmatrix} \quad \text{ as defined in~\eqref{eq:nottr}}.
\]
\end{lemma}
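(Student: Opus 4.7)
The plan is to approximate the discrete lattice sum by a two-dimensional integral using the asymptotic density of primitive lattice points. Writing $\xi = r\, e(\theta)$ in polar coordinates with $e(\theta) = (\cos\theta, \sin\theta)$, and using $d_\scat(\xi) = |\xi|^{-1} - 2\scat$ from Lemma~\ref{lem:width}, the summand simplifies to
\[
\frac{d_\scat(\xi)^2 \langle t,\xi\rangle^2}{|\xi|}
= \frac{(1-2\scat r)^2\, \langle t, e(\theta)\rangle^2}{r},
\]
which is nonnegative precisely on $r < 1/(2\scat)$, matching the cut-off in the sum.

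Since distinct corridors correspond to primitive lattice directions (with both corridors flanking a given $\xi$-line having the same width by Lemma~\ref{lem:width}), I would reduce the sum to one over primitive $\xi$ and convert it to an integral via M\"obius inversion. Writing $[\gcd(\xi)=1] = \sum_{d\mid\gcd(\xi)}\mu(d)$ gives
\[
\sum_{\substack{\xi\ \text{primitive}\\ |\xi|\leq 1/(2\scat)}} f(\xi)
= \sum_{d\geq 1}\mu(d) \sum_{\substack{\eta\in\Z^2\setminus\{0\}\\ |\eta|\leq 1/(2\scat d)}} f(d\eta),
\]
and each inner sum is approximated by its Riemann integral $d^{-2}\int f$. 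Summing the series $\sum_d \mu(d)/d^2 = 1/\zeta(2) = 6/\pi^2$ recovers the standard density factor.

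The resulting integral separates in polar coordinates. The angular part is elementary: $\int_0^{2\pi}\langle t,e(\theta)\rangle^2\, d\theta = \pi |t|^2$. For the radial part, the substitution $u = 2\scat r$ yields
\[
\int_1^{1/(2\scat)}(1-2\scat r)^2\, dr = \frac{(1-2\scat)^3}{6\scat}
= \frac{1}{6\scat}\bigl(1+O(\scat)\bigr).
\]
Together with the factor $2$ coming from the two corridors flanking each $\xi$-line, both being tallied in $\bar A$ (cf.\ Remark~\ref{rem:width}), this gives $\bar A(t,\scat) \sim \frac{2|t|^2}{\pi\scat}$, whence $\frac{\scat}{2}\bar A(t,\scat) \to \frac{|t|^2}{\pi} = \langle \Sigma t, t\rangle$, as claimed.

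The main technical obstacle is uniform control of the M\"obius/Riemann approximation error as $\scat \to 0$. Large $d$ is handled by $|\mu(d)|\leq 1$ against the absolutely convergent tail $\sum_{d}d^{-2}$; for small $d$, the Riemann-sum error on the disk $|\eta|\leq 1/(2\scat d)$ is $O(1)$ per $d$ by Euler--Maclaurin on smooth functions, so the total error in $\bar A$ is at worst $O(\log(1/\scat))$, which is negligible after multiplication by $\scat/2$.
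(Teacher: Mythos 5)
Your proposal is correct and arrives at the same leading asymptotics, but by a genuinely different decomposition than the paper's.  The paper's proof never writes down an integral: it divides $\Z^2$ into eight angular sectors and uses $p\leftrightarrow q$ and sign symmetries to collapse $\langle t,\xi\rangle^2$ to $|t|^2|\xi|^2$, rewrites the summand as $|\xi|^{-1}-4\scat+4\scat^2|\xi|$, and then invokes the corridor-sum Lemma~\ref{lem:corridor_sum} (itself proved via the totient function / one-dimensional M\"obius inversion).  You instead integrate out the angular factor directly --- $\int_0^{2\pi}\langle t,e(\theta)\rangle^2\,d\theta = \pi|t|^2$ --- apply two-dimensional M\"obius inversion for the primitivity constraint, and approximate by a polar integral.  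This is arguably cleaner (no eight-fold casework and the $1/\zeta(2)$ density factor appears transparently), and the radial integral $\int_1^{1/(2\scat)}(1-2\scat r)^2\,dr = (1-2\scat)^3/(6\scat)$ replaces three separate applications of Lemma~\ref{lem:corridor_sum} with $a=-1,0,1$.  The factor $2$ you insert to account for the two corridors flanking each primitive direction is indeed the convention the paper uses for $\bar A$ (cf.\ its sum over pairs $(\xi,\xi')\in\cXi$), so your final constant matches.

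The one place to be careful is the error control at the end, and as written it has an internal inconsistency.  You assert the per-$d$ Riemann-sum error is ``$O(1)$'' and then conclude the total error in $\bar A$ is $O(\log(1/\scat))$.  These do not cohere: the M\"obius sum runs over all $d\le 1/(2\scat)$ (beyond which the inner sum is empty), so a uniform $O(1)$ per $d$ would only give $O(\scat^{-1})$, which is the \emph{same order} as the main term and kills the argument.  What actually saves you is that the inner summand $f(d\eta)$ has sup norm $O(1/d)$ --- the $|\xi|^{-1}$ in the original summand becomes $(d|\eta|)^{-1}$ --- so the cell-counting/boundary estimate for the $d$-th term decays in $d$, yielding an error summing to $O(\log(1/\scat))$ or better.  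You should make that $1/d$ decay explicit rather than writing ``$O(1)$ per $d$.''  The paper sidesteps this bookkeeping entirely by pushing the quantitative work into Lemma~\ref{lem:totient} and the classical estimate~\eqref{eq:Riemann-zeta} cited from Hardy--Wright.
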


\begin{proof}
The coordinate axes $p=0$ and $q=0$, and the two diagonals $p=q$ and $p=-q$ divide the plane into eight sectors.
Here we count counter-clockwise with the first sector
$\cXi_1$ directly above the positive $p$-axis.
Let $\gamma = \gamma(t,\xi)$ be the angle between the vectors $t$ and $\xi$.
Let $\alpha = \arctan q/p$ and $\theta$ be the polar angles of $\xi$ and $t
\in \R^2$ respectively,
so $\gamma = \theta-\alpha$.
 For the first sector $\cXi_1$, taking into account that for every $\xi$
 there are two $\xi'$, we have
 \begin{align*}
  \sum_{(\xi,\xi') \in \cXi_1} \frac{d_{\scat}(\xi)^2 \langle t, \xi
  \rangle^2 }{|\xi|} &=
  2|t|^2 \sum_{(\xi,\xi') \in \cXi_1} \frac{d_{\scat}(\xi)^2 (|\xi| \cos
  \gamma)^2 }{|\xi|}  \\
 &= 2|t|^2 \sum_{(\xi,\xi') \in \cXi_1} \frac{d_{\scat}(\xi)^2 (\cos \theta
 \cos \alpha |\xi| + \sin \theta \sin \alpha |\xi|)^2 }{|\xi|}  \\
 &= 2|t|^2 \sum_{(\xi,\xi') \in \cXi_1} \frac{d_{\scat}(\xi)^2 (p \cos \theta + q\sin
 \theta)^2 }{|\xi|}.
 \end{align*}
The eighth sector $\cXi_8$ directly below the positive
$p$-axis gives the same result with $-q$ instead of $q$,
and sectors $\cXi_4$ and $\cXi_5$ above and below the negative $p$-axis
give the same results as sectors $\cXi_8$ and $\cXi_1$.
Therefore
$$
\sum_{(\xi,\xi') \in \cXi_1 \cup \cXi_4 \cup \cXi_5 \cup \cXi_8}
\frac{d_{\scat}(\xi)^2  }{|\xi|} =
4 |t|^2 \sum_{(\xi,\xi') \in \cXi_1}
  \frac{d_{\scat}(\xi)^2 }{|\xi|} (p^2 \cos^2 \theta +q^2 \sin^2 \theta).
$$
The same result holds the remaining  sectors with
 $\cos \theta$ replaced by $\sin \theta$ and vice versa.
Putting the results on all eight sectors together, we get
by Lemma~\ref{lem:corridor_sum}
\begin{eqnarray*}
\sum_{(\xi,\xi') \in \cXi} \frac{d_{\scat}(\xi)^2 \langle t, \xi \rangle^2
}{|\xi|}
&=& |t|^2 \sum_{(\xi,\xi') \in \cXi}
  \frac{ (|\xi|^{-1}-2\scat)^2 }{|\xi|} (p^2+q^2) \\
&=& |t|^2 \sum_{(\xi,\xi') \in \cXi}
|\xi|^{-1}-4\scat + 4\scat^2|\xi| \\
&=& |t|^2 \frac{2\pi}{\zeta(2)} \frac{1}{2\scat}
(1-\frac{2}{2}+\frac{1}{3})  (1+o(1))
= \frac{2|t|^2}{\scat \pi} (1+o(1)).
\end{eqnarray*}
Hence $\langle \Sigma t, t\rangle = \lim_{\scat\to 0} \frac{\scat}{2} \bar A(t,\scat) = \frac{|t|^2}{\pi}$, as required.
\end{proof}

For the result on the asymptotics of the eigenvalue in
Proposition~\ref{prop-eigv}, we will
also assume  some correlation decay type results. Namely, we assume that
there exist $\scat$-dependent constants $\gb_{\scat}\in (0,1)$ and $\hat C_\scat > 0$ so that
for every $j\ge 1$,
\begin{equation}\label{eq-corel}
\left|\int_{\cM_0}  (e^{it\ks}-1)\, (e^{it\ks}-1)\circ
T_{\scat}^{j}\, d\mu-\Big(\int_{\cM_0}  (e^{it\ks}-1)\,d\mu \Big)^2
\right|\le \hat C_\scat |t|^2 (1-\gb_{\scat})^j.
\end{equation}
More generally, we assume that that there exist $\scat$-dependent constants $\wgs\in (0,1)$ and
$\wCs > 0$ so that for  every $j\ge 1$ and
every $m\ge 0$
\begin{align}\label{eq-corel2}
\Big|&\int_{\cM_0}  (e^{it\ks}-1)\cdot
R_\scat(0)^{m}(e^{it\ks}-1)\, (e^{it\ks}-1)\circ
T_{\scat}^{j}\, d\mu\\
\nonumber &-\int_{\cM_0}  (e^{it\ks}-1)
R_\scat(0)^{m}(e^{it\ks}-1)\,d\mu \int_{\cM_0}
(e^{it\ks}-1)\,d\mu\\
\nonumber &-C\Big(\int_{\cM_0}  (e^{it\ks}-1)\,d\mu\Big)
\int_{\cM_0}  (e^{it\ks}-1)\, (e^{it\ks}-1)\circ
T_{\scat}^{j}\, d\mu\\
\nonumber &+C\Big(\int_{\cM_0} (e^{it\ks}-1)\,d\mu\Big)^3\Big|\le
\wCs |t|^2 (1-\wgs)^{m+j},
\end{align}
where $C=0$ if $m=0$ and $C=1$ if $m\ge 1$.
As justified in Proposition~\ref{prop:decaykappa} in Appendix~\ref{sec:decay} via the argument used in~\cite[Proof of
Proposition 9.1]{ChDo09}, assumptions~\eqref{eq-corel} and~\eqref{eq-corel2} are natural.

\begin{prop} \label{prop-eigv} Assume~\eqref{eq-complprojmixrho}, \eqref{eq-corel} and \eqref{eq-corel2}, and
let $\bar A(t,\scat) $ be as defined in Lemma~\ref{lemma-matrix}.
Let $\nu \in (\frac13,\frac12)$ and $\delta\in (0,\frac12\min\{\ga_{\scat},\hat{\gamma}_{\scat}\})$, ensuring that~\eqref{eq-spdec} holds.
Then for any $\delta_0\le\delta^{4/(3\nu-1)}$ and $t\in B_{\delta_0}(0)$,
\[
1-\lambda_{\scat}(t)=\bar A(t,\scat) \frac{\log(1/|t|)}{8\pi \scat}+E(t,
\scat),
\]
where $|E(t,\scat)|\le \wCs \,\wgs^{-2}\,|t|^2 + C|t|^2
\scat^{-2}$ for $\wCs$ and $\wgs$ as
in~\eqref{eq-corel2}
and some uniform constant $C$.\end{prop}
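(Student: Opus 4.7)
The plan is to use the spectral decomposition~\eqref{eq-spdec} to derive an exact formula for $1-\lambda_{\scat}(t)$, and then split the result into (i) a principal geometric part producing the $\log(1/|t|)$ factor and the corridor sum $\bar A(t,\scat)$, and (ii) a remainder controlled by the correlation bounds~\eqref{eq-corel}--\eqref{eq-corel2}. Concretely, for any fixed $h\in\cB$ with $\Pi_{\scat}(0)h=\rho_0$ (the invariant density of $T_\scat$), applying the eigenrelation $\hat R_{\scat}(t)\Pi_{\scat}(t)h=\lambda_{\scat}(t)\Pi_{\scat}(t)h$ and integrating against Lebesgue (using $\int R_{\scat}g\,dm=\int g\,dm$), I get
\[
\lambda_{\scat}(t)-1 \;=\; \frac{\int(e^{it\ks}-1)\,\Pi_{\scat}(t)h\,dm}{\int \Pi_{\scat}(t)h\,dm}.
\]
Writing $\Pi_{\scat}(t)h=\rho_0+(\Pi_{\scat}(t)-\Pi_{\scat}(0))h$, expanding the second summand via the contour integral~\eqref{eq-formpi} and the Neumann series $(u-\hat R_{\scat}(t))^{-1}=\sum_{k\ge 0}(u-R_{\scat})^{-1}[(\hat R_{\scat}(t)-R_{\scat})(u-R_{\scat})^{-1}]^k$, and using $(\hat R_\scat(t)-R_\scat)g=R_\scat((e^{it\ks}-1)g)$ together with $R_\scat\Pi_\scat(0)=\Pi_\scat(0)R_\scat=\Pi_\scat(0)$, which implies $(R_\scat-\Pi_\scat(0))^n=R_\scat^n-\Pi_\scat(0)$, the numerator takes the form
\[
\int(e^{it\ks}-1)\,d\mu \;+\; \sum_{k\ge 1}E_k(t,\scat),
\]
where the terms $E_k$ are precisely the multilinear integrals appearing on the left-hand sides of~\eqref{eq-corel} (for $k=1$) and~\eqref{eq-corel2} (for $k=2$); the products $\bigl(\int(e^{it\ks}-1)\,d\mu\bigr)^k$ subtracted on the right-hand sides of those bounds are exactly the ones absorbed into $\int(e^{it\ks}-1)d\mu$ after dividing by the denominator $\int\Pi_\scat(t)h\,dm=1+O(\scat^{-\nu}|t|^\nu)$. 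Summing the geometric series in the correlation indices $j$ (and $m$ for $k=2$) via~\eqref{eq-corel2} yields $\sum_{k\ge 1}|E_k|\le \wCs\,\wgs^{-2}|t|^2$, so
\[
\lambda_\scat(t)-1=\int(e^{it\ks}-1)\,d\mu+E_{\mathrm{corr}}(t,\scat),\qquad |E_{\mathrm{corr}}(t,\scat)|\le\wCs\,\wgs^{-2}|t|^2.
\]

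Next, I analyze the main integral $\int(e^{it\ks}-1)\,d\mu$. By the time-reversal symmetry $\int\ks\,d\mu=0$, it equals $\int(\cos(t\cdot\ks)-1)\,d\mu$. Since $\cos(t\cdot\ks)-1$ is constant on each level set $\{\ks=N\xi+\xi'\}$, I partition by corridors as in Section~\ref{sec:corr}, and for each corridor use the geometric measure formula
\[
\mu(\ks=N\xi+\xi') \;=\; \frac{d_{\scat}(\xi)^2}{c_0\,\scat\,|\xi|\,N^3}\,(1+o_{N\to\infty}(1)),
\]
with an explicit numerical constant $c_0$, which follows from the geometric computation in Lemma~\ref{lem:S-kappa} applied to both endpoints of the relevant $\cS_{-1}$ strip, and which for fixed $\scat$ reduces to the classical estimate going back to Bleher~\cite{Bl92}. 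Substituting and applying the elementary Fourier asymptotic
\[
\sum_{N\ge 1}\frac{1-\cos(Na)}{N^3}=\frac{a^2}{2}\log(1/|a|)+O(a^2) \qquad(a\to 0)
\]
with $a=t\cdot\xi$, then summing over corridors with $|\xi|\le 1/(2\scat)$ (the range in which the $\xi$-corridor is actually open, cf.~Lemma~\ref{lem:width}), I obtain
\[
\int(e^{it\ks}-1)\,d\mu=-\,\bar A(t,\scat)\,\frac{\log(1/|t|)}{8\pi\scat}+O\bigl(|t|^2\scat^{-2}\bigr).
\]
The $O(|t|^2\scat^{-2})$ term absorbs the contribution from short flights that do not enter any corridor, the logarithmic discrepancy $\log(1/|t\cdot\xi|)-\log(1/|t|)=O(\log|\xi|)$ summed against the corridor weights (controlled using Lemmas~\ref{lem:sector_sum} and~\ref{lem:corridor_sum} of Appendix~\ref{sec:corridors}), and the $o(1)$ relative correction in the measure formula.

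Combining the two displays yields the claimed expansion. The role of the constraint $\delta_0\le\delta^{4/(3\nu-1)}$ with $\nu\in(1/3,1/2)$ is to ensure that for $t\in B_{\delta_0}(0)$ the continuity bound~\eqref{eq-cont} keeps $\hat R_\scat(t)-R_\scat$ small enough on the spectral contour $|u-1|=\delta$ for the Neumann series in the first step to converge, with the specific exponent $4/(3\nu-1)$ calibrated so that the residual from the denominator $1+O(\scat^{-\nu}|t|^{\nu})$, when multiplied against the main term of size $|t|^2\log(1/|t|)\scat^{-2}$, is itself absorbed into the $|t|^2\scat^{-2}$ error.

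The main obstacle I anticipate is the geometric step: establishing $\mu(\ks=N\xi+\xi')\sim d_\scat(\xi)^2/(c_0\scat|\xi|N^3)$ with uniformity in both $N\to\infty$ and the corridor index $(\xi,\xi')$ up to $|\xi|\sim 1/\scat$, and then keeping the various error contributions (short flights, logarithmic discrepancy, tails in $N$) at the $|t|^2\scat^{-2}$ scale despite the divergence of the number of open corridors as $\scat\to 0$. Lemma~\ref{lemma-bw}, together with the corridor sum estimates in Appendix~\ref{sec:corridors}, provides exactly the uniform bounds needed to control these errors.
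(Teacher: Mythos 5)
Your overall architecture — express $1-\lambda_\scat(t)$ via the eigenrelation, split off $\mu(1-e^{it\ks})$ as the principal term, extract the $\log(1/|t|)$ from the corridor sum using the measure asymptotics of Lemma~\ref{lem:p6}, and treat the remainder as an error — mirrors the paper's proof. The corridor-asymptotics step of your sketch is essentially right (and $c_0=4\pi$ matches Lemma~\ref{lem:p6}), although summing the logarithmic discrepancy $\log(1/|t\cdot\xi|)-\log(1/|t|)$ against the corridor weights produces $O(|t|^2\scat^{-2}\log(1/\scat))$, not $O(|t|^2\scat^{-2})$; this is harmless up to bookkeeping but not quite what you claim. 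The real gap is in the treatment of the error term.

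You expand $(u-\hat R_\scat(t))^{-1}$ as a full Neumann series and assert that the resulting multilinear integrals $E_k$ are bounded via~\eqref{eq-corel} ($k=1$) and~\eqref{eq-corel2} ($k=2$), then sum the geometric series to get $\sum_{k\ge 1}|E_k|\le\wCs\wgs^{-2}|t|^2$. This does not work. The assumptions~\eqref{eq-corel} and~\eqref{eq-corel2} only bound two-point and three-point correlations of $e^{it\ks}-1$; the Neumann-series terms with $k\ge 3$ would require analogous bounds on arbitrarily high-order mixed correlations, and those are neither assumed nor proved (Appendix~\ref{sec:decay} establishes only the two- and three-point versions). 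The paper avoids this precisely by not expanding to all orders: it peels off exactly two applications of $(\hat R_\scat(t)-\hat R_\scat(0))$, giving the terms $J_1$ (estimated by~\eqref{eq-corel}) and $K_1$ (estimated by~\eqref{eq-corel2}), and then leaves the full remainder as the operator-theoretic term $K_2$ containing the factor $(u-\hat R_\scat(t))^{-1}-(u-\hat R_\scat(0))^{-1}$. This $K_2$ is bounded directly by the continuity estimate~\eqref{eq-contdefpi} on the Banach space combined with the explicit $L^1$-projection Lemma~\ref{lemma-bw}, and yields a factor $\scat^{-3\nu}\gamma_\scat^{-3}|t|^{3\nu+1}$. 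Moreover there is a separate "scalar" piece $I_1(t,\scat)=\frac{\mu((\Pi_\scat(t)-\Pi_\scat(0))1)}{\mu(\Pi_\scat(t)1)}\int(e^{it\ks}-1)\,d\mu$ which is also handled by~\eqref{eq-contdefpi} plus Lemma~\ref{lemma-bw}, not by a correlation bound.

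This also shows you have misidentified the role of the constraint $\delta_0\le\delta^{4/(3\nu-1)}$. It is not primarily about Neumann-series convergence or about cancelling the denominator $\int\Pi_\scat(t)h\,dm$ against the main term; it is calibrated so that on $B_{\delta_0}(0)$ the factor $|t|^{3\nu-1}$ (respectively $|t|^{\nu/2}$) appearing in $K_2$ (respectively $I_1$) dominates $\gamma_\scat^{4}$ (respectively $\gamma_\scat^{2}$), cancelling the resolvent norms $\gamma_\scat^{-3}$ (respectively $\gamma_\scat^{-2}$) so that both terms land at $O(\scat^{-1}|t|^2)\subset O(\scat^{-2}|t|^2)$. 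The restriction $\nu>\frac13$ is exactly what makes the exponent $3\nu-1$ positive so that this absorption is possible. Without the finite truncation of the resolvent expansion plus the explicit Banach-space estimates for $I_1$ and $K_2$, the error bound $|E(t,\scat)|\le\wCs\wgs^{-2}|t|^2+C|t|^2\scat^{-2}$ cannot be reached from~\eqref{eq-corel} and~\eqref{eq-corel2} alone.
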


\begin{remark}
\label{rem-shrdelta}
It is possible to shrink $\delta_0$ further to $\delta_0<e^{-\max\{\wCs\wgs^{-2},\scat^{-2}\}}$ leading to $E(t,\scat)=o(|t|^2\log|1/t|)$.
This would mean that in the proof of main results in Section~\ref{sec-lmthm}
we would work on this very small neighborhood and obtain the same range of $n$ and $\scat$ in the final statements.
We find it more convenient to work  on the neighborhood $B_{\delta_0}(0)$ as in the statement above.
\end{remark}

\begin{remark}
\label{rmk:flightev}
Let $q_{\scat}$ be the flight function taking values
in $\R^2$ as opposed to  the displacement function $\ks$ taking values in $\Z^2$.
A similar statement holds for the the dominant eigenvalue of the perturbed operator
$R_\scat(e^{it q_{\scat}})$.
The proof is similar to the one below using that
 $|q_{\scat}-\ks| \le 1$.
\end{remark}

\begin{proofof}{Proposition~\ref{prop-eigv}} In the notation of Banach spaces
of distributions (see, for instance,~\cite{LT16})
for $h\in C^{q_0}$ we write $\langle h, \Id \rangle=\langle \Id, h\rangle=\int h \Id\, d m$ and $\langle m, h\rangle=\int h\, d m$,  where $\Id$ is both an element of
$\cB$ and of $(C^{q_0})'$.
Let $v_{\scat}(t)=\frac{ \Pi_{\scat}(t) 1}{\langle \Pi_{\scat}(t) 1, 1\rangle}$
and recall that $v_{\scat}(0)=\Id$.
Recall also that for every $\scat$,  $\lambda_{\scat}(t) v_\scat(t)=\hat
R_{\scat}(t) v_{\scat}(t)$ for $t$ small enough, and that
$\lambda_{\scat}(0)=1$.
Since $\langle v_{\scat}(t),\Id\rangle=1$,
\begin{align*}
1-\lambda_{\scat}(t)&=1-\langle \hat
R_{\scat}(t)v_\scat(t),\Id\rangle=\mu(1-e^{it\ks})+\langle (\hat
R_{\scat}(t)-\hat R_{\scat}(0))(v_{\scat}(t)-\Id),\Id\rangle\\
&=: \mu(1-e^{it\ks})+ V(t,\scat).
\end{align*}
With the meaning of inner product  clarified, for ease of notation from here
on we will write
$V(t,\scat)=\int_{\cM}(e^{it\ks}-1)(v_{\scat}(t)-\Id) \, d
m$.
We recall the terminology in Remark~\ref{rem:width}. For $\xi =
(p,q)$ with $\gcd(p,q) = 1$, we let $\xi' = (p',q')$ be the point
 uniquely determined by $\xi$ in the sense that $p'/q'$ is  convergent
 preceding $p/q$
in the continued fraction expansion of $p/q$; in particular $|\xi'|  \leq
|\xi|$.
Recall that $\cXi$ is the set of all such pairs $(\xi,\xi')$ with $|\xi|
\leq 1/(2\scat)$.
With this specified, we write
\begin{align*}
 \mu(1-e^{it\ks})=\sum_{(\xi,\xi')\in\cXi}\sum_{N=1}^\infty (e^{it(
 \xi'+ N\xi)}-1)  \mu(\{  \ks = \xi'+ N\xi \}).
\end{align*}
Using the fact that $\int \ks \, d\mu=0$, we compute that
\begin{align*}
\mu(1-e^{it\ks})&=\sum_{(\xi,\xi')\in\cXi}\sum_{N=1}^\infty
\left(e^{it( \xi'+ N\xi)}-1-it( \xi'+ N\xi)\right) \mu(\{  \ks =
\xi'+ N\xi \})\\
&=\sum_{(\xi,\xi')\in\cXi}\sum_{N=1}^{1/|t|}  \left(e^{it( \xi'+ N\xi)}-1-it(
\xi'+ N\xi)\right)  \mu(\{  \ks = \xi'+ N\xi \}) \\
&\quad +O\left(|t|\sum_{(\xi,\xi')\in\cXi}|\xi|\sum_{N>1/|t|} N\mu(\{
\ks = \xi'+ N\xi \}) \right )\\
&=\sum_{(\xi,\xi')\in\cXi}\sum_{N=1}^{1/|t|} \frac12\, \langle t,  \xi'+
N\xi\rangle^2 \mu(\{ \ks = \xi'+ N\xi \})
+O(|t|^2):=I(t,\scat)+O(|t|^2),
\end{align*}
where the involved constants in the last big $O$ are independent of $\scat$.
Further, using Lemma~\ref{lem:corridor_sum},
\begin{align*}
I(t,\scat)&=\frac{1}{4\pi \scat}\sum_{|\xi|  \leq
1/(2\scat)}\frac{d_{\scat}(\xi)^2}{|\xi|} \langle t,
\xi\rangle^2\sum_{N=\max\{1,d_\scat(\xi)/(2\scat)\}}^{1/|t|} \frac{1}{N}
\\& \qquad +O\left(|t|^2\sum_{(\xi,\xi')\in\cXi}\frac{1}{4\pi |\xi|\scat}
\sum_{N<\max\{1, d_\scat(\xi)/(2\scat)\}}4\scat^2\, N\,|\xi|\right)\\
&=\frac{1}{4\pi \scat}\sum_{|\xi|  \leq
1/(2\scat)}\frac{d_{\scat}(\xi)^2}{|\xi|}\langle t, \xi\rangle^2
\sum_{N=\max\{1,d_\scat(\xi)/(2\scat)\}}^{1/|t|} \frac{1}{N}
+O\left(|t|^2\scat^{-1}\right)\\
&=\frac{\log(1/|t|)}{4\pi \scat}\sum_{|\xi|  \leq
1/(2\scat)}\frac{d_{\scat}(\xi)^2}{|\xi|}\langle t, \xi\rangle^2 +
O\left(|t|^2\scat^{-1}\log(1/\scat)\right).
\end{align*}
Hence, with $\bar A(t,\scat)$ as in Lemma~\ref{lemma-matrix},
\[
\mu(1-e^{it\ks})=\bar A(t,\scat) \frac{\log(1/|t|)}{4\pi\scat}+O\left(|t|^2\scat^{-1}\log(1/\scat)\right).
\]
Thus, $1-\lambda_{\scat}(t)=\bar A(t,\scat) \frac{\log(1/|t|)}{4\pi\scat}+E(t,\scat)$, where
$E(t,\scat)=O\left(|t|^2\scat^{-1}\log(1/\scat)\right) + V(t,\scat)$.
It remains to estimate $V(t,\scat)$.
Note that
\begin{equation*}
v_{\scat}(t)-\Id=\frac{\mu((\Pi_{\scat}(t)-\Pi_{\scat}(0)) \Id)}{\mu(
\Pi_{\scat}(t) \Id)}\, \Pi_\scat(0)\Id+\frac{(\Pi_{\scat}(t)-\Pi_{\scat}(0)) \Id}{\mu_{\scat}(
\Pi_{\scat}(t) 1)}.
\end{equation*}
Hence,
\begin{eqnarray*}\label{eq-corel3}
V(t,\scat) &=& \frac{\mu((\Pi_{\scat}(t)-\Pi_{\scat}(0)) \Id)}{\mu( \Pi_{\scat}(t)
\Id)}\int_{\cM_0}  (e^{it\ks}-1)\, d \mu
+\frac{\int_{\cM_0} (e^{it\ks}-1)
(\Pi_{\scat}(t)-\Pi_{\scat}(0)) \Id\, d m}{\mu( \Pi_{\scat}(t)
\Id)}\\
&=&I_1(t,\scat)+I_2(t,\scat).
\end{eqnarray*}

{\bf Estimating $ I_1(t,\scat)$.} Since $\int_{\cM_0} \ks\,
d\mu=0$, we have
\[I_1(t,\scat)=\frac{\mu((\Pi_{\scat}(t)-\Pi_{\scat}(0)) \Id)}{\mu( \Pi_{\scat}(t)
\Id)}\int_{\cM_0}  (e^{it\ks}-1-it\ks)\, d \mu. \]
Now, by~\eqref{eq-contdefpi} and Lemma~\ref{lemma-bw},
\begin{align}
\label{eq-proj1111}
\nonumber \int_{\cM} | (\Pi_{\scat}(t)-\Pi_{\scat}(0))\Id|\, d\mu
&=\sum_{(\xi,\xi')\in\cXi}\sum_{N=1}^\infty  \int_{\cM} 1_{\{  \ks
= \xi'+ N\xi \}} |(\Pi_{\scat}(t)-\Pi_{\scat}(0)) \Id| \\
\nonumber &\le \sum_{(\xi,\xi')\in\cXi} |\xi|^{-\frac52+s_0}
\scat^{-\frac12+s_0}\|\Pi_{\scat}(t)-\Pi_{\scat}(0)\|_s\sum_{N=1}^\infty
N^{-\frac52}\\
&\le C \scat^{-\nu} \ga_{\scat}^{-2}|t|^{\nu}
\end{align}
for some uniform $C$. Using also that $|e^{ix}-1-ix|\le x^y$, for any $y\in (0,2]$,
\[
|I_1(t,\scat)|\le C \scat^{-\nu} \ga_{\scat}^{-2}|t|^{\nu}|t|^{2-\nu/2}\int_{\cM_0}
|\ks|^{2-\nu}\, d \mu\le C \scat^{-\nu-1} \ga_{\scat}^{-2}|t|^{\nu/2+2},
\]
where in the last inequality we have used Lemma~\ref{lem:kappa_-norm}.
Note that for $|t|\in B_{\delta_0}(0)$ with $\delta_0\le \ga_{\scat}^{4/(3\nu-1)}$, as in the statement,
$|t|^{\nu/2}<\ga_{\scat}^{2\nu/(3\nu-1)}< \ga_{\scat}^{2}$ for all $\nu \in (\frac13,\frac12)$. Thus, $|I_1(t,\scat)|\le C \scat^{-\nu-1} |t|^{2}$.

{\bf Estimating $ I_2(t,\scat)$.}
Recall that~\eqref{eq-complprojmixrho} holds and that $\delta$ is chosen so
that~\eqref{eq-formpi} holds.
Using the definition of $\Pi_{\scat}(t)$ and noting that for every $\scat$,
$(u-\hat R_{\scat}(0))^{-1}\Id=(1-u)^{-1}$,
\begin{align*}
(\Pi_{\scat}(t)-\Pi_{\scat}(0) 1&=\int_{|u-1|=\delta}(u-\hat
R_{\scat}(t))^{-1}(\hat R_{\scat}(t)-\hat R_{\scat}(0))(u-\hat
R_{\scat}(0))^{-1}\Id\, du\\
&=\int_{|u-1|=\delta}(1-u)^{-1}(u-\hat R_{\scat}(t)^{-1}(\hat R_{\scat}(t)-\hat
R_{\scat}(0))\Id\, du.
\end{align*}
Thus,
\begin{align}
\label{eq-j1j2}
\nonumber
I_2(t,\scat) &=\int_{\cM_0}(e^{it\ks}-1)\int_{|u-1|=\delta}(1-u)^{-1}(u-\hat
R_{\scat}(t))^{-1}(\hat R_{\scat}(t)-\hat R_{\scat}(0))\Id\, d u\, d m\\
&:=J_1(t,\scat)+J_2(t,\scat),
\end{align}
for
$$
J_1(t,\scat) :=\int_{\cM_0}(e^{it\ks}-1)\int_{|u-1|=\delta}(1-u)^{-1}(u-\hat
R_{\scat}(0))^{-1}(\hat R_{\scat}(t)-\hat R_{\scat}(0))\Id\, d u\, d m
$$
and
\begin{align*}
J_2(t,\scat) &:= \int_{\cM_0}(e^{it\ks}-1)\int_{|u-1|=\delta}(1-u)^{-1}\left((u-\hat
R_{\scat}(t))^{-1}-(u-\hat R_{\scat}(0))^{-1}\right) \\
& \qquad (\hat R_{\scat}(t)-\hat R_{\scat}(0))\Id\, d u\, d m\\
&= \int_{\cM_0}(e^{it\ks}-1)\int_{|u-1|=\delta} (1-u)^{-1}(u-\hat
R_{\scat}(t))^{-1}(\hat R_{\scat}(t)-\hat R_{\scat}(0)) (u-\hat R_{\scat}(0))^{-1} \\
&\qquad \times (\hat R_{\scat}(t)-\hat R_{\scat}(0))\Id\, d u\,
d m =: K_1(t,\scat)+K_2(t,\scat),
\end{align*}
where
\begin{align}\label{eq-K2}
\nonumber
K_1(t,\scat)=\int_{\cM_0}(e^{it\ks}-1)\int_{|u-1|=\delta}&(1-u)^{-1}(u-\hat
R_{\scat}(0))^{-1}(\hat R_{\scat}(t)-\hat R_{\scat}(0))\\
&\times(u-\hat R_{\scat}(0))^{-1}(\hat R_{\scat}(t)-\hat R_{\scat}(0))\Id\, d u\,
d m
\end{align}
and
\begin{align*}
\nonumber K_2(t,\scat)=\int_{\cM_0}(e^{it\ks}-1) &
\int_{|u-1|=\delta} (1-u)^{-1}
\left((u-\hat R_{\scat}(t))^{-1}-(u-\hat R_{\scat}(0))^{-1}\right) \\
&\times (\hat R_{\scat}(t)-\hat R_{\scat}(0)) (u-\hat R_{\scat}(0))^{-1}(\hat
R_{\scat}(t)-\hat R_{\scat}(0))\Id\, d u\, d m.
\end{align*}
We first treat $K_2(t,\scat)$.
Note that for $u$ in the chosen contour, $\|(u-R_\scat (t))^{-1}\|_{\cB}\le \ga_{\scat}^{-1}$.
Using~\eqref{eq-contdefpi}, for all such $u$,
\begin{align*}
&\left\|\left((u-\hat R_{\scat}(t))^{-1}-(u-\hat R_{\scat}(0))^{-1}\right)(\hat
R_{\scat}(t)-\hat R_{\scat}(0)) (u-\hat R_{\scat}(0))^{-1}(\hat R_{\scat}(t)-\hat
R_{\scat}(0))\right\|_{\cB} \\
&\le C  \scat^{-2\nu}|t|^{3\nu}\ga_{\scat}^{-3}.
\end{align*}
This together with Lemma~\ref{lemma-bw} gives that
\begin{align*}
|K_2(t,\scat)| &\leq  \sum_{(\xi,\xi')\in\cXi} \sum_{N=1}^\infty
\int_{\cM_0} \int_{|u-1|=\delta}|1-u|^{-1}1_{\{  \ks = \xi'+ N\xi
\} } |e^{it\ks}-1|\\
&\times \left|(u-\hat R_{\scat}(t))^{-1}-(u-\hat R_{\scat}(0))^{-1}
(\hat R_{\scat}(t)-\hat R_{\scat}(0) (u-\hat R_{\scat}(0))^{-1}(\hat
R_{\scat}(t)-\hat R_{\scat}(0)) \Id\right|
\, d u\, d m\\
&\le|t|^{3\nu} \scat^{-3\nu} \ga_{\scat}^{-3} \sum_{(\xi,\xi')\in\cXi}
|\xi|^{-\frac52+s_0} \scat^{-\frac12+s_0}\sum_{N=1}^\infty |t|
N^{-\frac32}\le C \scat^{-3\nu} \ga_{\scat}^{-3}|t|^{3\nu +1}.
\end{align*}
Hence, $|K_2(t,\scat)| \leq C \scat^{-1}\ga_{\scat}^{-3}\,|t|^{2}\, t^{3\nu-1}=C \scat^{-1}\ga_{\scat}^{-3}\,|t|^{2}\, \ga_{\scat}^4$
for all $|t|\in B_{\delta_0}$ with $\delta_0<\ga_{\scat}^{4/(3\nu-1)}$. It follows that
$|K_2(t,\scat)| \leq C \scat^{-1}|t|^{2}$.\\

{\bf{Estimating $J_1(t,\scat)$ in~\eqref{eq-j1j2} and $K_1(t,\scat)$
in~\eqref{eq-K2}.}}
These terms are in, some sense, independent of the Banach space $\cB$ (see
the explanation below)
and can be analysed either via the correlation
function~\eqref{eq-corel} or its generalization~\eqref{eq-corel2}.
The rest of the proof is allocated to this type of analysis.

We start with $J_1(t,\scat)$ defined in~\eqref{eq-j1j2}, which is easier
using~\eqref{eq-corel}.
Recall that $\hat R_{\scat}(0)=R_{\scat}$ and $\int_{|u-1|=\delta}
(1-u)^{-2} \, du=0$ due to Cauchy's theorem.
This gives
\begin{align*}
J_1(t,\scat)
&=\int_{\cM_0}(e^{it\ks}-1)\int_{|u-1|=\delta}(1-u)^{-1}\sum_{j=0}^\infty
u^{-j-1} R_{\scat}^j\, R_\scat(e^{it\ks}-1)\Id\, d u\, d m \\
&\quad -\left(\int_{\cM_0}(e^{it\ks}-1)\, d \mu\right)^2
\int_{|u-1|=\delta}(1-u)^{-1}\sum_{j=0}^\infty u^{-j-1}\, d u\\
&=\int_{|u-1|=\delta}(1-u)^{-1}\sum_{j=0}^\infty
u^{-j-1}\int_{\cM_0}(e^{it\ks}-1)  R_\scat^j\,
R_{\scat}(e^{it\ks}-1)\Id\, d m\, d u \\
&\quad -\left(\int_{\cM_0}(e^{it\ks}-1)\, d \mu\right)^2
\int_{|u-1|=\delta}(1-u)^{-1}\sum_{j=0}^\infty u^{-j-1}\, d u.
\end{align*}
Swapping the order of the integrals is allowed due
to~\eqref{eq-corel}. The quantity
\[
\Big(\int_{\cM_0} (e^{it\ks}-1)
R_\scat^{j+1}(e^{it\ks}-1)\, d\mu
-\int_{\cM_0}(e^{it\ks}-1)\, d \mu \Big)^2\]
decays exponentially fast.
Hence, we can write
\begin{align*}
J_1(t,\scat)& =\int_{|u-1|=\delta}(1-u)^{-1}\sum_{j=0}^\infty u^{-j-1}\\
&\quad \times\Big(\int_{\cM_0}
(e^{it\ks}-1)\,(e^{it\ks}-1)\circ T_{\scat}^{j+1} \,
d\mu-\left(\int_{\cM_0}(e^{it\ks}-1)\, d \mu\right)^2 \Big)\, du.
\end{align*}
Using Lemma~\ref{lem:kappa_-norm} to control the dependence on $\scat$, $\left(\int_{\cM_0}(e^{it\ks}-1)\, d \mu\right)^2\le C
|t|^2\scat^{-2}$.
Next,
recall that~\eqref{eq-complprojmixrho} holds and
that $\delta < \frac12 \min\{ \ga_{\scat}, \gb_{\scat} \}$.
Note that for $|u-1|=\delta$, we have
$|u|^{-(j+1)}\ll (1-\gb_{\scat}/2)^{-(j+1)}$.
This together with~\eqref{eq-corel} gives
\begin{align*}
|J_1(t,\scat)|&\le C_\scat\,|t|^2
\int_{|u-1|=\delta}|1-u|^{-1}\sum_{j=0}^\infty
|u|^{-j-1}\left(1-\gb_{\scat}\right)^{j+1}\\
&\ll \hat C_\scat\,|t|^2
\sum_{j=1}^\infty\left(\frac{1-\gb_{\scat}}{1-\gb_{\scat}/2}\right)^{j+1}\leq
2\hat C_\scat\,|t|^2\, \gb_{\scat}^{-1}.
\end{align*}

An argument similar to the one above used in estimating $ J_1(t,\scat)$ with
~\eqref{eq-corel2} instead of~\eqref{eq-corel} allows us to deal with
$K_1(t,\scat)$ defined in~\eqref{eq-K2}.
Compute that
\begin{align*}
K_1(t,\scat)=\int_{\cM_0}(e^{it\ks}-1)\int_{|u-1|=\delta}&(1-u)^{-1}\sum_{m\ge
1} u^{-m}\sum_{j\ge 1} u^{-j}\hat R_{\scat}(0)^{j}(e^{it\ks}-1)\\
&\times\hat R_{\scat}(0)^{m}(e^{it\ks}-1)\, d u\, d m.
\end{align*}
Let
\begin{align*}
E(t,\scat)&=\int_{\cM_0}
(e^{it\ks}-1)\,d\mu\,\int_{|u-1|=\delta}(1-u)^{-1}\\
&\qquad  \times \sum_{j\ge 1}
u^{-j}\sum_{m\ge 1} u^{-m}\int_{\cM_0}  (e^{it\ks}-1)
R_\scat(0)^{m}(e^{it\ks}-1)\,d\mu\, du \\
&-\int_{|u-1|=\delta}(1-u)^{-1}\sum_{j\ge 1} u^{-j}\sum_{m\ge 1}
u^{-m}\int_{\cM_0}  (e^{it\ks}-1)\,d\mu\, \\
& \qquad \times \int_{\cM_0} (e^{it\ks}-1)\, (e^{it\ks}-1)\circ  T_{\scat}^{j}\,
d\mu\, du\\
&-\Big(\int_{\cM_0}
(e^{it\ks}-1)\,d\mu\Big)^3\int_{|u-1|=\delta}(1-u)^{-1}\sum_{j\ge
1} u^{-j}\sum_{m\ge 1} u^{-m}\, du\\
&=(E_1(t,\scat)-E_2(t,\scat))\,\int_{\cM_0}
(e^{it\ks}-1)\,d\mu-E_3(t,\scat).
\end{align*}
Using~\eqref{eq-corel2}, we obtain
\begin{align*}
\Big|K_1(t,\scat) -E(t,\scat) \Big|\le \wCs|t|^2 \sum_{m\ge 1}
|u|^{-m}\sum_{j\ge 1} |u|^{-j}(1-\wgs)^{m+j} \le 4 \wCs |t|^2
\wgs^{-2},
\end{align*}
where in the last inequality we proceeded as in estimating $J_1$ above.

Finally, we need to argue that $E$ is bounded by $|t|^2$. First,
\begin{align*}
&E_1(t,\scat)=\int_{|u-1|=\delta}(1-u)^{-1}\sum_{j\ge 1} u^{-j}\sum_{m\ge 1}
u^{-m}\int_{\cM_0}  (e^{it\ks}-1)
R_\scat(0)^{m}(e^{it\ks}-1)\,d\mu\, du\\
& =\int_{|u-1|=\delta}(1-u)^{-2}\sum_{m\ge 1} u^{-m}\int_{\cM_0}
(e^{it\ks}-1)\,(e^{it\ks}-1)\circ T_\scat^m\,d\mu\, du\\
&=\int_{|u-1|=\delta}(1-u)^{-2}\sum_{m\ge 1} u^{-m}  \\
& \qquad \times \left(\int_{\cM_0}
(e^{it\ks}-1)\,(e^{it\ks}-1)\circ T_\scat^m\,d\mu\, du -
\left(\int_{\cM_0}  (e^{it\ks}-1)\,d\mu\right)^2\,d\mu\right) \\
&\qquad + \left(\int_{\cM_0}
(e^{it\ks}-1)\,d\mu\right)^2\int_{|u-1|=\delta}(1-u)^{-2}\sum_{m\ge
1} u^{-m}\, du= E_1^1(t,\scat)+E_1^2(t,\scat).
\end{align*}
Using~\eqref{eq-corel}, we have that $|E_1^1(t,\scat)|\le 2\hat
C_\scat\,|t|^2\, \gb_{\scat}^{-1}$.

Also,
\(
E_2(t,\scat)=\int_{|u-1|=\delta}(1-u)^{-2}\sum_{j\ge 1} u^{-j}\int_{\cM_0}
(e^{it\ks}-1)\, (e^{it\ks}-1)\circ  T_{\scat}^{j}\, d\mu
\)
and again by~\eqref{eq-corel} and Cauchy's theorem,  $| E_2(t,\scat)|\leq
4\hat C_\scat\,|t|^2\, \gb_{\scat}^{-2}$.
Finally, $E_3(t,\scat)=0$.
Altogether, $| K_1(t,\scat)|\leq  8\wCs\,|t|^2\, \wgs^{-2}$.
\end{proofof}

\section{Limit theorems and mixing as $\scat\to 0$}
\label{sec-lmthm}

The first result below is the non-standard Gaussian limit law, known to hold
when the horizon is infinite. It is a precise version of Theorem~\ref{thm:meta}
stated in Subsection~\ref{subsec:ours}.

Our main contribution lies in characterizing the limit paths allowed as
$\scat\to 0$; this is done up to the
unknown $\ga_{\scat}$, $C_\scat$  in~\eqref{eq-complprojmixrho} and
$\wCs$, $\wgs$ as in~\eqref{eq-corel2}.

Throughout this section, the notation is the same in Subsection~\ref{subsec:recfix}.
In particular, $b_{n,\scat}=\frac{\sqrt{n\log (n/\scat^2)}}{\sqrt{4\pi}\ \scat}$,
and the variance matrix $\Sigma$ are defined as in~\eqref{eq:nottr}, in agreement with
Lemma~\ref{lemma-matrix}.
We recall that $\implies$ stands for convergence in distribution with respect to the invariant measure $\mu$.
\begin{thm}\label{th-mainth}
Let $\ga_{\scat}$, $C_\scat$ be as in~\eqref{eq-complprojmixrho}, let $\wgs$, $\wCs$ be as in~\eqref{eq-corel2}
and let $C$  be as in Proposition~\ref{prop-eigv}.

Set
$M(\scat)=\max\{ C_\scat
\ga_{\scat}^{-2},\scat^2 \wCs
\wgs^{-2}\} + C$. Let $\scat\to 0$ and simultaneously $n\to\infty$ in such a way that
$M(\scat)= o(\log n)$. Then
\[
\frac{\kappa_{n,\scat}}{b_{n,\scat}}\implies {\mathcal N}(0,\Sigma).
\]
\end{thm}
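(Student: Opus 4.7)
The plan is to apply the Nagaev--Guivarc'h method: to show that the characteristic function of $\kappa_{n,\scat}/b_{n,\scat}$ converges to $\exp(-\tfrac12\langle \Sigma t,t\rangle)$ pointwise in $t\in\R^2$, and then invoke the L\'evy continuity theorem. Since $\hat R_\scat(s)^n\Id$ has $\E_\mu(e^{is\cdot\kappa_{n,\scat}})=\langle m,\hat R_\scat(s)^n\Id\rangle$, with $s_n:=t/b_{n,\scat}$ the goal is to establish
\[
\langle m,\hat R_\scat(s_n)^n\Id\rangle \longrightarrow e^{-\frac12\langle\Sigma t,t\rangle}
\]
as $n\to\infty$, $\scat\to 0$ under $M(\scat)=o(\log n)$.

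First I would verify that $s_n$ eventually lies in the ball $B_{\delta_0}(0)$ where the spectral decomposition \eqref{eq-spdec} applies: since $b_{n,\scat}\to\infty$ and $\delta_0$ depends only on $\scat$ through $\ga_\scat,\hat\gamma_\scat$, the condition $|s_n|<\delta_0$ holds as soon as $n$ is sufficiently large (for each fixed $\scat$; uniformity follows from $M(\scat)=o(\log n)$). Then apply \eqref{eq-spdec} to write
\[
\langle m,\hat R_\scat(s_n)^n\Id\rangle
= \lambda_\scat(s_n)^n\,\langle m,\Pi_\scat(s_n)\Id\rangle
+ \langle m, Q_\scat(s_n)^n\Id\rangle.
\]
The remainder term is bounded by $C_\scat(1-\ga_\scat)^n$; since $C_\scat\ga_\scat^{-2}\le M(\scat)=o(\log n)$, one has $C_\scat(1-\ga_\scat)^n\le \exp(\log C_\scat - n\ga_\scat)\to 0$. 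The projection term $\langle m,\Pi_\scat(s_n)\Id\rangle$ tends to $1$ by \eqref{eq-contdefpi}, which gives $\|\Pi_\scat(s_n)-\Pi_\scat(0)\|_\cB\le C\scat^{-2\nu}\ga_\scat^{-2}|s_n|^{2\nu}$, an expression that tends to $0$ under the hypothesis since $|s_n|^{2\nu}$ has a genuine power-law decay while the prefactor is controlled by $M(\scat)$.

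The heart of the argument is therefore the eigenvalue asymptotic. Substituting $s_n=t/b_{n,\scat}$ into Proposition~\ref{prop-eigv} and using $\bar A(s_n,\scat)=\bar A(t,\scat)/b_{n,\scat}^2$, I would write
\[
n(1-\lambda_\scat(s_n))
= \frac{n\,\bar A(t,\scat)}{b_{n,\scat}^2}\cdot\frac{\log(b_{n,\scat}/|t|)}{8\pi\scat}
+ n\,E(s_n,\scat).
\]
Using the definition $b_{n,\scat}^2=\frac{n\log(n/\scat^2)}{4\pi\scat^2}$ and $\log b_{n,\scat}=\tfrac12\log(n/\scat^2)+O(\log\log(n/\scat^2))$, the leading term simplifies to $\tfrac{\scat}{2}\bar A(t,\scat)\,(1+o(1))$, which by Lemma~\ref{lemma-matrix} converges to $\langle\Sigma t,t\rangle$ as $\scat\to 0$. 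Hence the leading piece of $n(1-\lambda_\scat(s_n))$ converges to $\tfrac12\langle \Sigma t,t\rangle$. The error term satisfies
\[
n|E(s_n,\scat)| \le n(\wCs\wgs^{-2}+C\scat^{-2})|s_n|^2
= \frac{4\pi|t|^2(\scat^2\wCs\wgs^{-2}+C)}{\log(n/\scat^2)} = \frac{4\pi|t|^2\, M(\scat)}{\log(n/\scat^2)}(1+o(1)),
\]
which tends to $0$ precisely because $M(\scat)=o(\log n)=o(\log(n/\scat^2))$. Combining, $\lambda_\scat(s_n)^n = \exp(-n(1-\lambda_\scat(s_n))+O(n(1-\lambda_\scat(s_n))^2))\to e^{-\frac12\langle\Sigma t,t\rangle}$.

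The main obstacle is the bookkeeping that synchronizes the three small/large parameters $\scat$, $n$ and $s_n$: one must ensure that each $\scat$-dependent constant appearing in \eqref{eq-spdec}, in the continuity estimate for $\Pi_\scat(t)$, and in the eigenvalue expansion from Proposition~\ref{prop-eigv}, is absorbed by the logarithmic gain $\log(n/\scat^2)$ under the single hypothesis $M(\scat)=o(\log n)$. The definition of $M(\scat)$ is engineered to package precisely the terms $C_\scat\ga_\scat^{-2}$ (controlling $Q_\scat$ and the projection), $\scat^2\wCs\wgs^{-2}$ (from the error in the eigenvalue), and the uniform constant $C$, so once the arithmetic above is done carefully there should be no leftover obstruction and L\'evy's theorem completes the proof.
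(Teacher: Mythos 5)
Your proposal follows the paper's own proof step for step: spectral decomposition via \eqref{eq-spdec}, control of the complementary part $Q_\scat(s_n)^n$ and of $\Pi_\scat(s_n)-\Pi_\scat(0)$ through the hypothesis $M(\scat)=o(\log n)$, substitution of $s_n=t/b_{n,\scat}$ into Proposition~\ref{prop-eigv} together with Lemma~\ref{lemma-matrix} for the leading term, and L\'evy continuity to finish. The only blemish is a factor-of-two slip in the displayed simplification: with the $8\pi\scat$ denominator from Proposition~\ref{prop-eigv} and $n/b_{n,\scat}^2=4\pi\scat^2/\log(n/\scat^2)$, the leading term simplifies to $\tfrac{\scat}{4}\bar A(t,\scat)(1+o(1))$, not $\tfrac{\scat}{2}\bar A(t,\scat)(1+o(1))$; this is exactly what makes your final claim $\tfrac12\langle\Sigma t,t\rangle$ consistent with $\lim_{\scat\to 0}\tfrac{\scat}{2}\bar A(t,\scat)=\langle\Sigma t,t\rangle$, so the argument is otherwise sound and coincides with the paper's.
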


\begin{remark}\label{rmk:flightlimth}
A similar statement holds for the flight function $q_{\scat}$.
The only change in the proof is the use of Remark~\ref{rmk:flightev} instead of Proposition~\ref{prop-eigv}.
\end{remark}

\begin{proof} Throughout we let $\delta<\frac12 \min\{\ga_{\scat},\gb_{\scat}\}$, so that
we can use Proposition~\ref{prop-eigv} with $\delta_0=\delta^{4/(3\nu-1)}$.
By \eqref{eq-spdec}, for $t\in B_{\delta_0}(0)$,
\begin{align*}
\E_\mu(e^{i t\kappa_{n,\scat}} 1) &=\E_\mu(\hat R_{\scat}(t)^n
1)=\lambda_{\scat}(t)^n\int_{\cM_0}\Pi_{\scat}(t) 1\,
d\mu+\int_{\cM_0}Q_{\scat}(t)^n1\, d\mu\\
&=\lambda_{\scat}(t)^n\int_{\cM_0}\Pi_{\scat}(t) 1\,
d\mu+O(C_\scat\,(1-\ga_{\scat})^n).
\end{align*}

Note that the assumption $M(\scat)=o(\log n)$ ensures that, for $\scat$ small enough, $\frac{t}{b_{n,\scat}}\in B_{\delta_0}(0)$ for all $t\in\R^2$.
Hence,
as $n\to\infty$ and given the range of $n$, equivalently as $\scat\to 0$,
\[
\left| \E_\mu \left(\exp\left({it \frac{\kappa_{n,\scat}}{b_{n,\scat}}}\right)
\right) - \lambda_{\scat}
\left(\frac{t}{b_{n,\scat}}\right)^n\int_{\cM_0}\Pi_{\scat}
\left(\frac{t}{b_{n,\scat}}\right) 1\, d\mu\right|\to 0,
\]
for all $t\in\R^2$.

Also, it follows from~\eqref{eq-contdefpi} that $\|\Pi_{\scat}\left(\frac{t}{b_{n,\scat}}\right)
-\Pi_{\scat}(0) \|_{\cB}\to 0$, as $n\to\infty$ and given the range of $n$, equivalently as $\scat\to 0$. Thus,
a standard argument based on the dominated convergence theorem shows that as
$n\to\infty$, equivalently as $\scat\to 0$,
\[
\left| \E_\mu \left(\exp\left({it \, \frac{\kappa_{n,\scat}}{b_{n,\scat}}}\right)
\right) - \lambda_{\scat} \left(\frac{t}{b_{n,\scat}}\right)^n\right|\to 0.
\]
It remains to understand $\lambda_{\scat}
\left(\frac{t}{b_{n,\scat}}\right)^n$ as $\scat\to 0$.
Since $\delta_0=\delta^{4/(3\nu-1)}$, we can apply Proposition~\ref{prop-eigv}
to obtain
\begin{align*}
n\left(1-\lambda_{\scat}
\left(\frac{t}{b_{n,\scat}}\right)\right)=&\frac{n}{8\pi \scat}\,\bar
A\left(\frac{t}{b_{n,\scat}}\,,\,\scat\right) \log(b_{n,\scat}/|t|)
+n\, O\left(\left(\wCs \wgs^{-2} + C\scat^{-2}\right)
\left(\frac{|t|}{b_{n,\scat}}\right)^2\right).
\end{align*}
By assumption, $M(\scat)=o(\log n)$.  Hence, as $\scat\to0$,
\begin{align*}
& n\, \left(\wCs \wgs^{-2} + C\scat^{-2}\right)
\left(\frac{|t|}{b_{n,\scat}}\right)^2
= \left(\wCs \wgs^{-2} + C\scat^{-2}\right)\frac{4\pi
|t|^2\scat^2}{\log(n/\scat^2)} =O\left(\frac{M(\scat)}{\log n}\right)\cdot |t|^2= o(1)\cdot |t|^2\to 0.
\end{align*}
Now, given that $\bar A$ is as in Lemma~\ref{lemma-matrix},
\[
\frac{n}{4\pi \scat}\,\bar A\left(
\frac{t}{b_{n,\scat}}\,,\,\scat\right)=\frac{1}{\log
(n/\scat^2)}\frac{1}{\scat}\, \scat^2\bar A\left(t,\scat\right)=\frac{\scat\bar
A(t,\scat)}{\log (n/\scat^2)}.
\]
Also, using Lemma~\ref{lemma-matrix} and recalling the range of $n$,
\begin{align*}
\lim_{\scat\to 0}\, &\frac{n}{4\pi \scat}\,\bar A\left(\frac{t}{b_{n,\scat}}\,
,\, \scat\right) \log\left(\frac{b_{n,\scat}}{|t|}\right)
= \lim_{\scat\to 0} \frac{\scat\bar A\left(t,\scat\right)}{\log
(n/\scat^2)}\log\left(\frac{b_{n,\scat}}{|t|}\right)\\
&= \lim_{\scat\to 0}\frac{\scat}{2}
\frac{\bar A(t,\scat )} {\log \left( \frac{\sqrt n}{\scat}\right) }
\log\left(\frac{\sqrt n}{\scat} \frac{\sqrt{\log(n/\scat^2)} } { \sqrt{4\pi}
|t|} \right)
=\langle\Sigma t, t\rangle,
\end{align*}
where in the last equality we have used Lemma~\ref{lemma-matrix} and the
uniform convergence theorem for slowly varying functions.
Putting the above together,
\begin{align}
\label{eq:lambdaton}
\lim_{\scat\to 0} \lambda_{\scat} \left(\frac{t}{b_{n,\scat}}\right)^n
=\lim_{\scat\to 0} \exp\left(- n \left(1-\lambda_{\scat}
\left(\frac{t}{b_{n,\scat}}\right)\right)\right)
=\exp\left(-\frac{1}{2}\langle \Sigma t, t\rangle \right),
\end{align} for any $t\in\R^2$. This completes the proof of Theorem~\ref{th-mainth} by Levy's continuity theorem.
\end{proof}

The next result gives a local limit theorem as  $\scat\to 0$, again up to
the
unknown $\ga_{\scat}$, $C_\scat$ and $\wgs$, $\wCs$.
This is possible due to the present proof based on spectral methods
which produces the fine control of the eigenvalue in
Proposition~\ref{prop-eigv}.
The present proof of local limit theorem for the infinite horizon
is new even for $\scat$ fixed. We recall that the only proof of such a local
limit is given in~\cite{SV07} via the abstract results
in~\cite{BalintGouezel06}
for Young towers.  Our proof relies on Proposition~\ref{prop-eigv},
which is new in the set-up of the Banach spaces considered here
and it relies heavily on Appendix~\ref{sec:decay}
and on Proposition~\ref{prop:growthscaled} (which provides useful continuity estimates).

In the notation of Theorem~\ref{th-mainth} we let
$\Phi_\Sigma$ be the density of a Gaussian  random variable distributed
according to ${\mathcal N}(0,\Sigma)$
and recall from Section~\ref{subsec-prop} that $C^{p_0}\subset\cB$.

\begin{thm}\label{th-llt}
Assume the assumptions and notation of Theorem~\ref{th-mainth};
in particular $M(\scat)$ is defined in the same way.
Let $v\in C^{p_0}(\cM)$ and $w\in L^a(\cM)$, for $a>1$.

Let $\scat\to 0$ and simultaneously $n\to\infty$ in such a way that
$M(\scat)=o(\log n)$.
Then
\[
\left| \int_{\cM} v1_{\{\kappa_{n,\scat}=N\}} w\circ T_\scat^n\, d\mu
-\frac{\E_\mu(v)\,\E_\mu(w)}{(b_{n,\scat})^2}\, \Phi_\Sigma\left(\frac{N}{b_{n,\scat}}\right)\right|\to
0.
\]
uniformly in $N\in\Z^2$.
\end{thm}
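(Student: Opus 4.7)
The plan is the classical Nagaev--Guivarc'h Fourier-inversion scheme, leveraging the spectral picture built in Sections~\ref{sec-opRk}--\ref{sec:ev}. Writing $h_\ast = d\mu/dm$ for the invariant density and using $\int (w\circ T_\scat^n)g\,dm = \int w\cdot R_\scat^n g\,dm$, first I would express
\[
L_n(N) := \int v\,1_{\{\kappa_{n,\scat}=N\}}\,w\circ T_\scat^n\,d\mu = \frac{1}{(2\pi)^2}\int_{[-\pi,\pi]^2} e^{-it\cdot N}\,\Psi_n(t)\,dt,\qquad \Psi_n(t) := \int w\cdot \hat R_\scat(t)^n(vh_\ast)\,dm.
\]
After the substitution $t = s/b_{n,\scat}$ and multiplication by $b_{n,\scat}^2$, the target becomes
\[
\frac{1}{(2\pi)^2}\int_{b_{n,\scat}\cdot[-\pi,\pi]^2} e^{-is\cdot(N/b_{n,\scat})}\,\Psi_n(s/b_{n,\scat})\,ds \;\longrightarrow\; \E_\mu(v)\E_\mu(w)\cdot (2\pi)^2\Phi_\Sigma(N/b_{n,\scat}),
\]
uniformly in $N\in\Z^2$; the Gaussian inverse-Fourier identity for $\Phi_\Sigma$ then pins down the limit, provided I control the small-$s$ and large-$s$ contributions separately.

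On the disc $\{|s|\le \eta b_{n,\scat}\}$ (with $\eta\le \delta_0$ small), the spectral decomposition~\eqref{eq-spdec} gives
\[
\Psi_n(s/b_{n,\scat}) = \lambda_\scat(s/b_{n,\scat})^n\int w\cdot \Pi_\scat(s/b_{n,\scat})(vh_\ast)\,dm + \int w\cdot Q_\scat(s/b_{n,\scat})^n(vh_\ast)\,dm.
\]
The computation leading to~\eqref{eq:lambdaton} in the proof of Theorem~\ref{th-mainth} yields $\lambda_\scat(s/b_{n,\scat})^n \to e^{-\frac12\langle\Sigma s,s\rangle}$ pointwise; once $\eta$ is chosen small, Proposition~\ref{prop-eigv} upgrades this to a uniform Gaussian envelope $|\lambda_\scat(s/b_{n,\scat})^n|\le e^{-c|s|^2}$ that enables dominated convergence. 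Using~\eqref{eq-contdefpi} and the identity $\Pi_\scat(0)(vh_\ast) = \E_\mu(v)\,h_\ast$ (the rank-one projection onto the leading eigenspace), the first integral converges to $\E_\mu(v)\E_\mu(w)$; the second is bounded by the $Q_\scat$-remainder and contributes $O(b_{n,\scat}^2\,C_\scat(1-\ga_\scat)^n) = o(1)$ under $M(\scat)=o(\log n)$. Since $|e^{-is\cdot N/b_{n,\scat}}|=1$, the convergence is automatically uniform in $N$, producing $\E_\mu(v)\E_\mu(w)\Phi_\Sigma(N/b_{n,\scat})$ in the limit after accounting for the tail $\int_{|s|>\eta b_{n,\scat}} e^{-\frac12\langle\Sigma s,s\rangle}\,ds$, which vanishes super-polynomially.

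The main obstacle is the large-$s$ annulus $\{\eta b_{n,\scat}<|s|\le \pi b_{n,\scat}\}$, where one needs
\[
\int_{\{\eta\le|t|\le\pi\}} |\Psi_n(t)|\,dt = o(b_{n,\scat}^{-2}).
\]
This is an \emph{aperiodicity} statement: the spectral radius of $\hat R_\scat(t)$ on $\cB$ must be strictly less than $1$ for every $t\in[-\pi,\pi]^2\setminus\{0\}$, ruling out any cohomological relation of the form $e^{it\cdot\kappa_\scat}=\lambda\, u\circ T_\scat / u$ with $|\lambda|=1$. Pointwise aperiodicity, combined with the H\"older continuity~\eqref{eq-cont} of $t\mapsto\hat R_\scat(t)$ and compactness of the annulus, delivers a uniform bound $\|\hat R_\scat(t)^n\|_\cB\le \bar C(\scat)(1-\bar\gamma(\scat))^n$; absorbing the $\scat$-dependent constants into an enlarged $M(\scat)$ keeps the large-$s$ contribution $o(1)$ under the standing hypothesis. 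Aperiodicity itself can be handled by adapting the standard-pairs argument of Appendix~\ref{sec:decay}, in the spirit of~\cite{ChDo09}. Finally, the regularity hypothesis $w\in L^a$ is reduced to $w\in C^{p_0}$ by density: approximate $w$ in $L^a(\mu)$ by $w_\eps\in C^{p_0}$, prove the LLT for $(v,w_\eps)$ by the argument just sketched, and bound the residual $\int v\,1_{\{\kappa_{n,\scat}=N\}}(w-w_\eps)\circ T_\scat^n\,d\mu$ by H\"older's inequality together with the a~priori estimate $\mu(\kappa_{n,\scat}=N)\ll b_{n,\scat}^{-2}$ already delivered by the $C^{p_0}$ case, which yields the required uniform-in-$N$ control.
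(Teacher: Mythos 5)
Your proposal follows the same Nagaev–Guivarc'h Fourier-inversion scheme as the paper's proof: write the localization probability as an integral of $\hat R_\scat(t)^n$ applied to a density, split the torus of frequencies into a disc $\{|t|<\delta_0\}$ (where the spectral decomposition $\hat R_\scat(t)^n=\lambda_\scat(t)^n\Pi_\scat(t)+Q_\scat(t)^n$ and Proposition~\ref{prop-eigv} govern the asymptotics) and an outer annulus (controlled by aperiodicity of $\kappa_\scat$). The structure of the argument on the small disc — $\lambda_\scat(u/b_{n,\scat})^n\to e^{-\frac12\langle\Sigma u,u\rangle}$ from \eqref{eq:lambdaton}, dominated convergence via a Gaussian envelope, the $Q_\scat$-remainder being $O(C_\scat(1-\ga_\scat)^n)$ — is identical in both write-ups.

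The one genuine divergence is how the hypothesis $w\in L^a$, $a>1$, is exploited. The paper handles this head-on: after extracting the leading term $\E_\mu(v)\E_\mu(w)$ it must bound $\int_\cM (\Pi_\scat(t)-\Pi_\scat(0))v\cdot w\,d\mu$, and this it does by H\"older's inequality with exponents $a$ and $b=a/(a-1)$, reducing matters to an $L^b(\mu)$-estimate on $(\Pi_\scat(t)-\Pi_\scat(0))v$. That $L^b$-estimate is the real technical content — it requires the cell decomposition of Lemma~\ref{lemma-bw} and the argument of~\eqref{eq-proj1111}, since an element of $\cB$ is a priori a distribution and its $L^b$-norm is not controlled by $\|\cdot\|_\cB$. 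You instead propose to prove the LLT first for $w\in C^{p_0}$ (where the $\cB\hookrightarrow(C^{q_0})'$ duality makes the pairing $\int w\,(\Pi_\scat(t)-\Pi_\scat(0))(vh_\ast)\,dm$ automatically controlled by $\|\Pi_\scat(t)-\Pi_\scat(0)\|_\cB$), and then extend to $w\in L^a$ by density, using the $v=w=1$ case of the LLT to obtain the a priori bound $\mu(\kappa_{n,\scat}=N)\ll b_{n,\scat}^{-2}$ uniformly in $N$, then H\"older-bound the residual $\int v1_{\{\kappa_{n,\scat}=N\}}(w-w_\eps)\circ T_\scat^n\,d\mu$ by $\|v\|_\infty\|w-w_\eps\|_{L^a}\mu(\kappa_{n,\scat}=N)^{1-1/a}$, which is $O(\eps)$ uniformly. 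This is valid for the stated $o(1)$-conclusion, and is arguably more elementary since it dispenses with the delicate $L^b$-estimate; what the paper's direct approach buys in exchange is a quantitative and self-contained bound on the error term in one pass, which is cleaner to combine with the explicit $\scat$- and $n$-dependencies tracked throughout. Two smaller remarks: (1) you propose to \emph{prove} aperiodicity by adapting the standard-pair argument of Appendix~\ref{sec:decay}, whereas the paper simply cites the known aperiodicity of $\kappa_\scat$ from~\cite{SV07,SV04} and then invokes \cite{AD01} to get $\|\hat R_\scat(t)^n\|_\cB\le C_\scat(1-\ga_\scat)^n$ off the origin; (2) your caveat about possibly "enlarging $M(\scat)$" to absorb the off-zero aperiodicity constants is in fact more careful than the paper, which silently reuses the same $C_\scat,\ga_\scat$. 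Both your sketch and the paper treat the uniform Gaussian envelope $|\lambda_\scat(u/b_{n,\scat})^n|\le e^{-c|u|^2}$ over the whole disc $|u|\le\delta_0 b_{n,\scat}$ somewhat informally; this deserves a line of justification because the bound in Proposition~\ref{prop-eigv} degrades when $|u|$ is a power of $b_{n,\scat}$, but the integral still converges because $n(1-\lambda_\scat(u/b_{n,\scat}))\to\infty$ very fast in that regime.
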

\begin{remark}\label{rmk:llt}
A similar statement holds for the flight function $q_{\scat}$.
By a similar argument, using Remark~\ref{rmk:flightev} instead of Proposition~\ref{prop-eigv},
we obtain $(b_{n,\scat})^2\mu(\{q_{n,\scat}\in V\})\to\Phi_\Sigma(0)Leb_{\R^2}(V)$, for any compact
neighborhood $V\in\R^2$ with $Leb_{\R^2}(\partial V)=0$.
A uniform LLT for $q_{\scat}$ can be obtained by, for instance, a straightforward
adaptation of the
argument used in~\cite[Proof of Theorem 2.7]{MTaihp}.
\end{remark}

It is known that for every $\scat>0$, $\ks$ is aperiodic, i.e., there exists no non-trivial solution to the equation $e^{it\ks}g\circ T_\scat=g$.
The aperiodicity of $\ks$ has been used in~\cite{SV07} (in fact, in~\cite{SV04}) to provide LLT for fixed $\scat$.
Given Proposition~\ref{prop-eigv} and the aperiodicity of $\ks$, the proof of Theorem~\ref{th-llt} is classic, see~\cite{AD01}
and for a variation of it that provides the uniformity in $N$, see,  for
instance,~\cite[First part of Proof of Theorem 2]{Pene18}.
The proof below recalls the main elements needed to obtain the range of $n$ in the statement.
\\

\begin{proofof}{Theorem~\ref{th-llt}}
Let $\delta_0=\delta^{4/(3\nu-1)}$ be so that~\eqref{eq-formpi},~\eqref{eq-complprojmixrho}  and Proposition~\ref{prop-eigv} hold for all $|t|\in B_{\delta_0}(0)$. Since $\ks$ is aperiodic, a known argument (see~[Lemma 4.3 and Theorem 4.1]\cite{AD01}) shows that
$\|\hat R_{\scat}(t)^n\|_{\cB}\le C_\scat(1-\ga_{\scat})^n$, for all $|t|\ge \delta_0$.
 It follows that $|\E_\mu(\hat R_{\scat}(t)^n 1)|\le \|\hat R_{\scat}(t)^n
 \|_{\cB}\le C_\scat\,(1-\ga_{\scat})^n$ for every $|t| \in (\delta_0,\pi)$.
Thus, using that $v\in C^{p_0}\subset\cB$,
\begin{align} \label{eq:llt}
\nonumber \int_{\cM} & v1_{\{\kappa_{n,\scat}=N\}} w\circ T_\scat^n\, d\mu
=\frac{1}{4\pi^2}\int_{ [-\pi,\pi]^2} e^{-itN} \int_{\cM}\hat R_{\scat}(t)^n v
\, w \, d\mu\, dt\\
\nonumber &=\frac{1}{4\pi^2}\int_{ [-\delta_0,\delta_0]^2} e^{-itN}
\int_{\cM}\hat R_{\scat}(t)^n v \, w \, d\mu\, dt
+O\left(C_\scat\,(1-\ga_{\scat})^n\right)\\
\nonumber&=\frac{1}{4\pi^2}\int_{ [-\delta_0,\delta_0]^2} e^{-itN} \lambda_\scat
\left( t\right)^n\int_{\cM} \Pi_{\scat}(t)v \, w \, d\mu\, dt
+O\left(C_\scat\,(1-\ga_{\scat})^n+\hat C_\scat\,(1-\gb_{\scat})^n\right)\\
&=\frac{1}{4\pi^2}I(\scat,t)+O\left(C_\scat\,(1-\ga_{\scat})^n\right).
\end{align}
Recall that $w\in L^a$, $a>1$ and set $b=a/(a-1)$. Using the H\"older inequality,
\begin{align*}
I(\scat,t) &=\int_{ [-\delta,\delta]^2} e^{-itN} \lambda_\scat \left(
t\right)^n\, dt\, \int_{\cM} v \, d\mu\,\int_{\cM} w \, d\mu\\
&\quad  +\int_{ [-\delta,\delta]^2} e^{-itN} \lambda_\scat \left(
t\right)^n\int_{\cM} (\Pi_{\scat}(t)-\Pi_{\scat}(0))v \, w \, d\mu\, dt \\
&=\int_{ [-\delta,\delta]^2} e^{-itN} \lambda_\scat \left( t\right)^n\, dt\,
\int_{\cM} v \, d\mu\,\int_{\cM} w \, d\mu\\
&\quad + O\left(\|w\|_{L^a(\mu)}\int_{ [-\delta,\delta]^2}
\left|\lambda_\scat \left( t\right)^n\right|\left(\int_{\cM}
|(\Pi_{\scat}(t)-\Pi_{\scat}(0))v|^b\, d\mu\right)^{1/b}\, dt\right).
\end{align*}
Recall $v\in\cB$. Using~\eqref{eq-contdefpi},~\eqref{eq-cont} and Lemma~\ref{lemma-bw} and proceeding as in equation~\eqref{eq-proj1111},
\[
\Big(\int_{\cM} |(\Pi_{\scat}(t)-\Pi_{\scat}(0))v|^b\, d\mu\Big)^{1/b}
\le C \scat^{-\nu}\ga_{\scat}^{-2} |t|^{\nu}\le C \scat^{-2}|t|^\eps,
\]
for some uniform $C$ and some $\eps>0$. In the last inequality we have used that $|t|<\delta_0$. Thus,
\begin{align*}
I(\scat,t) &=\int_{ [-\delta_0,\delta_0]^2} e^{-itN} \lambda_\scat \left(
t\right)^n\, dt\, \int_{\cM} v \, d\mu\,
\int_{\cM} w \, d\mu+ O\left(\|w\|_{L^a(\mu)}\scat^{-2} \int_{
[-\delta_0,\delta_0]^2} |t|^{\eps} \left|\lambda_\scat \left( t\right)^n\right|\, dt\right).
\end{align*}
With a change of variables,
\begin{align}
\label{eq:bigOllt}
\nonumber I(\scat,t) &=\frac{1}{(b_{n,\scat})^2}\int_{ [-\delta_0
b_{n,\scat},\delta_0 b_{n,\scat}]^2} e^{-iu\frac{N}{b_{n,\scat}}}
\lambda_{\scat} \left(\frac{u}{b_{n,\scat}}\right)^n\, du\, \int_{\cM} v \,
d\mu\,\int_{\cM} w \, d\mu\\
&\qquad + O\left(\|w\|_{L^a(\mu)} \frac{\scat^{-2}}{(b_{n,\scat})^3}\int_{
[-\delta_0 b_{n,\scat},\delta_0 b_{n,\scat}]^2} |u|^{\eps} \left| \lambda_{\scat}
\left(\frac{u}{b_{n,\scat}}\right)^n\right| du\right).
\end{align}
Given the range of $n$ in the statement, we use~\eqref{eq:lambdaton} to
obtain
\begin{align*}
\lim_{\scat\to 0}\left|4\pi^2\,\int_{ [-\delta_0 b_{n,\scat},\delta_0
b_{n,\scat}]^2} e^{-iu\frac{N}{b_{n,\scat}}}  \lambda_{\scat}
\left(\frac{u}{b_{n,\scat}}\right)^n\,
du-\Phi_\Sigma\left(\frac{N}{b_{n,\scat}}\right)\right|=0.
\end{align*}
To deal with the big $O$ term in~\eqref{eq:bigOllt}, we use that
by~\eqref{eq:lambdaton} there exists a uniform constant $C$ so that
\begin{align*}
\frac{\scat^{-2}}{(b_{n,\scat})^3}\int_{ [-\delta_0 b_{n,\scat},\delta_0
b_{n,\scat}]^2} |u|^{\eps} \left| \lambda_{\scat}
\left(\frac{u}{b_{n,\scat}}\right)^n\right| \,du\le
\frac{\scat^{-2}}{(b_{n,\scat})^{2+\eps}}\int_{ [-\delta_0 b_{n,\scat},\delta_0
b_{n,\scat}]^2} |u|^{\eps} e^{-C|u|^2}\, du.
\end{align*}
Since $M(\scat)=o(\log n)$, we have $n\gg \exp\left( C\scat^{-2}\right)$.
Thus,
$\frac{\scat^{-2}}{(b_{n,\scat})^{2+\eps}}\ll \frac{\log
n}{(b_{n,\scat})^{2+\eps}}=o\left(\frac{1}{(b_{n,\scat})^2}\right)$ as $\scat\to
0$.
Putting these together and using~\eqref{eq:bigOllt},
\[
\lim_{\scat\to 0}\left|4\pi^2\, I(\scat,t)
-\Phi_\Sigma\left(\frac{N}{b_{n,\scat}}\right)\, \int_{\cM} v \,
d\mu\,\int_{\cM} w \, d\mu\,\right|=0.
\]
This together with~\eqref{eq:llt} gives that as $\scat\to 0$,
\begin{align*}
&\left| \int_{\cM} v1_{\{\kappa_{n,\scat}=N\}} w\circ T_\scat^n\, d\mu
-\frac{1}{(b_{n,\scat})^2}
\Phi_\Sigma\left(\frac{N}{b_{n,\scat}}\right)\,\int_{\cM} v \,
d\mu\,\int_{\cM} w \, d\mu\,\right|\\
&\qquad =O\left((b_{n,\scat})^2\ C_\scat\,(1-\ga_{\scat})^n  \right)=o(1),
\end{align*}
where in the last equation we used that $M(\scat)=o(\log n)$.
This concludes the proof.~\end{proofof}

It is known that the local limit theorem for $\ks$ and the billiard map
$T_{\scat}$ (with $\scat$ fixed) implies mixing for the planar Lorentz map $\hat T_{\scat}$
(again $\scat$ fixed), see~\cite{Pene18}.
In fact, sharp error rates in local limit theorems and mixing are also
known, see~\cite{Pene18} for the finite horizon case and~\cite{PT20} for the infinite horizon case.

We recall from Section~\ref{sec:intro} that the Lorentz map $\hat T_{\scat}$
defined on $\widehat\cM=\cM\times\Z^2$ is given
by
$\hat T_{\scat}(\theta,\phi,\ell) = ( T_{\scat}(\theta,\phi),
\ell+\ks(\theta,\phi))$ for $ (\theta,\phi) \in  \cM,\ \ell\in \Z^2$.
Let $\hat\mu=\mu\times \mbox{Leb}_{\Z^2}$, where $\mbox{Leb}_{\Z^2}$ is the counting measure on $\Z^2$.
An immediate consequence of Theorem~\ref{th-llt} is

\begin{corollary}\label{cor:mix}
Assume the assumptions and notation of  Theorem~\ref{th-llt}.
Let $\scat\to 0$ and simultaneously $n\to\infty$ in such a way that
$M(\scat)=o(\log n)$. Then
\[
\lim_{\scat\to 0}\left|(b_{n,\scat})^2\int_{\widehat\cM}v\, w\circ \hat
T_{\scat}\, d\hat\mu-\int_{\widehat\cM} v \, d\hat\mu\,\int_{\widehat\cM} w \,
d\hat\mu\,\right|=0.
\]
\end{corollary}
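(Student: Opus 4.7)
\textbf{Proof plan for Corollary~\ref{cor:mix}.} The strategy is to reduce the $\Z^2$-extension mixing statement to the uniform local limit theorem of Theorem~\ref{th-llt} via a fibrewise decomposition of $v$ and $w$. I would assume, as is standard in this kind of infinite-measure mixing result, that the observables are supported on only finitely many cells $\cM\times\{\ell\}$, with $v_\ell:=v(\cdot,\ell)\in C^{p_0}(\cM)$ and $w_\ell:=w(\cdot,\ell)\in L^a(\cM)$; the general case with suitable decay in the $\Z^2$-variable then follows by approximation.

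First I would unfold the correlation using the skew-product rule $\hat T_\scat^n(x,\ell)=(T_\scat^n x,\,\ell+\kappa_{n,\scat}(x))$ and Fubini:
\[
\int_{\widehat\cM} v\cdot w\circ \hat T_\scat^n\, d\hat\mu \;=\; \sum_{\ell,\,N\in\Z^2}\int_\cM v_\ell\cdot (w_{\ell+N}\circ T_\scat^n)\cdot 1_{\{\kappa_{n,\scat}=N\}}\, d\mu.
\]
Because only finitely many $v_\ell,w_m$ are nonzero, the double sum collapses to a finite sum over pairs $(\ell,m)$ in the supports, with $N=m-\ell$. Exchanging this finite sum with the limit $\scat\to 0$ is harmless.

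Next, to each such summand I would apply Theorem~\ref{th-llt} to the observable pair $(v_\ell, w_{\ell+N})$. Its conclusion, after multiplying by $(b_{n,\scat})^2$, reads
\[
(b_{n,\scat})^2 \int_\cM v_\ell\cdot (w_{\ell+N}\circ T_\scat^n)\cdot 1_{\{\kappa_{n,\scat}=N\}}\, d\mu \;=\; \E_\mu(v_\ell)\,\E_\mu(w_{\ell+N})\,\Phi_\Sigma\!\left(\tfrac{N}{b_{n,\scat}}\right) + o(1),
\]
uniformly in $N$ as $\scat\to 0$ within the regime $M(\scat)=o(\log n)$. Because $N=m-\ell$ ranges over a bounded set while $b_{n,\scat}\to\infty$, continuity of $\Phi_\Sigma$ gives $\Phi_\Sigma(N/b_{n,\scat})\to\Phi_\Sigma(0)$, so summing the finitely many contributions yields
\[
(b_{n,\scat})^2\int_{\widehat\cM} v\cdot w\circ \hat T_\scat^n\, d\hat\mu \;\longrightarrow\; \Phi_\Sigma(0)\,\Big(\sum_\ell \E_\mu(v_\ell)\Big)\Big(\sum_m \E_\mu(w_m)\Big) \;=\; \Phi_\Sigma(0)\int_{\widehat\cM} v\,d\hat\mu \int_{\widehat\cM} w\,d\hat\mu,
\]
which is the claimed limit (with the constant $\Phi_\Sigma(0)$, apparently elided in the stated formula).

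The main obstacle is not the fibrewise application of the LLT, which is off-the-shelf, but rather ensuring that the $o(1/(b_{n,\scat})^2)$ error of Theorem~\ref{th-llt}---which is uniform in $N$ but depends on the norms of the observable pair---survives multiplication by $(b_{n,\scat})^2$ and summation over the relevant $N$. For compactly supported $v$ and $w$ this is automatic since only finitely many $N$ contribute; an extension to general $v\in C^{p_0}(\widehat\cM)$ and $w\in L^a(\widehat\cM)$ would additionally require a tail bound of the form $\sum_{|N|\ge Kb_{n,\scat}}\mu(\kappa_{n,\scat}=N)\to 0$, obtainable from Theorem~\ref{th-mainth} combined with the uniform upper bound $\mu(\kappa_{n,\scat}=N)\ll (b_{n,\scat})^{-2}$ supplied by the LLT itself.
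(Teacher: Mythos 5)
Your plan is the natural one and matches the paper's intent: the corollary is stated as an immediate consequence of Theorem~\ref{th-llt} without a separate proof, and Remark 6.4 clarifies that $v,w$ are supported on a single cell $\cM_0$, in which case only the $N=0$ fibre survives the unfolding and the corollary is literally Theorem~\ref{th-llt} evaluated at $N=0$. Your Fubini decomposition over finitely many cells is a harmless modest generalization of this. You are also right to flag two discrepancies with the printed statements: the factor $\Phi_\Sigma(0)$ is indeed missing from the displayed limit in Corollary~\ref{cor:mix} (compare the fixed-$\scat$ mixing statement~\eqref{eq:mixfix} in the introduction, where $\Phi_{\Sigma_\scat}(0)$ appears), and the usable form of Theorem~\ref{th-llt} requires the error to be $o\bigl((b_{n,\scat})^{-2}\bigr)$ rather than $o(1)$ — which is in fact what the proof of that theorem establishes, even though its displayed conclusion, read literally, only asserts the trivially weaker $o(1)$. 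Your multiplication by $(b_{n,\scat})^2$ therefore has to be understood as invoking this sharper version of the LLT rather than as a consequence of the theorem as printed.
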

\begin{remark}
The class of functions in Corollary~\ref{cor:mix} is rather restrictive as
the functions $v,w$ are supported on the cell $\cM$.
Given the work~\cite{Pene18} (see also~\cite[Section 6]{PT20}), it is very
plausible that the present mixing result can be generalized to a suitable
class of dynamically H{\"o}lder functions supported on the whole of
$\widehat\cM$.
Since the involved argument is rather delicate and not a main concern of the
present work, we omit this.
\end{remark}

\appendix

\section{Estimates on corridors}\label{sec:corridors}

\subsection{Estimating $\P(\ks = \xi'+N\xi)$}\label{sec:Pkappa}
Given a corridor associated to $\xi$, there a neighborhood $U_0$ of $x_0 =
x_0(\xi)$ in
$\partial O_0 \times [-\frac{\pi}{2},\frac{\pi}{2}]$ of initial conditions
$x$ such that the next
collision occurs at a scatterer on the opposite side of the corridor.
For this situation, \cite{SV07} use the coordinates
$(\alpha, z)$, where $\alpha$ is the angle the trajectory of some $x \in
\partial O_0$
makes with the tangent line
at $x_0$, and the intersection point is $y = x_0+z \xi$,
see Figure~\ref{fig:corridor1}.

\begin{figure}[ht]
\begin{center}
\begin{tikzpicture}[scale=1.7]
\node[circle, draw=black] at (1,1.55) {$O_0$};
\node[circle, draw=black] at (4,1.55) {$O_\xi$};
\node[circle, draw=black] at (7,1.5) {$O_{2\xi}$};
\node[circle, draw=black] at (8.35,2.85) {$O_{\ks}$};
\draw[-] (0,1.85) -- (9,1.85);
\draw[-] (0,2.51) -- (9,2.51);
\draw[-] (1,1.5) -- (1.6,2);
\draw[-] (1,1.5) -- (1,1.95);
\draw[-, draw=blue] (1.25,1.7) -- (8.1,2.6);
\node at (3.4,1.91) {\small $\alpha$};
\node at (1.5,1.8) {\small $\phi$};
\node at (1.09,1.66) {\small $\theta$};
\node at (1,1.95) {\small $x_0$};
\node at (1.35,1.65) {\small $x$};
\node at (1.75,2.2) {\small $\overbrace{\qquad \, \qquad \qquad}^{z\xi}$};
\end{tikzpicture}
\caption{A corridor and coordinates $(\alpha,\theta)$.}
\label{fig:corridor1}
\end{center}
\end{figure}
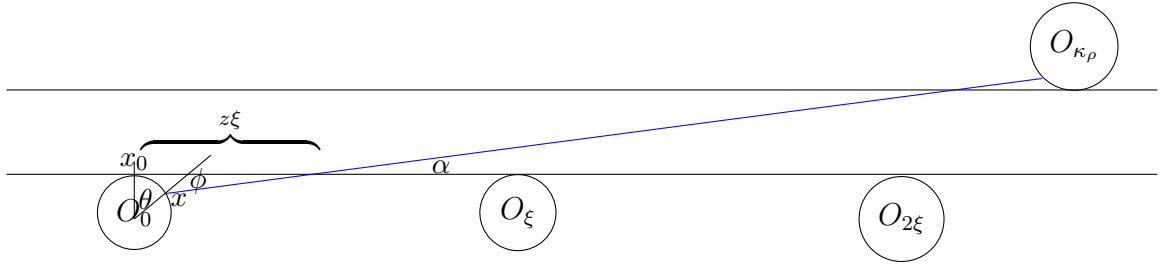

\begin{lemma}\label{lem:volume}
In coordinates $(z,\alpha)$ the volume form in a neighborhood of $x_0 =
x_0(\xi)$ is
$$
\frac{|\xi|}{4\pi \scat} \sin \alpha \, d\alpha \, dz = \frac{1}{4\pi} \cos
\phi  \, d\theta\, d\phi.
$$
\end{lemma}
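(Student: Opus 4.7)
The plan is to set up an explicit parametrization of the $(z,\alpha)$-coordinates, compute the Jacobian $\partial(z,\alpha)/\partial(\theta,\phi)$ by elementary planar trigonometry, and verify the claim as a direct change of variables in the standard billiard measure $\frac{1}{4\pi}\cos\phi\, d\theta\, d\phi$. By rotational symmetry, I would rotate so that $\xi = (|\xi|,0)$ is horizontal and $x_0 = (0,\scat)$ is the top of $O_0$, in which case the tangent line at $x_0$ is the horizontal line $\{y_2=\scat\}$ and $\alpha$ is the angle that the outgoing trajectory makes with this horizontal.

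For a nearby point $x = (\scat\sin\theta,\scat\cos\theta) \in \partial O_0$ with outgoing angle $\phi$, setting $\psi := \frac{\pi}{2}-\phi$ (so $\cos\phi = \sin\psi$) and rotating the outward normal $(\sin\theta,\cos\theta)$ by $\phi$ in the appropriate sense, the outgoing direction can be written as $\vec v = (\cos(\psi-\theta),\sin(\psi-\theta))$. Its angle with the horizontal is therefore $\alpha = \psi-\theta$ \emph{exactly}, not merely a linearization. Intersecting the ray $\{x + t\vec v : t \geq 0\}$ with the tangent line $\{y_2=\scat\}$ then yields the exit time $t^* = \scat(1-\cos\theta)/\sin\alpha$, and hence
\[
z\,|\xi| = y_1 = \scat\sin\theta + \scat(1-\cos\theta)\cot\alpha.
\]

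It is convenient to view $(\theta,\alpha)$ as the new independent variables, noting that the change $(\theta,\psi) \mapsto (\theta,\alpha)$ is upper-triangular with unit determinant since $d\alpha = d\psi - d\theta$. A direct calculation combined with the sine-addition formula then gives
\[
\frac{\partial z}{\partial\theta}\bigg|_{\alpha} = \frac{\scat(\cos\theta\sin\alpha + \sin\theta\cos\alpha)}{|\xi|\sin\alpha} = \frac{\scat\sin(\alpha+\theta)}{|\xi|\sin\alpha} = \frac{\scat\sin\psi}{|\xi|\sin\alpha},
\]
using $\alpha+\theta = \psi$. Substituting this Jacobian into the billiard volume form yields
\[
\frac{1}{4\pi}\cos\phi\, d\phi\, d\theta = \frac{1}{4\pi}\sin\psi\, d\psi\, d\theta = \frac{|\xi|}{4\pi\scat}\sin\alpha\, dz\, d\alpha,
\]
which is the claim.

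The main obstacle I anticipate is pinning down sign conventions (direction of rotation from the normal, sign of $\alpha$, orientation of $\xi$) so that the combination $\alpha+\theta = \psi$ emerges cleanly. Once this is in place, the sine-addition identity collapses the Jacobian to exactly $\sin\psi/\sin\alpha$, which cancels the $\sin\psi$ coming from $\cos\phi\, d\phi$ and leaves the advertised $\sin\alpha$; the identity is thus exact, not merely asymptotic in the small-angle regime relevant near $x_0$.
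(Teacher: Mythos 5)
Your proof is correct and follows the same geometric setup and trigonometric computation as the paper: both derive $\alpha = \frac{\pi}{2}-\theta-\phi$ together with the relation $z|\xi| = \scat\sin\theta + \scat(1-\cos\theta)\cot\alpha$, compute the Jacobian, and close by the sine-addition identity $\cos\alpha\sin\theta + \sin\alpha\cos\theta = \sin(\alpha+\theta) = \cos\phi$. Your factorization through the intermediate coordinates $(\theta,\alpha)$ (so that only $\partial z/\partial\theta|_\alpha$ needs computing) is a slightly slicker organization of the paper's $2\times 2$ determinant calculation — and the matrix you invoke is lower-, not upper-, triangular — but the substance is identical.
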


\begin{proof}
The part $\sin \alpha \, d\alpha \, dz$ can be understood because the
Liouville measure of the billiard flow
projects to a form $\cos \varphi \, d\varphi \, dr$ for any transversal
section parametrised by arc-length $r$ and
with $\varphi$ the angle of the trajectory to the normal vector at the
collision point.
When this section is the line $y = x_0 + x \xi$, we have $\alpha =
\frac{\pi}{2}-\varphi$,
so $\cos \varphi = \sin \alpha$. But to get the correct normalizing constant,
we give a more extensive argument.
From Figure~\ref{fig:corridor1} we have
\begin{equation}\label{eq:beta}
\frac{\pi}{2} = \theta + \alpha + \phi, \qquad \tan \alpha = \frac{\scat
(1-\cos \theta)}{z|\xi| - \scat \sin \theta}.
\end{equation}
After making $\alpha$ and $z$ subject of these equations, we see that the
change of coordinates involved is
$$
(\alpha, z) = F(\theta,\phi) = \left( \frac{\pi}{2} - \theta - \phi,
\frac{\scat}{|\xi|} \left( \frac{1-\cos \theta}{\tan (\frac{\pi}{2}-
\theta-\phi)} + \sin \theta \right)\right).
$$
The Jacobian determinant is
$$
|\det(dF)| = \left| \det \begin{pmatrix} -1 & -1 \\
            \frac{\partial F_2}{\partial \theta} & \frac{\partial
            F_2}{\partial \phi}
           \end{pmatrix} \right|
=   \left| \frac{\partial F_2}{\partial \theta} - \frac{\partial
F_2}{\partial \phi} \right|
= \frac{\scat}{|\xi|} \left( \frac{\cos
\theta}{\tan(\frac{\pi}{2}-\theta-\phi)} + \cos\theta\right).
$$
Thus, using \eqref{eq:beta} and some trigonometric formulas,
\begin{eqnarray*}
\frac{|\xi|}{4\pi \scat} \sin\alpha \, d\alpha \, dz &=& \frac{|\xi| \sin
\alpha}{4\pi\scat}
\frac{\scat}{|\xi|} \left( \frac{\sin \theta}{\tan(\frac{\pi}{2} - \theta
-\phi)} + \cos\theta\right) \, d\theta \, d\phi \\
&=&  \frac{1}{4\pi} (\cos \alpha \sin \theta + \sin \alpha \cos\theta) \,
d\theta\, d\phi \\
&=&  \frac{1}{4\pi} \sin(\alpha + \theta) \, d\theta\, d\phi  =
\frac{1}{4\pi}\cos(\phi) \,  d\theta\, d\phi,
\end{eqnarray*}
as claimed.
\end{proof}

The following is \cite[Proposition 6]{SV07} in more detail:

\begin{lemma}\label{lem:p6}
 Suppose that the scatterers have radius  $\scat > 0$ and the width of the
 corridor given by $\xi$ is $d_\scat(\xi)$.
 Then
 $$
 \mu(\{ x \in \partial O_0 \times [-\frac{\pi}{2}, \frac{\pi}{2}]: \ks(x)
 = N |\xi| + \xi' \})
 = \frac{1}{4\pi N |\xi| \scat} \min\{ 4\scat^2 , d_{\scat}(\xi)^2 N^{-2}\}
 (1+\cO(N^{-1})),
 $$
 where $\xi'$ as in Remark~\ref{rem:width} is the integer vector on the
 boundary of the corridor opposite to the $\xi$-boundary.
\end{lemma}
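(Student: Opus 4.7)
The plan is to change coordinates to the $(\alpha,z)$ system provided by Lemma~\ref{lem:volume}, in which the invariant density reads $\frac{|\xi|}{4\pi\scat}\sin\alpha$. In these coordinates the event $E_N := \{\ks = N\xi + \xi'\}$ becomes a (nearly) parallelogram-shaped region centered at $(\alpha_0, z_0) \approx (d_\scat(\xi)/(N|\xi|),\, N)$, and the task is to determine its $\alpha$- and $z$-extents and then integrate the density over it.

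For the $\alpha$-extent: a trajectory shot at small angle $\alpha>0$ from $x \in \partial O_0$ near $x_0$ accumulates perpendicular displacement $\approx M|\xi|\sin\alpha$ after horizontal distance $M|\xi|$. It strikes $O_{M\xi+\xi'}$ precisely when this displacement lies within $\pm\scat$ of the center-to-center perpendicular distance $d_\scat(\xi)+\scat$ to that scatterer. Hence $E_N$ requires $M|\xi|\sin\alpha < d_\scat(\xi)$ for every $1\le M\le N-1$ (to avoid earlier scatterers) together with $N|\xi|\sin\alpha \in [d_\scat(\xi),\, d_\scat(\xi)+2\scat]$. The binding upper bound comes either from the scatterer size $2\scat$ or from the $(N-1)$-th constraint, yielding
\[
|\alpha\text{-range}| = \tfrac{1}{N|\xi|}\,\min\{2\scat,\; d_\scat(\xi)/(N-1)\}\,(1+\cO(1/N)).
\]

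For the $z$-extent at fixed valid $\alpha$: varying the starting point on $\partial O_0$ moves the landing point on $\partial O_{N\xi+\xi'}$ through an arc whose image in the $z$-coordinate has length $\tfrac{N}{d_\scat(\xi)}\min\{2\scat,\, d_\scat(\xi)/N\}\,(1+\cO(1/N))$; geometrically this is the angular width of $O_{N\xi+\xi'}$ as seen from the source, pulled back through $x\mapsto z$. Combining with the density, and using $\sin\alpha_0 = d_\scat(\xi)/(N|\xi|)\,(1+\cO(1/N))$, gives
\[
\mu(E_N) \;=\; \frac{|\xi|\sin\alpha_0}{4\pi\scat}\cdot |\alpha\text{-range}|\cdot |z\text{-range}| \;=\; \frac{1}{4\pi N|\xi|\scat}\,\min\{4\scat^2,\, d_\scat(\xi)^2/N^2\}\,(1+\cO(1/N)),
\]
as claimed, after noting $\min\{2\scat, d_\scat/N\}^2 = \min\{4\scat^2, d_\scat^2/N^2\}$.

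The main obstacle is handling the transition regime $2\scat N \sim d_\scat(\xi)$, where the binding constraint switches from the scatterer size to the gap between successive allowed angles: both the $\alpha$- and $z$-extents are intrinsically piecewise, and one must verify that they combine into the single minimum $\min\{4\scat^2, d_\scat(\xi)^2/N^2\}$ rather than a product of two different minima. A secondary issue is ensuring that the various linearizations used (replacing $\sin\alpha$ by $\alpha$, neglecting the offset of $x$ from $x_0$ on $\partial O_0$, using straight-line perpendicular drift instead of the true trajectory through a curved-boundary corridor) all contribute uniformly at the $(1+\cO(1/N))$ level.
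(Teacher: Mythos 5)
Your proof takes essentially the same route as the paper: both pass to the $(\alpha,z)$ coordinates of Lemma~\ref{lem:volume}, determine the $\alpha$- and $z$-extents of the cell $\{\ks = N\xi+\xi'\}$ in the two regimes $2\scat \gtrless d_\scat(\xi)/N$, and multiply by the density $\frac{|\xi|}{4\pi\scat}\sin\alpha$; and the extents you find match the paper's in both regimes. The one place where you are less careful than the paper is in how the two extents are obtained and combined: you first compute the admissible $\alpha$-range from the tangent point $x_0$ alone (effectively the pointwise range $\alpha_1(z)-\alpha_0(z)$ at a single $z$) and then a $z$-range at fixed $\alpha$, while the paper parametrizes the region as $z\in[z_0,z_1]$, $\alpha\in[\alpha_0(z),\alpha_1(z)]$ via explicit common tangent lines of pairs of scatterers ($O_0,O_\xi,O_{\ks-\xi},O_\ks$) and integrates the density, noting that $\alpha_1(z)-\alpha_0(z)$ is $z$-independent to leading order and using $\cos\alpha_0-\cos\alpha_1\sim\frac12(\alpha_0+\alpha_1)(\alpha_1-\alpha_0)$. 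The transition regime you flag as a potential obstacle is handled in the paper simply by treating the two cases separately: in each case both the $\alpha$- and $z$-extents switch at the common threshold $2\scat\sim d_\scat(\xi)/N$ (see Figure~\ref{fig:corridor2} and Equation~\eqref{eq:z0z1}), so the product collapses to the single $\min\{4\scat^2,d_\scat(\xi)^2 N^{-2}\}$ exactly as you anticipate.
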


\begin{figure}[ht]
\begin{center}
\begin{tikzpicture}[scale=1]
\draw[-] (0,1.5) -- (11,1.5);
\draw[-] (0,3.5) -- (11,3.5);
\draw (2,1) circle (0.5);
\draw (4,1) circle (0.5);
\draw[draw=red] (2,1.3) circle (0.2);
\draw[draw=red] (10,3.7) circle (0.2);
\draw (8,1) circle (0.5);
\draw (3,4) circle (0.5);
\node at (3,4) {\small $O_{\xi'}$};
\draw (8,4) circle (0.5);
\draw (10,4) circle (0.5);
\node at (2,0.8) {\small $O_0$};
\node at (4,1) {\small $O_\xi$};
\node at (8,1) {\small $O_{N \xi}$};
\node at (10,4.2) {\small $O_{\ks}$};
\node at (8,4) {\small $O_{\ks-\xi}$};
\draw[-, draw=blue] (1,1.2) -- (9.5,3.95); \draw[-, draw=blue] (2.5,1.05) --
(11.5,3.95);
\draw[-, draw=red] (1,1.2) -- (11.5,4.35); \draw[-, draw=red] (1,0.8) --
(11.5,3.95);
\node at (2,1.7) {\small $z_0\xi$}; \node at (4,1.7) {\small $z_1\xi$}; \node
at (3.3,1.7) {\small $z'_1\xi$};
\draw[->] (0.5,1.7) -- (0.5,3.4); \draw[->] (0.5,3.3) -- (0.5,1.6); \node at
(1.1,2.5) {\small $d_{\scat}(\xi)$};
\end{tikzpicture}
\caption{$[z_0,z_1]$ given by two tangent lines for
$2\scat > \frac{d_{\scat}(\xi)}{N}$ (blue) or $2\scat <
\frac{d_{\scat}(\xi)}{N}$ (red).}
\label{fig:corridor2}
\end{center}
\end{figure}
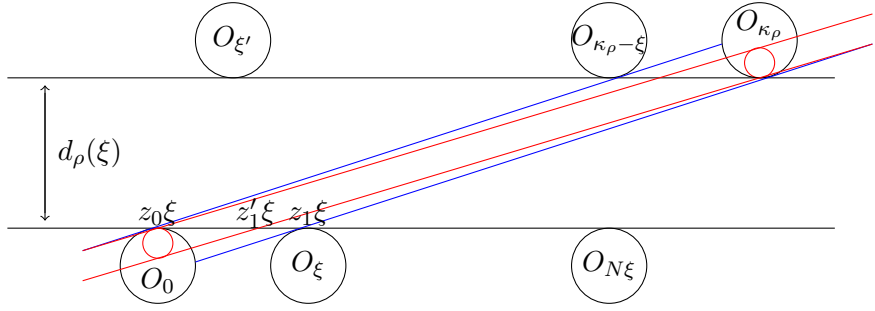

\begin{proof}
We take the region in $(z,\alpha)$-coordinates where $\ks = N \xi +
\xi'$.
In the $z$-direction this is an interval $[z_0, z_1]$, where for $z = z_0$,
there is only one line connecting
$O_0$ and $O_{\ks}$, namely the common tangent line of $O_0$ and
$O_{\ks-\xi}$.
For $z = z_1$ there is also is only one line,
namely the common tangent line of $O_\xi$ and $O_{\ks}$, see
Figure~\ref{fig:corridor2}.
These two lines are obtained from each other by translation over one unit
$\xi$, so $z_1\xi - z_0\xi = |\xi|$.
However, if $\scat$ is small compared to $N$, these two tangent lines are the
common tangent lines
at the upper sides of $O_0$ and $O_{\ks}$ and at the lower sides of $O_0$
and $O_{\ks}$.
In this case
\begin{equation}\label{eq:z0z1}
 |z_1\xi-z_0\xi| = \frac{2\scat}{\sin \alpha} = \frac{2 \scat
 (N|\xi|+|\xi'|)}{d_{\scat}(\xi)+2\scat}
+ \cO\left(\frac{\scat}{d_{\scat}(\xi)+2\scat}\right).
\end{equation}
This also shows that the transition between the two cases is when $2\scat =
\frac{d_\scat(\xi)}{N}$.

For each $z \in [z_0, z_1]$, the range of possible values of $\alpha$ is
again bounded by the $\alpha$'s obtained
at the tangent lines to $O_{\ks-\xi}$ and $O_{\ks}$.
Therefore, see Figure~\ref{fig:corridor3},
$$
\alpha \in [\alpha_0(z),\alpha_1(z)] := \left[
\arctan\left(\frac{d_\scat(\xi)}{N|\xi|+|\xi'|-z}\right) \ , \
\arctan\left( \frac{d_\scat(\xi)}{N|\xi| - |\xi| + |\xi'| - z}\right)
\right].
$$
Since $|\xi'| \leq |\xi|$ (see Remark~\ref{rem:width}) and $z \leq |\xi|$ as
well, each $\alpha$ in this interval satisfies
$\alpha = \frac{d_\scat(\xi)}{N\, |\xi|}(1 + \cO(N^{-1}))$ and
\begin{equation}\label{eq:alphadiff1}
 \alpha_1(z) - \alpha_0(z) = \frac{d_\scat(\xi)}{N^2|\xi|} (1+ \cO(N^{-1})).
\end{equation}
Integrating the density given in Lemma~\ref{lem:volume} for the case
$2\scat \geq \frac{d_\scat(\xi)}{N}$ (so $|z_1-z_0| = |\xi|$) and using
$|z_1-z_0|=|\xi|$ and the approximation
$\cos \alpha_0 - \cos \alpha_1 \sim \frac12 (\alpha_1+\alpha_0)
(\alpha_1-\alpha_0)$ gives:
\begin{eqnarray*}
 \int_{z_0}^{z_1} \int_{\alpha_0(z)}^{\alpha_1(z)} \frac{|\xi|}{4\pi \scat}
 \sin \alpha  \, d\alpha \, dz
 &=& \frac{|\xi|}{4\pi \scat}  \int_{z_0}^{z_1}  \cos(\alpha_0(z)) -
 \cos(\alpha_1(z)) \, dz \\
 &=& \frac{|\xi|}{4\pi \scat} \frac{d_{\scat}(\xi) }{ N |\xi| }
 \frac{d_{\scat}(\xi) }{ N^2 |\xi| } ( 1+\cO(N^{-1}) ) \\
 &=& \frac{1}{4\pi N \scat} \, \frac{d_\scat(\xi)^2}{|\xi|N^2} \left( 1 +
 \cO(N^{-1}) \right).
\end{eqnarray*}
Now for the case $2\scat < \frac{d_\scat(\xi)}{N}$, see
Figure~\ref{fig:corridor3} with small version of $O_{\ks}$, we have
$$
\alpha \in [\alpha_0(z), \alpha_1(z)] :=
\left[ \arctan \left(
\frac{d_\scat(\xi)}{N|\xi|+Q-z-2\scat\sin\alpha}\right)
\ , \ \arctan\left(\frac{d_\scat(\xi)+2\scat \cos \alpha_1(z)}{N|\xi|+Q -
z-2\scat\sin\alpha}\right) \right],
$$
so still $\alpha = \frac{d_\scat(\xi)}{N\, |\xi|} + \cO(N^{-2})$ and
$\alpha_1(z) - \alpha_0(z) = \frac{2\scat}{N|\xi|} (1+ \cO(N^{-1})$.

\begin{figure}[ht]
\begin{center}
\begin{tikzpicture}[scale=1]
\draw[-] (0,1.5) -- (11,1.5);
\draw[-] (0,3.5) -- (11,3.5);
\draw (2,1) circle (0.5);
\draw (4,1) circle (0.5);
\draw (8,1) circle (0.5);
\draw (8,4) circle (0.5);
\draw (10,4) circle (0.5);
\draw[draw=red] (10,3.7) circle (0.2);
\node at (2,1) {\small $O_0$};
\node at (4,1) {\small $O_\xi$};
\node at (8,1) {\small $O_{N \xi}$};
\node at (10,4.2) {\small $O_{\ks}$};
\node at (8,4) {\small $O_{\ks-\xi}$};
\draw[-, draw=blue] (3,1.5) -- (9.5,4.05); \draw[-, draw=blue] (3,1.5) --
(11.5,3.89);
\draw[-, draw=red] (3,1.5) -- (11,4.24); \draw[-, draw=red] (3,1.5) --
(11.5,3.95);
\node at (3,1.2) {\small $z\xi$}; \node at (3,1.48) {\small $\bullet$};
\node at (4.25,1.65) {\tiny $\alpha_1(z)$}; \node at (7,3.25) {\tiny
$\alpha_0(z)$};
\draw[->] (0.5,1.7) -- (0.5,3.4); \draw[->] (0.5,3.3) -- (0.5,1.6); \node at
(1.1,2.5) {\small $d_{\scat}(\xi)$};
\end{tikzpicture}
\caption{The parameter interval $[\alpha_0(z),\alpha_1(z)]$ given by angles
between two tangent lines.}
\label{fig:corridor3}
\end{center}
\end{figure}
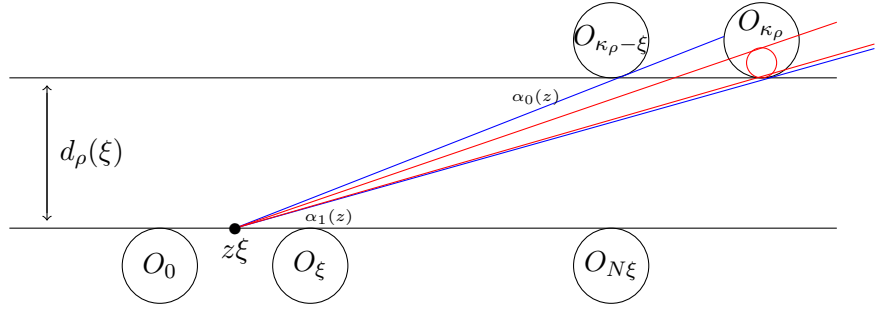

Integrating as before gives, using \eqref{eq:z0z1} and the fact that
$d_{\scat}(\xi)+2\scat =|\xi|^{-1}$
from Lemma~\ref{lem:width}:
\begin{eqnarray*}
 \int_{z_0}^{z_1} \int_{\alpha_0(z)}^{\alpha_1(z)} \frac{|\xi|}{4\pi \scat}
 \sin \alpha  \, d\alpha \, dz
 &=& \frac{|\xi|}{4\pi \scat}  \int_{z_0}^{z_1}  \cos(\alpha_0(z)) -
 \cos(\alpha_1(z)) \, dz \\
&=& \frac{|\xi|}{4\pi \scat}\, \frac{2 \scat N }{ d_{\scat}(\xi)+2\scat }\,
\frac{ d_{\scat}(\xi) }{ N |\xi| }
\frac{ 2\scat }{ N |\xi| } ( 1+\cO(N^{-1}) ) \\
 &=& \frac{ 4\scat^2 }{4\pi |\xi| N \scat} \,  \left( 1 + \cO(N^{-1})
 \right)
\end{eqnarray*}
as required.
\end{proof}

\subsection{Corridors sums}\label{sec:corsum}
Let $\varphi$ be Euler's totient function, i.e., the number of integers $1
\leq q \leq p$ coprime with $p$.
The following lemma is classical number theory, but we couldn't locate a
proof of the full statement.

\begin{lemma}\label{lem:totient}
For every $a > -2$, we have
$$
\sum_{n=1}^N n^a \varphi(n) = \frac{N^{a+2}}{a+2} \, \frac{1}{\zeta(2)}
(1+o(1)),
$$
where $\zeta$ is the Riemann $\zeta$-function, so $\zeta(2) =
\frac{\pi^2}{6}$.
\end{lemma}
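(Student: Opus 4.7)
The plan is to prove this via Möbius inversion applied to Euler's totient, exchanging the order of summation, and then applying the standard power-sum asymptotic.

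First I would write $\varphi(n) = n \sum_{d\mid n} \mu(d)/d$, so that
\[
S_N := \sum_{n=1}^N n^a \varphi(n) = \sum_{n=1}^N n^{a+1}\sum_{d\mid n}\frac{\mu(d)}{d}.
\]
Setting $n = dk$ and exchanging the order of summation gives
\[
S_N = \sum_{d=1}^N \mu(d)\, d^{a}\sum_{k=1}^{\lfloor N/d\rfloor} k^{a+1}.
\]
This reduces the problem to two standard asymptotics.

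Next, since $a > -2$ we have $a+1 > -1$, so by Euler–Maclaurin (or a direct integral comparison)
\[
\sum_{k=1}^{M} k^{a+1} = \frac{M^{a+2}}{a+2}\bigl(1 + O(M^{-1})\bigr) + O(1),
\]
uniformly in $M \geq 1$, where the additive $O(1)$ term is absorbed into the main one once $M \to \infty$. Substituting with $M = \lfloor N/d\rfloor$ yields a main term
\[
\frac{N^{a+2}}{a+2}\sum_{d=1}^N \frac{\mu(d)}{d^2}
\]
plus error terms. Using $\sum_{d=1}^\infty \mu(d)/d^2 = 1/\zeta(2)$ together with the tail bound $\sum_{d>N}|\mu(d)|/d^2 = O(1/N)$ converts the finite $d$-sum to $1/\zeta(2) + O(1/N)$, producing the claimed main term.

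The main obstacle is handling the error uniformly in $d$, especially for $d$ close to $N$ where $\lfloor N/d\rfloor$ is small and the asymptotic for $\sum_{k\leq M} k^{a+1}$ is weakest. I would split the $d$-sum at some threshold $d \leq N/\log N$ versus $d > N/\log N$: in the first range, the relative error $O(1/\lfloor N/d\rfloor) = O(d/N)$ integrates (against $d^a$ and $|\mu(d)| \leq 1$) to something of size $o(N^{a+2})$ provided $a > -2$; in the second range one bounds $|S_N|$ there by the crude estimate $\sum_{d > N/\log N, k \leq N/d} d^{a} k^{a+1} \ll N^{a+2}/\log N$ using $\sum_d d^{a}(\text{stuff}) \ll $ integrated power. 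Since $a+2>0$, both error contributions are $o(N^{a+2})$ and the claimed asymptotic
\[
S_N = \frac{N^{a+2}}{(a+2)\zeta(2)}\,(1+o(1))
\]
follows.
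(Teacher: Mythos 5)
Your proof follows essentially the same route as the paper's: Möbius inversion $\varphi(n)=n\sum_{d\mid n}\mu(d)/d$, interchange of summation, the power-sum asymptotic $\sum_{k\le M}k^{a+1}\sim M^{a+2}/(a+2)$, and the Dirichlet series identity $\sum_d\mu(d)/d^2=1/\zeta(2)$. The only difference is that you make explicit the uniformity issue when $d$ is close to $N$ (by splitting the $d$-sum at $N/\log N$), a point the paper's proof treats somewhat casually by carrying the $(1+o(1))$ factor inside the sum over $d$; your treatment is a bit more careful but not a different argument.
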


\begin{proof}
 Let $\mu$ be the M\"obius function. A standard equality is
 $\varphi(n) = \sum_{d|n} \mu(d) \frac{n}{d}$.
 Therefore
 \begin{eqnarray*}
  \sum_{n=1}^N n^a \varphi(n)  &=& \sum_{n=1}^N \sum_{d|n} n^a \mu(d)
  \frac{n}{d}
  = \sum_{n=1}^N \sum_{d|n} d^a \mu(d) \left(\frac{n}{d}\right)^{a+1} \\
  &=& \sum_{d=1}^N \sum_{m=1}^{\frac{N}{d}} d^a \mu(d) m^{a+1} =
  \sum_{d=1}^N  d^a \mu(d) \frac{1}{a+2}
  \left(\frac{N}{d}\right)^{a+2} (1+o(1)) \\
  &=& \frac{N^{a+2}}{a+2} \, \sum_{d=1}^N \frac{\mu(d)}{d^2} (1+o(1)) =
  \frac{N^{a+2}}{a+2} \, \frac{1}{\zeta(2)} (1+o(1)),
 \end{eqnarray*}
where we used the Dirichlet series identity $\sum_{d=1}^\infty
\frac{\mu(d)}{d^s} = \frac{1}{\zeta(s)}$
for $s=2$.

As an aside, there are asymptotic formulas for $s > 2$
\begin{equation}\label{eq:Riemann-zeta}
\sum_{p \geq 1} \frac{\varphi(p)}{p^s} = \frac{\zeta(s-1)}{\zeta(s)}
\quad \text{ and } \quad
\sum_{p=1}^N \frac{\varphi(p)}{p} = \frac{N}{\zeta(2)} + \cO((\log
N)^{\frac23} (\log \log N)^{\frac43}),
\end{equation}
see \cite[Theorem 288]{HW}.
\end{proof}

In the course of this paper we denote, for a fixed value of $\scat$,  the set of corridors that are  ``visible'' from the origin by $\cXi$.
As described in Lemma~\ref{lem:width}, these can be characterized by pairs $(\xi,\xi')\in \Z^2 \times \Z^2$ where $\xi = (p,q)$, $\gcd(p,q) = 1$ and $|\xi| \leq (2\scat)^{-1}$, while $\xi'$ may denote either the first or the second convergent preceding $\xi$ in the continued fraction expansion of $p/q$, see Remark~\ref{rem:width}.
Sums of the type in the following lemma are used throughout the paper.

\begin{lemma}\label{lem:corridor_sum}
We have
$$
\sum_{(\xi,\xi') \in \cXi} |\xi|^a
 \begin{cases}
  \sim \frac{2}{a+2} \frac{2\pi}{\zeta(2)}(2\scat)^{-(a+2)} & \text{ if } a >
  -2;\\
  \asymp |\log\scat| & \text{ if } a = -2;\\
  \leq -\frac{4\pi}{a+2}  & \text{ if } a < -2.
 \end{cases}
$$
\end{lemma}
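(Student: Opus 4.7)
The plan is to reduce the double sum to a sum over primitive lattice points in the disk of radius $R = (2\scat)^{-1}$, and then evaluate it by Abel summation against the classical primitive-count asymptotic (itself obtained by M\"obius inversion in the spirit of Lemma~\ref{lem:totient}).

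First, by Remark~\ref{rem:width}, each corridor direction corresponds to a primitive $\xi \in \Z^2 \setminus \{0\}$ with $|\xi| \leq R$, and for each such $\xi$ there are exactly two admissible choices of $\xi'$ (one on each side of the corridor). Hence
\[
\sum_{(\xi,\xi') \in \cXi} |\xi|^a \;=\; 2\, S(R), \qquad S(R) \;:=\; \sum_{\substack{\xi \in \Z^2 \setminus \{0\} \\ \gcd(\xi)=1 \\ |\xi| \leq R}} |\xi|^a.
\]
The key input is the primitive lattice-point count
\[
P(R) \;:=\; \#\{\xi \in \Z^2\setminus\{0\}:\gcd(\xi)=1,\ |\xi|\leq R\} \;=\; \frac{\pi R^2}{\zeta(2)} + O(R\log R),
\]
which follows by M\"obius inversion from the Gauss circle estimate $\#\{\xi: 0 < |\xi| \leq r\} = \pi r^2 + O(r)$ together with $\sum_{d \geq 1} \mu(d)/d^2 = 1/\zeta(2)$. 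Equivalently, I can derive this from Lemma~\ref{lem:totient} applied to dyadic annular rearrangements.

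With $P(R)$ in hand, Abel summation yields $S(R) = R^a P(R) - a \int_1^R r^{a-1} P(r)\,dr + O(1)$. For $a > -2$, inserting $P(r) \sim \pi r^2/\zeta(2)$ produces
\[
S(R) \;\sim\; \frac{\pi R^{a+2}}{\zeta(2)} - \frac{a\pi R^{a+2}}{\zeta(2)(a+2)} \;=\; \frac{2\pi R^{a+2}}{\zeta(2)(a+2)},
\]
and multiplying by $2$ and substituting $R = 1/(2\scat)$ gives the claimed asymptotic. The case $a = -2$ is identical except that $\int_1^R r^{-1}\,dr = \log R$ replaces the power, yielding $S(R) \asymp \log R \asymp |\log\scat|$. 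For $a < -2$ the sum converges; dropping primitivity and comparing to an integral gives
\[
S(R) \;\leq\; \sum_{\xi \in \Z^2,\, |\xi| \geq 1} |\xi|^a \;\leq\; \mathrm{const} + 2\pi \int_1^\infty r^{a+1}\,dr \;=\; \mathrm{const} - \frac{2\pi}{a+2},
\]
and after the factor $2$ (absorbing the constant into the slack bound) one obtains $-\frac{4\pi}{a+2}$.

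I do not expect any serious obstacle beyond bookkeeping: ensuring that the factor $2$ (from the two sides of each corridor) and the factor $1/\zeta(2)$ (from M\"obius inversion) each appear exactly once, and that the error terms in Abel summation are genuinely of lower order compared to the main term in the given range of $a$. The substantive content lies in the cases $a > -2$ and $a = -2$; the $a < -2$ case is essentially immediate once the primitive count is available.
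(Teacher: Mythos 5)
Your proof is correct but takes a genuinely different route from the paper's. The paper decomposes the disk $|\xi|\leq(2\scat)^{-1}$ into eight angular sectors, replaces each circular sector by the triangular sector $\{0\leq q\leq p\leq(2\scat)^{-1}\}$ (with a correction factor $\pi/4$, which one must observe works for the $|\xi|^a$-weighted sum, not just for the raw count), and then feeds the resulting totient sum $\sum_p\varphi(p)p^a$ into Lemma~\ref{lem:totient}. You instead work with the whole disk at once via the primitive-point count $P(R)=\pi R^2/\zeta(2)+O(R\log R)$ and extract the weighted sum by Abel summation, reproducing the same leading constant $\frac{2\pi}{(a+2)\zeta(2)}R^{a+2}$ per $\xi$-direction and hence $\frac{4\pi}{(a+2)\zeta(2)}(2\scat)^{-(a+2)}$ after the factor~$2$. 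Your route is arguably cleaner: it avoids the sector-versus-triangle comparison and the substitution $|\xi|\approx p$, and the error estimate is transparent (the $O(R\log R)$ remainder contributes $O(R^{a+1}\log R)$, negligible against $R^{a+2}$ for $a>-2$). The paper's route has the virtue of reusing its own Lemma~\ref{lem:totient}. One small caveat: in the case $a<-2$, your bound $2S(R)\leq 2\,\mathrm{const}-\frac{4\pi}{a+2}$ cannot literally be reduced to $-\frac{4\pi}{a+2}$ (for $a$ very negative the four unit vectors $|\xi|=1$ alone contribute $8$ to the left side, which can exceed $-\frac{4\pi}{a+2}$); but the paper's stated constant has the same issue, and what is actually used downstream is only that the sum is $O(1)$ uniformly in $\scat$, which both arguments deliver.
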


\begin{proof}
Using the two coordinate axes and their bisectrices, we divide the plane into eight sectors
and for each sector, we sum the scatterers in $\cS$.
Circular sections of radius $R$ have asymptotically $\frac{\pi}{4}$ as many
points as triangular sectors with base $R$.
Also, every corridor direction in this sector comes with two corridors $(\xi,\xi')$ and $(\xi,\xi'')$.
By Lemma~\ref{lem:totient}, their sum is, for $a > -2$,
\begin{eqnarray*}
 \sum_{(\xi,\xi') \in \cXi} |\xi|^a
 &\sim& \frac{16\pi}{4} \sum_{0 \leq q \leq p \leq (2\scat)^{-1}} |\xi|^a
 = 4\pi \sum_{1 \leq p \leq (2\scat)^{-1}} \phi(p) p^a
 \sim \frac{4\pi}{2+a} \frac{1}{\zeta(2)} (2\scat)^{-(2+a)}.
\end{eqnarray*}
If $a = -2$, then a similar computation gives $\asymp |\log \scat|$,
and for $a < -2$, the series is summable:
$4\pi \sum_{1 \leq p \leq (2\scat)^{-1}} \phi(p) p^a
\leq 4\pi\int_1^\infty x^a \, dx = -\frac{4\pi}{2+a}$.
\end{proof}

\begin{lemma}\label{lem:kappa_-norm}
For $p \in [1,2)$, the $p$-norm of the displacement function satisfies
$$
\|\ks\|_{L^p} \ll (p(2-p))^{-1/p} \ \scat^{-1}.
$$
\end{lemma}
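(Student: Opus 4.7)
The plan is to compute $\|\ks\|_{L^p}^p$ directly by decomposing $\{\ks = N\xi+\xi'\}$ over corridors $(\xi,\xi')\in\cXi$ and integers $N\ge 1$, then feed in the explicit measure estimate from Lemma~\ref{lem:p6}. First I would write
$$
\|\ks\|_{L^p}^p = \sum_{(\xi,\xi')\in\cXi}\sum_{N=1}^\infty |N\xi+\xi'|^p\,\mu(\{\ks = N\xi+\xi'\}),
$$
and use $|\xi'|\le|\xi|$ from Remark~\ref{rem:width} to conclude $|N\xi+\xi'|^p\ll N^p|\xi|^p$. Lemma~\ref{lem:p6} then turns the right-hand side into
$$
\|\ks\|_{L^p}^p \ll \scat^{-1}\sum_{(\xi,\xi')\in\cXi}|\xi|^{p-1}\sum_{N=1}^\infty N^{p-1}\min\{\scat^2,\,d_\scat(\xi)^2 N^{-2}\}.
$$

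Next, for each fixed corridor I would split the inner $N$-sum at the threshold $N^*:=d_\scat(\xi)/(2\scat)$, where the two terms in the minimum balance. For $1\le N\le N^*$ the minimum equals $\scat^2$ and $\sum_{N\le N^*} N^{p-1}\ll (N^*)^p/p$; for $N>N^*$ the minimum equals $d_\scat(\xi)^2/N^2$ and $\sum_{N>N^*} N^{p-3}\ll (N^*)^{p-2}/(2-p)$. (The latter is the only place where $p<2$ enters; the degenerate regime $N^*<1$, when $d_\scat(\xi)<2\scat$, is absorbed into the tail estimate alone.) Substituting $N^*=d_\scat(\xi)/(2\scat)$, both pieces evaluate to a constant multiple of
$$
\left(\frac{1}{p}+\frac{1}{2-p}\right)\scat^{2-p}d_\scat(\xi)^p = \frac{2}{p(2-p)}\scat^{2-p}d_\scat(\xi)^p.
$$

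Finally, Lemma~\ref{lem:width} gives $d_\scat(\xi)\le|\xi|^{-1}$, so $d_\scat(\xi)^p\le|\xi|^{-p}$, which collapses the estimate to
$$
\|\ks\|_{L^p}^p \ll \frac{1}{p(2-p)}\scat^{1-p}\sum_{(\xi,\xi')\in\cXi}|\xi|^{-1}\ll \frac{1}{p(2-p)}\scat^{-p},
$$
where the last bound invokes Lemma~\ref{lem:corridor_sum} with $a=-1$. Taking $p$-th roots yields the claim. I do not expect a serious obstacle here; the one thing to watch is the careful tracking of the constant $\tfrac{1}{p(2-p)}$ through the two partial $N$-sums, as this is precisely the factor that forces the blow-up as $p\to 2^-$, consistent with the well-known fact that $\ks\notin L^2$ in the infinite horizon regime.
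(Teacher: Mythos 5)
Your proof is correct and follows essentially the same route as the paper: decompose by corridors and cell index $N$, apply Lemma~\ref{lem:p6}, split the $N$-sum at $N^*=d_\scat(\xi)/(2\scat)$ to extract the $\tfrac1p+\tfrac1{2-p}$ factor, then bound $d_\scat(\xi)\le|\xi|^{-1}$ and sum over corridors via Lemma~\ref{lem:corridor_sum}. The paper does precisely this computation, only writing the split sum with explicit bounds rather than introducing the symbol $N^*$.
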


\begin{proof}
Take $p \in [1,2)$. We estimate over all $\xi$-corridors similarly as in Lemma~\ref{lem:corridor_sum}:
 \begin{eqnarray*}
  \int |\ks|^p \, d\mu &\ll& 2\sum_{|\xi| \leq (2\scat)^{-1} } \sum_{N \geq
  1}
  |\xi|^p N^p \frac{1}{4\pi |\xi| N \scat} \min\{ 4\scat^2, d_{\scat}(\xi)^2
  N^{-2}\} \\
 &\leq& \frac{1}{2\pi \scat} \sum_{|\xi| \leq (2\scat)^{-1}} |\xi|^{p-1}
 \left( \sum_{N=1}^{\lfloor d_{\scat}(\xi)/(2\scat) \rfloor } 4\scat^2
 N^{p-1} + \sum_{N=\lfloor d_{\scat}(\xi)/(2\scat)\rfloor}^\infty
 d_{\scat}(\xi)^2  N^{p-3}\right) \\
  &\leq& \frac{1}{2\pi \scat}  \left( \frac1p (2\scat)^{2-p} + \frac{1}{2-p}
  (2\scat)^{2-p}\right)  \sum_{|\xi| \leq  (2\scat)^{-1}}  |\xi|^{-1} \\
  &\sim&  \frac{2}{\zeta(2)}  \left( \frac1p + \frac{1}{2-p} \right)\, (
  2\scat)^{-p}.
\end{eqnarray*}
Taking the $p$-th root gives the result.
\end{proof}

\begin{lemma}\label{lem:sector_sum}
 Let $W \in \cW^s$ be a stable leaf, and let $\cXi_W$
 stand for all lattice points $\xi = (p,q) \in \cXi$ that can be reached from $O_0$ with coordinates in $W$.
 Then for every $a \in (\frac12,1)$,
 $$
 \sum_{(\xi,\xi') \in \cXi_W} |\xi|^{-a}  \ll \scat^{a-2} |W|  + \scat^{a-1} \log(1/\scat) + \scat^{a-1}|W|^{-1}.
 $$
\end{lemma}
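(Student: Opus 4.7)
The plan is to write each pair $(\xi,\xi')\in\cXi_W$ in terms of the primitive lattice vector $\xi=(p,q)$ with $\gcd(p,q)=1$: by Remark~\ref{rem:width} this correspondence is at most two-to-one, and the condition $(\xi,\xi')\in\cXi_W$ forces $\arg(\xi)$ to lie in an angular sector of width $\asymp|W|$ subtended from the origin (as observed in the proof of Proposition~\ref{prop:growthscaled}). Thus it will suffice to bound $\sum_\xi |\xi|^{-a}$ over primitives with $|\xi|\le 1/(2\scat)$ whose angles lie in that sector, and I would split this sum at the threshold $|\xi|=1/|W|$.

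In the \emph{wide regime} $1/|W|<|\xi|\le 1/(2\scat)$, the arc-length of the sector at radius $r$ already exceeds one lattice spacing, so a standard area count together with the $6/\pi^2$ density of primitive lattice points (via the Euler totient asymptotic of Lemma~\ref{lem:totient}) gives $\asymp |W|\,r\,dr$ primitives at radius $r$. Integration yields
\[
\sum_{\text{wide}} |\xi|^{-a} \ \ll\ |W|\int_{1/|W|}^{1/(2\scat)} r^{1-a}\,dr \ \ll\ |W|\scat^{a-2},
\]
since $2-a>0$; this accounts for the first term in the statement.

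In the \emph{narrow regime} $|\xi|\le 1/|W|$ the sector has arc-length $|\xi|\,|W|\le 1$ at every radius. Rotating so the sector lies in the first octant and parametrising by the larger coordinate $q$, the angular constraint forces $p$ to lie in an interval of length $\le q|W|\le 1$, so at most one primitive lattice vector occurs for each $q\in\{1,\dots,\lfloor 1/|W|\rfloor\}$. Summing $|\xi|^{-a}\le q^{-a}$ gives $\sum_{\text{narrow}}|\xi|^{-a}\ll|W|^{a-1}$, and since $|W|,\scat\in(0,1)$ and $a<1$ the elementary inequality $|W|^a\le 1\le \scat^{a-1}$ upgrades this to $|W|^{a-1}\le \scat^{a-1}|W|^{-1}$, matching the third term. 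The middle term $\scat^{a-1}\log(1/\scat)$ in the stated bound is then absorbed as slack.

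The main obstacle will be the narrow regime, where the continuous density approximation breaks down and a naive lattice count via area-plus-perimeter overcounts badly. The coordinate-slab argument above bypasses this by using only the pigeonhole observation that each integer column meets the sector in at most one primitive vector once $q|W|<1$, thereby sidestepping any Diophantine subtlety about the sector's central direction.
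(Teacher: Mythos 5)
Your proof is correct, and it is genuinely different from — and in fact more elementary than — the paper's. The paper's argument invokes a theorem of Weber on Farey sums, which requires smoothing the indicator of the arc $\tilde W$, estimating Fourier coefficients by integration by parts, controlling a Dirichlet series of the M\"obius function, and then separately treating sectors near the coordinate axes and the diagonal via iterates of the Calkin--Wilf map $g$. Your approach replaces all of this with a threshold at $|\xi|=1/|W|$ and two pieces of elementary lattice-point counting. In the wide regime the area-plus-perimeter count of lattice points in a dyadic annular sector (area $\asymp 2^{2k}|W|$, perimeter $\asymp 2^k$) is dominated by the area precisely because $2^k\ge 1/|W|$, and the resulting geometric sum is carried by its top term $|W|\scat^{a-2}$; dropping the primitivity constraint can only help an upper bound, so the reference to the totient asymptotic of Lemma~\ref{lem:totient} is unnecessary but harmless. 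In the narrow regime the column count is the right idea and sidesteps the perimeter-dominated error that would otherwise swamp the area term. Since you show $|W|^{a-1}\le \scat^{a-1}|W|^{-1}$ directly from $|W|\le 1\le \scat^{a-1}$, your bound $\scat^{a-2}|W|+|W|^{a-1}$ implies the stated one, with the $\scat^{a-1}\log(1/\scat)$ term being pure slack; this is all that is needed where Lemma~\ref{lem:sector_sum} is used in Proposition~\ref{prop:growthscaled}.

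Two small points to tighten. First, ``at most one'' lattice point in a column of width $\le 1$ should be ``at most two''; this is a harmless constant but the claim as written is false. Second, ``rotating so the sector lies in the first octant'' should be phrased as applying an element of the dihedral symmetry group of $\Z^2$ (possibly after splitting the sector into $O(1)$ pieces if it straddles an octant boundary), since a generic rotation does not preserve the lattice. Neither affects the validity of the argument.
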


\begin{proof}
There is an arc $\tilde W \in \S^1$ of length $|\tilde W| \ll |W|$
such that every lattice point that can be reached from $O_0$
with coordinates in $W$ has its polar angle in $\tilde W$.
Due to the symmetries in the $\Z^2$, it suffices to study
$\tilde W \subset [0, \pi/2]$, so the lattice point $\xi = (p,q)$
in this sector satisfy $0 \leq q \leq p$ and $\tan(\tilde W) \subset [0,1]$.
In fact, we will start by assuming that
$\tan(\tilde W) \in [\frac{1}{10}, \frac{9}{10}]$.

 Because $p^2+q^2 \geq 2pq$ for all $(p,q) = \xi$, we have
 $\sum_{(\xi,\xi') \in \cXi_W} |\xi|^{-a} \ll 2^{-a/2} \sum_{(\xi,\xi') \in \cXi} \frac{1}{(pq)^{a/2}} 1_{\tilde W}(\frac{q}{p})$.
 We will apply an estimate from \cite[Theorem 2.2]{Weber}, which, in our terminology,  reduces to
 \begin{eqnarray}\label{eq:weber}
  \sum_{(\xi,\xi') \in \cXi} \frac{1}{(pq)^{a/2}}\, \psi\left(\frac{p}{q}\right)
  &=& C_a\, \scat^{a-2} \int \psi(x) \, dx + O(\scat^{1-a} \log(1/\scat)) \nonumber \\
  && +\ O\left(\sum_{\ell \neq 0} c_\psi(\ell)
  \sum_{\stackrel{d \leq (2\scat)^{-1}}{d | \ell}} d^{1-a} \sum_{k \leq (2\scat d)^{-1}} \frac{\mu(k)}{k^a} \right),
 \end{eqnarray}
 where $C_a$ is a constant depending only on $a$, and
 $c_{\psi}(\ell)$ is the $\ell$-th
 Fourier coefficient of $x \mapsto \psi(x) x^{-a}$.

 If $\psi = 1_{\tilde W}$, then these Fourier coefficients are not summable,
 so we first smoothen $1_{\tilde W}$ to a function $\psi$
 with $\supp(\psi)$ concentric to $\tilde W$
 and $|\supp(\psi)| = |\tilde W|= 3|W|$.
 On $\tilde W$ itself, $\psi \equiv 1$ and on the two interval components $\psi$ is a translated copy of the function $f_W:[-\frac{|W|}{2}, \frac{|W|}{2}] \to \R$
defined by
$$
f_W(x) = \frac12-\frac{1}{2\pi} \sin \frac{2\pi x}{|W|} + \frac{x}{|W|}.
$$
Then $\int \psi \, dx = 2|W|$ and integrating by parts twice gives an estimate of the Fourier coefficients of $x \mapsto \psi(x) x^{-a}$.
$$
|c_{\psi}(\ell)| \ll \left| \int \frac{(\psi(x)x^{-a})''}{(2\pi \ell)^2} \, e^{2\pi i \ell x} \, dx \right| \ll \frac{1}{|W|\ell^2}
$$
because $\supp(\psi)$ is bounded away from $\{ 0,1\}$ (so $x^{-a}$
doesn't blow up) and $(\psi(x) x^{-a})'' = 0$ outside $\supp(\psi)$.

The Dirichlet series of the M\"obius function
can be estimated as
$\left|\sum_{k=1}^{\lfloor1/(2\scat d)\rfloor} \mu(k) k^{-a}\right| \leq
(2\scat d)^{1-a}$.
We use this
and the fact that $\ell \in \N$ has $O(\ell^{1/2})$ divisors
to estimate the last big $O$-term in~\eqref{eq:weber}.
\begin{eqnarray*}
\sum_{\ell \in \N} |c_\psi(\ell)| \sum_{\stackrel{d \leq (2\scat)^{-1}}{d | \ell}} d^{1-a}
\sum_{k \leq (2\scat d)^{-1}} \frac{\mu(k)}{k^a}
 &\ll&
\frac{(2\scat)^{a-1}}{|W|} \sum_{\ell \in \N} |c_\psi(\ell)| \sum_{\stackrel{d \leq (2\scat)^{-1}}{d | \ell}} 1
 \\
 &\ll& \frac{(2\scat)^{1-a}}{|W|} \sum_{\ell \neq 0}
|\ell|^{-\frac32} \leq \frac{(2\scat)^{a-1}}{|W|}.
\end{eqnarray*}
Hence \eqref{eq:weber} becomes
$$
 \sum_{(\xi,\xi') \in \cXi} \frac{1}{(pq)^{a/2}} 1_{\tilde W}(\frac{q}{p})
 \leq \sum_{(\xi,\xi') \in \cXi} \frac{1}{(pq)^{a/2}}\, \psi(\frac{q}{p})
\ll \scat^{a-2} |W| + \scat^{a-1} \log(1/\scat) +
 \scat^{a-1} |W|^{-1},
$$
as required.

It remains to consider the cases that $\tan(\tilde W) \not\subset
[\frac{1}{10}, \frac{9}{10}]$.
Suppose instead that $\tan(\tilde W) \subset
(0, \frac{1}{10}]$ (we ignore $\xi = (0,1)$ and $\xi = (1,0)$).
In this case, we give an injection between the lattice points in the $\tilde W$-sector with coprime coordinates to the set of
lattice points (with coprime coordinates and comparable norm) in a sector of comparable width, but near polar angle $\frac12$.
Indeed, set $\Q_{cp} = \{ q/p : 0 \neq p, q \in \Z, \gcd(p,q) = 1\} \cup \{0\}$ and
$\Z_{cp} := \{ (p, q) \in \Z^2 : \gcd(p,q) = 1\}$,
and define the Calkin-Wilf map $f:\Q_{cp} \to \Q_{cp}$
as well as $g:\Z_{cp} \to \Z_{cp}$ by
$$
f: x \mapsto \frac{1}{1-x-2\lfloor x \rfloor},\qquad \qquad
g: (p,q) \mapsto ( p-q+2p \lfloor q/p \rfloor \ , \ p).
$$
The $f$-orbit of $0$ enumerates all non-negative lowest-term rationals, see \cite{CW},
and $g$ is the same map expressed on the collection of lattice points.
Since $f^2( (0,\frac{1}{10}] ) \subset (\frac12,\frac{10}{21}]$
and $|g(\xi)| \leq 4|\xi|$, the second iterate $g^2$ provides the required injection. In case
$\tan(\tilde W) \subset [\frac{9}{10},1)$ we use $g^3$.
\end{proof}

\section{Distortion properties}\label{sec:distortion}

Throughout, a uniform constant is a constant that is independent of $\scat$.

Let us recall some terminology and notations from \cite[Chapter 4]{CM}. Unstable curves
generate dispersing wavefronts,
which are evolved by the free flight, and then leave traces of unstable
curves on the scatterer at the next collision.
For wavefronts it is convenient to use the Jacobi coordinates
$(d\xi,d\omega)$, and an important quantity\footnote{Usually called
$\mathcal B$ in billiard literature such as \cite{CM}, but we write $\Omega$
to avoid confusion with Banach spaces $\cB$.}
$\Omega=\frac{d\omega}{d\xi}$, the curvature of the wavefront.
Let $\Omega^-$ and $\Omega^+$ denote
its value immediately before and after a particular collision, respectively.

On the scatterer, the traditional coordinates are $(r,\phi)$ yet, we prefer
to use the $\scat$-independent
$(\theta,\phi)$ and take advantage of
\[
\frac{d}{d\theta}=(2\pi \scat) \frac{d}{dr}.
\]
First we relate $\Omega^-$ to the slope of the unstable curve:
$\frac{1}{2\pi} \frac{d\phi}{d\theta} =\scat \Omega^- \cos\phi+1$.
Differentiating with respect to $\theta$ gives
\begin{equation}\label{eq:SecondDer}
\frac{1}{2\pi} \frac{d^2\phi}{d\theta^2}=\frac{d\Omega^-}{d\theta} \scat
\cos\phi -\scat \Omega^- \sin\phi \frac{d\phi}{d\theta}.
\end{equation}

\begin{lemma}\label{lem:CurvBound}
There exists a uniform constant $C>0$ such that for any $C^2$ smooth unstable
curve $W$ there exists $n_W$ such that for $n\ge n_W$ on all components of
$T_{\scat}^nW$ we have
\begin{equation}\label{eq:CurvBound}
\left|\frac{d^2\phi}{d\theta^2}\right| \le C\scat.
\end{equation}
\end{lemma}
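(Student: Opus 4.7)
The plan is to reduce everything to the identity \eqref{eq:SecondDer}, which rewrites as
\[
\frac{1}{2\pi}\frac{d^2\phi}{d\theta^2}=\scat\cos\phi\,\frac{d\Omega^-}{d\theta}-\scat\,\Omega^-\sin\phi\,\frac{d\phi}{d\theta}.
\]
Thus it suffices to show that after sufficiently many iterations of $T_\scat$, the three quantities $\Omega^-$, $\frac{d\phi}{d\theta}$, and $\frac{d\Omega^-}{d\theta}$ are uniformly bounded on each component of $T_\scat^n W$ by constants independent of $\scat$. The first two are routine; the main work goes into the third.

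First I would bound $\Omega^-$. Using the mirror equation $\Omega^+=\Omega^-+\frac{2}{\scat\cos\phi}$ and the free-flight formula $\Omega^-_{n+1}=(\tau_n+1/\Omega^+_n)^{-1}$, the lower bound $\Omega^+_n\ge 2/(\scat\cos\phi_n)\ge 2/\scat$ gives $0<\Omega^-_{n+1}\le 1/\tau_n\le 1/\tau_{\min}$ for any $n\ge 1$. Combined with the slope relation $\frac{1}{2\pi}\frac{d\phi}{d\theta}=\scat\Omega^-\cos\phi+1$, this shows $|d\phi/d\theta|=O(1)$ and $\scat\Omega^-=O(\scat)$, so the second term on the right of the display above is automatically $O(\scat)$.

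Next I would set up a recursion for $\Psi_n:=d\Omega^-_n/d\theta_n$ along $T_\scat^n W$, parametrised by $\theta_n$. Differentiating $\Omega^-_{n+1}=(\tau_n+1/\Omega^+_n)^{-1}$ and $\Omega^+_n=\Omega^-_n+\frac{2}{\scat\cos\phi_n}$ gives
\[
\Psi_{n+1}=-(\Omega^-_{n+1})^2\Bigl(\frac{d\tau_n}{d\theta_{n+1}}-\frac{1}{(\Omega^+_n)^2}\Bigl[\Psi_n\,\frac{d\theta_n}{d\theta_{n+1}}+\frac{2\sin\phi_n}{\scat\cos^2\phi_n}\,\frac{d\phi_n}{d\theta_{n+1}}\Bigr]\Bigr).
\]
The crucial observations are: (i) $(\Omega^+_n)^{-2}\le \scat^2\cos^2\phi_n/4$, which cancels the dangerous $\scat^{-1}$ from the $2\sin\phi_n/(\scat\cos^2\phi_n)$ term and leaves a factor of $O(\scat)$ in front of $d\phi_n/d\theta_{n+1}$; (ii) hyperbolic expansion yields $|d\theta_n/d\theta_{n+1}|\le \Lambda^{-1}<1$ uniformly (via \eqref{eq:expansion} re-expressed in $\theta$-coordinates), so $\Psi_n$ enters $\Psi_{n+1}$ multiplied by a coefficient of absolute value at most $(1/\tau_{\min})^2(\scat\cos\phi_n/2)^2\cdot \Lambda^{-1}$, which is $\le \tfrac12$ for $\scat$ small; (iii) the ``source'' terms $d\tau_n/d\theta_{n+1}$ and $\scat\cdot d\phi_n/d\theta_{n+1}$ are uniformly bounded on homogeneous pieces using \eqref{eq:unstable_cone}, the identity $\tau_n=\langle \ks_n,\vec e\rangle+O(\scat)$, and $\cos\phi_n\asymp k^{-r_0}$ inside $\bH_k$ combined with the post-collision expansion. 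In short, the recursion is of the form $|\Psi_{n+1}|\le \tfrac12|\Psi_n|+C$ with $C$ uniform in $\scat$, hence $|\Psi_n|\le 2C$ for all $n$ larger than some $n_W$ depending on $|\Psi_0|$.

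Plugging these uniform bounds on $\Omega^-$, $d\phi/d\theta$ and $d\Omega^-/d\theta$ back into \eqref{eq:SecondDer} produces $|d^2\phi/d\theta^2|\le C\scat$ with $C$ independent of $\scat$, which is the claim. The main obstacle I anticipate is item (iii): verifying that the source terms $d\tau/d\theta_{n+1}$ and $\scat\cdot d\phi/d\theta_{n+1}$ are genuinely $\scat$-independent on each homogeneous piece. In the infinite-horizon setting, $\tau$ can be as large as $d_\scat(\xi)/(2\scat)\cdot|\xi|$ in a long corridor, and near-grazing collisions make $d\phi/d\theta_{n+1}$ initially large; one has to use the homogeneity-strip partition $\bH_k$ together with the post-collision expansion factor $\tau/(\scat\cos\phi)$ from \eqref{eq:expansion} (which cancels the problematic $1/\cos\phi$) to obtain the required uniform bound. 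Once this technical point is in place, the inductive contraction argument closes uniformly in $\scat$.
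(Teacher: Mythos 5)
Your proposal is correct and takes a genuinely different route from the paper. The paper's proof reuses the Chernov--Markarian bookkeeping from \cite[Section~4.6]{CM}: it passes to the quantity $F_1 = \cE_1/\Omega^3$ (where $\cE_1 = d\Omega/d\xi$), invokes \cite[Formula~(4.37)]{CM}, which gives the clean one-step recursion $-F_1^+ = (\Omega^-/\Omega^+)^3 F_1^- + H_1$ with a uniform contraction ratio $\Theta^3 < 1$ and a uniformly bounded source $H_1$, and then converts back to $d\Omega^-/dr$ via \cite[Formula~(4.24)]{CM} and finally to $d\Omega^-/d\theta$ via the factor $2\pi\scat$. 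You instead differentiate the mirror and free-flight equations directly to produce a recursion for $\Psi_n = d\Omega^-_n/d\theta_n$ itself, bypassing the $F_1$ reparametrisation. Both arguments hinge on the same numerical cancellation: in your version, $(\Omega^+_n)^{-2}\le \scat^2\cos^2\phi_n/4$ kills the dangerous factor $\tfrac{2\sin\phi_n}{\scat\cos^2\phi_n}$ and leaves $O(\scat)$; in the paper's version, the same effect is the fact that $H_1$ has denominator $(2\scat^{-1}+\Omega^-\cos\phi)^3 \gtrsim \scat^{-3}$ against a numerator $\lesssim \scat^{-2}$, giving $|H_1| = O(\scat)$. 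The paper's route is shorter because it can cite \cite{CM} for the recursion, whereas your route reproves it in the coordinates actually needed; but the payoffs are the same, and your version makes the eventual $O(\scat)$ scaling of $d\Omega^-/d\theta$ more immediately visible, since the source in your recursion is already $O(\scat)$ and the coefficient of $\Psi_n$ is in fact $O(\scat^3)$.

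One correction to your own risk assessment: the obstacle you flag in item (iii) is not actually there, and the paper's proof does not invoke homogeneity strips either. The flight time $\tau_n = |P(x_n)-P(x_{n+1})|$ has $\theta$-derivative $O(\scat)$ regardless of how large $\tau_n$ itself is, because only the $\scat$-scale boundary parametrisations of the two scatterers vary as you move along the leaf (this is exactly the observation used in the paper's Lemma~\ref{lem:Distortion}). Likewise, $d\phi_n/d\theta_{n+1}$ is not ``initially large'' near grazing; it is the product of $d\phi_n/d\theta_n = O(1)$ and the backward contraction $d\theta_n/d\theta_{n+1} \asymp \scat\cos\phi_{n+1}/\tau_n = O(\scat)$, so it is in fact small, with the near-grazing $\cos^{-2}\phi_n$ factor already neutralised by the $(\Omega^+_n)^{-2}$ cancellation you identified. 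So the contraction-plus-bounded-source argument closes cleanly without any partition into $\bH_k$, exactly as in the paper.
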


Thus we may restrict to the class of \textit{regular} unstable curves for which \eqref{eq:CurvBound} holds. Also, this shows that as $\scat\to 0$, the
unstable curves limit in a $C^2$ sense to straight lines of slope $2\pi$.
\\[4mm]
\begin{proof}
The properties of the free flight are not effected by shrinking the
scatterers or using the $\theta$-coordinate. Thus
\[
0\le \Omega^-\le (\tau_{min})^{-1}
\]
and, by \eqref{eq:SecondDer}, it is enough to show
\[
\left|\frac{d\Omega^-}{d\theta}\right| \le C
\]
to prove the lemma.
Now $\frac{d\Omega^-}{d\theta}=(2\pi \scat) \frac{d\Omega^-}{dr}$, and the
evolution of $\frac{d\Omega^-}{dr}$ is discussed in \cite[section 4.6]{CM}.
Following the notation there, introduce
\[
\cE_1=\frac{d\Omega}{d\xi};\qquad F_1=\frac{\cE_1}{\Omega^3}
\]
and use superscripts $-$ and $+$ to denote pre- and post-collision values of these quantities, respectively.
\cite[Formula (4.37)]{CM} states
\[
-F_1^+=\left(\frac{\Omega^-}{\Omega^+}\right)^3 F_1^- +H_1,
\]
where
\[
H_1=\frac{6\scat^{-2}\sin\phi +6\scat^{-1} \Omega^- \cos\phi\sin\phi
}{(2\scat^{-1}+\Omega^-\cos\phi)^3}
\]
and by the analysis of \cite[page 81]{CM}: \begin{itemize}
\item $F_1$ remains constant between collisions
\item there exists a uniform constant $\Theta<1$ such that
    $\frac{\Omega^-}{\Omega^+}\le \Theta$,
\item there exists a uniform constant $C_1>0$ such that $|H_1|\le C_1$. This
    remains valid for shrinking $\scat$ as the denominator scales with
    $\scat^{-3}$ while the
numerator scales with $\scat^{-2}$.
\end{itemize}
Hence it follows that $|F_1(n+1)|\le \Theta^3 |F_1(n)| +C$, where $F_1(n)$ is
the value of $F_1$ between the $n$-th and the $(n+1)$st
collision. This implies that there exists $C_2>0$ and $n_W$ (depending on the
curve $W$) such that for any $n\ge n_W$ we have $|F_1(n)|\le C_2$.

Now $|\cE_1^-|=|F_1^-|\cdot (\Omega^-)^3\le C_3$ for some uniform $C_3>0$, and finally \cite[Formula (4.24)]{CM} states
\[
\frac{d\Omega^-}{dr}=\cE_1^- \cos\phi - (\Omega^-)^2\sin\phi,
\]
which thus implies that $\left|\frac{d\Omega^-}{dr}\right|\le C_4$ for some
uniform constant $C_4>0$.
This bound completes the proof of the lemma.
\end{proof}

It follows that regular unstable curves can be parametrised by the coordinate
$\theta$, and for any smooth function $f:W\to\bR$, $\frac{df}{d\theta}\asymp
\frac{df}{dx}$,
where $x$ is (Euclidean) arc-length along the curve --
$dx^2=d\theta^2+d\phi^2$ (not to be confused with the arc-length $r$ along
the scatterer).

Let us also recall that an unstable curve is homogeneous if it is regular and
contained in one of the homogeneity
strips $\bH_k=\{(\theta,\phi) : \frac{\pi}{2}-k^{-r_0} < \phi <
\frac{\pi}{2}-(k+1)^{-r_0} \}$.
For such curves, analogous to \cite[Formula (5.13)]{CM}, we have
\begin{equation}\label{eq:DistortCos}
|W|\le C \cos^{\frac{r_0+1}{r_0}} \phi
\end{equation}
for some uniform constant $C>0$, where $\phi$ corresponds to any point of $W$. (This follows as the slope of the curve is uniformly
bounded away from $0$ and $\infty$.)

Distortion bounds are stated as follows. Let $W$ be a homogeneous unstable
curve, and assume that for some $N\ge 1$,
$W_n=T_{\scat}^{-n}W$ is a homogeneous unstable curve for $n=0,1,\dots ,N$.
For $x\in W$, let $x_n=T_{\scat}^{-n}x\in W_n$. Let $J_WT_{\scat}^{-n}(x)$ and $J_{W_n}
T_{\scat}^{-1}(x_n)$ denote the respective Jacobians.
\begin{lemma}\label{lem:Distortion}
Consider $W$ and $N$ as above and $y,z\in W$ arbitrary. There exists a
uniform constant $C_d>0$ such that
\[
|\log J_WT_{\scat}^{-N}(y)-\log J_WT_{\scat}^{-N}(z)|\le C_d|W|^{\frac{1}{r_0+1}}.
\]
\end{lemma}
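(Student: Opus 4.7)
The plan is to apply the chain rule to telescope
\[
\log J_W T_\scat^{-N}(y) - \log J_W T_\scat^{-N}(z) = \sum_{n=0}^{N-1} \bigl[\log J_{W_n} T_\scat^{-1}(y_n) - \log J_{W_n} T_\scat^{-1}(z_n)\bigr],
\]
where $y_n = T_\scat^{-n}y$, $z_n = T_\scat^{-n}z$. It then suffices to establish a one-step H\"older estimate on each homogeneous unstable curve $W_n$, and to sum the result using uniform hyperbolic contraction of $T_\scat^{-1}$ on unstable curves.

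For the one-step bound I would follow the template of \cite[Lemma 5.27]{CM}, adapted to the $(\theta,\phi)$ coordinates and to general $r_0$. Writing the single-collision Jacobian $J_V T_\scat^{-1}$ explicitly in terms of $\cos\phi^\pm$, the free flight $\tau$, and the wavefront curvatures $\Omega^\pm$, one sees that the derivative of $\log J_V T_\scat^{-1}$ along a homogeneous unstable curve $V$ is controlled by terms like $1/\cos\phi$, which may blow up as $\phi \to \pm\pi/2$. Combining this with the homogeneity constraint $|V|\le C\cos^{(r_0+1)/r_0}\phi$ from \eqref{eq:DistortCos} and the mean value theorem (interpolating between the uniform bound on $\log J_V T_\scat^{-1}$ and the large derivative near grazing) yields the H\"older-$1/(r_0+1)$ estimate
\[
\bigl|\log J_{V}T_\scat^{-1}(y)-\log J_{V}T_\scat^{-1}(z)\bigr|\le C\, d_V(y,z)^{1/(r_0+1)}
\]
with $C$ independent of $\scat$. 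Since by \eqref{eq:expansion} the unstable expansion rate satisfies $\Lambda_0 \ge 1+2\tau_{\min}/\scat > 1$ uniformly (and in fact only improves as $\scat \to 0$), one has $d_{W_n}(y_n,z_n)\le \Lambda_0^{-(N-n)}|W|$, and summing a geometric series gives
\[
\sum_{n=0}^{N-1} C\,\bigl(\Lambda_0^{-(N-n)}|W|\bigr)^{1/(r_0+1)}\le \frac{C}{1-\Lambda_0^{-1/(r_0+1)}}\,|W|^{1/(r_0+1)} =: C_d\, |W|^{1/(r_0+1)}.
\]

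The main obstacle is making the one-step H\"older constant \emph{uniform in $\scat$}: as $\scat\to 0$ the scatterer curvature $\cK=1/\scat$ diverges, so naively the quantities appearing in the derivative of $\log J_V T_\scat^{-1}$ could blow up. The crucial input is Lemma \ref{lem:CurvBound}, which shows that on regular unstable curves both the incoming wavefront curvature $\Omega^-$ and the higher-order quantity $\cE_1 = F_1(\Omega^-)^3$ remain $\scat$-uniformly bounded. Once these are controlled, the singular factors $1/\scat$ cancel in the relevant ratios entering the Jacobian derivative, and the classical CM argument goes through with only cosmetic modifications, producing a $\scat$-independent constant $C_d$ as claimed.
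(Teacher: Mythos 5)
Your proposal follows essentially the same route as the paper: telescope via the chain rule, prove a one-step estimate of the form $|W_n|\cdot\sup\bigl|\tfrac{d}{dx_n}\log J_{W_n}T_\scat^{-1}\bigr|\ll |W_n|/\cos\phi_n\ll|W_n|^{1/(r_0+1)}$ using the homogeneity bound \eqref{eq:DistortCos}, and sum a geometric series via uniform hyperbolicity of $T_\scat^{-1}$ on unstable curves. Phrasing the one-step estimate as a H\"older-$1/(r_0+1)$ bound rather than as (length)$\times$(derivative bound) is only a cosmetic difference; both reduce to the same interpolation against $\cos\phi$. (A small indexing slip: the contraction gives $d_{W_n}(y_n,z_n)\le\Lambda_0^{-n}|W|$, not $\Lambda_0^{-(N-n)}|W|$, but the geometric sum is unaffected.)

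The one place where you are too quick is the claim that, once Lemma~\ref{lem:CurvBound} bounds $\Omega^-$ and $\cE_1$, ``the classical CM argument goes through with only cosmetic modifications.'' The paper's proof of \eqref{eq:Distortion1} has a genuinely non-cosmetic ingredient here. Writing the one-step log-Jacobian in $(\theta,\phi)$ coordinates produces, among others, the term $\log\bigl(2\scat^{-1}\tau_{n+1}+\cos\phi_{n+1}(1+\tau_{n+1}\Omega^-_{n+1})\bigr)$, and its derivative contains $\frac{2}{\scat}\frac{d\tau_{n+1}}{dx_n}$. The factor $\scat^{-1}$ does not cancel automatically: one must prove the new estimate $\bigl|\frac{d\tau_{n+1}}{dx_n}\bigr|\le C\scat$, which the paper obtains by noting that the collision point $P(x_n)=(\scat\cos\theta_n,\scat\sin\theta_n)$ has $\theta_n$-derivative of size $O(\scat)$, and similarly for $P(x_{n+1})$. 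This is the specific step that makes the distortion constant $C_d$ uniform in $\scat$, and it deserves to be made explicit rather than absorbed into ``the singular factors $1/\scat$ cancel in the relevant ratios.'' Otherwise your argument matches the paper's.
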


\begin{proof}
The lemma relies on the inequality
\begin{equation}\label{eq:Distortion1}
\left|\frac{d}{dx_n} \log J_{W_n} T_{\scat}^{-1}(x_n) \right|\le
\frac{C}{\cos\phi_n}
\end{equation}
for some uniform $C>0$, cf.~\cite[Formula (5.8)]{CM}.

Using this formula the argument in the proof of \cite[Lemma 5.27]{CM} can be
repeated literally:
\begin{eqnarray}
 |\log J_WT_{\scat}^{-N}(y)-\log J_WT_{\scat}^{-N}(z)|&\le & \sum_{n=0}^{N-1} |\log
 J_{W_n}T_{\scat}^{-1}(y_n)-\log J_{W_n}T_{\scat}^{-1}(z_n)| \nonumber \\
  &\le& \sum_{n=0}^{N-1} |W_n| \max \left|\frac{d}{dx_n} \log J_{W_n}
  T_{\scat}^{-1}(x_n) \right|  \\
  &\le&  C \sum_{n=0}^{N-1} \frac{|W_n|}{\cos\phi_n}
  \le C \sum_{n=0}^{N-1} |W_n|^{\frac{1}{r_0+1}}\le C
   |W|^{\frac{1}{r_0+1}}, \nonumber
\end{eqnarray}
where we have used the chain rule, \eqref{eq:Distortion1},
and~\eqref{eq:DistortCos} and the uniform hyperbolicity.

It remains to prove \eqref{eq:Distortion1}. Here we essentially follow
\cite[pp. 106--107]{CM}. We have
\begin{eqnarray}
  \log J_{W_n} T_{\scat}^{-1}(x_n) &=& \log \cos\phi_n + \frac{1}{2}
  \log\left(4\pi^2\scat^2 +\left(\frac{d\phi_n}{d\theta_n}\right)^2
  \right)-\frac{1}{2} \log\left(4\pi^2\scat^2
  +\left(\frac{d\phi_{n+1}}{d\theta_{n+1}}\right)^2 \right)\nonumber \\
    && - \log\left( 2\scat^{-1} \tau_{n+1}+
    \cos\phi_{n+1}(1+\tau_{n+1}\Omega^-_{n+1}) \right).\nonumber
  \end{eqnarray}
We consider the derivatives of these terms separately. As noted above,
differentiation with respect to $\theta_n$ and $x_n$ can be interchanged. By
Lemma~\ref{lem:CurvBound}, the derivative of the second term w.r.t.\ $\theta_n$ is uniformly bounded. The same applies to the derivative of the
third term with respect to $\theta_{n+1}$, while
\[
\frac{dx_{n+1}}{dx_n}= J_{W_n} T_{\scat}^{-1}(x_n)
\]
is uniformly bounded from above. The first term gives the main contribution:
as $\cos\phi_n$ is not bounded away from $0$, the derivative of its logarithm is
\[
\left|\frac{d(\log \cos\phi_n)}{dx_n}\right|\le C \left|\frac{d(\log
\cos\phi_n)}{d\theta_n}\right|\le \frac{C}{\cos\phi_n}.
\]
The fourth term is the logarithm of the quantity
\[
2\scat^{-1} \tau_{n+1}+ \cos\phi_{n+1}(1+\tau_{n+1}\Omega^-_{n+1})
\]
which is bounded from below, but not from above. It is thus (more than) enough
to show that, when taking the derivative,
all contributions to the numerator are uniformly bounded. This holds
immediately by the previous discussion for all
the terms except $\frac{2}{\scat} \frac{d\tau_{n+1}}{dx_n}$ which requires
further investigation. Note that
\[
\tau_{n+1}=\mathop{dist}(P(x_n), P(x_{n+1}))
\]
where $P(x_n)$ and $P(x_{n+1})$ are points on the billiard table (and thus on
$\bR^2$) associated to the points $x_n\in W_n$ and $x_{n+1}\in W_{n+1}$ on
the two scatterers, respectively. In an appropriate reference frame
$P(x_n)=(\scat\cos\theta_n,\scat\sin\theta_n)$ hence the $\theta_n$-derivatives
of both coordinates are $\ll \scat$, and the same holds for the
$\theta_{n+1}$-derivatives of the coordinates of $P(x_{n+1})$. Thus
\[
\left|\frac{d\tau_{n+1}}{dx_n}\right|\le C \scat,
\]
which is sufficient for our purposes.
\end{proof}

\section{Decay of correlation for $\ks$.}\label{sec:decay}

The main result of this section is the justification of~\eqref{eq-corel2}, that is

\begin{prop}\label{prop:decaykappa}
There exist $\hat C_\scat>0$ and $\hat\vartheta_\scat<1$ such that

\begin{itemize}
\item for any $j \geq 1$ we have
\begin{align}\label{eq:prop_c1_l0}
 \Big| \int_{\cM_0} & (e^{it\ks}-1)\, (e^{it\ks}-1)\circ
 T_{\scat}^{j}\, d\mu
- \int_{\cM_0}  (e^{it\ks}-1)
\,d\mu \int_{\cM_0}
(e^{it\ks}-1)\,d\mu \Big|
 \le\hat C_\scat |t|^2 \hat\vartheta_\scat^j, \quad
\end{align}

\item furthermore, there exist $\wCs>0$ and $\wts<1$ such that for any $j, \ell\ge 1$ we have
\begin{align}
 \Big| \int_{\cM_0} & (e^{it\ks}-1)
 R_\scat^{\ell}(e^{it\ks}-1)\, (e^{it\ks}-1)\circ
 T_{\scat}^{j}\, d\mu \nonumber\\
&-\int_{\cM_0}  (e^{it\ks}-1)
R_\scat^{\ell}(e^{it\ks}-1)\,d\mu \int_{\cM_0}
(e^{it\ks}-1)\,d\mu \nonumber\\
&-\,\Big(\int_{\cM_0}  (e^{it\ks}-1)\,d\mu\Big) \int_{\cM_0}
(e^{it\ks}-1)\, (e^{it\ks}-1)\circ  T_{\scat}^{j}\,
d\mu \nonumber\\
&+\,\Big(\int_{\cM_0} (e^{it\ks}-1)\,d\mu\Big)^3\Big|\ \le\
\wCs |t|^2 \wts^{\ell+j}.
\label{eq:prop_c1_l1}
\end{align}

\end{itemize}
\end{prop}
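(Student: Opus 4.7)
The plan is to follow the Chernov--Dolgopyat standard pair framework from \cite[Proposition 9.1]{ChDo09}, which proves the analogous correlation decay for the unbounded observable $\ks$ itself (recalled as \eqref{eq:corrfix}), and to combine it with a Taylor expansion of $e^{it\ks}-1$ to extract the factor $|t|^2$. The observables $e^{it\ks}-1$ are bounded (by $2$) and oscillatory, so they are actually \emph{easier} for standard pairs than $\ks$; the only challenge is tracking the $t$-dependence.

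For \eqref{eq:prop_c1_l0}, I would first write $e^{it\ks}-1 = it\ks + h_t(\ks)$ where $|h_t(x)| \le \min\{ \tfrac12 t^2 x^2,\, 2\}$. Using $\int \ks\, d\mu = 0$, the left-hand side of \eqref{eq:prop_c1_l0} expands into four pieces: the dominant term $-t^2\int \ks\cdot \ks\circ T_\scat^j\, d\mu$ which is immediately controlled by \eqref{eq:corrfix}, two cross terms of type $\int \ks\cdot h_t\circ T_\scat^j\, d\mu$, and a remainder $\int h_t\cdot h_t\circ T_\scat^j\, d\mu - (\int h_t\, d\mu)^2$. For the cross and remainder pieces I would rerun the standard-pair decomposition of \cite[Sec.~9]{ChDo09} with $h_t$ (or its centered version) replacing one of the $\ks$ factors. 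Because $h_t$ is pointwise bounded by $2$, the coupling argument goes through verbatim and yields the $j$-dependence $\hat\vartheta_\scat^j$; what has to be inserted by hand is the sharper pointwise bound $|h_t|\le \tfrac12 t^2\ks^2$ on the truncation $\{|\ks|\le 1/|t|\}$ together with the infinite-horizon tail estimate $\mu(|\ks|>1/|t|) = O(t^2)$ from Lemma~\ref{lem:p6}. Balancing truncation versus tail at level $K=1/|t|$ produces the required overall size $|t|^2$.

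For \eqref{eq:prop_c1_l1} the strategy is the same but now applied to a triple correlation with a middle ``spacer'' $R_\scat^\ell$, which represents $\ell$ iterates between the first and second occurrence of $(e^{it\ks}-1)$. The Taylor expansion again reduces matters to bounding centered triple correlations with $\ks$ (or $h_t$) in each slot. I would organize the coupling in two stages: first couple along the $\ell$-step gap between the first and middle factors, which yields a factor $\wts^\ell$; then, treating $R_\scat^\ell(e^{it\ks}-1)$ as a bounded test function along unstable leaves, couple along the $j$-step gap to the third factor, yielding an additional factor $\wts^j$. The four terms in \eqref{eq:prop_c1_l1} are exactly the combinatorial terms that remain after subtracting all factorizations into pairs and singletons, so the bound comes out to $\wCs |t|^2 \wts^{\ell+j}$ after again using the truncation/tail balance to extract $|t|^2$.

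The main obstacle is the triple-correlation bound \eqref{eq:prop_c1_l1}: the standard pair machinery of \cite{ChDo09} as stated gives only two-point estimates, so one must iterate the coupling argument and make sure that the residual ``uncoupled'' piece after $\ell$ iterates is still sufficiently regular (as a function along unstable curves) to be fed back into the coupling argument for the next $j$ iterates. Carrying this out requires checking that the distortion, regularity and equidistribution estimates underlying the Chernov--Dolgopyat coupling survive under multiplication by the bounded oscillatory factor $(e^{it\ks}-1)$, and that all the mean-subtraction cancellations assemble into exactly the four combinatorial terms displayed in \eqref{eq:prop_c1_l1}. The $\scat$-dependence of the constants $\hat C_\scat,\hat\vartheta_\scat,\wCs,\wts$ is not a problem here, since the statement only asserts their existence for each fixed $\scat$; the relation \eqref{eq:edc_const_relate} to the H\"older correlation decay constants is tracked separately along the way.
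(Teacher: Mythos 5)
There is a basic error at the start of your first-part argument: the Taylor remainder $h_t(x):=e^{itx}-1-itx$ is \emph{not} pointwise bounded by $2$. Since $|e^{itx}-1|\le 2$, one has $|h_t(x)|\ge |t||x|-2$, which is unbounded; the usable pointwise bound is $|h_t|\le\frac12 t^2 x^2$, not $\min\{\frac12 t^2 x^2,\,2\}$. This invalidates the assertion that ``the coupling argument goes through verbatim because $h_t$ is pointwise bounded by $2$.'' Even granting a bounded piece after truncation, your $t$-dependent truncation at $K=1/|t|$ cannot deliver the joint bound $|t|^2\hat\vartheta_\scat^j$ of \eqref{eq:prop_c1_l0}: the $j$-decay comes only from exponential decay of correlations for the \emph{bounded} (truncated) observable, while the tail piece admits only $j$-uniform standard-pair estimates, so the truncation level must be tuned to $j$ and $\scat$, not to $t$. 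With $K=1/|t|$, the sup norm of $h_t\textbf{1}_{|\ks|\le K}$ equals $t^2K^2/2=1/2$ — leaving \emph{no} residual factor of $t$ in the exponentially decaying piece — and you are left with $O(\hat\theta_\scat^j)+O(|t|^2)$, which is strictly weaker than $O(|t|^2\hat\vartheta_\scat^j)$. The paper instead uses a \emph{multiplicative} decomposition $e^{it\ks}-1=e^{it\ks'}(e^{it\ks''}-1)+(e^{it\ks'}-1)$ with $\ks'=\ks\textbf{1}_{|\ks|\le H}$, so that every factor satisfies $|e^{itx}-1|\le|t||x|$ and carries an honest $|t|$ regardless of $H$; one then chooses $H$ as a function of $j$ and $\scat$ alone (roughly $H\asymp\hat\theta_\scat^{-cj}\scat^{-c'}$), so that the exponential contribution $C_\scat\hat\theta_\scat^j H\hat H|t|^2$ from the truncated piece balances the $j$-uniform tail bounds of Lemma~\ref{lemma:stpair}, yielding the explicit relation \eqref{eq:edc_const_relate}.

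For \eqref{eq:prop_c1_l1} your iterated-coupling proposal takes a genuinely different route, and the step you yourself flag as the ``main obstacle'' is exactly where it fails: after the first $\ell$-step coupling one is left with $R_\scat^\ell(e^{it\ks}-1)$, which acts in the backward direction and has no reason to be a dynamically H\"older function on unstable curves that could be reinserted into a second coupling stage. The paper sidesteps this by exploiting that $\ks$ is constant on stable leaves and so lifts to the one-sided Young tower $\bar\Delta_\scat$; there the spectral gap of $R_{\bar\Delta_\scat}$ on the Young space $\cB_{\bar\Delta_\scat}\subset L^p(\mu_{\bar\Delta_\scat})$, $p>1$, gives $R_{\bar\Delta_\scat}^\ell(e^{it\hks}-1)-\int(e^{it\hks}-1)\,d\mu = Q_{\bar\Delta_\scat}^\ell(e^{it\hks}-1)$ with $\|Q_{\bar\Delta_\scat}^\ell\|_{L^p}\le C^0_\scat\hat\theta_\scat^\ell$, and the $\ell$-decay is transferred into the triple correlation via H\"older's inequality with conjugate exponent $q=p/(p-1)$ close to $1$. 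This is precisely why the $L^q$ ($q>1$) versions of the standard-pair estimates are proved in Lemma~\ref{lemma:stpair}, not just the $L^1$ versions. Your structural observation — that the four terms in \eqref{eq:prop_c1_l1} are the residuals left after subtracting factorizations into pairs and singletons — is correct and does match what emerges once $R_{\bar\Delta_\scat}^\ell$ is replaced by $\Pi_{\bar\Delta_\scat}+Q_{\bar\Delta_\scat}^\ell$ and the products expanded.
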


The  $\scat$-dependence of this exponential rate gives the main source of
unknown dependence on $\scat$ in the main results of our paper. During the proof we will
point out the exact sources of unknown dependence of $\wCs>0$ and $\wts<1$ on $\scat$.

Let us make some comments on the relations of the two estimates of Proposition~\ref{prop:decaykappa}.
We will first prove \eqref{eq:prop_c1_l0} with some
 $\hat C_\scat>0$ and $\hat\vartheta_\scat<1$ that we can explicitly relate to the correlation decay rates of the map $T_{\scat}$ on H\"older functions, as expressed in
 \eqref{eq:edc_const_relate} below. Then we extend our argument to obtain \eqref{eq:prop_c1_l1} for some $\wCs>\hat C_\scat$ and $\wts\in(\hat\vartheta_\scat,1)$.
  Obtaining relations similar to \eqref{eq:edc_const_relate} for the constants $\wCs$ and $\wts$ seems quite difficult and we do not push this point.

The proof of Proposition~\ref{prop:decaykappa} consists in: a) reconsider \cite[Proposition 9.1]{ChDo09};
b) only for \eqref{eq:prop_c1_l1}, work with a version of $R_\scat$
with spectral gap in a Banach space embedded in some $L^p$ space with $p>1$.
Item a) is needed in order to obtain the bound $|t|^2$ and the decay of correlation in $j$.
Item b) is needed to obtain the joint decay in $j$ and $\ell$.
Item b) is possible because  for every $\scat>0$, \emph{there exists} a Young tower
$\Delta_\scat$ and a tower map $T_{\Delta_\scat}$ associated with the billiard map $T_\scat$; this is ensured by the construction in~\cite{Chernov99, Young98}.
We emphasize that we will not exploit any fine dependence on $\scat$ of
$T_{\Delta_\scat}$ (the mere existence is enough), which is why this part of
 our arguments can be worked on the Young tower $\Delta_\scat$.

\subsection{Standard pair argument}
\label{subsec:stpair}

In this section we reconsider \cite[Proposition 9.1]{ChDo09}.
 Let us introduce truncation levels $H,\hat H > 0$ to be fixed later and
\begin{align*}
\ks'=\ks\cdot \textbf{1}_{|\ks|\le H} \qquad\qquad
&\ks''=\ks-\ks';\\
\ks'''=\ks\cdot \textbf{1}_{|\ks|\le \hat H} \qquad\qquad
&\ks''''=\ks-\ks'''.
\end{align*}
As $|\ks| \asymp |\xi| m$ on $D_{\xi,m}$, the truncation $\ks'$
restricts $\ks$ to the cells $D_{\xi,m}$ with $m\le H |\xi|^{-1}$.

The result we will use in the proof of Proposition~\ref{prop:decaykappa}
below is

\begin{lemma}\label{lemma:stpair} For any $c_0>2$ we have
\begin{itemize}
\item[(i)']$\int |\ks'| \cdot |\ks''''| \circ T_{\scat}^j \, d\mu   \le  CH^2 \hat H^{-1} \scat^{-3}$,
\item[(ii)'] $\int |\ks'' | \cdot |\ks| \circ T_{\scat}^j \, d\mu
\le C |\log \scat|\cdot \left(H^{-\frac12 +\frac{1}{2r_0}} \, \log H \, \scat^{-3-\nu}+ H^{2-c_0} \scat^{-2-c_0}\right)$.
\end{itemize}
Furthermore, for any $q\in \left(1,\frac87-\frac{6}{7(7r_0-1)}\right)$ and
$c \in (\frac{q+1}{2-q}\, , \, \frac{1-1/r_0}{2q-2}-1)$,
\begin{itemize}
\item[(i)]$\int |\ks'|^q \cdot |\ks''''|^q \circ T_{\scat}^j \, d\mu   \le  CH^{q+1} \hat H^{q-2} \scat^{-3}$,
\item[(ii)] $\int |\ks'' |^q \cdot |\ks|^q \circ T_{\scat}^j \, d\mu
\le C \left(H^{-\frac32+q+c(q-1)+\frac{1}{2r_0}} \scat^{c(q-1)-q-2-\nu} + H^{c(q-2)+q+1} \scat^{-1-q-c(2-q)}\right)$.
\end{itemize}
\end{lemma}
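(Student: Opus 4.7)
The four bounds split naturally: (i) and (i') follow from tail estimates alone, whereas (ii) and (ii') need to combine tail estimates with a standard pair correlation argument. In all four cases the bound must be uniform in the iterate $j$, so the $T_\scat$-invariance of $\mu$ will be the fundamental tool for eliminating $j$-dependence.

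For (i) and (i'), I would exploit the pointwise bound $|\ks'|^q \leq H^q$, which combined with invariance yields
\[
\int |\ks'|^q \, |\ks''''|^q \circ T_\scat^j \, d\mu \;\leq\; H^q \int |\ks''''|^q \, d\mu
\]
uniformly in $j$. The remaining tail moment $\int_{\{|\ks|>\hat H\}} |\ks|^q \, d\mu$ is then computed by summing the explicit formula for $\mu(\{\ks = \xi'+M\xi\})$ from Lemma~\ref{lem:p6} over cells $D_{\xi,M}$ with $|\xi|M > \hat H$; the outer sum over corridor directions $\xi$ is controlled by Lemma~\ref{lem:corridor_sum}, and multiplying by $H^q$ gives the $H^2\hat H^{-1}\scat^{-3}$ bound (resp.\ $H^{q+1}\hat H^{q-2}\scat^{-3}$).

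For (ii) and (ii'), the pivotal observation is that $\ks'$ and $\ks''$ have disjoint supports, so $|\ks|^q = |\ks'|^q + |\ks''|^q$ pointwise and
\[
\int |\ks''|^q \, |\ks|^q \circ T_\scat^j \, d\mu \;=\; \int |\ks''|^q \, |\ks'|^q \circ T_\scat^j \, d\mu + \int |\ks''|^q \, |\ks''|^q \circ T_\scat^j \, d\mu.
\]
The cross term is handled by the same scheme as in (i') after a H\"older-type interpolation $|\ks''|^q \leq H^{-c(q-1)} |\ks|^{q+c(q-1)} \mathbf{1}_{\{|\ks|>H\}}$ with free exponent $c$; this produces the $H^{c(q-2)+q+1}\scat^{-1-q-c(2-q)}$ contribution in (ii) (and the $H^{2-c_0}\scat^{-2-c_0}$ one in (ii')), with the admissible ranges of $c$, $c_0$ in the statement ensuring the tail moments of $|\ks|^{q+c(q-1)}$ remain finite via Lemma~\ref{lem:kappa_-norm}.

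The diagonal term $\int |\ks''|^q \, |\ks''|^q \circ T_\scat^j \, d\mu$ is the main obstacle, because $\ks \notin L^{2}$ rules out a direct Cauchy-Schwarz. Here I would follow the scheme of \cite[Proposition 9.1]{ChDo09}: decompose $\ks'' = \sum_{|\xi|M>H}|\xi|M\,\mathbf{1}_{D_{\xi,M}}$, smooth each indicator at the homogeneity-strip scale so that its H\"older norm scales as a negative power of the strip width (whence the $1/(2r_0)$ in the exponent), and invoke exponential decay of correlations for H\"older observables with the explicit $\scat$-dependence of the rate inherited via \eqref{eq:edc_const_relate}. Summing the resulting pairwise correlation estimates over cell pairs, with the leading $|\log \scat|$ coming from the corridor sum $\sum_\xi d_\scat(\xi)^2/|\xi|$ (cf.\ Lemma~\ref{lem:corridor_sum}) and the $\log H$ from the summation over $M$, produces the first summand in the bounds (ii) and (ii'). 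The free exponent $c$ (resp.\ $c_0$) then tunes the balance between the $H^{-1}$ and $\scat^{-1}$ powers, explaining the two-summand form of the final bound.
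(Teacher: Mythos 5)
Your strategy for (i) and (i') is sound and in fact slightly slicker than the paper's: bounding $|\ks'|^q\le H^q$ pointwise and pulling out the factor by $T_\scat$-invariance gives $H^q\int|\ks''''|^q\,d\mu\ll H^q\hat H^{q-2}\scat^{-2}$, which is a bit stronger than the claimed $H^{q+1}\hat H^{q-2}\scat^{-3}$. (The paper instead keeps the full double sum over cells $D_{\xi,m}, D_{\hxi,\hm}$ and bounds $\mu(D_{\xi,m}\cap T_\scat^{-j}D_{\hxi,\hm})\le\mu(D_{\hxi,\hm})$, which wastes a factor $H\scat^{-1}$ when summing $m$ up to $H/|\xi|$; either way the claimed inequality is established.)

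The proposal for (ii) and (ii'), however, has a genuine gap, and it is a conceptual one. You propose to handle the diagonal term $\int|\ks''|^q\,|\ks''|^q\circ T_\scat^j\,d\mu$ by smoothing the cell indicators $\mathbf{1}_{D_{\xi,m}}$ and invoking exponential decay of correlations for H\"older observables with rate $\hat\theta_\scat$. This cannot work as stated: the right-hand sides of (ii) and (ii') are uniform in $j$ and contain no factor $\hat\theta_\scat^j$ nor any of the $\scat$-dependent constants $C_\scat,\hat\theta_\scat$ from the H\"older decay estimate \eqref{eq:edcHolder}. A correlation-decay argument inevitably produces either a term proportional to $\hat\theta_\scat^j$ (which cannot be absorbed into the claimed $j$-independent bound) or, after trading off, a bound depending on the unknown quantities $C_\scat,\hat\theta_\scat$. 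The whole point of Lemma~\ref{lemma:stpair} is that it is a purely geometric estimate, and it is precisely what allows the main text to keep the unknown $\scat$-dependence isolated in \eqref{eq-complprojmixrho} and \eqref{eq:decay}.

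What the paper actually uses in place of decay of correlations is the growth-lemma machinery: foliate $D_{\xi,m}$ by unstable curves, compute $\cZ_{\cG_1}\asymp(\scat m|\xi|)^{\frac12+\frac1{2r_0}}$ for the first image, and crucially use the uniform-in-$n$ bound \eqref{eq:Zbound}, $\cZ_{\cG_n}\le C\max(\cZ_{\cG_1},\scat^{-\nu})$, together with \eqref{eq:Ztrans} to get the $j$-independent estimate
\[
\mu\bigl(D_{\xi,m}\cap T_\scat^{-j}D_{\hxi,\hm}\bigr)\ll |\hxi|^{-2}\hm^{-2}\,|\xi|^{-\frac52+\frac1{2r_0}}\,m^{-\frac52+\frac1{2r_0}}\,\scat^{-1-\nu}.
\]
This is where the exponents $\tfrac1{2r_0}$ and $\nu$ in the statement actually come from — not from any H\"older smoothing. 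Furthermore, the split in the paper is not your cross/diagonal decomposition $|\ks|^q=|\ks'|^q+|\ks''|^q$, but a split of the $\hm$-summation at the $m$-dependent threshold $\hm\lessgtr m^c$: the standard-pair bound above is used for $\hm\le m^c$ and the crude bound $\mu(D_{\xi,m}\cap T_\scat^{-j}D_{\hxi,\hm})\le\mu(D_{\hxi,\hm})$ for $\hm>m^c$. The free parameter $c$ balances these two regimes, which is what produces the specific two-term bound and the admissible range $c\in(\frac{q+1}{2-q},\frac{1-1/r_0}{2q-2}-1)$. Your fixed-threshold decomposition does not reproduce this balance, and the moment you try to estimate the diagonal term without the growth-lemma bound you are stuck with either unboundedness or unwanted $j$-dependence.
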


\begin{remark}\label{rmk:stpair}
Let $q(r_0)=\frac87-\frac{6}{7(7r_0-1)}$, the upper bound on $q$ for $r_0$ fixed.
Furthermore, let $c_1(q)=\frac{q+1}{2-q}$ and $c_2(q)=\frac{1-1/r_0}{2q-2}-1$,
the lower and upper bounds on $c$ for $q$ fixed.
Note that $c_1(q)$ is increasing in $q$, while $c_2(q)$ is decreasing in $q$, and $c_1(q(r_0))=c_2(q(r_0))$. Also $c_1(1)=2$ and
$c_2(1)=\infty$, which is in accordance with the conditions on $c_0$.
Note also that:
\begin{itemize}
\item The condition $c<c_2(q)=\frac{1-\frac{1}{r_0}}{2q-2}-1$ is equivalent to $q+c(q-1)<\frac32-\frac{1}{2r_0}$. This ensures that the power of $H$ in the first term of (ii) is negative.
\item Since $c>c_1(q)=\frac{q+1}{2-q}$, the power of $H$ in the second term of (ii) is negative.
\item Choosing $\hat H=H^c$, the power of $H$ in (i) is also negative, again for $c>c_1(q)=\frac{q+1}{2-q}$.
\end{itemize}
\end{remark}

\textbf{Standard pairs and families.} Let us recall some terminology related
to standard pairs, see also \cite[page 29]{ChDo09}. A \textit{standard pair}
$\ell=(W,h_W)$ is a regular unstable curve $W$ that supports a dynamically
log-H\"older continuous probability density $h_W$. As such, it can be
regarded as a probability measure on the phase space $\cM$, which will be
denoted by $\ell$, too.

A \textit{standard family} is a collection of standard pairs
$\cG=\{\ell_a\}$, $a\in\cA$ equipped with a probability factor measure
$\lambda_{\cG}$ on $\cA$. This induces a probability measure $\bP_{\cG}$ on
$\cM$.

For a standard pair $\ell=(W,h_W)$ any $x\in W$ splits $W$ into two
subcurves, let $r_W(x)$ denote the length of the shorter, and let
$\cZ_{\ell}=\sup_{\eps>0} \eps^{-1} \ell(r_W\le \eps)$. By H\"older
continuity of $\log h_W$, $\ell$ is equivalent to the normalized Lebesgue
measure on $W$ and thus $\cZ_{\ell} \asymp |W|^{-1}$.
This generalizes for the $Z$-function of a standard family $\cZ_{\cG} \asymp
\int \frac{\lambda_{\cG}(a)}{|W_a|} dm_W$.

The $T_{\scat}$-image of a standard pair is a countable collection of standard pairs.
Hence, the image of a standard family is a standard family. Given a standard
family $\cG$, for $n\ge 1$, $\cG_n$ denotes the $T_{\scat}^n$-image of $\cG$.
It follows from the growth lemma (Proposition~\ref{prop:growthscaled}) that there exists $\vartheta<1$ and $C_1,C_2>0$ such that
\[
\cZ_{\cG_n}\le C_1\vartheta^n \cZ_{\cG} + C_2 \delta_0^{-1}
\]
where $\delta_0 \asymp \scat^{\nu}$ (see \eqref{eq-size W} and
Remark~\ref{rem-usegr3}, part (i)). As  consequence, for any standard pair and
$n\ge 1$

\begin{equation}\label{eq:Zbound}
\cZ_{\cG_n}\le C \max(\cZ_{\cG_1}, \scat^{-\nu}).
\end{equation}

\textbf{Cells.} For $\xi\in\Z^2$ such that the corridor is opened up, and for
$m\in\Z$ let $D_{\xi,m}\subset \cM$ denote the set of points for which
$\ks=m\xi+\xi'$. The geometric properties of $D_{\xi,m}$ and its image
$T_{\scat}D_{\xi,m}$ will play an important role in the argument. $T_{\scat}D_{\xi,m}$ is
depicted in Figure~\ref{fig:phasespace}.
A similar description applies to $D_{\xi,m}$; it is delimited by a long singularity
curve, decreasing in the $(\theta,\varphi)$ coordinates, which is connected
to the boundary of $\cM$ by two shorter decreasing singularity curves,  of
length $\asymp (|\xi|\scat m)^{-1/2}$, running at a distance $\asymp (|\xi|
m)^{-2}$ from each other. Further properties:
\begin{itemize}
\item $\mu(D_{\xi,m})=\mu(T_{\scat}D_{\xi,m})\asymp \scat^{-1} |\xi|^{-3}m^{-3}$
(due to the factor $\cos\phi$ in the measure);
\item an unstable curve may intersect $D_{\xi,m}$ in a subcurve of length
$\le C (|\xi| m)^{-2}$;
\item  $T_{\scat}D_{\xi,m}$ intersects homogeneity strips of index $k\ge C(\scat
    |\xi| m)^{\frac{1}{2r_0}}$
\end{itemize}

If $\ell=(W,h_W)$ is a standard pair, then it can intersect $D_{\xi,m}$ in a
subcurve of length $\le C (|\xi| m)^{-2}$, thus the intersection has
probability bounded
above by $C (|\xi| m)^{-2}|W|^{-1} \asymp \cZ_{\ell} (|\xi| m)^{-2}$. It
follows that for a standard family $\cG$ we have
\begin{equation}\label{eq:Ztrans}
  \P_{\cG} (D_{\xi,m})\le C (|\xi| m)^{-2} \cZ_{\cG} .
\end{equation}

Our argument below follows the proof of \cite[Proposition 9.1]{ChDo09}
 taking into account that the corridor structure depends on $\scat$.\\

\begin{proofof}{Lemma~\ref{lemma:stpair}}
For item (i), using $\mu(T_{\scat}^{-j}D_{\hxi,\hat m}) \ll \scat^{-1}
\hat m^{-3} |\hxi|^{-3}$ as well as Lemma~\ref{lem:corridor_sum} several
times, we get

\begin{align*}
\int |\ks'|^q \cdot |\ks''''\circ T_{\scat}^j|^q \, d\mu
   & \le  C \sxi \shxi |\xi|^q |\hxi|^q  \sum_{m=1}^{\frac{H}{|\xi|}}
   \sum_{\hm=\frac{\hat H}{|\hxi|}}^{\infty} m^q \hm^q \mu(D_{\xi,m}\cap
   T_{\scat}^{-n}D_{\hxi,\hm}) \\
   &\le  C \scat^{-1}  \sxi \shxi |\xi|^q |\hxi|^q \sum_{m=1}^{\frac{H}{|\xi|}} m^q
   \sum_{\hm=\frac{\hat H}{|\hxi|}}^{\infty} \hm^{q-3} |\hxi|^{-3}
   \\
   &\le  C \scat^{-1} \sxi   H^{q+1} |\xi|^{-1} \shxi   \hat H^{q-2}
   |\hxi|^{-1}
   \le  CH^{q+1} \hat H^{q-2} \scat^{-3}.
\end{align*}
We will take $\hat H=H^c$ for $c>0$ to be determined.
To get a negative power of $H$, we need $q<2$ and $c>\frac{q+1}{2-q}$.

For the proof of (ii), we need to estimate
\begin{equation} \label{eq:decaykappa1}
\int |\ks'' |^q\cdot |\ks\circ T_{\scat}^j|^q \, d\mu \le C \sxi \shxi |\xi|^q |\hxi|^q
\sum_{m=\frac{H}{|\xi|}}^{\infty} m^q \sum_{\hm=1}^{\infty} \hm^q
\mu(D_{\xi,m}\cap T_{\scat}^{-j}D_{\hxi,\hm}).
\end{equation}

For different ranges of the indices, we will use two different estimates to
bound $\mu(D_{\xi,m}\cap T_{\scat}^{-j}D_{\hxi,\hm})$. On the one hand, as before, we
have
\begin{equation}\label{eq:decaykappa2}
\mu(D_{\xi,m}\cap T_{\scat}^{-n}D_{\hxi,\hm})\le \mu(D_{\hxi,\hm})\le C \scat^{-1}
|\hxi|^{-3} \hm^{-3}.
\end{equation}

For the other estimate, foliate $D_{\xi,m}$ with unstable curves $|W|$ of
length $\asymp (|\xi|m)^{-2}$. The image of any such curve stretches along
$T_{\scat}D_{\xi,m}$, crossing homogeneity strips with indices $k\ge C(\scat |\xi|
m)^{\frac{1}{2r_0}}$. The piece of $T_{\scat}W$ in the $k$-th homogeneity strip will
be denoted by $T_{\scat}W_k$, it has length $\asymp k^{-r_0-1}$, and its preimage has
length
 \[
 |W_k|\asymp k^{-r_0-1} \frac{\scat}{|\xi| m k^{r_0}}=\frac{\scat}{|\xi| m
 k^{2r_0+1}}
 \]
 as the expansion factor of $T_{\scat}$ on $W_k$ is $\asymp \scat^{-1} |\xi| m
 k^{r_0}$.
 Equipped with the conditional measure induced by $\mu$, $W$ is a standard
 pair $\ell=(W,h_W)$, and its image is a standard family $T_{\scat}\ell$ associated
 to the curves $T_{\scat}W_k$. To obtain the Z function, we use that the weight of
 $|T_{\scat}W_k|$ within this family is $\frac{|W_k|}{|W|}$, thus
 \begin{align*}
 \cZ_{T_{\scat}\ell} &\asymp  \sum_{k\ge C(\scat |\xi| m)^{\frac{1}{2r_0}}}
 \frac{|W_k|}{|W|} |T_{\scat}W_k|^{-1} \asymp \sum_{k\ge C(\scat |\xi|
 m)^{\frac{1}{2r_0}}} \frac{\scat |\xi|^2 m^2}{|\xi| m k^{2r_0+1}} k^{r_0+1} \\
 &\asymp  \scat m |\xi| \sum_{k\ge C(\scat |\xi| m)^{\frac{1}{2r_0}}} k^{-r_0}
 \asymp (\scat m |\xi|)^{\frac12+\frac{1}{2r_0}}.
 \end{align*}
This analysis applies to all the curves in the foliation. Accordingly, $\mu$
conditioned on $D_{\xi,m}$ can be regarded as a standard family $\cG$, and
the $\cZ$-function of its $T_{\scat}$-image satisfies
 \[
\cZ_{\cG_1}  \asymp C (\scat m |\xi|)^{\frac12+\frac{1}{2r_0}}.
 \]
For further iterates, it follows form \eqref{eq:Zbound} that
 \[
\cZ_{\cG_n}  \le C \scat^{-\nu} (m
|\xi|)^{\frac12+\frac{1}{2r_0}}.
 \]
Now we apply \eqref{eq:Ztrans} to get
\begin{align}
\mu(D_{\xi,m}\cap T_{\scat}^{-n}D_{\hxi,\hm}) &= \mu(D_{\xi,m}) \P_{\cG_n}
(D_{\hxi,\hm}) \le C \mu(D_{\xi,m}) \cZ_{\cG_n} |\hxi|^{-2} \hm^{-2} \nonumber
\\
&\le C |\hxi|^{-2}\hm^{-2} |\xi|^{-\frac52+\frac{1}{2r_0}}
m^{-\frac52+\frac{1}{2r_0}} \scat^{-1-\nu}.
\label{eq:decaykappa3}
\end{align}
We split \eqref{eq:decaykappa1} into two parts. If $\hm\le m^c$ (for some $c
> 0$ to be determined), we use \eqref{eq:decaykappa3} and get

\begin{align*}
\sxi \shxi & |\xi|^q |\hxi|^q \sum_{m=\frac{H}{|\xi|}}^{\infty} m^q
\sum_{\hm=1}^{m^c} \hm^q \mu(D_{\xi,m}\cap T_{\scat}^{-n}D_{\hxi,\hm})
\\
&\le C \scat^{-1-\nu} \sxi \shxi |\xi|^{-\frac52+q+\frac{1}{2r_0}} |\hxi|^{q-2}
\sum_{m=\frac{H}{|\xi|}}^{\infty} m^{-\frac52+q+\frac{1}{2r_0}} m^{c(q-1)}
\\
&\le C  \scat^{-1-\nu} H^{-\frac32+q+c(q-1)+\frac{1}{2r_0}}
\left(\sxi |\xi|^{-1-c(q-1)}
\right) \left(\shxi |\hxi|^{q-2}\right) \\
&\le C H^{-\frac32+q+c(q-1)+\frac{1}{2r_0}} \scat^{c(q-1)-q-2-\nu},
\end{align*}
where we have used that because $q+c(q-1)<\frac32-\frac{1}{2r_0}$, the contribution of $m$ is summable
(this condition is equivalent to $c<c_2(q)=\frac{1-\frac{1}{r_0}}{2q-2}$, cf.~Remark~\ref{rmk:stpair}).
Note that if $q=1$ then this contribution is independent of $c$; however, there is an additional factor
of $|\log \scat|\cdot \log H$.

For $m > m^c$ we use \eqref{eq:decaykappa2} and get

\begin{align*}
\sxi \shxi & |\xi|^q |\hxi|^q \sum_{m=\frac{H}{|\xi|}}^{\infty} m^q
\sum_{\hm=m^c}^{\infty} \hm^q \mu(D_{\xi,m}\cap T_{\scat}^{-n}D_{\hxi,\hm}) \\
&\le C \scat^{-1} \sxi \shxi |\xi|^q |\hxi|^{q-3}
\sum_{m=\frac{H}{|\xi|}}^{\infty} m^q \sum_{\hm=m^c}^{\infty} \hm^{q-3}
\le C \scat^{-1} \sxi \shxi |\xi|^q |\hxi|^{q-3}
\sum_{m=\frac{H}{|\xi|}}^{\infty} m^{c(q-2)+q} \\
&\le C H^{c(q-2)+q+1} \scat^{-1} \left(\sxi |\xi|^{c(2-q)-1} \right) \left(\shxi
|\hxi|^{q-3}\right) \le C H^{c(q-2)+q+1} \scat^{-1-q-c(2-q)},
\end{align*}
and in case $q=1$ we still have an additional $|\log \scat|$ factor. The condition of summability
$c(q-2)+q<-1$ is satisfied because $c>\frac{q+1}{2-q}$.
Summarizing, we need
\[
1\le q <2, \qquad q+c(q-1)<\frac32-\frac{1}{2r_0}, \qquad \frac{q+1}{2-q}<c.
\]
First we may fix $q$ such that
\[
\frac32-\frac{1}{2r_0}>q+\frac{q+1}{2-q}(q-1)=\frac{2q-1}{2-q} \quad
\Leftrightarrow\quad  q<2-\frac{6}{7-1/r_0}
\]
and then we can fix $c$ slightly larger than $\frac{q+1}{2-q}$, such that the
conditions are still met. The range of allowed $q$ depends on $r_0$, it can
never exceed $\frac87$; for the traditional $r_0=2$ the upper bound is
$\frac{14}{13}$, while for $r_0=5$ the upper bound is
$\frac{19}{17}$.
\end{proofof}

\subsection{Exploiting the existence of a Young tower for $T_\scat$}

Let  $(\bar\Delta_\scat, T_{\bar\Delta_\scat}, \mu_{\bar\Delta_\scat})$ be the
corresponding one-sided Young tower (i.e., with stable leaves quotiented out) and let $R_{\bar\Delta_\scat}$ be the
transfer operator of  $T_{\bar\Delta_\scat}$.
Let $\hks$ be the version of $\ks$ on
$\bar\Delta_\scat$. We will also use the notations $\hks', \hks'', \hks''', \hks''''$ for the Young tower versions of the truncations
$\ks', \ks'', \ks''', \ks''''$, respectively, hence for example $\hks'=\hks \cdot \textbf{1}_{|\hks|\le H}$. 
Since $\ks$ is constant on stable leaves, we have for any $j, \ell\ge 0$,
\begin{align}
\label{eq:corel-alt}
\nonumber&\int_{\cM_0}  (e^{it\ks}-1)\,
R_\scat^{\ell}(e^{it\ks}-1)\, (e^{it\ks}-1)\circ
T_{\scat}^{j}\, d\mu\\
&=\int_{\bar\Delta_\scat}  (e^{it\hks}-1)\,
R_{\bar\Delta_\scat}^{\ell}(e^{it\hks}-1)\,
(e^{it\hks}-1)\circ T_{\bar\Delta_\scat} ^{j}\,
d\mu_{\bar\Delta_\scat}.
\end{align}

Let $r$ be the roof function of the
tower $(\bar\Delta_{\scat},\mu_{\bar\Delta_{\scat}})$.
We recall that if $d := \gcd(r) > 1$, then for every $\scat>0$,  $R_{\bar\Delta_\scat}$, when viewed as an operator acting on the Young Banach
space $\cB_{\bar\Delta_\scat}\subset L^p(\mu_{\bar\Delta_\scat})$, $p>1$,
has a spectral gap (see~\cite{Chernov99, Young98}). As clarified in Remarks~\ref{rem:gcd} and~\ref{rem:gcd2}, the decomposition of $R_{\bar\Delta_\scat}$ we shall need in the proof below
holds when $d>1$.

Before proceeding to the proof of Proposition~\ref{prop:decaykappa}, we recall one property of the norm the Young space $\cB_{\bar\Delta_\scat}\subset L^p(\mu_{\bar\Delta_\scat})$, $p>1$ that we shall need in the proof below.
(The precise definition of $\cB_{\bar\Delta_\scat}$ is not important in the proof below, and we omit it.) For any function that is constant on the partition elements of the Young tower, the involved seminorm is zero. This is the case for $\hks'$ and thus, $\|\hks'\|_{\cB_{\bar\Delta_\scat}}\le H$
(and a similar version holds for $\hks'''$). \\

\begin{proofof}{Proposition~\ref{prop:decaykappa}} We first prove the
statement for the case when $\ell=0$ and point out the required modifications when $\ell\ge 1$.

{\bf{Case $\ell=0$.}}  Given~\eqref{eq:corel-alt}, in this case we need to show
that
\begin{align}\label{eq:mo}
\left|\int_{\bar\Delta_\scat}  (e^{it\hks}-1)\,
(e^{it\hks}-1)\circ T_{\bar\Delta_\scat} ^{j}\,
\mu_{\bar\Delta_\scat}-\Big(\int_{\bar\Delta_\scat}
(e^{it\hks}-1)\, \mu_{\bar\Delta_\scat}\Big)^2\right|
\le \hat C_\scat |t|^2 \hat\vartheta_\scat^{j},
\end{align}
for some $\scat$-dependent constants $\hat\vartheta_\scat<1$ and $\hat C_\scat > 0$.

Throughout this proof, we let $\ks', \ks'', \ks''', \ks''''$ also denote
their corresponding versions on the tower $\Delta_\scat$ and the context
in which they appear will make it clear which version we are referring to.

Write
\begin{align*}
&\int_{\bar\Delta_\scat}  (e^{it\hks}-1)\,
(e^{it\hks}-1)\circ T_{\bar\Delta_\scat} ^{j}\, d
\mu_{\bar\Delta_\scat}=
\int_{\bar\Delta_\scat}(e^{i\hks t}-e^{i\ks't}) \cdot (e^{i\hks
t}-1) \circ T_{\bar\Delta_\scat}^j \, d \mu_{\bar\Delta_\scat}\\
& + \int_{\bar\Delta_\scat} (e^{i\ks' t}-1) \cdot (e^{i\hks
t}-e^{i\ks'''t}) \circ T_{\bar\Delta_\scat}^j \, d \mu_{\bar\Delta_\scat}
+ \int_{\bar\Delta_\scat}  (e^{i\ks' t}-1) \cdot (e^{i\ks''' t}-1) \circ
T_{\bar\Delta_\scat}^j \, d \mu_{\bar\Delta_\scat}\\
 &=
\int_{\bar\Delta_\scat} e^{i\ks't} \cdot (e^{i\ks'' t}-1) \cdot
(e^{i\hks t}-1) \circ T_{\bar\Delta_\scat}^j \, d \mu_{\bar\Delta_\scat}+
\int_{\bar\Delta_\scat}  (e^{i\ks' t}-1) \cdot e^{i\ks'''t} \circ
T_{\bar\Delta_\scat}^j \cdot (e^{i\ks'''' t}-1) \circ T_{\bar\Delta_\scat}^j
\, d \mu_{\bar\Delta_\scat}\\
& + \int_{\bar\Delta_\scat}  (e^{i\ks' t}-1) \cdot (e^{i\ks''' t}-1)
\circ T_{\bar\Delta_\scat}^j\, d \mu_{\bar\Delta_\scat}
=I_1(t,\scat)+I_2(t,\scat)+I_3(t,\scat).
\end{align*}

For $I_3(t,\scat)$ we use the exponential decay of correlation (see Remark~\ref{rem:gcd} below for the case that the roof function $r$ of the tower has $\gcd(r) > 1$). This gives
the only source of unknown dependence on $\scat$ in the case $m=0$.
More precisely, for every $\scat>0$, there exists $\hat\theta_\scat<1$ and $C_\scat>0$ so that
\begin{eqnarray}\label{eq:decay}
\left| I_3(t,\scat)
 - \int_{\bar\Delta_\scat} (e^{i\ks't}-1) \, d \mu_{\bar\Delta_\scat}
 \int_{\bar\Delta_\scat} (e^{i\ks'''t}-1) \, d \mu_{\bar\Delta_\scat}
 \right|
& \leq & C_{\scat} \, \hat\theta^j_{\scat}\, \| e^{it\ks' t}-1
\|_{\cB_{\Delta_\scat}} \, \| e^{it\ks''' t}-1 \|_{\cB_{\Delta_\scat}}  \nonumber \\
& \leq&  C_{\scat}\, \hat\theta^j_{\scat} H \, \hat H \, |t|^2.
 \end{eqnarray}
Thus,
\begin{align*}
&\left| I_3(t,\scat)-\Big(\int_{\bar\Delta_\scat}
(e^{it\hks}-1)\, \mu_{\bar\Delta_\scat}\Big)^2\right|
\le C_{\scat}\, \hat\theta^j_{\scat} H \, \hat H \, |t|^2\\
&+
\Big| \int_{\bar\Delta_\scat} (e^{i\ks't}-1) \, d \mu_{\bar\Delta_\scat}
\int_{\bar\Delta_\scat} (e^{i\ks'''t}-1) \, d \mu_{\bar\Delta_\scat}-
\int_{\bar\Delta_\scat} (e^{i\hks t}-1) \, d \mu_{\bar\Delta_\scat}
\int_{\bar\Delta_\scat} (e^{i\hks t}-1) \, d
\mu_{\bar\Delta_\scat}\Big|\\
& =C_{\scat}\, \hat\theta^j_{\scat} H \, \hat H \, |t|^2+ |J(t,\scat)|.
 \end{align*}
By definition,
\[
|J(t,\scat)|=\Big| \int_{\cM_0}(e^{i\ks't}-1) \, d \mu  \int_{\cM_0} (e^{i\ks'''t}-1)
\, d \mu- \int_{\cM_0} (e^{i\hks t}-1) \, d \mu  \int_{\cM_0} (e^{i\hks t}-1) \,d \mu\Big|
\]
and we note that $J(t,\scat)$ is bounded by the sum of
\[
\int_{\cM_0} |e^{i\ks' t} \cdot (e^{i t\ks''}-1)| \, d \mu\,\int_{\cM_0} |e^{i
t\ks''' t}-1| \, d \mu
\leq |t|^2 \int |\ks| 1_{\{ \ks > H\}} \, d \mu  \int_{\cM_0}|\ks'''|  \, d
\mu
\]
and a similar term with $\hat H$ instead of $H$.
Using the H\"older inequality (with exponents $\frac{2}{1+\delta}$ and
$\frac{2}{1-\delta}$),
the tail behaviour of $\ks$ and Lemma~\ref{lem:kappa_-norm}, we obtain that
\[
\int_{\cM_0} |\ks| 1_{\{|\ks| > H\}} \, d\mu \leq \|
\ks\|_{L^{2/(1+\delta)}} \,
\mu(|\ks|>H)^{(1-\delta)/2} \ll \scat^{-1} H^{-(1-\delta)}.
\]
Also $\int_{\cM_0} |\ks'''|  d\mu\leq \| \ks \|_{L^1(\mu)} \ll \scat^{-1}$.
Hence,
\begin{equation}\label{eq:thirdterm}
  |J(t,\scat)|  \ll |t|^2 \scat^{-2} \left( H^{-(1-\delta)} +
  \hat H^{-(1-\delta)} \right).
\end{equation}
Finally, note that
\begin{align}\label{eq-deci1i2}
\nonumber|I_1(t,\scat)+I_2(t,\scat)|&\leq
 |t|^2 \int_{\bar\Delta_\scat}  |\ks''| \cdot |\ks|\circ
 T_{\bar\Delta_\scat}^j \, d \mu_{\bar\Delta_\scat}
 + |t|^2 \int_{\bar\Delta_\scat}  |\ks'| \cdot |\ks''''|\circ
 T_{\bar\Delta_\scat}^j \, d \mu_{\bar\Delta_\scat}\\
&=|t|^2\left(\int_{\cM_0} |\ks''| \cdot |\ks|\circ
 T_{\bar\Delta_\scat}^j \, d \mu +\int_{\cM_0} |\ks'| \cdot |\ks''''|\circ
 T_{\bar\Delta_\scat}^j \, d \mu\right).
\end{align}

For this $\ell=0$ case,
if we fix any $r_0\ge 2$ (taking into account that
$\hat{H}=H^{c_0}$), then we may bound the coefficients of $|t|^2$ in
$|J(t,\scat)|$ from \eqref{eq:thirdterm}, $|I_1(t,\scat)|$ and $|I_2(t,\scat)|$ from \eqref{eq-deci1i2},
respectively by
\[
\scat^{-2} H^{-(1-\delta)}; \qquad H^{-\frac15} \scat^{-4} + H^{2-c_0}\scat^{-\frac{11}{5}-c_0}, \qquad H^{2-c_0}\scat^{-3},
\]
where in the bound for $|I_1(t,\scat)|$ the exponents of $H$ and $\scat$ have been slightly decreased to bound the logarithmic factors.
Fixing $c_0=\frac{11}{5}$ and $\delta=\frac45$, all these are dominated by $H^{-\frac15}\scat^{-\frac{22}{5}}$.
On the other hand the coefficient of $|t|^2$ in  $|I_3(t,\scat)|$ is
$C_{\scat}\, \hat\theta^j_{\scat} H^{c_0+1}=C_{\scat}\, \hat\theta^j_{\scat} H^{\frac{16}{5}}$. Thus letting
$H=\left(C_{\scat}^{-1}\, \hat\theta^{-j}_{\scat} \scat^{-\frac{22}{5}}\right)^{\frac{5}{17}}$ we conclude that all terms
are dominated by
\begin{equation}
\label{eq:edc_const_relate}
\scat^{-\frac{352}{85}} C_{\scat}^{\frac{1}{17}}\, (\hat\theta_{\scat}^{\frac{1}{17}})^j;\qquad \text{thus we let}\qquad \hat{C}_{\scat}=\scat^{-\frac{352}{85}} C_{\scat}^{\frac{1}{17}}, \ \hat{\vartheta}_{\scat}=\hat\theta_{\scat}^{\frac{1}{17}}.
\end{equation}

{\bf{Case $\ell\ge 1$.}} The main differences in this case come down to dealing
with integrals containing unbounded terms $\ks''$ and $\ks''''$
in such a way that can gain exponential decay in $\ell$ and then proceed as in
the case $\ell=0$ treated above. To do this, we exploit that
$\cB_{\bar\Delta_\scat}\subset L^p(\mu_{\bar\Delta_\scat})$.

Using~\eqref{eq:corel-alt}, we need to estimate
\begin{align*}
J(t,\scat) := &\int_{\bar\Delta_\scat}  (e^{it\hks}-1)\,
R_{\bar\Delta_\scat}^{\ell}(e^{it\ks}-1)\,
(e^{it\hks}-1)\circ T_{\bar\Delta_\scat} ^{j}\,
d\mu_{\bar\Delta_\scat}\\
&-\int_{\bar\Delta_\scat} (e^{it\hks}-1)
R_{\bar\Delta_\scat}^{\ell}(e^{it\hks}-1)\, d\mu_{\bar\Delta_\scat}\,
\int_{\bar\Delta_\scat}
(e^{it\hks}-1)\,d\mu_{\bar\Delta_\scat}\\
&-\int_{\bar\Delta_\scat}
(e^{it\hks}-1)\,d\mu_{\bar\Delta_\scat}\, \int_{\bar\Delta_\scat}
(e^{it\hks}-1)\, (e^{it\hks}-1)\circ
T_{\scat}^{j}\, d\mu_{\bar\Delta_\scat} +\Big(\int_{\bar\Delta_\scat}
(e^{it\hks}-1)\,d\mu_{\bar\Delta_\scat}\Big)^3.
\end{align*}
By Remark~\ref{rem:gcd}, for every $\scat>0$ and
for every $\ell \ge 1$,
\begin{align}\label{eq:Qtower}
R_{\bar\Delta_\scat}^{\ell}(e^{it\hks}-1)-\int_{\bar\Delta_\scat}
(e^{it\ks}-1)\,d\mu_{\bar\Delta_\scat}=Q_{\bar\Delta_\scat}^{\ell}(e^{it\ks}-1),\quad\quad\|Q_{\bar\Delta_\scat}^{\ell}(e^{it\hks}-1)\|_{\cB_{\bar\Delta_\scat}}\le
C_{\scat}\, \hat\theta^\ell_{\scat},
\end{align}
for some $\scat$-dependent $C_\scat$ and $\hat\theta_{\scat}<1$. This is the first
source of unknown dependence on $\scat$. Since $\cB_{\bar\Delta_\scat}\subset L^p(\mu_{\bar\Delta_\scat})$,
\begin{align}\label{eq:Qtower2}
\|Q_{\bar\Delta_\scat}^{\ell}(e^{it\hks}-1)\|_{L^p(\mu_{\bar\Delta_\scat})}\le
C_{\scat}^0\, \hat\theta^\ell_{\scat},
\end{align}
for some $\scat$-dependent $C_\scat^0$. This is the second source of unknown dependence on $\scat$. \\

With these specified, we can write
\begin{align*}
J(t,\scat) &=\int_{\bar\Delta_\scat}  (e^{it\hks}-1)\,
Q_{\bar\Delta_\scat}^{\ell}(e^{it\hks}-1)\,
(e^{it\hks}-1)\circ T_{\bar\Delta_\scat} ^{j}\,
d\mu_{\bar\Delta_\scat}\\
&\quad -\int_{\bar\Delta_\scat} (e^{it\hks}-1)
Q_{\bar\Delta_\scat}^{\ell}(e^{it\hks}-1)\, d\mu_{\bar\Delta_\scat}\,
\int_{\bar\Delta_\scat}
(e^{it\hks}-1)\,d\mu_{\bar\Delta_\scat} =E(t,\scat) -G(t,\scat).
\end{align*}
Rearranging as in the case $\ell=0$,
\begin{align*}
E(t,\scat) &=
\int_{\bar\Delta_\scat}(e^{i\hks t}-e^{i\hks't})
\,Q_{\bar\Delta_\scat}^{\ell}(e^{it\ks}-1)\, \, (e^{i\hks t}-1)
\circ T_{\bar\Delta_\scat}^j \, d \mu_{\bar\Delta_\scat}\\
& + \int_{\bar\Delta_\scat} (e^{i\ks' t}-1)
\,Q_{\bar\Delta_\scat}^{\ell}(e^{it\hks}-1)\, (e^{i\hks
t}-e^{i\ks'''t}) \circ T_{\bar\Delta_\scat}^j \, d \mu_{\bar\Delta_\scat}\\
&+ \int_{\bar\Delta_\scat}  (e^{i\ks' t}-1)\,
Q_{\bar\Delta_\scat}^{\ell}\,(e^{it\hks}-1)\,  (e^{i\ks''' t}-1)
\circ T_{\bar\Delta_\scat}^j \, d \mu_{\bar\Delta_\scat}\\
 &=
\int_{\bar\Delta_\scat} e^{i\ks't}
\,Q_{\bar\Delta_\scat}^{\ell}(e^{it\hks}-1)\,  (e^{i\ks'' t}-1)
\cdot (e^{i\hks t}-1) \circ T_{\bar\Delta_\scat}^j \, d
\mu_{\bar\Delta_\scat}\\
&+ \int_{\bar\Delta_\scat}  (e^{i\ks' t}-1)
\,Q_{\bar\Delta_\scat}^{\ell}(e^{it\hks}-1)\,  e^{i\ks'''t} \circ
T_{\bar\Delta_\scat}^j \cdot (e^{i\ks'''' t}-1) \circ T_{\bar\Delta_\scat}^j
\, d \mu_{\bar\Delta_\scat}\\
& +\int_{\bar\Delta_\scat}  (e^{i\ks' t}-1)
\,Q_{\bar\Delta_\scat}^{\ell}(e^{it\hks}-1)\,  (e^{i\ks''' t}-1)
\circ T_{\bar\Delta_\scat}^j\, d
\mu_{\bar\Delta_\scat}=E_1(t,\scat)+E_2(t,\scat)+E_3(t,\scat).
\end{align*}
Let $q\in (1,\frac87-\frac{6}{7r_0-1})$ so that Lemma~\ref{lemma:stpair} holds.
By the H\"older inequality with $\frac1p+\frac1q=1$ and~\eqref{eq:Qtower2},
\begin{align*}
|E_1(t,\scat)+E_2(t,\scat)|&\le
\|Q_{\bar\Delta_\scat}^{\ell}(e^{it\hks}-1)\|_{L^p(\mu_{\bar\Delta_\scat})}\,
|t|^2\| \, |\ks''| \cdot |\hks|\circ T_{\bar\Delta_\scat}^j
\|_{L^q(\mu_{\bar\Delta_\scat})}\\
&\quad +\|Q_{\bar\Delta_\scat}^{\ell}(e^{it\hks}-1)\|_{L^p(\mu_{\bar\Delta_\scat})}\,
|t|^2\| \, |\ks'| \cdot |\ks''''|\circ T_{\bar\Delta_\scat}^j
\|_{L^q(\mu_{\bar\Delta_\scat})}\\
&\le C_{\scat}^0\, \hat\theta^{\ell}_{\scat}\,|t|^2\, \left(\||\ks''| \cdot
|\hks|\circ T_{\bar\Delta_\scat}^j \|_{L^q(\mu_{\bar\Delta_\scat})}
+\||\ks'| \cdot |\ks''''|\circ T_{\bar\Delta_\scat}^j
\|_{L^q(\mu_{\bar\Delta_\scat})}\right).
\end{align*}
Similar to estimating~\eqref{eq-deci1i2}, using Lemma~\ref{lemma:stpair} and Remark~\ref{rmk:stpair}
and without trying for optimal bounds,
we can pick $q$ close to $1$ and $c_0 < \frac52$ such that $c_0(q-2)+q+1 = -\frac15$.
For these values,
\begin{align}\label{eq:decj1j2}
|E_1(t,\scat)+E_2(t,\scat)|\le C\,C_{\scat}^0\,
\hat\theta^{\ell}_{\scat}\,|t|^2\, H^{-\frac{1}{5q}}
\scat^{\frac{-5}{q}}.
\end{align}
Next, let
\begin{align*}
L_1(t,\scat)&=\int_{\bar\Delta_\scat}  (e^{i\ks' t}-1)
\,Q_{\bar\Delta_\scat}^{\ell}(e^{it\hks}-1)\,  (e^{i\ks''' t}-1)
\circ T_{\bar\Delta_\scat}^j\, d \mu_{\bar\Delta_\scat}\\
&\quad -\int_{\bar\Delta_\scat} (e^{it\ks'}-1)
Q_{\bar\Delta_\scat}^{\ell}(e^{it\hks}-1)\, d\mu_{\bar\Delta_\scat}\,
\int_{\bar\Delta_\scat}  (e^{it\ks'''}-1)\,d\mu_{\bar\Delta_\scat}
\end{align*}
and note that
\begin{align*}
E_3(t,\scat)-G(t,\scat)&=L_1(t,\scat)-\int_{\bar\Delta_\scat}
(e^{it\hks}-e^{it\ks'})
Q_{\bar\Delta_\scat}^{\ell}(e^{it\hks}-1)\, d\mu_{\bar\Delta_\scat}\,
\int_{\bar\Delta_\scat}
(e^{it\hks}-1)\,d\mu_{\bar\Delta_\scat}\\
&\quad -\int_{\bar\Delta_\scat} (e^{it\hks}-1)
Q_{\bar\Delta_\scat}^{\ell}(e^{it\hks}-1)\, d\mu_{\bar\Delta_\scat}\,
\int_{\bar\Delta_\scat}
(e^{it\hks}-e^{it\ks'''})\,d\mu_{\bar\Delta_\scat}\\
&=L_1(t,\scat)-L_2(t,\scat)-L_3(t,\scat).
\end{align*}
By the exponential decay of correlations as in~\eqref{eq:decay} as well as \eqref{eq:Qtower2}:
\begin{align*}
|L_1(t,\scat)| &\le C_{\scat}\, \hat\theta^j_{\scat} H \, \hat H \,
|t|^2\|Q_{\bar\Delta_\scat}^{\ell}(e^{it\hks}-1)\|_{L^p(\mu_{\bar\Delta_\scat})}
\le C_{\scat}\,C_{\scat}^0\, \hat\theta^{\ell}_{\scat} \, |t|^2 \, H^{1+c_0},
\end{align*}
where as before $c_0 < \frac52$.
Finally, by the equation before~\eqref{eq:thirdterm}, we have
\[
|L_2(t,\scat)|\le |t^2|\,\scat^{-1}
H^{-(1-\delta)}\|Q_{\bar\Delta_\scat}^{\ell}(e^{it\hks}-1)\|_{L^p(\mu_{\bar\Delta_\scat})}
\le  C_{\scat}\, \,C_{\scat}^0\,\hat\theta^{\ell}_{\scat}|t^2|\,\scat^{-1} H^{-(1-\delta)}.
\]
A similar argument applies to $L_3(t,\scat)$.

The conclusion follows with a similar choice of $H$ as in the case $\ell=0$ treated above.~\end{proofof}

\begin{remark}\label{rem:gcd}
Let $r$ be the roof function of the
one-sided tower map $(\bar\Delta_{\scat},\mu_{\bar\Delta_{\scat}})$.
If $d := \gcd(r) > 1$, then $T_{\bar\Delta}$ is not mixing on the Banach space $\cB_{\bar\Delta_\scat}$.
However, the underlying billiard map $T_{\scat}$ is mixing and thus,
\begin{equation}\label{eq:1}
 \int_{\cM} R_{\scat}^n \phi \cdot \overline{\psi} \, d\mu \to 0 \text{ as } n \to \infty,
\end{equation}
for $\phi, \psi \in \cB$ with $\int_{\cM} \phi \, d\mu = 0$.
If $\gcd(r) = d > 1$, then the eigenvalues on the unit circle
are the $d$-th roots of unity.
Hence,
$$
R_{\bar\Delta_\scat} = \Pi_{\bar \Delta_\scat} + Q_{\bar\Delta_\scat} :=
\sum_{\lambda^d=1} \lambda \Pi_\lambda
+ Q_{\bar \Delta},
\qquad\qquad \Pi_{\bar\Delta_\scat} Q_{\bar\Delta_\scat} =
 Q_{\bar\Delta_\scat} \Pi_{\bar\Delta_\scat}= 0,
$$
where $\Pi_\lambda$ denotes the projection on the (generalised) eigenspace $ \cB_{\bar\Delta_\scat,\lambda}$ of eigenvalue $\lambda$,
and $Q_{\bar\Delta_\scat}$ is the complementary projection.
The Banach space $\cB_{\bar\Delta_{\scat}}$ on $\bar\Delta_{\scat}$ can be written as the direct sum
\begin{equation}
\cB_{\bar\Delta_{\scat}} = \cB_{\Pi}
\oplus \cB_{Q}
\qquad \text{ for } \quad
\cB_{\Pi} := \oplus_{\lambda^d = 1} \cB_{\bar\Delta_\scat,\lambda}
= \ker(Q_{\bar \Delta_\scat})
\quad \text{ and }  \quad \cB_Q = \ker(\Pi_{\bar\Delta_\scat}),
\end{equation}
As the kernels of projections, $\cB_{\Pi}$ and $\cB_Q$
are closed $R_{\bar\Delta_{\scat}}$-invariant
subspaces of $\cB_{\bar\Delta_\scat}$, and hence Banach spaces themselves.
Also, as clarified below, for every $\scat>0$, the restriction $R_{\bar\Delta_{\scat}}$ to $\cB_{Q}$
has spectral radius less than $1$. That is, for every $\scat>0$, there exists $\hat\theta_\scat < 1$
so that
\begin{equation}
\label{eq:spgapgcd}\| R^n_{\scat} \phi \|_{\cB_{\bar\Delta_\scat}} \ll \hat\theta_\scat^n\,\| \phi \|_{\cB_{\bar\Delta_\scat}}.
\end{equation}

Consider the lifted version of $\phi$:
$\phi_{\bar \Delta_\scat}(x) = \int_{\ell(x)} \phi \circ \pi \, d\mu_{\Delta_\scat,\ell(x)}$
where $\ell(x)$ is the stable leaf through $x \in \bar\Delta$ and $\mu_{\Delta_\scat,\ell(x)}$
the measure on this leaf emerging from the disintegration of the measure $\mu_{\Delta_\scat}$ of the two-sided tower.
The transfer operator $R_{\bar\Delta_\scat}$ on the one-sided tower satisfies
\begin{equation}\label{eq:2}
 \int_{\cM} R^n \phi \cdot \overline{\psi} \, d\mu =
 \int_{\bar\Delta} R^n_{\bar\Delta_\scat} \phi_{\bar\Delta_\scat} \cdot \overline{\psi_{\bar\Delta_\scat} }\, d\mu_{\bar\Delta_\scat}.
\end{equation}
If $\Pi_{\bar\Delta_\scat} \phi_{\bar\Delta_\scat} \neq 0$, then there exists
$\psi_{\bar\Delta_\scat} \in \cB_{\bar\Delta_\scat}$ such that
$\int_{\cM} R^n_{\bar\Delta_\scat} \phi_{\bar\Delta_\scat} \cdot \psi_{\Delta_\scat} \, d\mu_{\Delta_\scat} \not\to 0$.
(In fact, taking $\psi_{\bar \Delta_\scat} = \Pi_{\bar\Delta_\scat} \phi_{\bar\Delta_\scat}$, we get
$\int_{\cM} R^{dn}_{\bar\Delta_\scat} \phi_{\bar\Delta_\scat} \cdot \psi_{\bar\Delta_\scat} \, d\mu_{\bar\Delta_\scat} \to
\int_{\cM} \Pi_{\bar\Delta_\scat} \phi_{\bar\Delta_\scat}
\overline{\Pi_{\bar\Delta_\scat} \phi_{\bar\Delta_\scat}} \, d\mu_{\bar\Delta_\scat}  \neq 0$.)
This contradicts \eqref{eq:1} and/or \eqref{eq:2}.
Hence $\phi_{\bar\Delta_\scat} \in \cB_{Q}$ and
$\| R^n_{\bar\Delta_\scat} \phi_{\bar\Delta_\scat} \|_{\cB_Q} \leq
\|\, R_{\bar\Delta_{\scat}}|_{\cB_Q}\, \|^n \, \| \phi_{\bar\Delta_\scat}\|_{\cB_Q} \ll \hat\theta_\scat^n\, \| \phi_{\bar\Delta_\scat}\|_{\cB_Q}$.
Property~\eqref{eq:spgapgcd} follows.
\end{remark}

\begin{remark}\label{rem:gcd2}
We note that mixing of the underlying map $T_\scat$
is not required for an useful version of~\eqref{eq:Qtower} to hold.
Indeed the property of $Q_{\bar\Delta_\scat}$ in~\eqref{eq:Qtower} holds independent of mixing
and for this we just need to work with~\eqref{eq:spgapgcd}, which holds for $d>1$.
The downside of using~\eqref{eq:spgapgcd} directly
is that in assumption~\eqref{eq-corel2} we would have to extract
$\sum_{\lambda^d=1} \lambda \Pi_\lambda(e^{it\ks}-1)$
instead of $\int_{\cM} (e^{it\ks}-1)\, d\mu$.
We found it more convenient to work
with the 'clean' assumption~\eqref{eq-corel2}.
\end{remark}

\end{document}